\documentclass[11pt,letterpaper,reqno]{amsart}
\usepackage{amsmath,amssymb,amsthm,amsfonts,mathrsfs,url}
\usepackage[mathscr]{eucal}
\usepackage{array}

\usepackage{hyperref}
\hypersetup{
    bookmarks=true,         % show bookmarks bar?
    unicode=false,          % non-Latin characters in Acrobat�s bookmarks
    pdftoolbar=true,        % show Acrobat�s toolbar?
   pdfmenubar=true,        % show Acrobat�s menu?
   pdffitwindow=false,     % window fit to page when opened
   pdfstartview={FitH},    % fits the width of the page to the window
   pdftitle={Lorentzian polynomials on cones},    % title
   pdfauthor={Author},     % author     pdfsubject={Subject},   % subject of the document
    pdfcreator={Creator},   % creator of the document
    pdfproducer={Producer}, % producer of the document
   pdfkeywords={keyword1} {key2} {key3}, % list of keywords
   pdfnewwindow=true,      % links in new window
    colorlinks=true,       % false: boxed links; true: colored links
    linkcolor=blue,          % color of internal links
    citecolor=blue,        % color of links to bibliography
    filecolor=blue,      % color of file links
    urlcolor=black    % color of external links
    }

\theoremstyle{plain}
   \newtheorem{theorem}{Theorem}[section]
   \newtheorem{proposition}[theorem]{Proposition}
   \newtheorem{lemma}[theorem]{Lemma}
   \newtheorem{corollary}[theorem]{Corollary}

   \newtheorem{question}{Question}
\theoremstyle{definition}
   \newtheorem{definition}[theorem]{Definition}
   \newtheorem{example}[theorem]{Example}

\theoremstyle{remark}
   \newtheorem{remark}[theorem]{Remark}

\author[P.~Br\"and\'en]{Petter Br\"and\'en}
\address{Department of Mathematics, KTH Royal Institute of Technology, SE-100 44 Stockholm,
Sweden}
\email{pbranden@kth.se}

\author[J.~Leake]{Jonathan Leake}
\address{Department of Combinatorics and Optimization, University of Waterloo, Canada}
\email{jonathan@jleake.com}

%\keywords{???}
%Phase transitions, Lee-Yang theory, P\'olya-Schur theory, 
%linear operators, 
%polarization, real stable polynomials, 
%hyperbolic polynomials, 
%graph polynomials, 
%dimer models, 
%symmetrization, 
%exclusion process,
%apolarity, Sz\'asz principles, multiplier sequences
%Lee-Yang theory, 
%P\'olya-Schur theory, phase transitions, linear operators, 
%real stable polynomials, hyperbolic polynomials, polarization, multipliers,  
%Sz\'asz principles
\subjclass[2020]{05A20, 52A40, 52B20, 52A39, 05E14}
%{Primary: 47B38; Secondary: 05A15, 05C70, 30C15, 32A60, 
%46E22, 82B20, 82B26}

\thanks{PB is a Wallenberg Academy Scholar
  supported by the Knut and Alice Wallenberg
  Foundation, and the G\"oran Gustafsson Foundation.
  JL acknowledges the support of the Natural Sciences and Engineering Research Council of Canada (NSERC), [funding reference number RGPIN-2023-03726]. Cette recherche a \'et\'e partiellement financ\'ee par le Conseil de recherches en sciences naturelles et en g\'enie du Canada (CRSNG), [num\'ero de r\'ef\'erence RGPIN-2023-03726].}

\numberwithin{equation}{section}
\newcommand{\xx}{\mathbf{x}}
\newcommand{\yy}{\mathbf{y}}
\newcommand{\zz}{\mathbf{z}}
\newcommand{\ee}{\mathbf{e}}
\newcommand{\sss}{\mathbf{s}}
\newcommand{\ccc}{\mathbf{c}}
\newcommand{\vv}{\mathbf{v}}
\newcommand{\ttt}{\mathbf{t}}
\newcommand{\ff}{\pol}

\newcommand{\ww}{\mathbf{w}}
\newcommand{\uu}{\mathbf{u}}

\newcommand{\NN}{\mathbb{N}}

\newcommand{\one}{E}
\newcommand{\zero}{K}

\newcommand{\SMOD}{\mathscr{S}}

\newcommand{\PPP}{\mathscr{P}}

\newcommand{\VV}{\mathcal{V}}

\newcommand{\LL}{\mathscr{L}}
\newcommand{\LLL}{\mathrm{L}}

\newcommand{\MM}{\mathrm{M}}

\newcommand{\Hmg}{\mathrm{H}}

\newcommand{\RR}{\mathbb{R}}

\newcommand{\CCC}{\mathscr{K}}
\newcommand{\KKK}{\mathscr{K}}

\DeclareMathOperator{\spn}{span}
\DeclareMathOperator{\bus}{weld}

\newcommand{\dc}{\bar{\partial}}

\def\newop#1{\expandafter\def\csname #1\endcsname{\mathop{\rm
#1}\nolimits}}

\newop{st}
\newop{diag}
\newop{supp}
\newop{rk}
\newop{Deg}
\newop{sub}
\newop{rank}
\newop{Sym}
\newop{sign}
\newop{Int}
\newop{disc}
\newop{mult}
\newop{tr}
\newop{vol}
\newop{Vol}

\newop{pol}
%\newop{Trp}
\newop{MAP}
\newop{lk}

\newcommand{\rev}{}

\title[Lorentzian polynomials on cones]{Lorentzian polynomials on cones}

\begin{document}
\begin{abstract} Inspired by the theory of hyperbolic polynomials and Hodge theory, we develop the theory of Lorentzian polynomials on cones. This notion captures the Hodge-Riemann relations of degree zero and one. Motivated by fundamental properties of volume polynomials of Chow rings of simplicial fans, we define a class of multivariate polynomials which we call hereditary polynomials. We give a complete and easily checkable characterization of hereditary Lorentzian polynomials. This characterization is used to give elementary and simple proofs of the Heron-Rota-Welsh conjecture for the characteristic polynomial of a matroid, and the Alexandrov-Fenchel inequalities for convex bodies. 

We then characterize Chow rings of simplicial fans which satisfy the Hodge-Riemann relations of degree zero and one, and we prove that this property only depends on the support of the fan. 

Several different characterizations of Lorentzian polynomials on cones are provided.

\end{abstract}
\maketitle
\thispagestyle{empty}
\setcounter{tocdepth}{2}
\tableofcontents

\section{Introduction}

Over the past \rev{15 years}, methods originating in algebraic geometry have been developed  to solve long-standing conjectures on unimodality and log-concavity in matroid theory \rev{\cite{AHK, ALOV, Lagrangian, top-heavy, BH, Huh-chromatic, HK}}. In the influential paper \cite{AHK}, a Hodge theory of matroids was developed by Adiprasito, Huh and Katz to prove the Heron-Rota-Welsh conjecture on the log-concavity of the characteristic polynomial of a matroid. This paper has generated a surge of research activities at the interface of Hodge theory and combinatorics, and has led to the resolution of  long-standing conjectures in matroid theory, see \cite{Eur-survey,HuhICM}. 

Another successful approach to log-concavity problems originates in the works of Choe, Oxley, Sokal, Wagner, Gurvits and the first author, see \cite{WSur}. This approach uses convexity properties of multivariate stable and hyperbolic polynomials rather than Hodge theory. Recently efforts to combine the two approaches were made by Huh and the first author \cite{BH}, and by Anari, Liu, Oveis Gharan and Vinzant \cite{ALOV-mixing, ALOV}. The theory of Lorentzian polynomials  was developed in \cite{ALOV, BH, Gu}, and used in \cite{ALOV,BH} to prove the strongest of Mason's conjectures on the \rev{(ultra)} log-concavity of the  number of independent sets of a matroid. 

In this paper we study a natural generalization of Lorentzian polynomials to convex cones. The notion is motivated by hyperbolic polynomials \cite{ABG,Garding} and Hodge theory \cite{Baker,Eur-survey,HuhICM}. Indeed, any hyperbolic polynomial is Lorentzian with respect to its hyperbolicity cone, and if a graded vector space $A$ satisfies the so called K\"ahler package, then the volume polynomial associated to $A$ is Lorentzian with respect \rev{to} a convex cone $\CCC$ \rev{(see Example~\ref{ex:C-Lor})}. In fact, the notion of $\CCC$-Lorentzian polynomials captures the Hodge-Riemann relations of degree zero and one. We have made an effort to make this paper self-contained, based on convexity properties of polynomials. \rev{Specifically, many of our results follow from basic facts in classical convexity theory and linear algebra.}
%\rev{In particular, our theory of $\CCC$-Lorentzian polynomials leads to new simpler proofs of various important log-concave inequalities, which we outline below.}
\\[1ex]

\noindent
{\bf  \rev{Outline.}} In Section \ref{Lor-cones} we prove fundamental properties of Lorentzian polynomials on cones, which are used in subsequent sections. \rev{These include sufficient conditions for a polynomial to be $\CCC$-Lorentzian which are suitable for recursive proofs, see Theorem~\ref{engine}.} In Section \ref{hersec} we define a class of multivariate homogeneous polynomials which we call \emph{hereditary polynomials}\rev{, Definition~\ref{def:hered-poly}}. The definition is inspired by fundamental properties of volume polynomials of Chow rings of simplicial fans. We define a natural notion of the Lorentzian property for hereditary polynomials, and give a complete and easily checkable characterization of hereditary  Lorentzian polynomials, Theorem~\ref{mainthm_hereditary}. In Section \ref{matroidsec} we apply this characterization to give a self-contained and elementary proof of the Lorentzian property for the volume polynomial of the Chow ring of a matroid, Theorem~\ref{matroid-hered-Lor}. This is equivalent to the Hodge-Riemann relations of degree zero and one for this Chow ring, which were first proved by Adiprasito, Huh and Katz in \cite{AHK}.
We use this to derive a new proof of the Heron-Rota-Welsh conjecture for the characteristic polynomial of a matroid \rev{in Section~\ref{lc}}, first proved in \cite{AHK}.

In Section \ref{subdsec} we define stellar subdivisions of hereditary polynomials. We show that the stellar subdivision of a polynomial $f$ with respect to a face $S$, is achieved by a canonical bijective linear operator $\sub_S^\ccc$. In Theorem~\ref{support-general} we prove that the hereditary Lorentzian property is invariant under subdivisions. 

In Section \ref{polytsec} we apply the results in Sections \ref{hersec} and \ref{subdsec} to give a new proof of the Lorentzian property of volume polynomials of simple polytopes \cite{Aleksandrov, McMullen}. A consequence is a simple and self-contained proof of the Alexandrov-Fenchel inequalities for convex bodies \cite{Aleksandrov}.  

In Section \ref{chowsec} we give a complete and easily checkable  characterization of Chow rings of simplicial fans which are hereditary Lorentzian, Theorem~\ref{posfanchar}. More generally, we characterize hereditary Lorentzian functionals on Chow rings of simplicial fans, 
Theorem~\ref{genfanchar}.    In Section \ref{suppfans} we prove that the hereditary Lorentzian property only depends on the support of the fan. The corresponding property for Lefschetz fans was recently proved by Ardila, Denham and Huh in \cite{Lagrangian}. 

In  Section \ref{Lorsec} we provide various characterizations of $\CCC$-Lorentzian polynomials \rev{and M-convex sets}, and in Section \ref{top} we prove that the projective space of $\CCC$-Lorentzian polynomials of fixed degree and number variables is compact \rev{and} contractible to a point in its interior. \\[1ex]

\noindent
{\bf  Related work.} The manuscript \cite{BL} by the authors, which is not intended for publication, contains a short and self-contained proof (based on the same methods used in this paper) of the Hodge-Riemann relations of degree zero and one for the Chow ring of a matroid and the Heron-Rota-Welsh conjecture, first proved by Adiprasito, Huh and Katz in \cite{AHK}. We decided to instead include the work in \cite{BL} in the more general setting of this paper. 

While completing this paper, we became aware that Dustin Ross had independently proved the characterization of Chow rings of simplicial fans satisfying the Hodge-Riemann relations of degree zero and one, as well as that this property only depends on the support of the fan.  Our papers are written from two different perspectives, which we believe to be mutually beneficial: The paper of Ross is from the perspective of Lorentzian fans, while our paper is from the perspective of (hereditary) polynomials. Therefore we decided to post our two papers simultaneously.

\section{Lorentzian polynomials on cones}\label{Lor-cones}
Inspired by Hodge theory, matroid theory and the theory of stable polynomials, Lorentzian polynomials were introduced by June Huh and the first author in \cite{BH}. In this section we extend the notion of Lorentzian polynomials to open convex cones. To motivate this definition, we recall the Hodge-Riemann relations for the case of graded rings, see \cite{HuhICM} for a detailed and more general exposition. Suppose 
$$
A= \RR[x_1,\ldots,x_n]/I=\bigoplus_{k=0}^dA^k 
$$
is a graded ring \rev{(with standard grading given by polynomial degree)}, where $A^d$ is one dimensional. Let $\deg : A^d \longrightarrow \RR$ be a linear isomorphism. Suppose further \rev{that} $\CCC$ is an open convex cone\footnote{\rev{In this paper, an \emph{open convex cone} is an open subset $\CCC$ of the ambient vector space for which $\lambda \uu + \mu \vv \in \CCC$ whenever $\uu, \vv \in \CCC$ and $\lambda, \mu$ are positive real numbers.}} in the real vector space $A^1$. For $0 \leq k \leq d/2$, the (mixed) \emph{Hodge-Riemann relations} of degree $k$ say that given any elements $\ell_0,\ell_1, \ldots, \ell_{d-2k}$ in $\CCC$, the bilinear form 
\begin{equation}\label{HRk}
A^k \times A^k \longrightarrow \RR, \ \ \ \ \ (x,y) \longmapsto (-1)^k\deg(xy \ell_1 \ell_2\cdots \ell_{d-2k})
\end{equation}
is positive definite on $\{ x \in A^k : \ell_0\ell_1 \cdots \ell_{d-2k}x =0\}$. 

Notice that $A^0$ is isomorphic to $\RR$ and that $\ell_0 \ell_1 \cdots \ell_d = 0$ in $A$. It follows that for $k=0$, the  Hodge-Riemann relations simply say that $\deg(\ell_1\ell_2 \cdots \ell_{d})>0$ for any $\ell_1, \ldots, \ell_d \in \CCC$. Given this, the Hodge-Riemann relations of degree one say that for any $\ell_1,\ldots, \ell_{d-2} \in \CCC$, the bilinear form 
$$
(x,y) \longmapsto \deg(xy \ell_1\ell_2 \cdots \ell_{d-2})
$$
 has the \emph{Lorentz signature} $(+,-,-,\ldots,-)$, i.e., it is non-singular and has  exactly one positive eigenvalue. This reformulation follows from Cauchy's interlacing theorem.

The Hodge-Riemann relations of degree zero and one may be rephrased in terms of polynomials as follows. The \emph{volume polynomial} of $A$ is the polynomial in $\RR[t_1,\ldots, t_n]$ defined by 
$$
\vol_A(\ttt)= \frac 1 {d!} \deg\left(\left(\sum_{i=1}^nt_ix_i \right)^{\!\! d}\right).
$$
Let $\partial_i$ (or $\partial_{t_i}$) denote the partial derivative with respect to $t_i$, and for $\uu=(u_1, \ldots, u_n) \in \RR^n$ let 
$D_\uu= u_1\partial_1 + \cdots + u_n \partial_n$.
For $\underline{\ell} = (a_1,\ldots, a_n) \in \RR^n$, let $\ell = a_1x_1+\cdots+ a_nx_n\in A^1$. Then 
for any $\ell_1, \ldots, \ell_d \in A^1$, 
$$
\deg(\ell_1 \ell_2\cdots \ell_{d}) = D_{\underline{\ell}_1} D_{\underline{\ell}_2} \cdots D_{\underline{\ell}_d}\vol_A. 
$$

The above discussion motivates the following definition, which encapsulates the Hodge-Riemann relations of degree zero and one. 

\begin{definition}\label{C-def}
Let $\CCC$ be an open convex cone in $\RR^n$. A homogeneous polynomial $f \in \RR[t_1,\ldots, t_n]$ of degree $d \geq 2$ is called $\CCC$-\emph{Lorentzian} if for all 
$\vv_1, \ldots, \vv_{d} \in \CCC$, 
\begin{itemize}
\item[(P)] $D_{\vv_1}\cdots D_{\vv_{d}} f>0$, and 
\item[(HR)] the bilinear form on $\RR^n$,
$$
(\xx,\yy) \longmapsto D_{\xx}D_{\yy}D_{\vv_3}\cdots D_{\vv_d}f
$$
has exactly one positive eigenvalue.
\end{itemize}
% Nonnegative constant and linear polynomials satisfying (P) are also defined to be $\CCC$-Lorentzian. 
\rev{We further define the $\CCC$-Lorentzian polynomials of degree at most $0$ to be the nonnegative constant polynomials, and we define the $\CCC$-Lorentzian polynomials of degree $1$ to be the linear polynomials satisfying (P).}
\end{definition}
By definition, $D_\vv f$ is $\CCC$-Lorentzian whenever $\vv \in \CCC$ and $f$ is $\CCC$-Lorentzian of degree at least one. 

\rev{We will prove in Proposition~\ref{CL-equal} that a polynomial is $\RR_{>0}^n$-Lorentzian if and only if it is Lorentzian as defined in \cite{BH}.}

\begin{example} \label{ex:C-Lor}
    By construction, volume polynomials of graded rings
    %(or vector spaces)
    that satisfy the Hodge-Riemann relations \rev{(of degrees $0$ and $1$)} are $\CCC$-Lorentzian. This provides an important set of examples from projective geometry, matroid theory and convex geometry, see \cite{HuhICM}. 
    
    Another important class of $\CCC$-Lorentzian polynomials are \emph{hyperbolic polynomials}\rev{,} which originate in PDE-theory \cite{ABG,Garding}. A homogeneous polynomial $f \in \RR[t_1,\ldots, t_n]$ is hyperbolic with respect to a vector $\ee \in \RR^n$ if 
    \begin{itemize}
        \item $f(\ee)>0$, and 
        \item \rev{for each $\xx \in \RR^n$,} all zeros of the univariate polynomial $t \longmapsto f(t\ee-\xx)$ are real. 
    \end{itemize}
For example the determinant $\det(\xx)$, considered as a polynomial on the space of real symmetric matrices, is hyperbolic with respect to the identity matrix.

 The open \emph{hyperbolicity cone} of \rev{a hyperbolic polynomial} $f$ with respect to $\ee$ is the set $\CCC(f,\ee)$ of all $\xx$ for which all zeros of $f(t\ee-\xx)$ are positive. The hyperbolicity cone is convex, and if $\vv \in \CCC(f,\ee)$, then $D_\vv f$ is hyperbolic with respect to $\ee$ and $\CCC(f,\ee) \subseteq \CCC(D_\vv f,\ee)$, see \cite{Garding}. Hence to prove that $f$ is $\CCC(f,\ee)$-Lorentzian, it suffices to prove that quadratic hyperbolic polynomials satisfy (HR). \rev{In fact, a quadratic polynomial is $\CCC$-Lorentzian if and only if it is  hyperbolic with hyperbolicity cone containing $\CCC$, see Lemma~\ref{AF=H} below}.
 
 \rev{We also note that hyperbolic polynomials with hyperbolicity cone containing $\RR_{>0}^n$ are called \emph{real stable} in the literature. Hence $\CCC$-Lorentzian polynomials generalize hyperbolic polynomials with hyperbolicity cone containing $\CCC$ in the same way that Lorentzian polynomials generalize real stable polynomials.}
\end{example}

Notice that (P) is equivalent to \rev{the condition} that all coefficients of the polynomial 
$$
f(s_1\vv_1+s_2\vv_2+\cdots+s_d\vv_d) \in \RR[s_1,\ldots, s_d]
$$
are positive.

\begin{lemma}\label{AF=H}
    Let $\CCC$ be a non-empty open convex cone in $\RR^n$, and let $(\xx,\yy) \longmapsto P(\xx,\yy)$ be a symmetric bilinear form on $\RR^n$ such that $P(\vv,\vv) > 0$ for all $\vv \in \CCC$. Then the following are equivalent:
    \begin{itemize}
        \item[(AF)] $P(\vv,\ww)^2 \geq P(\vv,\vv) P(\ww,\ww)$ for all $\vv,\ww \in \CCC$,
        \item[(AF2)] $P(\vv,\xx)^2 \geq P(\vv,\vv) P(\xx,\xx)$ for all $\vv \in \CCC$ and $\xx \in \RR^n$,
        \item[(Hyp)] \rev{the polynomial $f(\xx) = P(\xx,\xx)$ is hyperbolic with hyperbolicity cone containing $\CCC$,}
        \item[(Lor)] \rev{the polynomial $f(\xx) = P(\xx,\xx)$ is $\CCC$-Lorentzian},
        \item[(H)] $P$ has exactly one positive eigenvalue.
    \end{itemize}
\end{lemma}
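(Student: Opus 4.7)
The plan is to close the cycle (H) $\Rightarrow$ (AF) $\Rightarrow$ (AF2) $\Rightarrow$ (H), which gives all three equivalences at once.

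For (H) $\Rightarrow$ (AF), I would reduce to a two-dimensional linear algebra fact. Given $\vv,\ww \in \CCC$, the linearly dependent case is immediate, so assume independence and set $V := \spn(\vv,\ww)$. Since $P(\vv,\vv)>0$, the restriction $P|_V$ has at least one positive eigenvalue; by Cauchy interlacing (and (H)) it has at most one. In a two-dimensional space this forces $\det(P|_V)\leq 0$, which is exactly the inequality in (AF).

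The key step is (AF) $\Rightarrow$ (AF2), where I promote the inequality on the cone to the full ambient space. Fix $\vv \in \CCC$ and consider the quadratic form
$$
Q_\vv(\xx) \;:=\; P(\vv,\vv)\,P(\xx,\xx) - P(\vv,\xx)^2
$$
on $\RR^n$. A direct expansion shows that the constant and linear terms cancel:
$$
Q_\vv(\vv+t\yy) \;=\; t^2 \, Q_\vv(\yy) \qquad \text{for all } \yy \in \RR^n,\ t \in \RR.
$$
Since $\CCC$ is open and contains $\vv$, we have $\vv + t\yy \in \CCC$ for all sufficiently small $|t|$, so (AF) gives $t^2 Q_\vv(\yy) \leq 0$ and hence $Q_\vv(\yy) \leq 0$. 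This is (AF2).

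Finally, for (AF2) $\Rightarrow$ (H): the hypothesis $P(\vv,\vv)>0$ on the non-empty cone already forces at least one positive eigenvalue. If there were a second, then $P$ would be positive-definite on some two-dimensional subspace $V$; fixing $\vv \in \CCC$ and solving the single linear equation $P(\vv,\cdot) = 0$ on $V$ produces $\xx \in V \setminus\{0\}$ with $P(\vv,\xx)=0$ and $P(\xx,\xx)>0$, contradicting (AF2). The main obstacle is really the strengthening (AF) $\Rightarrow$ (AF2); the crucial point is the vanishing of the cross term in the expansion of $Q_\vv(\vv+t\yy)$, which combined with openness of $\CCC$ is what upgrades an inequality known only on $\CCC$ to one valid on all of $\RR^n$. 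The other implications are then routine consequences of interlacing and dimension counting.
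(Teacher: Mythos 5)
Your proof is correct and follows essentially the same route as the paper: both close the cycle (H) $\Rightarrow$ (AF) $\Rightarrow$ (AF2) $\Rightarrow$ (H) using Cauchy interlacing for (H) $\Rightarrow$ (AF), a perturbation within the open cone for (AF) $\Rightarrow$ (AF2), and the observation that $P$ is negative semidefinite on the hyperplane $P(\vv,\cdot)=0$ for (AF2) $\Rightarrow$ (H). Your homogeneity identity $Q_\vv(\vv+t\yy)=t^2\,Q_\vv(\yy)$ is the same algebraic cancellation the paper exploits by taking $\ww=\xx+t\vv$ with $t$ large, merely parameterized differently.
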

\begin{proof}
Suppose $P$ satisfies (AF), and let $\vv \in \CCC$ and $\xx \in \RR^n$. Since $\CCC$ is an open cone, $\ww= \xx+t\vv \in \CCC$ for all $t>0$ sufficiently large. Applying (AF) to $\vv,\ww$ yields (AF2) after a simple calculation. 

\rev{Assume (AF2), and let $\vv \in \CCC$ and $\xx \in \RR^n$. Then
\begin{equation}\label{quadric}
    f(t\vv - \xx) = t^2 \cdot P(\vv,\vv) - 2t \cdot P(\vv,\xx) + P(\xx,\xx),
\end{equation}
which is real-rooted by nonnegativity of the discriminant. Thus $f$ is hyperbolic with hyperbolicity cone containing $\CCC$. Note in fact that (AF2) and (Hyp) are equivalent, since the discriminant condition is equivalent to real-rootedness.}

\rev{Assume (Hyp), and let $\vv \in \CCC$. Since (Hyp) implies (AF2), we have that $P$ is negative semidefinite on the hyperplane $\{\xx : P(\vv,\xx)=0\}$. By Cauchy interlacing and Sylvester's law of inertia, $P$ has at most one positive eigenvalue. Since $P(\vv,\vv) > 0$, $P$ has exactly one positive eigenvalue. Suppose $\xx \in \CCC$. Then $D_{\vv} D_{\xx} f = 2 \cdot P(\vv,\xx) > 0$, since the quadratic \eqref{quadric} has two negative zeros. Hence $f(\xx)$ satisfies (P).}
\rev{The fact that (Lor) implies (H) is immediate from Definition~\ref{C-def}.}

% Assume (AF2), and let $\vv \in \CCC$. Then $P$ is negative semidefinite on the hyperplane $\{\xx : P(\vv,\xx)=0\}$. By Cauchy interlacing and Sylvester's law of inertia, $P$ has at most one positive eigenvalue. Since $P(\vv,\vv)>0$, $P$ has exactly one positive eigenvalue.  

Assume (H), and let $\vv, \ww \in \CCC$. If $\vv$ and $\ww$ are parallel, then (AF) holds. Otherwise extend $\vv,\ww$ to a basis of $\RR^n$. In this basis, the top-left $2 \times 2$ matrix reads 
$$
\begin{pmatrix}
P(\vv,\vv) & P(\vv,\ww) \\
P(\vv,\ww) & P(\ww,\ww)
\end{pmatrix}.
$$
Since $P(\vv,\vv)>0$, Cauchy interlacing and Sylvester's law of inertia implies that this matrix has exactly one positive eigenvalue. The condition (AF) now follows since the determinant of this matrix is nonpositive. 
\end{proof}

\begin{remark}\label{AF-remark}
From Lemma \ref{AF=H} it follows that we may replace (HR) in Definition~\ref{C-def} by the following equivalent condition. Let $\overline{\CCC}$ be the closure of $\CCC$ in the Euclidean topology on $\RR^n$. 
\begin{itemize}
\item[(AF)] For all $\vv_1, \ldots, \vv_{d} \in \overline{\CCC}$,
$$(D_{\vv_1}D_{\vv_2} D_{\vv_3}\cdots D_{\vv_{d}} f)^2 \geq (D_{\vv_1}D_{\vv_1} D_{\vv_3}\cdots D_{\vv_{d}} f)\cdot (D_{\vv_2}D_{\vv_2} D_{\vv_3}\cdots D_{\vv_{d}} f).$$
\end{itemize}
In particular, if $f$ is $\CCC$-Lorentzian and $\uu, \vv \in \overline{\CCC}$
%\footnote{\rev{Here we can consider vectors in $\overline{\CCC}$, the topological %closure of $\CCC$, since log-concavity is a closed condition.}}
, then the sequence $a_k=D_\uu^kD_\vv^{d-k} f$, $0\leq k \leq d$, is \emph{log-concave}, i.e., 
\begin{equation}\label{af-lc}
a_k^2 \geq a_{k-1}a_{k+1}, \ \ \ \ \ 0 \leq k \leq d. 
\end{equation}
It is no coincidence that (AF) is reminiscent of the \emph{Alexandrov-Fenchel inequalities} for convex bodies \cite{Aleksandrov}. 
\end{remark}

The following remark proves that the space of $\CCC$-Lorentzian polynomials is closed in the linear space of all $d$-homogeneous polynomials in $\RR[t_1,\ldots, t_n]$. 
\begin{remark}\label{closedc}
We may replace (P) with the seemingly weaker condition 
\begin{itemize}
\item[(N)] $D_{\vv_1}\cdots D_{\vv_{d}} f \geq 0$, for all $\vv_1, \ldots, \vv_{d} \in \overline{\CCC}$. 
\end{itemize}
Indeed, assume (N) and that $D_{\vv_1}\cdots D_{\vv_{d}} f = 0$
for some $\vv_1, \ldots, \vv_{d} \in \CCC$. Let $\uu_1 \in \CCC$. Since $\CCC$ is open, there is $\epsilon >0$ such that
$$
D_{\vv_1+t\uu_1} \rev{D_{\vv_2}} \cdots D_{\vv_{d}}f = tD_{\uu_1} \rev{D_{\vv_2}} \cdots D_{\vv_{d}}f\geq 0, 
$$
for all  $|t| <\epsilon$. Hence 
$
D_{\uu_1}D_{\vv_2}\cdots D_{\vv_{d}}f =0 
$
for all $\uu_1 \in \CCC$. By applying the same argument to $\vv_2,\ldots, \vv_d$, 
$
D_{\uu_1}D_{\uu_2}\cdots D_{\uu_{d}}f =0
$
for all $\uu_1,\ldots, \uu_d \in \CCC$. Hence $f \equiv 0$, since $\CCC$ is open.
\end{remark}

\rev{The following is a generalization of \cite[Thm. 2.10]{BH} and \cite[Lem. 2.2]{ALOV} from Lorentzian to $\CCC$-Lorentzian polynomials.}

\begin{proposition}\label{comp}
Let $A : \RR^m \to \RR^n$ be a linear map. If $f \in \RR[t_1,\ldots,t_n]$ is $\CCC$-Lorentzian, then the polynomial $g$ defined by $g(\xx)= f(A\xx)$ is $A^{-1}(\CCC)$-Lorentzian.  
\end{proposition}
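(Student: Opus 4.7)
The plan is to reduce everything to the chain rule, which yields the identity
\[
D_{\uu_1} D_{\uu_2}\cdots D_{\uu_k} g(\xx) = D_{A\uu_1} D_{A\uu_2}\cdots D_{A\uu_k} f(A\xx)
\]
for all $\uu_1,\ldots,\uu_k \in \RR^m$. Since $A$ is linear and continuous, $A^{-1}(\CCC)$ is an open convex cone in $\RR^m$ (possibly empty, in which case the proposition is vacuous), and $\uu \in A^{-1}(\CCC)$ implies $A\uu \in \CCC$. Moreover $g$ inherits homogeneity of degree $d$ from $f$, so the definition of $\CCC$-Lorentzian applies to $g$ with the cone $A^{-1}(\CCC)$.

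Property (P) is then immediate: given $\vv_1,\ldots,\vv_d \in A^{-1}(\CCC)$, the $d$-fold derivative $D_{\vv_1}\cdots D_{\vv_d} g$ is a constant, and by the chain rule identity it equals $D_{A\vv_1}\cdots D_{A\vv_d} f > 0$, the positivity coming from (P) for $f$ applied to the vectors $A\vv_1,\ldots,A\vv_d \in \CCC$.

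For (HR), I would instead verify the equivalent (AF) condition from Remark~\ref{AF-remark}. Fixing $\vv_3,\ldots,\vv_d \in A^{-1}(\CCC)$, introduce the symmetric bilinear forms $Q(\xx,\yy) = D_\xx D_\yy D_{\vv_3}\cdots D_{\vv_d} g$ on $\RR^m$ and $P(\xx,\yy) = D_\xx D_\yy D_{A\vv_3}\cdots D_{A\vv_d} f$ on $\RR^n$. The chain rule identity gives $Q(\xx,\yy) = P(A\xx,A\yy)$. For any $\vv_1,\vv_2 \in A^{-1}(\CCC)$, applying (AF) for $f$ to the cone vectors $A\vv_1,A\vv_2,A\vv_3,\ldots,A\vv_d \in \CCC$ yields
\[
Q(\vv_1,\vv_2)^2 = P(A\vv_1,A\vv_2)^2 \geq P(A\vv_1,A\vv_1)\,P(A\vv_2,A\vv_2) = Q(\vv_1,\vv_1)\,Q(\vv_2,\vv_2),
\]
which is (AF) for $g$ on $A^{-1}(\CCC)$. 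Combined with $Q(\vv,\vv) = P(A\vv,A\vv) > 0$ for $\vv \in A^{-1}(\CCC)$ (which is (P) for $f$), Lemma~\ref{AF=H} converts (AF) into (H) for $Q$, which is exactly (HR) for $g$.

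The main subtlety, where a naive attempt would stumble, is that (HR) as an eigenvalue signature statement is not preserved under arbitrary linear pullback: the map $A$ can collapse directions and shrink the rank of the form, so signature $(+,-,\ldots,-)$ need not pull back to the same signature. Routing through the (AF) formulation via Lemma~\ref{AF=H} circumvents this, because the Alexandrov--Fenchel-type inequality pulls back along $A$ in a completely formal way, and the lemma then recovers the signature statement using only positivity of $Q$ on the pulled-back cone.
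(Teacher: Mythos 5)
Your proof is correct and uses the same chain-rule identity \eqref{us} as the paper; the difference is in how you deduce (HR) for $g$. The paper invokes Sylvester's law of inertia directly: in its generalized form, the positive inertia index of $Q = A^\top P A$ is at most that of $P$ for any linear $A$ (not just invertible ones), so $Q$ has at most one positive eigenvalue, and positivity $Q(\vv,\vv)>0$ on the nonempty pulled-back cone supplies the missing ``at least one.'' You instead pull back the (AF) inequality, which is purely formal, and then invoke Lemma~\ref{AF=H} to recover (H). Both work, and they are really two faces of the same coin — indeed Lemma~\ref{AF=H} is itself proved by Cauchy interlacing and Sylvester. One small caveat: your stated worry that ``signature $(+,-,\ldots,-)$ need not pull back'' somewhat overstates the difficulty, since (HR) in Definition~\ref{C-def} only requires exactly one positive eigenvalue, not nonsingularity, and the one-sided monotonicity of the positive inertia index under pullback is enough; the paper's Sylvester argument is not the ``naive'' approach you caution against. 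Still, routing through (AF) is a perfectly clean and self-contained alternative.
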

\begin{proof}
    \rev{
    Given a cone $\CCC$, Remark~\ref{AF-remark} implies that a polynomial is $\CCC$-Lorentzian if and only if it satisfies (P) and (AF) with respect to $\CCC$. Notice that 
    \begin{equation}\label{us}
    D_{\vv_1}D_{\vv_2}\cdots D_{\vv_d} g = D_{A\vv_1}D_{A\vv_2}\cdots D_{A\vv_d}f.
    \end{equation}
    Thus (P) and (AF) for $g$ follow directly from (P) and (AF) for $f$.
    % Consider now the bilinear form
    % \begin{equation}\label{biform}
    %     (\xx,\yy) \longmapsto D_{\xx}D_{\yy}D_{\vv_3}\cdots D_{\vv_d} g = D_{A\xx}D_{A\yy} D_{A\vv_3} \cdots D_{A\vv_d} f,
    % \end{equation}
    % which has at least one positive eigenvalue by (P) for $g$. The form \eqref{biform} is equal to the bilinear form $(\xx,\yy) \longmapsto D_{\xx}D_{\yy}D_{A\vv_3}\cdots D_{A\vv_d} f$ (which has exactly one positive eigenvalue by (HR) for $f$), conjugated by $A$. Thus \eqref{biform} has at most one positive eigenvalue by Sylvester's law of inertia and a standard limiting argument. This implies (HR) for $g$, and therefore $g$ is $A^{-1}(\CCC)$-Lorentzian.
    }
\end{proof}
% \begin{proof}
% Notice that 
% \begin{equation} \label{us}
% D_{\vv_1}D_{\vv_2}\cdots D_{\vv_d} g = D_{A\vv_1}D_{A\vv_2}\cdots D_{A\vv_d}f, 
% \end{equation}
% from which (P) for $g$ follows, and also property (H) by Sylvester's law of inertia. The proposition then follows from Lemma \ref{AF=H}.
% \end{proof}

Recall that the \emph{lineality space} of an open convex  cone $\CCC$ in $\RR^n$ is $L_\CCC=\overline{\CCC}\cap -\overline{\CCC}$, i.e., the largest linear space contained in the closure of $\CCC$. Given a homogeneous polynomial $f \in \RR[t_1,\ldots,t_n]$, we define the \emph{lineality space of $f$}\rev{, denoted $L_f$,} to be the set of all $\vv \in \RR^n$ for which $D_{\vv} f \equiv 0$, i.e., the set of all $\vv$ for which 
$$
f(\ttt+\vv) = f(\ttt), \mbox{ for all } \ttt \in \RR^n.
$$
\begin{proposition}\label{lineal}
Let $\CCC$ be an open convex cone in $\RR^n$. If $f$ satisfies (P), then $L_{\CCC} \subseteq L_f$.
\end{proposition}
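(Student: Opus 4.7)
The plan is to show that any $\vv \in L_\CCC$ satisfies $D_\vv f \equiv 0$ by polarizing: it suffices to check that all iterated derivatives of $D_\vv f$ in directions chosen from the open cone $\CCC$ vanish, and then use that $\CCC$ spans $\RR^n$.

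First I would observe that the iterated directional derivative $(\uu_1,\ldots,\uu_d) \mapsto D_{\uu_1}\cdots D_{\uu_d} f$ is a symmetric multilinear form on $(\RR^n)^d$ with values in $\RR$ (since $f$ is homogeneous of degree $d$), and in particular it is continuous. Hypothesis (P) asserts this form is strictly positive on $\CCC^d$, so by continuity it is nonnegative on $\overline{\CCC}^{\,d}$.

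Next, fix $\uu_1,\ldots,\uu_{d-1} \in \CCC$ and consider the linear functional $\lambda : \RR^n \to \RR$ defined by
\[
\lambda(\vv) = D_{\vv}D_{\uu_1}\cdots D_{\uu_{d-1}} f.
\]
By the previous paragraph, $\lambda \geq 0$ on $\overline{\CCC}$. Now if $\vv \in L_\CCC = \overline{\CCC}\cap -\overline{\CCC}$, then both $\vv$ and $-\vv$ lie in $\overline{\CCC}$, so $\lambda(\vv) \geq 0$ and $-\lambda(\vv) = \lambda(-\vv)\geq 0$; hence $\lambda(\vv) = 0$. This shows $D_\vv D_{\uu_1}\cdots D_{\uu_{d-1}}f = 0$ for all $\uu_1,\ldots,\uu_{d-1} \in \CCC$.

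Finally I would promote this to $D_\vv f \equiv 0$. The polynomial $D_\vv f$ is homogeneous of degree $d-1$, and is therefore determined by its polarization, the symmetric multilinear form $(\uu_1,\ldots,\uu_{d-1})\mapsto D_{\uu_1}\cdots D_{\uu_{d-1}} D_\vv f$. The previous step shows this form vanishes on $\CCC^{d-1}$; since $\CCC$ is a non-empty open subset of $\RR^n$ it spans $\RR^n$, so by multilinearity the form vanishes on $(\RR^n)^{d-1}$. Thus $D_\vv f \equiv 0$ and $\vv \in L_f$, completing the proof. There is no real obstacle here: the only subtle point is the continuity argument that upgrades (P) from $\CCC$ to $\overline{\CCC}$, which is needed to evaluate the linear functional $\lambda$ at points on the boundary of $\CCC$ where $L_\CCC$ lives.
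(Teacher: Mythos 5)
Your proof is correct and follows essentially the same approach as the paper: use that (P) extends by continuity to $\overline{\CCC}$, observe that $\ww, -\ww \in \overline{\CCC}$ for $\ww \in L_\CCC$ forces the associated linear functional to vanish, and then deduce $D_\ww f \equiv 0$. The only cosmetic difference is that the paper specializes to $\uu_1 = \cdots = \uu_{d-1} = \vv$ and concludes via ``a polynomial vanishing on the open set $\CCC$ is identically zero,'' whereas you keep the $\uu_i$ general and invoke multilinearity over a basis contained in $\CCC$; these are equivalent finishing steps.
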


\begin{proof}
If $\vv \in \CCC$ and $\ww \in L_{\CCC}$, then 
$$
D_\ww f(\vv)= \frac{D_\ww D_\vv^{d-1}f}{(d-1)!} \geq 0 \ \ \mbox{ and } \ \ -D_{\ww} f(\vv)= \frac {D_{-\ww}D_\vv^{d-1}f}{(d-1)!} \geq 0.
$$
 Hence $D_\ww f(\vv) =0 $ for all $\vv \in \CCC$. \rev{Since $D_\ww f$ is a polynomial which is $0$ on the open set $\CCC$, we have that $D_\ww f \equiv 0$ as desired.}
\end{proof}

A matrix $A=(a_{ij})_{i,j=1}^n$ with nonnegative off-diagonal entries  is called \emph{irreducible} if for all distinct $i,j$ there is a sequence $i=i_0,i_1,i_2,\ldots, i_\ell=j$  such that $i_{k-1}\neq i_{k}$ for all $1 \leq k \leq \ell$, and 
$
a_{i_0 i_1} a_{i_1 i_2} \cdots a_{i_{\ell-1}i_{\ell}}>0.
$
By translating such a matrix by a positive multiple of the identity matrix, the Perron-Frobenius theory \cite[Chapter~1]{BP} guarantees that $A$ has a unique eigenvector (up to multiplication by positive scalars) whose entries are all positive. Moreover the corresponding eigenvalue is simple and is the largest eigenvalue of $A$. 

If $A$ and $B$ are symmetric matrices of the same size, we write $A \preceq B$ if $B-A$ is positive semidefinite.  Recall that the \emph{Hessian} of $f$ at $\xx$ is the matrix $\nabla^2 f (\xx) = (\partial_i\partial_j f(\xx))_{i,j=1}^n$. 

The following lemma is,  in essence, taken from \cite[Prop. 3]{AFI}. 
\begin{lemma}\label{indlemma}
Let $f\in \RR[t_1,\ldots, t_n]$ be a homogeneous polynomial of degree $d \geq 3$, and let $\xx \in \RR_{>0}^n$. If
\begin{enumerate}
\item  $\partial_i f(\xx)>0$ for all $i$, and 
\item the Hessian  of $\partial_i f$ at $\xx$ has exactly one positive eigenvalue for all $i$, and 
\item the Hessian of $f$ at $\xx$ \rev{has nonnegative off-diagonal entries} and is irreducible, 
\end{enumerate}
then the Hessian of $f$ at $\xx$ has exactly one positive eigenvalue. 
\end{lemma}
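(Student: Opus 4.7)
The plan is to convert the one-positive-eigenvalue hypothesis on each $H_i := \nabla^2(\partial_i f)(\xx)$ into a quadratic matrix inequality for $H := \nabla^2 f(\xx)$ via Lemma~\ref{AF=H}, and then use Perron--Frobenius (from~(3)) to pin down the signature of $H$.

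Regard $u^T H u$ as a polynomial in $\xx$ of degree $d-2$, so that $\partial_i(u^T H u) = u^T H_i u$. Euler's identity applied to $\partial_i f$ gives $\xx^T H_i \xx = (d-1)(d-2)\,\partial_i f(\xx) > 0$ by~(1), and $\xx^T H_i u = (d-2)(Hu)_i$. Since $H_i$ has exactly one positive eigenvalue by~(2), Lemma~\ref{AF=H}(AF2), applied with $\xx$ as the positive direction, yields
\[
u^T H_i u \;\le\; \frac{(\xx^T H_i u)^2}{\xx^T H_i \xx} \;=\; \frac{d-2}{d-1}\cdot\frac{(Hu)_i^2}{\partial_i f(\xx)}.
\]
Multiplying by $x_i$, summing over $i$, and applying Euler once more to $u^T H u$ gives the matrix inequality
\[
u^T H u \;\le\; \frac{1}{d-1}\, u^T H D H u, \qquad D := \diag\!\left(\frac{x_i}{\partial_i f(\xx)}\right)\succ 0.
\]

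Setting $M := D^{1/2} H D^{1/2}$, which is similar to $H$ by a positive diagonal and hence shares its inertia, nonnegative off-diagonal entries, and irreducibility, the inequality becomes $M \preceq M^2/(d-1)$. Every eigenvalue $\mu$ of $M$ therefore satisfies $\mu(d-1-\mu)\le 0$, i.e.\ $\mu \in (-\infty,0]\cup[d-1,\infty)$. Perron--Frobenius (applied to $M+cI$ for large $c$) provides a unique, simple top eigenvalue $\mu_1$ with a strictly positive eigenvector $w_1$; moreover $\mu_1>0$ since $\xx^T H \xx = d(d-1) f(\xx) > 0$ by Euler and~(1).

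The decisive step is the equality case. Using $H\xx = (d-1)\nabla f(\xx)$ and $D\nabla f(\xx) = \xx$, one checks that $(H - \tfrac{1}{d-1}HDH)\xx = 0$, so the strictly positive vector $\yy := D^{-1/2}\xx$ lies in the kernel of $M(I - M/(d-1))$. Expanding $\yy$ in an orthonormal eigenbasis of $M$, its coefficient along $w_1$ equals $\yy\cdot w_1 > 0$, and the kernel condition forces $\mu_1(d-1-\mu_1) = 0$, hence $\mu_1 = d-1$. Simplicity of $\mu_1$ then places every remaining eigenvalue strictly below $d-1$, and the spectral gap $(0,d-1)$ forces each of them to lie in $(-\infty,0]$. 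Thus $M$, and therefore $H$, has exactly one positive eigenvalue. The principal obstacle I expect is the equality-case bookkeeping: Euler's identity must deliver exactly the constants that place $\xx$ in the kernel of $H - \tfrac{1}{d-1}HDH$, after which the simultaneous positivity of $\yy$ and $w_1$ is what converts the kernel fact into the identification $\mu_1 = d-1$; once this is in hand, the spectral-gap structure of the inequality does the rest.
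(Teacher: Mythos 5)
Your proof is correct and takes essentially the same route as the paper's: both reduce, via Euler's identity and the one-positive-eigenvalue hypothesis on each $\nabla^2\partial_i f$, to the matrix inequality $(d-1)\,\nabla^2 f \preceq (\nabla^2 f)\,D\,(\nabla^2 f)$ with $D=\diag(x_i/\partial_i f(\xx))$, and then conclude by Perron--Frobenius together with the resulting spectral gap $(0,d-1)$ for $M=D^{1/2}(\nabla^2 f)D^{1/2}$. The only superficial deviations are that the paper reaches the pointwise quadratic bound by invoking \cite[Prop.~2.33]{BH} where you use Lemma~\ref{AF=H}(AF2), the paper observes directly that $D^{-1/2}\xx$ is a $(d-1)$-eigenvector of $M$ rather than passing through the kernel of $M(I-M/(d-1))$, and one small terminological slip: $M$ is \emph{congruent}, not \emph{similar}, to $\nabla^2 f$, congruence being what preserves inertia here.
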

\begin{proof}
 If $g$ is a $d$-homogeneous polynomial and $g(\xx)>0$, then the following three statements are equivalent 
\begin{itemize}
\item[(a)] the Hessian of $g$ at $\xx$ has exactly one positive eigenvalue,
\item[(b)]   the Hessian of $g^{1/d}$  is negative semidefinite at $\xx$, 
\item[(c)] the matrix $d\cdot g \cdot \nabla^2g - (d-1)\cdot \nabla g (\nabla g)^T$ is negative semidefinite at $\xx$,
\end{itemize}
see e.g. \cite[Prop. 2.33]{BH}. 

Suppose $\xx$ and $f$ are as in the hypotheses of the lemma. 
Then, by (c), 
$$
(d-1)\cdot \partial_i f \cdot \nabla^2 \partial_i f  \preceq (d-2) \cdot \nabla \partial_i f (\nabla \partial_i f)^T.
$$
Euler's identity, 
$
d \cdot f(\xx) = \sum_{i=1}^n x_i \cdot \partial_i f(\xx),
$
 yields 
$$
(d-2)\cdot \nabla^2 f = \sum_{i=1}^n x_i \nabla^2  \partial_i f  \preceq  \sum_{i=1}^n \frac {x_i}{\partial_i f} \frac {(d-2)}{(d-1)} \nabla \partial_i f (\nabla \partial_i f)^T.
$$
Rewrite the above inequality as
$
(d-1) \cdot \nabla^2 f \preceq (\nabla^2 f)  \Lambda (\nabla^2 f), 
$ 
where $\Lambda$ is the diagonal matrix $\diag({x_1}/{\partial_1 f}, \ldots, {x_n}/{\partial_n f})$. For the matrix $B= \Lambda^{1/2} (\nabla^2 f) \Lambda^{1/2}$,  this implies 
$
B^2 -(d-1)B \succeq 0.
$
Hence no eigenvalue of $B$ lies in the open interval $(0,d-1)$. The matrix $B$ is irreducible and has nonnegative off-diagonal entries, so the Perron-Frobenius theorem applies to $B$. 
Notice that $\Lambda^{-1/2} \xx$ is a positive eigenvector of $B$, and the corresponding eigenvalue is $d-1$. Hence   $d-1$ is the unique largest eigenvalue of $B$ afforded by  the Perron-Frobenius theorem. We conclude that $B$, and thus also $\nabla^2 f(\xx)$, has exactly one positive eigenvalue.
\end{proof}
We say that an open convex cone $\CCC$ in $\RR^n$ is \emph{effective} if $\CCC= \CCC \cap \RR_{>0}^n + L_\CCC$. 

\begin{example}\rev{
Any open convex cone contained in the positive orthant is trivially effective. Let $\CCC$ be an open half-plane in $\RR^2$ whose boundary contains the origin. Then $\CCC$ is effective if and only if $\CCC \cap \RR_{>0}^2 \neq 0$. }
    \end{example}
The following theorem provides sufficient conditions on a polynomial to be Lorentzian with respect to an effective cone. 

\begin{theorem}\label{engine}
Let $f\in \RR[t_1,\ldots, t_n]$ be a homogeneous polynomial of degree $d \geq 3$, and let $\CCC$ be an open, convex and effective cone in $\RR^n$. If 
\begin{enumerate}
\item $D_{\vv_1}\cdots D_{\vv_d}f >0$ for all $\vv_1, \ldots, \vv_d \in \CCC$, and 
\item  the Hessian of $D_{\vv_1} \cdots D_{\vv_{d-2}}f$ is irreducible and its off-diagonal entries are nonnegative for all  $\vv_1,\ldots,  \vv_{d-2} \in \CCC$,  and 
\item $\partial_i f$ is $\CCC$-Lorentzian for all $i$, 
\end{enumerate}
then $f$ is $\CCC$-Lorentzian.
\end{theorem}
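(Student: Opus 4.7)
The plan is to prove (HR), since hypothesis (1) is exactly (P). The bilinear form appearing in (HR), $(\xx,\yy) \mapsto D_\xx D_\yy D_{\vv_3}\cdots D_{\vv_d} f$, is precisely the (constant) Hessian of the quadratic polynomial $D_{\vv_3}\cdots D_{\vv_d} f$, so the task reduces to showing that this Hessian has exactly one positive eigenvalue for every choice of $\vv_3,\ldots,\vv_d \in \CCC$.

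The key maneuver is to apply Lemma \ref{indlemma} not to $f$ itself, but to the cubic polynomial $g := D_{\vv_4}\cdots D_{\vv_d} f$ (which reduces to $f$ when $d=3$), at a point $\xx \in \CCC \cap \RR^n_{>0}$, a set that is nonempty by effectiveness of $\CCC$. Since $g$ is cubic, the Hessian $\nabla^2 g(\xx)$ coincides with the constant Hessian of the quadratic $D_\xx g = D_\xx D_{\vv_4}\cdots D_{\vv_d} f$, so it suffices to verify the three hypotheses of Lemma \ref{indlemma} for $g$ at such an $\xx$. These correspond almost verbatim to (1)--(3): first, $\partial_i g(\xx) = \tfrac{1}{2} D_\xx^2 D_{\vv_4}\cdots D_{\vv_d} \partial_i f > 0$ follows from (P) for the $\CCC$-Lorentzian polynomial $\partial_i f$ supplied by (3); second, the constant Hessian of the quadratic $\partial_i g = D_{\vv_4}\cdots D_{\vv_d} \partial_i f$ is the bilinear form $(\yy,\zz) \mapsto D_\yy D_\zz D_{\vv_4}\cdots D_{\vv_d}\partial_i f$, which has exactly one positive eigenvalue by (HR) for $\partial_i f$ applied to its $d-3$ extra vectors $\vv_4,\ldots,\vv_d$; third, $\nabla^2 g(\xx) = \nabla^2(D_\xx D_{\vv_4}\cdots D_{\vv_d} f)$ is irreducible with nonnegative off-diagonals directly by hypothesis (2) applied to the $(d-2)$-tuple $(\xx,\vv_4,\ldots,\vv_d) \in \CCC^{d-2}$.

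Lemma \ref{indlemma} then gives the required signature for $\nabla^2(D_\xx D_{\vv_4}\cdots D_{\vv_d} f)$ whenever $\xx \in \CCC \cap \RR^n_{>0}$ and $\vv_4,\ldots,\vv_d \in \CCC$. To upgrade the first slot to an arbitrary $\vv_3 \in \CCC$, I use effectiveness to write $\vv_3 = \xx + \ww$ with $\xx \in \CCC \cap \RR^n_{>0}$ and $\ww \in L_\CCC$. By Proposition \ref{lineal} applied to $f$ (valid since $f$ satisfies (P) by (1)), $L_\CCC \subseteq L_f$, and hence $D_\ww D_{\vv_4}\cdots D_{\vv_d} f \equiv 0$, so $D_{\vv_3}D_{\vv_4}\cdots D_{\vv_d} f = D_\xx D_{\vv_4}\cdots D_{\vv_d} f$ and the two Hessians coincide, yielding (HR).

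I expect the main obstacle to be conceptual rather than computational: recognizing that Lemma \ref{indlemma}, whose three assumptions are tailored to a cubic evaluated at a positive point, should be applied to the derived cubic $g = D_{\vv_4}\cdots D_{\vv_d} f$ instead of to $f$. Once that shift is made, the identities $\nabla^2 g(\xx) = \nabla^2(D_\xx g)$ and $2\partial_i g(\xx) = D_\xx^2 \partial_i g$ translate hypotheses (1)--(3) directly into the lemma's requirements, and effectiveness together with Proposition \ref{lineal} handles the passage from $\CCC \cap \RR^n_{>0}$ to all of $\CCC$.
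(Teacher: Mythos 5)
Your proposal is correct and takes essentially the same route as the paper: reduce to the derived cubic $g = D_{\vv_4}\cdots D_{\vv_d}f$, translate the theorem's hypotheses into the three conditions of Lemma~\ref{indlemma} at a point of $\CCC\cap\RR^n_{>0}$, and use effectiveness together with Proposition~\ref{lineal} to replace a general $\vv_3\in\CCC$ by such a point. The only cosmetic difference is that the paper packages the first two hypothesis checks by noting $\partial_i g$ is $\CCC$-Lorentzian of degree two, while you unwind them directly into the (P) and (HR) conditions for $\partial_i f$.
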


\begin{proof}
Let $\vv_1, \ldots, \vv_{d-3} \in \CCC$, and consider the cubic  $g= D_{\vv_1}\cdots D_{\vv_{d-3}} f$. 
Since $\partial_i f$ is $\CCC$-Lorentzian, it follows from Definition~\ref{C-def} that so is $\partial_i g$. By choosing $\vv_{d-2}=\xx \in \CCC$, it follows from (2) that 
the Hessian $\nabla^2g(\xx)$ is irreducible and its off-diagonal entries are nonnegative. 

By (1) and \rev{Proposition} \ref{lineal},  $g(\xx+\ww)=g(\xx)$ for all $\xx \in \RR^n$ and $\ww \in L_\CCC$.  Since $\CCC$ is effective, we may assume  $\xx \in \CCC \cap \RR_{>0}^n$. Lemma \ref{indlemma} then implies that the Hessian of $g$ at $\xx$ has exactly one positive eigenvalue. 

The \rev{theorem} now follows since the Hessian of $D_{\vv_1}\cdots D_{\vv_{d-3}} f$ at $\xx$ is equal to the Hessian of $D_{\vv_1}\cdots D_{\vv_{d-3}}D_\xx f$. 
\end{proof}

\section{Hereditary polynomials}\label{hersec}

In this section we introduce \emph{hereditary polynomials}\footnote{\rev{The definition of hereditary polynomial has slightly changed since a previous version of this paper. See Definition~\ref{def:hered-poly}.}}.  These capture fundamental properties of volume polynomials of Chow rings of simplicial fans. We characterize  hereditary Lorentzian polynomials, which in Section \ref{chowsec}  will lead  to a characterization of Chow rings that satisfy the Hodge-Riemann relations of degree zero and one.

Let $\Delta$ be an abstract simplicial complex on a finite set $V$, and let  $L$ be a linear subspace \rev{of} $\RR^V$. 
For $S \in \Delta$, let $V_S$  
be the set of vertices of the \emph{link}, $$\lk_\Delta(S)=\{T \rev{ \in \Delta} : T\cap S=\varnothing \mbox{ and } S \cup T \in \Delta\},$$ of $S$ in $\Delta$. The \emph{skeleton}, $\tau\Delta$, of $\Delta$ is the simplical complex obtained by removing all facets from $\Delta$. 
If 
\begin{equation}\label{simpl}
\{ (\ell_i)_{i \in T} : \ell \in L\} = \RR^T, \ \ \mbox{ for all } T \in \Delta,
\end{equation}
we say the pair $(\Delta,L)$ is \emph{hereditary}. \rev{That is, $(\Delta, L)$ is hereditary if for every $T \in \Delta$, the coordinate projection of $L$ onto $\RR^T \subseteq \RR^V$ (defined by deleting coordinates indexed by $V \setminus T$) is surjective. Further, we say the pair $(\Delta, L)$ is \emph{weakly hereditary} if $(\tau\Delta, L)$ is hereditary.}

Given a homogeneous polynomial $f \in \RR[t_i : i \in V]$ of degree $d\geq 1$, we define a simplicial complex 
$$
\Delta_f= \{ S \subseteq V :  \partial^Sf \not \equiv 0\}.
$$
% We say that $f$ is \emph{hereditary} if $(\tau\Delta_f,L_f)$ is hereditary, and we say that $f$ is \emph{strongly hereditary} if $(\Delta_f,L_f)$ is hereditary.

\begin{definition} \label{def:hered-poly}
    Let $\Delta$ be a simplicial complex on $V$, let $L$ be a linear subspace of $\RR^V$. \rev{We define $\PPP^k(\Delta,L)$ to be the linear subspace of $\RR[t_i : i \in V]$ consisting of all polynomials $f$ of degree $k$ for which $L \subseteq L_f$ and $\Delta_f \subseteq \Delta$. If $(\Delta,L)$ is (weakly) hereditary then the polynomials in $\PPP^k(\Delta,L)$ are called \emph{(weakly) hereditary}. }
    
    \rev{Equivalently,  a polynomial $f$ is \emph{(weakly) hereditary} if $(\Delta_f,L_f)$ is (weakly) hereditary.}
\end{definition}

\begin{example}\rev{
Let $f=(v_1t_1+\cdots+v_nt_n)^n$, where each entry of $\vv=(v_1,\ldots,v_d)$ is a nonzero real number. Then $\Delta_f$ is a simplex and $L_f$ is the orthogonal complement of $\RR\vv$. It follows that $f$ is weakly hereditary since for each facet $F=\{1, \ldots, n\} \setminus \{i \}$ of $\tau\Delta_f$, and given real numbers $x_j$, $j \in F$, the vector $(x_1, \ldots, x_n)$, where 
$$x_i = -\frac 1 {v_i} \sum_{j \neq i} v_jx_j 
$$
lies in $L_f$. Notice that $f$ is weakly hereditary but not hereditary.}
\end{example}

\begin{lemma}\label{herbydiff}
\rev{
Suppose $g \in \RR[\partial_1, \ldots, \partial_n]$ is homogeneous of degree $k \geq 1$, and that 
$f \in \RR[t_1, \ldots, t_n]$ is a weakly hereditary polynomial. Then the polynomial $h(\ttt)=g(\partial)f(\ttt)$ is hereditary. }
\end{lemma}
\begin{proof}
 Clearly $L_f \subseteq L_h$ and $\Delta_h \subseteq \tau \Delta_f$. Hence $h$ is hereditary.  
\end{proof}
\rev{
If $f \in \RR[t_1,\ldots, t_n]$ and $S \subseteq \{1,\ldots,n\}$, define 
$$
f^S = \partial^Sf\big|_{t_i=0, i\in S}.
$$
Note that $\Delta_{f^S} = \lk_{\Delta_f}(S)$ and $L_{f^S} \supseteq L_S$ where we define
\[
    L_S = \{(\ell_i)_{i \in V_S} : \ell \in L \text{ and } \ell_i = 0, ~ \forall i \in S\}.
\]
\begin{lemma}\label{hereditary}
If $f$ is (weakly) hereditary and $S \in \Delta_f$, then 
$
f^S
$
is (weakly) hereditary.
\end{lemma}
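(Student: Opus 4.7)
\emph{Plan.} The plan is to express $f^S$ as a restriction of $\partial^S f$ and then transport the hereditary hypothesis across this restriction. First, since $\partial_i$ for $i\in V\setminus S$ commutes with setting $t_j=0$ for $j\in S$, for any $T\subseteq V\setminus S$ one has
\[
\partial^T f^S \;=\; (\partial^{S\cup T} f)\big|_{W_S},\qquad W_S \;:=\; \{t\in\RR^V : t_i=0 \text{ for all } i\in S\}.
\]
In particular, since $T\cap S=\varnothing$ and $\partial^T f^S\not\equiv 0$ forces $\partial^{S\cup T}f\not\equiv 0$, we obtain the inclusion $\Delta_{f^S}\subseteq \lk_{\Delta_f}(S)$.

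Second, I would show that the restriction $\ell\mapsto \ell|_{V\setminus S}$ sends $\{\ell\in L_f : \ell|_S=0\}$ into $L_{f^S}$. Since $D_\ell$ commutes with $\partial^S$ and annihilates $f$, the identity $D_\ell\,\partial^S f\equiv 0$ restricts on $W_S$ (after applying the first identity to the terms indexed by $i\in V\setminus S$) to
\[
0 \;=\; \sum_{i \in S} \ell_i \,(\partial_i\partial^S f)\big|_{W_S} \;+\; D_{\ell|_{V\setminus S}} f^S .
\]
When $\ell|_S=0$ the first sum vanishes, giving $D_{\ell|_{V\setminus S}} f^S = 0$, i.e.\ $\ell|_{V\setminus S}\in L_{f^S}$.

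Finally I verify the (strong) hereditary condition for $f^S$. Fix $T\in\tau\Delta_{f^S}$ in the weak case, or $T\in\Delta_{f^S}$ in the strong case. By the first paragraph, $S\cup T\in\Delta_f$; in the weak case, the non-facet property of $T$ in $\Delta_{f^S}$ yields $j\notin T$ with $T\cup\{j\}\in\Delta_{f^S}$, whence $S\cup T\cup\{j\}\in\Delta_f$, so $S\cup T$ is not a facet of $\Delta_f$ and therefore lies in $\tau\Delta_f$. Applying the (strong) hereditary hypothesis on $f$ to the face $S\cup T$, for any prescribed $\alpha\in\RR^T$ I choose $\ell\in L_f$ with $\ell|_T=\alpha$ and $\ell|_S=0$. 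By the second paragraph, $\ell|_{V\setminus S}\in L_{f^S}$, and its restriction to $T$ is $\alpha$, so $L_{f^S}$ projects onto $\RR^T$. The step I expect to be most delicate is precisely this coordination of faces: in the weak case, to apply hereditary to $f$ one must first upgrade $S\cup T\in\Delta_f$ to $S\cup T\in\tau\Delta_f$, which is achieved only by passing through a coface of $T$ in $\Delta_{f^S}$.
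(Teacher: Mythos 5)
Your proof is correct, and in fact the paper states Lemma \ref{hereditary} without supplying a proof, so you are filling a gap rather than reproducing an argument. The three-step structure you use---(i) $\Delta_{f^S}\subseteq \lk_{\Delta_f}(S)$ from $\partial^T f^S = (\partial^{S\cup T}f)|_{W_S}$, (ii) restricting the identity $D_\ell \partial^S f\equiv 0$ to $W_S$ to show that $\ell\in L_f$ with $\ell|_S=0$ lands, after projection, in $L_{f^S}$, and (iii) the face-coordination step upgrading $S\cup T$ from $\Delta_f$ to $\tau\Delta_f$ via a proper coface of $T$ in $\Delta_{f^S}$---is exactly what one needs, and step (iii) is indeed the only delicate point in the weak case. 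One small notational remark: the paper views $f^S$ as a polynomial in $\RR[t_i : i\in V_S]$, so $L_{f^S}$ sits in $\RR^{V_S}$ rather than $\RR^{V\setminus S}$; but since $f^S$ involves no variable outside $V_S$ and every face $T\in\Delta_{f^S}$ lies in $V_S$, restricting your $\ell|_{V\setminus S}$ further to $V_S$ is harmless and the conclusion is unchanged.
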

}

\rev{
Given weakly hereditary $(\Delta, L)$, fix $T \in \tau\Delta$ and $S \in \lk_{\tau\Delta}(T)$. For all $i \in S$, let $\ell^{(i)} \in L_T \subseteq \RR^{V_T}$ be such that $\ell_i^{(i)} = 1$ and $\ell_j^{(i)} = 0$ for all $j \in S \setminus \{i\}$. We define a linear map $\pi_S^T: \RR^{V_T} \to \RR^{V_{T \cup S}}$ by
\begin{equation}\label{piSdef}
    \pi_S^T\big((x_i)_{i \in V_T}\big) = (y_j)_{i \in V_{T \cup S}}, \quad \text{where} \quad (y_j)_{j \in V_T} = (x_j)_{j \in V_T} - \sum_{i \in S} x_i \ell^{(i)}.
\end{equation}
That is, $\yy \in \RR^{V_T}$ is defined via $\yy = \xx - \sum_{i \in S} x_i \ell^{(i)}$, and then $\pi_S^T(\xx)$ is defined to be the coordinate projection of $\yy$ onto $\RR^{V_{T \cup S}}$. Hence for $f \in \PPP^d(\Delta,L)$ we have
\begin{equation}\label{derproj}
\partial^Sf^T(\ttt)= f^{T \cup S}(\pi^T_S(\ttt)), \quad \text{for each } S \in \tau \Delta_{f^T}
\end{equation}
since $\ell^{(i)} \in L_{\partial^S f^T}$ for all $i \in S$. We will often use the simpler notation $\pi_S = \pi^\varnothing_S$.}

\begin{lemma}
If $f$ is a weakly hereditary polynomial of degree $d$, then $\Delta_f$ is pure of dimension $d-1$.
\end{lemma}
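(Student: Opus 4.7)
My plan is to argue by contradiction: suppose some facet $S \in \Delta_f$ has $|S| = k < d$, and derive an impossibility. First, analyze $g := \partial^S f$. It is nonzero and homogeneous of degree $d - k \geq 1$. Because $S$ is a facet of $\Delta_f$, $S \cup \{j\} \notin \Delta_f$ for every $j \notin S$, so $\partial_j g = \partial^{S \cup \{j\}} f \equiv 0$ for every such $j$; hence $g$ depends only on the variables $\{t_i : i \in S\}$. Being a nonzero homogeneous polynomial of positive degree in these variables, $g$ must satisfy $\partial_{i_0} g \not\equiv 0$ for some $i_0 \in S$.

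Next, I would invoke the hereditary property on $S' := S \setminus \{i_0\}$. Since $S' \subsetneq S \in \Delta_f$, $S'$ is not a facet, so $S' \in \tau\Delta_f$ and the projection $\pi_{S'}: L_f \to \RR^{S'}$ is surjective. Combined with the construction of $\ell^{(i_0)}$ described immediately before the lemma, this produces $\ell \in L_f$ with $\ell_{i_0} = 1$ and $\ell_j = 0$ for every $j \in S \setminus \{i_0\}$. Applying $\partial^S$ to the identity $D_\ell f \equiv 0$ and commuting derivatives then yields
$$
0 = \partial^S(D_\ell f) = \sum_{j \in V} \ell_j\, \partial_j g = \sum_{j \in S} \ell_j\, \partial_j g = \partial_{i_0} g,
$$
where the penultimate equality uses $\partial_j g \equiv 0$ for $j \notin S$ and the final equality uses the prescribed values of $\ell_j$ on $S$. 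This contradicts $\partial_{i_0} g \not\equiv 0$, so no such facet exists and every facet has $|S| = d$.

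The hard part will be producing the element $\ell$ in the second step: the hereditary assumption only directly controls components indexed by the subface $S' \in \tau\Delta_f$, so extending to an $\ell \in L_f$ simultaneously satisfying $\ell_{i_0} = 1$ and $\ell_j = 0$ for $j \in S \setminus \{i_0\}$ requires care, since $S$ itself is a facet and therefore not in $\tau\Delta_f$. This is precisely the technical content of the $\ell^{(i_0)}$ construction in the paper, and its correct adaptation to a facet $S$ is the crux of the argument; once that element is in hand, the derivative calculation closes out the contradiction essentially by bookkeeping.
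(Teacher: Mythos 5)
Your argument is essentially the paper's, phrased at the level of a single derivative identity rather than the factorization $\partial^S f = f^S\circ\pi_S$; the computation $\partial^S(D_\ell f) = \sum_j \ell_j\,\partial_j\partial^S f$ is exactly the chain rule underlying that factorization, so in structure the two proofs agree.

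The worry you flag at the end is the right one, and going through $S' = S\setminus\{i_0\}$ alone does not close it: surjectivity of the restriction $L_f\to\RR^{S'}$ produces $\ell$ with $\ell|_{S'}=0$ but says nothing about $\ell_{i_0}$, and it can happen that every such $\ell$ also has $\ell_{i_0}=0$. Concretely, take $V=\{1\}$ and $f=t_1^2$: then $L_f=\{0\}$, $\Delta_f=\{\emptyset,\{1\}\}$, $S=\{1\}$ is a facet of size $1<2=d$, and no $\ell\in L_f$ has $\ell_1\neq 0$; under the literal reading of $\tau\Delta_f$ as ``remove all maximal faces,'' this is in fact a counterexample to the statement itself, since $(\tau\Delta_f,L_f)=(\{\emptyset\},\{0\})$ is trivially hereditary while $\Delta_f$ has dimension $0$. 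What makes both proofs work is applying the hereditary condition directly to $S$: the paper defines $\pi_S$ for $S\in\tau\Delta_f$ yet uses it in the proof for any $S\in\Delta_f$ with $|S|<d$, so the operative convention is that $\tau\Delta_f$ is the $(d-2)$-skeleton, i.e.\ all faces of size less than $d$. Under that reading, $L_f\to\RR^{S}$ is surjective for your $S$, which gives the desired $\ell$ (with $\ell_{i_0}=1$ and $\ell_j=0$ for $j\in S'$) outright, with no extension step; your derivative calculation then closes the argument exactly as you describe.
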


\begin{proof}
    \rev{
 If $S \in \Delta_f$ has fewer than $d$ elements, then $\partial^Sf \not \equiv 0$ so that 
$$
0 \not \equiv \partial^Sf(\ttt) = f^S(\pi_S(\ttt)),
$$
by \eqref{derproj}. But then $f^S \not \equiv 0$ is a weakly hereditary polynomial in a set of variables $t_j$ such that  $j \notin S$. If $t_j$ is such a variable, then 
$$
0 \not \equiv \partial_j f^T = \partial^{S \cup \{j\}}f\big|_{t_i=0, i\in S}.
$$
Hence $S$ is not maximal in $\Delta_f$. }
\end{proof}

There is a canonically defined open convex cone associated to any weakly hereditary polynomial. 
\begin{definition}\label{hericone}
Let $f \in \RR[t_i: i \in V]$ be a weakly hereditary polynomial. Define an open (possibly empty) convex cone $\CCC_f$ in $\RR^V$ recursively as follows.
\begin{itemize}
\item[(1)] If $f$ is linear, then $\CCC_f = \{ \vv \in \RR^V : f(\vv)>0\}$.
\item[(2)] If $\deg(f)>1$, then 
$$
\CCC_f= (\RR_{>0}^V +L_f)\cap \{ \vv \in \RR^V : \pi_{\{i\}}(\vv) \in \CCC_{f^{\{i\}}} \mbox{ for all } i \in V\}. 
$$
\end{itemize}
\end{definition}
It is straightforward to see that Definition \ref{hericone} does not depend on the choices of the projections $\pi_{\{i\}}$. By definition, $\CCC_f$ is effective whenever $d >1$. Also, by construction,
\rev{
\begin{equation}\label{incl}
\pi^T_S(\CCC_{f^T}) \subseteq \CCC_{f^{T \cup S}} \qquad \mbox{ and } \qquad \pi^T_S(L_{f^T}) \subseteq L_{f^{T \cup S}}, 
\end{equation}
for all $T \in \Delta_f$ and $S \in \tau \Delta_{f^T}$.}
\begin{definition}
We say that a weakly hereditary polynomial $f$ is \emph{positive} if $f^F>0$ for all facets $F$ of $\Delta_f$. \rev{(Notice that $f^F$ is a constant polynomial whenever $F$ is a facet of $\Delta_f$.)}
\end{definition}
Notice that if $f$ is positive, then we may replace (1) \rev{Here ref X wants to replace (1) by (2), but I don't agree} in Definition \ref{hericone} with $\CCC_f= \RR_{>0}^V +L_f$ \rev{for linear $f$}.

\begin{definition}\label{helor}
A weakly hereditary polynomial $f$ is \emph{hereditary Lorentzian} if 
for each $S \in \tau\Delta_f$, the polynomial 
$f^S$ is $\CCC_{f^S}$-Lorentzian. 
\end{definition}

\begin{definition}
    We say that a pure $(d-1)$-dimensional simplicial complex $\Delta$ is \emph{H-connected} \rev{(or \emph{hereditary connected})} if $\lk_\Delta(S)$ is connected for each $S \in \Delta$ of size at most $d-2$. Any zero-dimensional simplicial complex is considered to be \rev{H}-connected. 
\end{definition}

The next theorem is the main theorem of this section\rev{, and it can be thought of as a generalization of \cite[Thm. 2.25]{BH} or \cite[Thm. 3.2]{ALOV}}. It characterizes weakly hereditary Lorentzian polynomials into two conditions that are simple to check.

\begin{theorem} \label{mainthm_hereditary}
Suppose $f \in \RR[t_i : i \in V]$ is a weakly hereditary polynomial of degree $d \geq 2$, and that $\CCC_f \neq \varnothing$. Then $f$ is hereditary Lorentzian if and only if 
\begin{itemize}
\item[(C)] $\tau\Delta_f$ is \rev{H}-connected,\footnote{\rev{The reason we only need this for $\tau\Delta_f$ instead of $\Delta_f$ is because condition (Q) already gives us all we need in the case that $d=2$ (condition (C) is vacuous in this case). Condition (C) is then used in the induction for $d \geq 3$, and only connectivity for $\tau \Delta_f$ is needed. (See also Remark~\ref{main-remark}.)}} and 
\item[(Q)] for each $S \in \Delta_f$ with $|S|= d-2$, the Hessian of \rev{the} polynomial $f^S$ has at most one positive eigenvalue.

\end{itemize}

\end{theorem}
\begin{proof}
We start by proving sufficiency of (C) and (Q) by induction over $d \geq 2$. 
Suppose $d=2$, and let $i \in V$. Then 
$
\partial_if(\vv)=f^{\{i\}}(\pi_{\{i\}}(\vv))>0
$
for all  $\vv \in \CCC_f$,  since $\pi_{\{i\}}(\vv) \in \CCC_{f^{\{i\}}}$. Hence $f^{\{i\}}$ is $\CCC_{f^{\{i\}}}$-Lorentzian.  Also 
$$
 f (\vv) = \frac 1 2\sum_{i\in V} v_i \cdot \partial_if(\vv) >0,
$$
since $\CCC_f$ is effective. By Lemma \ref{AF=H} and (Q), $f$ is $\CCC_f$-Lorentzian. 

Suppose $d \geq 3$. Then $f^S$ satisfies (C) and (Q) for all nonempty $S \in \tau\Delta_f$. By induction,
$f^S$ is $\CCC_{f^S}$-Lorentzian for all nonempty $S \in \tau\Delta_f$. It remains to prove that $f$ is $\CCC_f$-Lorentzian. To do this  we  apply Theorem \ref{engine}.   
To verify (1) of Theorem~\ref{engine}, we prove that for all $\vv_1, \ldots, \vv_d$ in $\CCC_f$ the polynomial 
$
f(s_1\vv_1+ \cdots+s_d\vv_d)
$
has positive coefficients. Since $\CCC_f$ is effective we may assume $\vv_1, \ldots, \vv_d \in \CCC_f \cap \RR_{>0}^n$. Let $\yy=s_1\vv_1+ \cdots+s_d\vv_d$. By Euler's formula, 
$$
d \cdot f(\yy)= \sum_{i \in V} y_i \cdot f^{\{i\}} \big( s_1\pi_{\{i\}}(\vv_1)+\cdots+ s_d\pi_{\{i\}}(\vv_d)\big).
$$
By assumption all coefficients of $y_i$ are positive for each $i \in V$, and by induction all coefficients of 
$
f^{\{i\}} \big( s_1\pi_{\{i\}}(\vv_1)+\cdots+ s_d\pi_{\{i\}}(\vv_d)\big)
$
are positive since $\pi_{\{i\}}(\vv_j) \in \CCC_{f^{\{i\}}}$ for all $j$. This proves the claim. 

Next we claim that for each $\{i,j \} \in \tau\Delta_f$ and all  $\vv_1, \ldots, \vv_{d-2}$ in $\CCC_f$, the polynomial 
$$
(\partial_i\partial_j f)(s_1\vv_1+ \cdots+s_{d-2}\vv_{d-2})=f^{\{i,j\}} \big(s_1\pi_{\{i,j\}}(\vv_1)+\cdots+ s_{d-2}\pi_{\{i,j\}}(\vv_{d-2})\big)
$$
has positive coefficients. The claim will prove the nonnegativity of the off-diagonal entries of the Hessian in Theorem \ref{engine} (2). The claim follows by induction as above, since $\pi_{\{i,j\}}(\vv_k) \in \CCC_{f^{\{i,j\}}}$ for each $k$. By H-connectivity of $\tau \Delta_f$, the Hessian of 
$D_{\vv_1}\cdots D_{\vv_{d-2}}f$ is irreducible. Condition (3) of Theorem \ref{engine} follows by induction since 
$
\partial_i f(\ttt) = f^{\{i\}} ( \pi_{\{i\}}(\ttt)) 
$
is $\CCC_f$-Lorentzian for each $i \in V$ by Proposition~\ref{comp} and \eqref{incl}. Hence sufficiency  follows from Theorem \ref{engine}. 

For the converse, we need to prove that for a weakly hereditary Lorentzian polynomial $f$, $\tau\Delta_f$ is H-connected. For $d=2$, (C) is obvious, since $\tau\Delta_f$ is zero-dimensional. Suppose $d\geq 3$. It suffices to prove that 
$
\tau\Delta_f
$
is connected. Let $\vv \in \CCC_f$. Then  $g=D_{\vv}^{d-2}f$ is a $\CCC_f$-Lorentzian polynomial. \rev{Moreover if $\{i,j\} \in  \tau\Delta_f$, then 
$$
\partial_i \partial_j g= D_\vv^{d-2} \partial_i \partial_j f = (d-2)!f^{\{i,j\}}(\pi_{\{i,j\}}(\vv))>0, 
$$
since $f^{\{i,j\}}$ is $\pi_{\{i,j\}}(\CCC_f)$-Lorentzian.} 
 If $\tau\Delta_f$ is not connected, then we may write the Hessian of $g$ as 
 a block matrix $A\oplus B$ with nonnegative off-diagonal entries, where $A$ has exactly one positive eigenvalue and $B=(b_{ij})$ is a negative semidefinite irreducible matrix for which $b_{ij} >0$ for some $\{i,j\} \in  \tau\Delta_f$. By Perron-Frobenius the largest eigenvalue of $B$ is simple. Hence the kernel of $B$ has dimension at most one. Since $\{i,j\} \in \tau\Delta_f$ and $f$ is weakly hereditary,  for each $(x, y) \in \RR^2$ there exists $\ell \in L_f \subseteq L_g$ such that $\ell_i=x$ and $\ell_j=y$. Hence the kernel of $B$ is at least two-dimensional. Hence $\tau\Delta_f$ is connected. 
\end{proof}

\begin{remark}
    If $f$ is positive, then we may replace (C) in Theorem~\ref{mainthm_hereditary} with the condition that $\Delta_f$ is H-connected.
\end{remark}

\begin{remark}\label{main-remark}
We may reformulate Theorem \ref{mainthm_hereditary} as follows. Let $f$ be a weakly hereditary polynomial of degree $d \geq 2$ and suppose $\CCC_f$ is nonempty. Then $f$ is hereditary Lorentzian if and only if 
\begin{itemize}
    \item $d=2$ and the Hessian of $f$ has at most one positive eigenvalue, or
    \item $d\geq 3$ and $\Delta_f$ is connected and $f^{\{i\}}$ is hereditary Lorentzian for each $i \in V$.
\end{itemize}
\end{remark}

\begin{proposition}\label{main-conv}
Suppose $f \in \RR[t_i : i \in V]$ is a weakly hereditary polynomial of degree $d \geq 2$. If $\CCC$ is a nonempty open convex cone such that  $\pi_S(\CCC)$ is effective and $f^S$ is $\pi_S(\CCC)$-Lorentzian  for all $S \in \tau \Delta_f$, then $\CCC \subseteq \CCC_f$ and $f$ is hereditary Lorentzian.  
\end{proposition}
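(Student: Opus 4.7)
The plan is to induct on $d = \deg f \geq 2$. In each step the argument splits into two parts: first establish $\CCC \subseteq \CCC_f$, then verify that $f$ is hereditary Lorentzian via Theorem~\ref{mainthm_hereditary} (conveniently stated in Remark~\ref{main-remark}). The base case $d = 2$ is essentially immediate: the hypothesis at $S = \varnothing$ says $\CCC$ is effective and $f$ is $\CCC$-Lorentzian, while at $S = \{i\} \in \tau\Delta_f$ the linear polynomial $f^{\{i\}}$ is positive on $\pi_{\{i\}}(\CCC)$, hence $\pi_{\{i\}}(\CCC) \subseteq \CCC_{f^{\{i\}}}$. Proposition~\ref{lineal} combined with effectivity gives $\CCC \subseteq \RR_{>0}^V + L_f$, and hence $\CCC \subseteq \CCC_f$ by Definition~\ref{hericone}; conditions (C) and (Q) of Theorem~\ref{mainthm_hereditary} hold trivially in degree two.

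For the inductive step $d \geq 3$, I would apply the inductive hypothesis to each $f^{\{i\}}$ (of degree $d - 1$) equipped with the cone $\pi_{\{i\}}(\CCC)$. The key bookkeeping is that for $T \in \tau\Delta_{f^{\{i\}}} = \tau\lk_{\Delta_f}(\{i\})$, the set $S := \{i\} \cup T$ lies in $\tau\Delta_f$ and $(f^{\{i\}})^T = f^S$. With a consistent choice of lineality representatives $\ell^{(j)} \in L_f$, the projections satisfy $\pi_T \circ \pi_{\{i\}} = \pi_S$, so the hypothesis of the proposition at $S$ transfers to the required hypothesis at $T$ for $f^{\{i\}}$. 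Induction then delivers $\pi_{\{i\}}(\CCC) \subseteq \CCC_{f^{\{i\}}}$ and hereditary Lorentzianness of each $f^{\{i\}}$; combined with $\CCC \subseteq \RR_{>0}^V + L_f$ as before, this gives $\CCC \subseteq \CCC_f$.

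By Remark~\ref{main-remark} it remains to verify that $\Delta_f$ is connected. I would argue by contradiction along the lines of the converse direction of Theorem~\ref{mainthm_hereditary}. Pick $\vv \in \CCC$ and consider the $\CCC$-Lorentzian quadratic $g := D_\vv^{d-2} f$; since $\partial_i\partial_j g = (d-2)!\, f^{\{i,j\}}(\pi_{\{i,j\}}(\vv))$ vanishes when $\{i,j\} \notin \tau\Delta_f$, the Hessian of $g$ is block diagonal along the connected components of $\tau\Delta_f$. Since $\Delta_f$ is pure of dimension $d - 1 \geq 2$, each component contains a facet, hence at least two vertices and an edge. Assuming disconnectedness, at least two components exist, and at most one of them can carry the unique positive eigenvalue of the Hessian; so we may pick a component $C$ whose block $B$ is negative semidefinite. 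Because $C$ contains an edge $\{i,j\}$, the entry $\partial_i\partial_j g$ is strictly positive (by positivity of $f^{\{i,j\}}$ on $\pi_{\{i,j\}}(\CCC)$), making $B$ irreducible with nonnegative off-diagonal entries; by Perron-Frobenius, $\dim\ker B \leq 1$. On the other hand, the hereditary property at $\{i,j\}$ produces two elements of $L_f \subseteq L_g$ whose restrictions to $C$ are independent vectors in $\ker B$, a contradiction.

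The main obstacle in this plan is the final connectedness step, which requires carefully redeploying the lineality/Perron-Frobenius trick from the converse of Theorem~\ref{mainthm_hereditary} and ensuring that the positivity $\partial_i\partial_j g > 0$ really does come out of the hypothesis applied to the face $\{i,j\}$. A secondary but indispensable subtlety is the compatibility $\pi_T \circ \pi_{\{i\}} = \pi_S$, which underlies the application of the inductive hypothesis on $f^{\{i\}}$.
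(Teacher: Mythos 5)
Your proof is correct and takes essentially the same route as the paper: both induct on the degree, establish $\CCC\subseteq\CCC_f$ from Definition~\ref{hericone} via the inductive hypothesis applied at the vertices (using $\pi_T\circ\pi_{\{i\}}=\pi_{\{i\}\cup T}$ and $(f^{\{i\}})^T=f^{\{i\}\cup T}$), and obtain connectedness of $\Delta_f$ by rerunning the Perron--Frobenius/lineality argument from the converse direction of Theorem~\ref{mainthm_hereditary}. The only cosmetic difference is that the paper first replaces $\CCC$ by $\CCC+L_f$, whereas you deduce $\CCC\subseteq\RR_{>0}^V+L_f$ directly from effectivity and Proposition~\ref{lineal}; these are equivalent.
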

\begin{proof}
If $\CCC$ satisfies the hypothesis in the statement, then so does $\CCC+L_f$. Hence we may assume $L_f \subseteq \overline{\CCC}$. Since $f$ is weakly hereditary and $\CCC$ is open and nonempty, $\pi_S(\CCC)$ is nonempty for all $S \in \tau \Delta_f$. By induction we deduce \rev{$\pi_S(\CCC) \subseteq \CCC_{f^S}$ for all nonempty $S \in \tau \Delta_f$. If $\vv \in \CCC$, then $\vv \in \RR_{>0}^V+ L_f$, since $\CCC$ is effective. Moreover $\pi_{\{i\}}(\vv) \in \pi_{\{i\}}(\CCC) \subseteq \CCC_{f^{\{i\}}}$. Hence $\vv \in \CCC_f$ by the definition of $\CCC_f$.  As in the proof of Theorem \ref{mainthm_hereditary}, (C) follows from that $f^S$ is $\pi_S(\CCC)$-Lorentzian. Also (Q) holds since the quadratics are $\pi_S(\CCC)$-Lorentzian. Hence the theorem follows from Theorem \ref{mainthm_hereditary}. }
\end{proof}

By Euler's formula we have the following recursive formula for weakly hereditary polynomials.
% \rev{Let $\pi_{\{i\}}^S \equiv \pi_{\{i\}} : \RR^{V_S} \to \RR^{  V_{S\cup\{i\}}  }$ be a linear map defined in \eqref{piSdef} (see also the discussion after \eqref{derproj}). Then} 
\begin{equation}\label{recdef}
(d-|S|)\cdot f^S(\ttt)= \sum_{i \in V_S}t_i \cdot f^{S \cup\{i\}}(\pi_{\{i\}}^S(\ttt)), \ \ \ S \in \Delta_f. 
\end{equation}
%where $\ell^{(i)}$ in the definition of $\pi_{\{i\}}$ is chosen so that %$\ell^{(i)}_j=0$ for all $j \in S$.
Hence a weakly hereditary polynomial is uniquely determined by the numbers 
$w(S)\rev{:=}\partial^S f$, $|S|=d$. For each facet $F$ of $\tau\Delta_f$, 
$$
f^F(\ttt) = \sum_{i \not \in F} w(F\cup\{i\})t_i, 
$$
and by construction this linear form is identically zero on $\pi_F(L_f)$. Theorem~\ref{uniquepoly} below shows that this exactly determines when the numbers $w(S)$ determine a (unique) weakly hereditary polynomial.

\begin{lemma}\label{Euler-cons}
Suppose $f \in \RR[t_1, \ldots, t_n]$ is a homogeneous polynomial of degree $d$, and that 
$
df = \sum_{i=1}^n t_i Q_i,
$
where $Q_1, \ldots, Q_n$ are homogeneous polynomials of degree $d-1$ for which 
$
\partial_i Q_j = \partial_j Q_i$, for all $i,j$. 
Then $Q_i= \partial _i f$, for all $i$. 
\end{lemma}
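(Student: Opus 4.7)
The plan is to differentiate the identity $df = \sum_{i=1}^n t_i Q_i$ with respect to $t_j$ and combine the resulting expression with Euler's identity applied to each $Q_j$. This is short but exploits both hypotheses (homogeneity of $Q_j$ and the symmetry of its partials) in an essential way.

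First I would apply $\partial_j$ to both sides of the given identity. On the left this gives $d\,\partial_j f$, and on the right the product rule yields
$$
\partial_j\!\left(\sum_{i=1}^n t_i Q_i\right) = Q_j + \sum_{i=1}^n t_i\, \partial_j Q_i.
$$
Next I would invoke the symmetry hypothesis $\partial_j Q_i = \partial_i Q_j$ to rewrite the sum as $\sum_{i=1}^n t_i\, \partial_i Q_j$. Since $Q_j$ is homogeneous of degree $d-1$, Euler's identity gives
$$
\sum_{i=1}^n t_i\, \partial_i Q_j = (d-1)\, Q_j.
$$

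Putting these two observations together, I obtain $d\,\partial_j f = Q_j + (d-1)Q_j = d\, Q_j$, and dividing by $d$ yields $\partial_j f = Q_j$ for every $j$, which is the desired conclusion. There is no genuine obstacle here; the only subtlety is recognizing that the symmetry hypothesis is exactly what lets one convert the derivative of the sum into an Euler-type sum for $Q_j$, so that the degree-$(d-1)$ homogeneity of $Q_j$ can be exploited.
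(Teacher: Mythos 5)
Your proof is correct and follows exactly the same route as the paper: differentiate the identity with respect to $t_j$, apply the product rule, use the symmetry hypothesis $\partial_j Q_i = \partial_i Q_j$, and then invoke Euler's identity for the degree-$(d-1)$ polynomial $Q_j$ to conclude $d\,\partial_j f = dQ_j$.
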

\begin{proof}
By Euler's identity,
\begin{align*}
d \partial_j f &= \partial_j(t_jQ_j) -t_j\partial_jQ_j+ \sum_{i=1}^n t_i\partial_jQ_i  \\
&= Q_j + \sum_{i=1}^n t_i\partial_iQ_j = Q_j+(d-1)Q_j= dQ_j.
\end{align*}
\end{proof}

The next theorem is analogous to \emph{Minkowski weights} and the \emph{balancing condition} for Chow rings of simplicial fans, see \cite[Section 5]{AHK}.

\begin{theorem} \label{uniquepoly}
Let $\Delta$ be a pure $(d-1)$-dimensional simplicial complex on $V$, let $L$ be a subspace of $\RR^V$ such that $(\Delta,L)$ is weakly hereditary, and let $w$ be a real-valued function on the $d$-element subsets of $V$ which is non-zero precisely on the facets of $\Delta$. Then there is at most one weakly hereditary polynomial $f$ for which
$$
\partial^Sf = w(S), \ \ \ \ \mbox{ for all } |S|=d,
$$
$\Delta_f = \Delta$, and $L \subseteq L_f$. 

Moreover, this polynomial exists if and only if for each facet $F$ of $\tau\Delta$, the linear form
$$
\sum_{i \not \in F} w(F\cup\{i\})t_i
$$
is identically zero on $L_F$. If further $(\Delta,L)$ is hereditary, then $f$ is hereditary.
\end{theorem}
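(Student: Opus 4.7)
My plan is to prove the theorem by downward induction on $|S|$, constructing the polynomials $f^S$ for $S\in\Delta$ and verifying their required properties in parallel. Uniqueness is immediate from the recursion \eqref{recdef}: any hereditary $f$ with $L\subseteq L_f$ satisfies $\partial_i f^S = f^{S\cup\{i\}}\circ\pi^S_{\{i\}}$ (since $L_f$ absorbs the difference), so Euler's identity applied to $f^S$ yields \eqref{recdef}. Starting from $|S|=d$, where $f^S=w(S)$ is forced, downward induction then determines every $f^S$ and in particular $f=f^\varnothing$ is uniquely determined by $w$.

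For existence, I construct the $f^S$ in reverse. At $|S|=d$, set $f^S:=w(S)$. At $|S|=d-1$ (facets $F$ of $\tau\Delta$), set $f^F(\ttt):=\sum_{i\in V_F}w(F\cup\{i\})t_i$; for this linear polynomial the condition $L_F\subseteq L_{f^F}$ is exactly the vanishing hypothesis of the theorem, which is visibly necessary. For $|S|<d-1$, using that $(\tau\Delta,L)$ is hereditary, pick $\ell^{(i)}\in L$ with $\ell^{(i)}_i=1$ and $\ell^{(i)}_s=0$ for $s\in S$, defining $\pi^S_{\{i\}}$, and set
\[
f^S(\ttt):=\frac{1}{d-|S|}\sum_{i\in V_S}t_i\cdot f^{S\cup\{i\}}\bigl(\pi^S_{\{i\}}(\ttt)\bigr),
\]
and $f^S\equiv 0$ for $S\notin\Delta$. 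Three properties must be verified inductively: (i) independence of the definition from the choice of $\ell^{(i)}$; (ii) the identity $\partial_i f^S = f^{S\cup\{i\}}\circ\pi^S_{\{i\}}$; (iii) $L_S\subseteq L_{f^S}$. Item (i) is automatic, since any two choices of $\ell^{(i)}$ differ by an element of $L_{S\cup\{i\}}\subseteq L_{f^{S\cup\{i\}}}$ (by the inductive hypothesis), which $f^{S\cup\{i\}}$ absorbs.

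The heart of the argument — and the main obstacle — is (ii), which I obtain by applying Lemma~\ref{Euler-cons} to $(d-|S|)f^S=\sum_i t_i Q_i$ with $Q_i=f^{S\cup\{i\}}\circ\pi^S_{\{i\}}$. This requires verifying the mixed-partial symmetry $\partial_j Q_i=\partial_i Q_j$. A chain-rule computation, noting that the $t_i\ell^{(i)}$ term contributes nothing when we differentiate in a direction $j\neq i$, gives $\partial_j Q_i(\ttt)=(\partial_j f^{S\cup\{i\}})(\pi^S_{\{i\}}(\ttt))$; the inductive hypothesis applied at level $S\cup\{i\}$ rewrites this as $f^{S\cup\{i,j\}}(\pi^{S\cup\{i\}}_{\{j\}}\circ\pi^S_{\{i\}}(\ttt))$, with the convention $f^{S\cup\{i,j\}}\equiv 0$ when $S\cup\{i,j\}\notin\Delta$ (which also handles the case $j\notin V_S$). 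The analogous expression for $\partial_i Q_j$ involves $\pi^{S\cup\{j\}}_{\{i\}}\circ\pi^S_{\{j\}}$. Both iterated projections are of the form $\ttt-\lambda$ with $\lambda\in L$, and both land on points whose $S\cup\{i,j\}$-coordinates vanish; hence their difference lies in $L_{S\cup\{i,j\}}\subseteq L_{f^{S\cup\{i,j\}}}$, and the two evaluations of $f^{S\cup\{i,j\}}$ coincide.

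For (iii), evaluating the recursion at $\ttt+\mu$ with $\mu\in L_S$ and absorbing $\mu-\mu_i\ell^{(i)}\in L_{S\cup\{i\}}\subseteq L_{f^{S\cup\{i\}}}$ yields the functional equation
\[
f^S(\ttt+s\mu)=f^S(\ttt)+\frac{s}{d-|S|}D_\mu f^S(\ttt);
\]
matching both sides as polynomials in $s$ to the Taylor expansion forces $D_\mu^k f^S\equiv 0$ for every $k\geq 1$ when $d-|S|\neq 1$, giving $\mu\in L_{f^S}$ (the case $d-|S|=1$ was handled in the base step). To finish, $\Delta_f=\Delta$ holds because $f^S\not\equiv 0$ for each $S\in\Delta$ (any $S$ extends to a facet, on which $w\neq 0$) while $\partial_i f^T\equiv 0$ whenever $T\cup\{i\}\notin\Delta$ (a direct downward induction via the recursion). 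Finally, the strongly hereditary conclusion is immediate: if $(\Delta,L)$ is hereditary, then $L\subseteq L_f$ transfers the surjection $L\twoheadrightarrow\RR^T$ for every $T\in\Delta=\Delta_f$ to $L_f\twoheadrightarrow\RR^T$.
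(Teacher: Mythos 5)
Your proposal is correct and follows essentially the same route as the paper: uniqueness from the Euler recursion \eqref{recdef}, then an existence construction by downward induction on $|S|$, with the identity $\partial_i f^S = f^{S\cup\{i\}}\circ\pi_{\{i\}}$ established via Lemma~\ref{Euler-cons} and the mixed-partial symmetry check, and with $L_S\subseteq L_{f^S}$ fed back into the induction. The only stylistic deviation is in the verification of $L_S\subseteq L_{f^S}$: you derive the functional equation $f^S(\ttt+s\mu)=f^S(\ttt)+\tfrac{s}{d-|S|}D_\mu f^S(\ttt)$ and compare it with the Taylor expansion in $s$, whereas the paper directly shows $\partial^\alpha f^S(\yy)=0$ for $|\alpha|\neq d-|S|$ and Taylor-expands around $\yy$ --- both are small variations of the same argument.
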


%\rev{ADDED DETAILS TO PROOF}
\begin{proof} 
That there is at most one weakly hereditary polynomial with these properties follows from the above discussion, since a weakly hereditary polynomial $f$ is determined by $\partial^S f$ for all $S \subset V$ for which $|S| = d$. Further, if such a weakly hereditary polynomial $f$ exists, then for each facet $F$ of $\tau\Delta$,
$$
f^F(\ttt) = \sum_{i \not \in F} w(F\cup\{i\})t_i
$$
is identically zero on $\pi_F(L_f) \supseteq \pi_F(L) = L_F$.

It remains to prove that the condition on linear forms is enough to guarantee the existence of the weakly hereditary polynomial $f$.
To this end, we define a family of polynomials $g^S \in \RR[t_i : i \in V_S]$ recursively by 
\begin{equation}\label{gdef}
(d-|S|)\cdot g^S(\ttt)= \sum_{i \in V_S}t_i \cdot g^{S\cup\{i\}}(\pi_{\{i\}}^S(\ttt)), \ \ \ S \in \Delta,
\end{equation}
where 
$
g^F=w(F)  
$
for each facet $F$ of $\Delta$. 

 We prove by induction over $d-|S|$ that 
$g^{S}$ is weakly hereditary and that $L_S \subseteq L_{g^{S}}$. The case when $|S|=d-1$ is clear since then $L_S \rev{\subseteq} L_{g^{S}}$ by hypothesis. Suppose $S \in \Delta$ with $|S|< d-1$, and that $g^T$ is weakly hereditary with $L_{g^T}\supseteq L_T$ for all $T \in \Delta$ of size greater than $|S|$. We claim $\partial_i g^S(\ttt)= g^{S \cup\{i\}}(\pi^S_{\{i\}}(\ttt))$ for each $i \in V_S$. By Lemma \ref{Euler-cons} and Euler's formula this holds if and only if 
$$
\partial_j \left(g^{S \cup\{i\}}(\pi_{\{i\}}^S(\ttt))\right) = \partial_i \left(g^{S \cup\{j\}}(\pi_{\{j\}}^S(\ttt))\right)
$$
for each $\{i,j\} \in \Delta_S$. For $|S|=d-2$ this is true by hypothesis. For $|S|<d-2$, we get by induction 
$$
\partial_j \left(g^{S \cup\{i\}}(\pi_{\{i\}}^S(\ttt))\right)=g^{S \cup\{i,j\}}(\pi_{\{j\}}^{S\cup\{i\}}\pi_{\{i\}}^S(\ttt)).
$$
By the induction hypothesis it follows that $g^{S \cup\{i\}}(\pi_{\{i\}}^S(\ttt))$ does not depend on the choice of the vector $\ell^{(i)}$ in the definition of $\pi_{\{i\}}^S$. Hence we may assume $\ell^{(i)}_j=0$. Then $\pi_{\{j\}}^{S\cup\{i\}}\pi_{\{i\}}^S= \pi_{\{i,j\}}^S$, and again by induction, $g^{S \cup\{i,j\}}(\pi_{\{i,j\}}^S(\ttt))$ does not depend on the particular choice of vectors in the definition of $\pi_{\{i,j\}}^S$. Hence 
$$
\partial_j \left(g^{S \cup\{i\}}(\pi_{\{i\}}^S(\ttt))\right) = \partial_i \left(g^{S \cup\{j\}}(\pi_{\{j\}}^S(\ttt))\right)= g^{S \cup\{i,j\}}(\pi_{\{i,j\}}^S(\ttt)), 
$$
which proves the claim. 

Next we prove $L_S \subseteq L_{g^{S}}$. Let $\yy \in L_S$ and $i \in V_S$. Since $\partial_i g^S(\ttt)= g^{S \cup\{i\}}(\pi_{\{i\}}^S(\ttt))$ for all $i \in V_S$, it follows by induction, \rev{(\ref{incl}), and Euler's formula} that 
% $$
% \partial^\alpha g^S(\yy)= \partial^{\alpha-e_i} g^{S \cup \{i\}}(\pi_{\{i\}}^S(\yy))=0,
% $$
\rev{
\[
    \partial^\alpha g^S(\yy) = \frac{1}{d-|S|-|\alpha|} \sum_{i \in V_S} t_i \cdot \partial^\alpha g^{S \cup \{i\}}(\pi_{\{i\}}^S(\yy)) = 0
\]
}
unless $|\alpha|=d-|S|$. By Taylor expanding $g^S(\ttt+\yy)$, 
$$
g^S(\ttt+\yy) =\sum_{\alpha} \frac{\partial^\alpha g^S(\yy)}{\alpha!} \ttt^\alpha,
$$
we see that $g^S(\ttt+\yy)=g^S(\ttt)$ as claimed. This finishes the proof of the theorem for the weakly hereditary case by induction. 
Finally, since $L \subseteq L_f$, if $(\Delta,L)$ is hereditary, then $f$ is hereditary.
\end{proof}

\begin{lemma}\label{dirprod}
Let $\Delta_1$ and $\Delta_2$ be pure simplicial complexes on disjoint sets. Suppose $f_1$ is the unique weakly hereditary polynomial corresponding to $(\Delta_1$, $L_1$, $w_1$),  and $f_2$ is the unique weakly hereditary polynomial corresponding to ($\Delta_2$, $L_2$, $w_2$) (given by Theorem \ref{uniquepoly}). If $f_1,f_2$ are hereditary, then $f_1f_2$ is the unique hereditary polynomial corresponding to $(\Delta_1\times \Delta_2$, $L_1\oplus L_2$, $w_1w_2$). 

\end{lemma}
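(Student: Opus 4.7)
The plan is to apply Theorem~\ref{uniquepoly} to the data $(\Delta_1\times\Delta_2,\; L_1\oplus L_2,\; w_1 w_2)$ and show that $f_1 f_2$ realizes the (unique) hereditary polynomial attached to it, and then invoke the last clause of Theorem~\ref{uniquepoly} to upgrade ``hereditary'' to ``strongly hereditary''. Three items need to be verified: (i) the top-degree partial derivatives of $f_1 f_2$ agree with $w_1 w_2$ and $\Delta_{f_1 f_2}=\Delta_1\times\Delta_2$; (ii) $L_1\oplus L_2\subseteq L_{f_1 f_2}$; and (iii) the pair $(\Delta_1\times\Delta_2,\, L_1\oplus L_2)$ is hereditary on the full join.

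For (i), because $f_i$ involves only variables indexed by $V_i$ and the ground sets are disjoint, iteration of the Leibniz rule yields
$$\partial^S(f_1 f_2)=(\partial^{S_1} f_1)(\partial^{S_2} f_2), \qquad S\subseteq V_1\sqcup V_2, \quad S_i=S\cap V_i,$$
since any $\partial_j$ with $j\in V_i$ annihilates $f_{3-i}$. Hence $S\in\Delta_{f_1 f_2}$ iff $S_i\in\Delta_i$ for $i=1,2$, so $\Delta_{f_1 f_2}=\Delta_1\times\Delta_2$. For $|S|=d_1+d_2$, nonvanishing forces $|S_i|=d_i$, and then $\partial^S(f_1 f_2)=w_1(S_1)w_2(S_2)=(w_1 w_2)(S)$, matching the weight on facets.

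For (ii), if $\vv_1\in L_1$ is viewed as a vector in $\RR^V$ supported on $V_1$, then $D_{\vv_1}$ only differentiates the $V_1$-variables, so $D_{\vv_1}(f_1 f_2)=(D_{\vv_1} f_1)f_2=0$, and symmetrically for $L_2$; linearity gives $L_1\oplus L_2\subseteq L_{f_1 f_2}$. For (iii), given any $T=T_1\sqcup T_2\in\Delta_1\times\Delta_2$, the projection of $L_1\oplus L_2$ onto $\RR^T=\RR^{T_1}\oplus\RR^{T_2}$ splits as the direct sum of the projections of $L_i$ onto $\RR^{T_i}$, each of which equals $\RR^{T_i}$ by strong heredity of $f_i$. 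Thus $(\Delta_1\times\Delta_2,\, L_1\oplus L_2)$ is hereditary on the whole complex, not merely its skeleton.

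With these three pieces in hand, the uniqueness clause of Theorem~\ref{uniquepoly} identifies $f_1 f_2$ as the unique hereditary polynomial attached to $(\Delta_1\times\Delta_2, L_1\oplus L_2, w_1 w_2)$ (its existence is witnessed by $f_1 f_2$ itself), and the final clause of that theorem, applied via (iii), promotes it to strongly hereditary. No substantive obstacle arises; the only point requiring a little care is to check heredity on the full join in (iii), which is needed to activate the strongly hereditary conclusion rather than the merely hereditary one.
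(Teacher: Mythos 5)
Your proof follows the same route as the paper's: compute $\partial^S(f_1f_2)=(\partial^{S_1}f_1)(\partial^{S_2}f_2)$, conclude $\Delta_{f_1f_2}=\Delta_1\times\Delta_2$ and match the facet weights $w_1w_2$, verify the lineality space inclusion, and then cite Theorem~\ref{uniquepoly} for uniqueness. The only small slip is in (iii): strong heredity of $f_i$ is the statement that $(\Delta_{f_i},L_{f_i})$ is hereditary, i.e., the projections of $L_{f_i}$ (not of $L_i$) onto $\RR^{T_i}$ surject; since $L_i\subseteq L_{f_i}$ this does not by itself give that the projections of $L_i$ surject. The cleanest fix, and what the argument really requires, is to observe $L_{f_1}\oplus L_{f_2}\subseteq L_{f_1f_2}$, so that heredity of $(\Delta_{f_i},L_{f_i})$ directly yields heredity of $(\Delta_{f_1f_2},L_{f_1f_2})$, which is the definition of $f_1f_2$ being strongly hereditary; there is then no need to route through the final clause of Theorem~\ref{uniquepoly} with the pair $(\Delta_1\times\Delta_2, L_1\oplus L_2)$.
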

\begin{proof}
    The fact that $f_1f_2$ is hereditary follows from $\Delta_{f_1f_2} = \Delta_1 \times \Delta_2$ and $L_1 \oplus L_2 \subseteq L_{f_1f_2}$. Further, $\partial^{S_1 \cup S_2} (f_1f_2) = \partial^{S_1} f_1 \partial^{S_2} f_2 = w_1(S_1) w_2(S_2)$ for all facets $S_1 \in \Delta_1$ and $S_2 \in \Delta_2$ (and equals 0 otherwise), which shows that $f_1f_2$ is a weakly hereditary polynomial corresponding to $\Delta_1\times \Delta_2$, $L_1\oplus L_2$ and $w_1w_2$. The uniqueness then follows from Theorem \ref{uniquepoly}.
\end{proof}

\begin{proposition}\label{prodHL}
Suppose $f$ and $g$ are hereditary polynomials on disjoint sets of variables. If $f$ and $g$ are positive and hereditary Lorentzian, then so is $fg$. 
\end{proposition}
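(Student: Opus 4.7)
The plan is to induct on $d = \deg(fg)$ using the recursive reformulation of hereditary Lorentzian in Remark~\ref{main-remark}, while simultaneously establishing the auxiliary cone inclusion $\CCC_f \times \CCC_g \subseteq \CCC_{fg}$ so that nonemptiness of $\CCC_{fg}$ (which Remark~\ref{main-remark} requires as a hypothesis) is available throughout. Lemma~\ref{dirprod} gives that $fg$ is strongly hereditary with $\Delta_{fg} = \Delta_f \times \Delta_g$ and $L_f \oplus L_g \subseteq L_{fg}$; positivity of $fg$ follows from that of $f$ and $g$ via $(fg)^{F_1 \cup F_2} = f^{F_1} g^{F_2} > 0$ on facets.

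For the base case $d = 2$, we reduce to $\deg f = \deg g = 1$ (the cases with a constant factor being trivial). Writing $f = \sum_{i \in V_f} a_i t_i$ and $g = \sum_{j \in V_g} b_j s_j$ with $a_i, b_j > 0$ by positivity, the Hessian of $fg$ has anti-diagonal block form with block $a b^\top$ of rank one, so its nonzero eigenvalues are $\pm\|a\|\|b\|$, verifying condition (Q). Since $\tau\Delta_{fg}$ is zero-dimensional, (C) holds by definition, and the cone inclusion $\CCC_f \times \CCC_g \subseteq \CCC_{fg}$ follows directly from Definition~\ref{hericone}. Theorem~\ref{mainthm_hereditary} then concludes.

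For the inductive step $d \geq 3$, we apply Remark~\ref{main-remark}. The complex $\Delta_{fg}$ is connected: by the product structure and purity, $\{i, j\} \in \Delta_{fg}$ whenever $i \in V_f$ and $j \in V_g$, so every pair of vertices is joined through a vertex of the opposite factor. For each $i \in V_f$, one has $(fg)^{\{i\}} = f^{\{i\}} g$, where $f^{\{i\}}$ is strongly hereditary (Lemma~\ref{hereditary}), positive (via $(f^{\{i\}})^{F \setminus \{i\}} = f^F > 0$ for facets $F$ of $\Delta_f$ containing $i$), and hereditary Lorentzian (since $\{i\} \cup T \in \tau\Delta_f$ and $(f^{\{i\}})^T = f^{\{i\} \cup T}$ for every $T \in \tau\Delta_{f^{\{i\}}}$). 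If $\deg f = 1$ then $f^{\{i\}}$ is a positive scalar and the claim is trivial; otherwise the induction hypothesis applies to the disjoint-variable degree-$(d-1)$ product $f^{\{i\}} g$, and a symmetric argument handles $i \in V_g$. For the cone inclusion at degree $d$, effectiveness gives $\CCC_f \times \CCC_g \subseteq \RR^V_{>0} + L_f \oplus L_g \subseteq \RR^V_{>0} + L_{fg}$; and by the heredity of $f$ the element $\ell^{(i)}$ of \eqref{piSdef} may be chosen inside $L_f \subseteq L_{fg}$ for $i \in V_f$, so that $\pi_{\{i\}}(\vv_f, \vv_g)$ decomposes as $(\pi'(\vv_f), \vv_g)$ for the corresponding projection $\pi'$ of $f$, and therefore lies in $\CCC_{f^{\{i\}}} \times \CCC_g \subseteq \CCC_{f^{\{i\}} g} = \CCC_{(fg)^{\{i\}}}$ by \eqref{incl} and the inductive cone inclusion.

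The main obstacle is that Remark~\ref{main-remark} requires $\CCC_{fg}$ to be nonempty as a precondition, so the proof of the Lorentzian property and the proof of the cone inclusion $\CCC_f \times \CCC_g \subseteq \CCC_{fg}$ must be carried through the induction in tandem rather than sequentially; the only additional care point is the degenerate case where a factor reduces to a positive constant under differentiation, which is absorbed trivially.
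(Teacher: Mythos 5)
Your proof is correct, but it takes a genuinely different route from the paper. Both arguments ultimately rest on Theorem~\ref{mainthm_hereditary}, and both observe $\Delta_{fg}=\Delta_f\times\Delta_g$, $L_f\oplus L_g\subseteq L_{fg}$, and the connectedness of products of simplicial complexes. The paper, however, verifies (C) and (Q) directly and in one shot: (C) from connectedness of Cartesian products (applied to the links), and (Q) by noting that every quadratic $(fg)^S$ with $|S|=d-2$ is either a product of two linear forms with positive coefficients or a positive scalar times a quadratic $f^{S_1}$ or $g^{S_2}$ already known to satisfy (Q); the nonemptiness of $\CCC_{fg}$ is dispatched by asserting $\CCC_{fg}=\CCC_f\times\CCC_g$. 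You instead invoke the recursive reformulation of Remark~\ref{main-remark} and induct on $\deg(fg)$, which forces you to simultaneously carry the cone inclusion $\CCC_f\times\CCC_g\subseteq\CCC_{fg}$ through the induction --- a point you correctly identify as the crux of the bookkeeping, and which makes explicit what the paper leaves as "clear." The paper's direct check is shorter, since once the degree-2 derivatives of $fg$ are classified as above there is nothing left to induct on; your inductive version is longer but has the advantage of spelling out the cone inclusion rather than asserting it, and your explicit eigenvalue computation for the anti-diagonal rank-one Hessian in the base case replaces the paper's unproved assertion about products of positive linear forms. Both are sound.
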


\begin{proof}
We apply Theorem \ref{mainthm_hereditary}. 
Clearly $\Delta_{fg}= \Delta_f \times \Delta_g$, $L_{fg}=L_f \oplus L_g$ and $\CCC_{fg}=\CCC_f \times \CCC_g$. Hence $fg$ is hereditary and $\CCC_{fg}$ is non-empty. The Cartesian product of two non-empty simplicial complexes is automatically connected. Hence, the H-connectedness of $\Delta_{fg}$ follows from the H-connectedness of $\Delta_f$ and $\Delta_g$. This verifies (C).

The quadratics of $fg$ appearing in (Q) are either products of two linear polynomials with positive coefficients, or are positive constants times 
the quadratics appearing in (Q) for $f$ or $g$. This verifies (Q) for $fg$.
\end{proof}

\section{Volume polynomials of matroids}\label{matroidsec}
In \cite{AHK}, Adiprasito, Huh and Katz proved that the Chow ring of a matroid satisfies the K\"ahler package. As a consequence the volume polynomial of a matroid is Lorentzian with respect to the cone of strictly submodular elements. A simplified proof of this consequence using Hodge theory and Lorentzian polynomials was proved in \cite{BES}. We will now give a self contained proof of this fact based  on the theory developed in Section \ref{hersec}. This leads to an elementary proof of the Heron-Rota-Welsh conjecture first proved in \cite{AHK}. 

Let $\LL$ be the lattice of flats of a matroid \rev{of rank $r(\LL)$} on a finite set $E$, i.e., $\LL$ is a collection of subsets of $E$, ordered by inclusion, satisfying 
\begin{itemize}
\item if $F,G \in \LL$, then $F \cap G \in \LL$, and 
\item for each $F$ in $\LL$, $\{G\setminus F\}_{F \prec G}$ partitions $E \setminus F$.\footnote{Here $F\prec G$ means $G$ covers $F$ in $\LL$.} 
\end{itemize}
 
The \emph{order complex}, $\Delta(\LL)$, of $\LL$ is the simplicial complex of all chains (totally ordered subsets) of 
$\underline{\LL}=\LL\setminus \{\zero, \one\}$, where $\zero$ denotes the smallest element in $\LL$, i.e., the set of loops of the matroid.  Let further 
$L(\LL)$ be the linear subspace of $\RR^{\underline{\LL}}$ consisting of all \emph{modular} 
$(y_F)_{F \in \underline{\LL}}$, i.e.,  there exists real numbers $c_i$, $i \in \one \setminus \zero$, such that for each $F \in \underline{\LL}$,
$$
y_F= \sum_{i \in F \setminus \zero}c_i \ \ \ \mbox{ and } \ \ \ 
\sum_{i \in \one \setminus \zero}c_i=0.
$$
We shall  apply Theorem \ref{uniquepoly} with $\Delta(\LL)$, $L(\LL)$ and $w(F)=1$ for all facets $F$ of $\Delta(\LL)$, to construct a hereditary polynomial $\pol_\LL \in \RR[t_G : G \in \underline{\LL}]$. To this end, we first show that $(\Delta(\LL),L(\LL))$ is hereditary. Given a chain of flats 
\[
   \zero= F_0  < F_1 < F_2 < \cdots < F_m < F_{m+1}=E,
\]
and a vector $(x_j)_{j=1}^m \in \RR^m$, we want to find a vector $(c_i)_{i \in \one \setminus \zero}$  such that 
$$
x_j= \sum_{i \in F_j\setminus F_0}c_i, \ \ \ 1\leq j \leq m, \ \ \mbox{ and } \sum_{i \in E\setminus K}c_i=0.
$$
Clearly this can be done since $F_0 \subset F_1 \subset F_2 \subset \cdots \subset F_m \subset  F_{m+1}$.

Next define $w(S) = 1$ for all facets $S \in \Delta(\LL)$. Let $S=\{F_1,\ldots, F_m\}$ be a facet of $\tau\Delta(\LL)$, i.e., $F_j \prec F_{j+1}$ for all $j$ except for exactly one value $0\leq k\leq m$. Then
\[
    \sum_{G \not\in S} w(S \cup \{G\}) t_G = \sum_{F_k \prec G \prec F_{k+1}} t_G.
\]
Since the interval $[F_k,F_{k+1}]$ is the lattice of flats of a rank-2 matroid, we have that $V_S = \{G: F_k \prec G \prec F_{k+1}\}$ partitions $F_{k+1} \setminus F_k$. Thus for any $\yy \in L(\LL)_S$,
\[
    \sum_{F_k \prec G \prec F_{k+1}} y_G = \sum_{i \in F_{k+1} \setminus F_k} c_i = \sum_{i \in F_{k+1}} c_i - \sum_{i \in F_k} c_i = 0,
\]
by the definition of $L(\LL)_S$. 
Therefore Theorem \ref{uniquepoly} applies, so that there is a unique hereditary polynomial $\pol_{\LL}$ of degree \rev{$d(\LL) = r(\LL)-1$} satisfying $\Delta_{\pol_{\LL}} = \Delta(\LL)$, $L(\LL) \subseteq L_{\pol_{\LL}}$, and $\partial^S \pol_{\LL} = 1$ for all facets $S$ of $\Delta(\LL)$. \rev{Notice that $\pol_{\LL}$ is positive}. 

Further, for all $S=\{F_1,\ldots, F_m\} \in \Delta(\LL)$, 
\begin{equation}\label{f-def}
 \Delta_{\pol_\LL^S} = \lk_{\Delta(\LL)}(S) = \prod_{i=0}^m \Delta([F_i,F_{i+1}]) \ \mbox{ and }  \  \pol_\LL^S(\ttt) = \prod_{i=0}^m \pol_{[F_i,F_{i+1}]}(\ttt), 
\end{equation}
by Lemma \ref{dirprod}. 

\begin{example} \label{lowdeg-matroid}
    When $d(\LL)=r(\LL)-1=1$, it is clear from Theorem \ref{uniquepoly} that
    \[
        \pol_{\LL}(\ttt) = \sum_{F \in \underline{\LL}} t_F.
    \]
    When $d(\LL)=2$, we claim that 
    $$
2 \cdot \pol_\LL(\ttt) = \left(\sum_{F}t_F\right)^2- \sum_{G} \left(t_G-\sum_{F<G}t_F\right)^2, 
$$
where $F$ denotes the flats of rank one, and $G$ denotes the flats of rank two. Since the set of rank one flats partitions $E\setminus K$, it follows that $L(\LL)$ is contained in the lineality space of the polynomial, $f$, on the right hand side. Moreover $\Delta_f= \Delta(\LL)$. Also the coefficients in front of the linear terms $t_Ft_G$ of $f$ are all equal to two. Hence the claim follows from Theorem \ref{uniquepoly}.
\end{example}

Given a matroid on $E$ with lattice of flats $\LL$, a vector $(y_S)_{\zero \subseteq S \subseteq \one}$ of real numbers is called \emph{strictly submodular} if $y_K = y_E = 0$ and
\[
    y_S + y_T > y_{S \cap T} + y_{S \cup T}
\]
for all incomparable sets $S$ and $T$. \rev{Notice that $S$ in $(y_S)_{\zero \subseteq S \subseteq \one}$ runs over all subsets of $E$ which contain $K$, not just flats.} We denote by $\SMOD_\LL$ the open convex cone in $\RR^{\underline{\LL}}$ obtained by restricting strictly submodular vectors to the entries which are labelled  by flats in $\underline{\LL}$, i.e., $(y_F)_{F \in \underline{\LL}}$. Notice that $L(\LL)$ is the lineality space of $\SMOD_\LL$.

\begin{lemma}\label{smod-effective}
    $\SMOD_\LL \subseteq \CCC_{\pol_\LL}$.
\end{lemma}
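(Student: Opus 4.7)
The plan is to proceed by strong induction on the rank $r$ of the lattice of flats $\LL$, exploiting the recursive Definition~\ref{hericone} of $\CCC_{\pol_\LL}$.

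For the base case $r=2$ (where $d=1$), Example~\ref{lowdeg-matroid} gives $\pol_\LL(\ttt)=\sum_F t_F$ summed over the rank-one atoms, so $\CCC_{\pol_\LL}=\{\vv:\sum_F v_F>0\}$. Since atoms partition $\one\setminus\zero$, any two distinct atoms $F_i,F_j$ are incomparable with $F_i\cap F_j\subseteq\zero$ and $F_i\cup F_j=\one$; strict submodularity then yields $y_{F_i}+y_{F_j}>y_\zero+y_\one=0$, and summing over all pairs gives $(k-1)\sum_F v_F>0$ where $k$ is the number of atoms.

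For the inductive step $r\ge 3$, by Definition~\ref{hericone}(2) I need to verify for $\vv\in\SMOD_\LL$ the two conditions: (a) $\vv\in\RR_{>0}^{\underline{\LL}}+L_{\pol_\LL}$, and (b) $\pi_{\{F\}}(\vv)\in \CCC_{\pol_\LL^{\{F\}}}$ for every $F\in\underline{\LL}$. Part (b) is the cleaner one: the factorization $\pol_\LL^{\{F\}}=\pol_{[\zero,F]}\cdot\pol_{[F,\one]}$ from \eqref{f-def}, combined with Proposition~\ref{prodHL} giving $\CCC_{\pol_\LL^{\{F\}}}=\CCC_{\pol_{[\zero,F]}}\times\CCC_{\pol_{[F,\one]}}$, reduces (b) to checking that the two restricted vectors lie in the respective cones. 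I would choose $\ell^{(F)}\in L(\LL)$ with $\ell^{(F)}_F=1$, which automatically satisfies $\ell^{(F)}_\zero=\ell^{(F)}_\one=0$; then $\vv-v_F\ell^{(F)}$ remains strictly submodular on all subsets since modular translates preserve the strict inequality $y_S+y_T>y_{S\cap T}+y_{S\cup T}$, and its restrictions to the interval lattices $[\zero,F]$ and $[F,\one]$ (themselves matroid flat lattices of strictly lower rank) are strictly submodular on those sub-lattices. The inductive hypothesis then places them in $\CCC_{\pol_{[\zero,F]}}$ and $\CCC_{\pol_{[F,\one]}}$.

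Part (a) will be the main obstacle. Since $L(\LL)\subseteq L_{\pol_\LL}$, it suffices to find $\ww\in L(\LL)$ with $\vv+\ww$ coordinate-wise positive on $\underline{\LL}$. By Farkas' lemma, this linear feasibility is equivalent to showing $\langle\lambda,\vv\rangle>0$ for every nonzero $\lambda\in\RR_{\ge 0}^{\underline{\LL}}\cap L(\LL)^\perp$, where $\lambda\in L(\LL)^\perp$ amounts to the balancing condition that $\sum_{F\ni i}\lambda_F$ is constant in $i\in\one\setminus\zero$ (reminiscent of the Minkowski weight balancing condition for matroid Chow rings). The crux is to decompose every such balanced non-negative $\lambda$ as $\sum\alpha_{S,T}(e_S+e_T-e_{S\cup T}-e_{S\cap T})$, a non-negative combination of elementary submodular rays over incomparable subset pairs $S,T$ with the combination supported on $\underline{\LL}$; strict submodularity then forces $\langle\lambda,\vv\rangle>0$ term by term. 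I expect this combinatorial decomposition can be carried out by induction on rank combined with the partition structure of matroid atoms (e.g., starting from the atom-balanced weighting, where the telescoping $y_{F_1}+\cdots+y_{F_k}-y_{\bigcup F_j}>0$ already provides such a decomposition).
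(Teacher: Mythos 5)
Your overall structure — verify the two clauses of Definition~\ref{hericone}(2), using the product factorization for the projections and an effectiveness argument for the first clause — is essentially the paper's. However there are two problems.

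First, a local but real error in the base case: you assert that for distinct atoms $F_i,F_j$ of a rank-$2$ lattice one has $F_i\cup F_j=\one$. This is false whenever there are more than two atoms (e.g.\ in $U_{2,3}$, $\{1\}\cup\{2\}=\{1,2\}\neq\{1,2,3\}$); strict submodularity is a condition over \emph{all} subsets $\zero\subseteq S\subseteq\one$, and $F_i\cup F_j$ is in general not a flat and not $\one$. The conclusion $\sum_F v_F>0$ is still true, but you have to telescope: $v_{F_1}+v_{F_2}>v_\zero+v_{F_1\cup F_2}$, $v_{F_1\cup F_2}+v_{F_3}>v_\zero+v_{F_1\cup F_2\cup F_3}$, and so on, ending at $v_\one=0$.

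Second, and more seriously, the heart of the lemma — part (a), the effectiveness of $\SMOD_\LL$, i.e.\ that every $\vv\in\SMOD_\LL$ lies in $\RR_{>0}^{\underline{\LL}}+L(\LL)$ — is not actually proved. The Farkas reduction to ``every nonzero balanced $\lambda\in\RR_{\geq 0}^{\underline{\LL}}\cap L(\LL)^\perp$ pairs strictly positively against $\vv$'' is correct, but the claim that such a $\lambda$ admits a nonnegative decomposition into elementary submodular rays $e_S+e_T-e_{S\cup T}-e_{S\cap T}$ (after extending $\lambda$ by zero on non-flats and modulo $\RR e_\zero+\RR e_\one$) is exactly the nontrivial content of the lemma, and you have only asserted that you ``expect'' it; no construction or induction is given, and it is not at all automatic. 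This is precisely the point where the paper's proof turns to an external result (it exhibits an explicit positive $\vv\in\SMOD_\LL$, perturbs, and invokes \cite[Prop.~4.4]{Murota} to conclude the existence of a suitable $\ww\in L(\LL)$). Without either proving your decomposition claim or citing such a result, the argument has a genuine gap at its central step.
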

\begin{proof}
We first prove that $\SMOD_\LL$ is effective.  Notice that $\vv = (|F \setminus \zero| \cdot |\one \setminus F|)_{\zero < F < \one}$ is a vector in $\SMOD_\LL$ with positive entries. Suppose $\yy \in \SMOD_\LL$. Then $\zz := \yy - \epsilon \vv \in \SMOD_\LL$ for $\epsilon > 0$ sufficiently small. By e.g. \cite[Prop.~4.4]{Murota}, there exists $\ww \in L(\LL)$ such that $\zz + \ww \in \RR_{\geq 0}^{\underline{\LL}}$. Thus $\yy + \ww = \zz + \ww + \epsilon\vv \in \RR_{> 0}^{\underline{\LL}}$ as desired.

\rev{
We now prove $\SMOD_\LL \subseteq \CCC_{\pol_\LL}$ by induction on the degree of $\pol_\LL$, where $\LL = [\zero, \one]$. First, the inclusion is clear when $\pol_\LL$ is linear. To complete the proof, we need to prove 
$$\pi_{\{F\}}(\yy) \in \CCC_{\pol_\LL^{\{F\}}},$$ for all $F \in \underline{\LL}$ and $\yy \in \SMOD_\LL$. Recall that $\pol_\LL^{\{F\}} = \pol_{[\zero,F]} \cdot \pol_{[F,\one]}$. Further, $\pi_{\{F\}}(\yy) \in \SMOD_{[\zero,F]} \oplus \SMOD_{[F,\one]}$ by the definition of $\pi_{\{F\}}$, since the vectors in $L(\LL)$ are modular. Thus by induction $\pi_{\{F\}}(\yy) \in \SMOD_{[\zero,F]} \oplus \SMOD_{[F,\one]} \subseteq \CCC_{\pol_{[\zero,F]}} \oplus \CCC_{\pol_{[F,\one]}} = \CCC_{\pol_\LL^{\{F\}}}$, as desired.
}
%
 % \textcolor{red}{I don't get the proof below. Shouldn't we prove by induction? Clear for linears. Then use the recursive description of $\CCC_{\pol_\LL}$. }   
 %
 % Next suppose $S = \{F_1,\ldots, F_m\} \in \Delta(\LL)$, and let $\yy \in \SMOD_\LL$. Since $(\Delta(\LL),L(\LL))$ is hereditary, there is $\ww \in L(\LL)$ such that $\yy+\ww$ is zero on each $F_i$, $1 \leq i \leq m$. Now we can use the effectiveness of each  $\SMOD_{[F_{i}, F_{i+1}]}$ to produce an element $\ww'=\ww'_1+\cdots+\ww'_\ell \in L(\LL)$ such that $\yy+\ww+\ww'$ is zero on each $F_i$, $1 \leq i \leq m$, and positive on the open interval  $(F_{i}, F_{i+1})$, for each $0 \leq i \leq m$. Hence $\yy \in \CCC_{\pol_\LL}$. 
\end{proof}

\begin{theorem}\label{matroid-hered-Lor}
    \rev{If $\LL$ is the lattice of flats of a matroid, then the polynomial $\pol_\LL$ is hereditary Lorentzian.}
\end{theorem}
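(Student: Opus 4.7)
The plan is to induct on the rank $r$ of $\LL$, equivalently on $d = r-1 = \deg \pol_\LL$, using the recursive reformulation in Remark~\ref{main-remark}: for $d \geq 3$ it suffices to verify that $\Delta(\LL)$ is connected and that each link polynomial $\pol_\LL^{\{F\}}$ is hereditary Lorentzian. Throughout the induction, the cone $\CCC_{\pol_\LL}$ is nonempty by Lemma~\ref{smod-effective} together with the existence of strictly submodular weightings, so Theorem~\ref{mainthm_hereditary} and Remark~\ref{main-remark} are applicable.

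For the base cases, when $d=1$ the polynomial $\pol_\LL = \sum_F t_F$ is linear and positive on $\CCC_{\pol_\LL}$, hence trivially $\CCC_{\pol_\LL}$-Lorentzian. When $d=2$, I would use the explicit formula from Example~\ref{lowdeg-matroid}, which writes $2\pol_\LL$ as the difference of the rank-one positive-semidefinite form $(\sum_F t_F)^2$ and the negative-semidefinite form $\sum_G (t_G - \sum_{F < G} t_F)^2$. This makes it immediate that the Hessian of $\pol_\LL$ has at most one positive eigenvalue, verifying (Q) of Theorem~\ref{mainthm_hereditary}; condition (C) is automatic since $\tau\Delta_{\pol_\LL}$ is $0$-dimensional.

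For the inductive step $d \geq 3$, I would first check that $\Delta(\LL)$ is connected: every $F \in \underline{\LL}$ is $\Delta(\LL)$-adjacent to each rank-one flat it contains, and any two rank-one flats $p,q$ lie below the rank-two flat $\overline{\{p,q\}}$, which belongs to $\underline{\LL}$ because $r \geq 3$. For the link condition, formula \eqref{f-def} yields $\pol_\LL^{\{F\}} = \pol_{[\zero,F]} \cdot \pol_{[F,\one]}$ on disjoint variable sets. The intervals $[\zero,F]$ and $[F,\one]$ are lattices of flats of the restriction $\LL|_F$ and contraction $\LL/F$ respectively, both of rank strictly less than $r$, so by the inductive hypothesis each factor is hereditary Lorentzian. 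Each factor is also strongly hereditary (by construction via Theorem~\ref{uniquepoly}) and positive (all facet weights equal $1$), so Proposition~\ref{prodHL} implies that $\pol_\LL^{\{F\}}$ is hereditary Lorentzian. Remark~\ref{main-remark} then closes the induction.

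The main technical work is concentrated in the $d=2$ base case, where one must exhibit the one-positive-eigenvalue signature of the Hessian directly; this is made painless by the difference-of-squares formula of Example~\ref{lowdeg-matroid}. The remainder is a clean structural induction: the product factorization of links in order complexes of geometric lattices reduces the Lorentzian property of $\pol_\LL^{\{F\}}$ to smaller-rank instances of the same theorem, and Remark~\ref{main-remark} packages everything together.
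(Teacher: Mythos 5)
Your proof is correct and follows essentially the same route as the paper's: nonemptiness of $\CCC_{\pol_\LL}$ via Lemma~\ref{smod-effective}, the recursive criterion of Remark~\ref{main-remark}, the degree-two base case via Example~\ref{lowdeg-matroid}, and the product factorization of link polynomials via \eqref{f-def} and Proposition~\ref{prodHL}. The only superficial difference is that the paper establishes connectedness of $\Delta(\LL)$ by citing semimodularity of geometric lattices, while you spell out the same fact directly using rank-one flats and the rank-two flat $\overline{\{p,q\}}$.
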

\begin{proof}
By Lemma \ref{smod-effective}, $\CCC_{\pol_\LL}$ is nonempty. By Remark \ref{main-remark}, Proposition \ref{prodHL}, \eqref{f-def} and induction on the rank of $\LL$, it suffices to prove that $\Delta(\LL)$ is connected for any $\LL$, and that the Hessian of $\pol_{\LL}(\ttt)$ has at most one positive eigenvalue whenever the rank of $\LL$ is three. The latter follows immediately from Example \ref{lowdeg-matroid}.

Recall that lattices of flats of matroids are graded and semimodular, see \cite[Chapter 1.7]{Oxley}. Also, by \cite[Prop. 3.3.2]{stanley} a finite lattice $\LL$ is semimodular if and only if for all $a,b \in \LL$,
\[
    \text{if $a$ and $b$ cover $a \wedge b$, then $a \vee b$ covers $a$ and $b$.}
\]
Connectedness of $\Delta(\LL)$ now follows from semimodularity.
\end{proof}

\subsection{Heron-Rota-Welsh conjecture}\label{lc}

The \emph{characteristic polynomial} of a matroid $\MM$ with lattice of flats $\LL = [\zero,\one]$ is 
\begin{equation}\label{chara}
\chi_{\MM}(t) = \sum_{F \in \LL} \mu(\zero, F) t^{r([F,\one])}, 
\end{equation}
\rev{where $\mu$ is the M\"obius function of $\LL$, see \cite[Section 3.7]{stanley}.}
If $\MM$ has rank at least one, then $\chi_\MM(t)$ is divisible by $t-1$, see \cite[Section 7]{White87}. The  \emph{reduced characteristic polynomial}  of a matroid $\MM$   is then 
$
\overline{\chi}_\MM(t) =  {\chi_\MM(t)}/(t-1)
$.  The next theorem, which was first proved by Adiprasito, Huh and Katz \cite{AHK}, solved the Heron-Rota-Welsh conjecture. For completeness we provide an alternative  proof below based on our approach.
\begin{theorem}[\cite{AHK}]\label{HRWconj}
The absolute values of the coefficients of the reduced characteristic polynomial of a matroid form a log-concave sequence. 
\end{theorem}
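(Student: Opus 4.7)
My plan is to realize the unsigned coefficients of $\overline{\chi}_\MM$ as a sequence of mixed derivatives $a_k = D_{\vv_\alpha}^{d-k} D_{\vv_\beta}^{k} \pol_\LL$ for a suitable pair $\vv_\alpha, \vv_\beta \in \overline{\CCC_{\pol_\LL}}$; log-concavity is then immediate from Theorem~\ref{matroid-hered-Lor} together with the Alexandrov--Fenchel form \eqref{af-lc} of Remark~\ref{AF-remark}. Fix $i \in E$ not a loop, and define
\[
(\vv_\alpha)_F = [\, i \notin F\,], \qquad (\vv_\beta)_F = [\, i \in F\,], \qquad F \in \underline{\LL},
\]
the polynomial avatars of the Adiprasito--Huh--Katz nef classes $\alpha_i = \sum_{i \notin F} x_F$ and $\beta_i = \sum_{i \in F} x_F$ on the Chow ring of $\MM$.

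The first task is to check $\vv_\alpha, \vv_\beta \in \overline{\CCC_{\pol_\LL}}$. For $\vv_\alpha$, extend it to $\LL$ via $y_{\zero} = y_{\one} = 0$; submodularity on $\LL$ then follows from the inequality $[i \notin F \vee G] \leq [i \notin F][i \notin G]$ for incomparable flats $F,G$ (since $F \cup G \subseteq F \vee G$) together with boundary checks, so $\vv_\alpha \in \overline{\SMOD_\LL} \subseteq \overline{\CCC_{\pol_\LL}}$ by Lemma~\ref{smod-effective}. The vector $\vv_\beta$ is supermodular rather than submodular, hence is not in $\overline{\SMOD_\LL}$, so I would verify $\vv_\beta \in \overline{\CCC_{\pol_\LL}}$ recursively via Definition~\ref{hericone}: by \eqref{f-def} and Lemma~\ref{dirprod}, $\pol_\LL^{\{F\}} = \pol_{[\zero,F]}\pol_{[F,\one]}$, so by induction on rank it suffices to check that each projection $\pi_{\{F\}}(\vv_\beta)$ lies in $\overline{\CCC_{\pol_{[\zero,F]}}} \times \overline{\CCC_{\pol_{[F,\one]}}}$ for every $F \in \underline{\LL}$, with the rank-two base case being a direct check from Example~\ref{lowdeg-matroid}.

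The second and key step is the combinatorial identity
\[
a_k \;=\; |c_{k}(\MM)|, \qquad 0 \leq k \leq d,
\]
where $c_j(\MM)$ is the coefficient of $t^j$ in $\overline{\chi}_\MM(t)$. I would prove this by induction on the rank $r = d+1$. By the Euler recursion \eqref{recdef}, applying $D_{\vv_\alpha}$ (respectively $D_{\vv_\beta}$) to $\pol_\LL$ produces a sum, over proper flats $F$ with $i \notin F$ (respectively $i \in F$), of the polynomials $\pol_\LL^{\{F\}}(\pi_{\{F\}}(\ttt))$; each such summand factors by \eqref{f-def} and Lemma~\ref{dirprod} into $\pol_{[\zero,F]}\pol_{[F,\one]}$, which are the polynomials of the restricted matroid $\MM|_F$ and the contracted matroid $\MM/F$. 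Matching these factorizations with the deletion--contraction identity $\chi_\MM(t) = \chi_{\MM\setminus i}(t) - \chi_{\MM/i}(t)$ (valid because $i$ is not a loop) and the multiplicativity $\chi_{\MM_1 \oplus \MM_2}(t) = \chi_{\MM_1}(t)\chi_{\MM_2}(t)$ yields the identity inductively.

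The main obstacle is this combinatorial identity, together with the nef-ness of $\vv_\beta$: the M\"obius--function bookkeeping required to align the polynomial recursion with deletion--contraction on the characteristic polynomial is the delicate point, as is the non-submodular membership of $\vv_\beta$ in the recursive cone $\CCC_{\pol_\LL}$. Once the identity is in hand, the conclusion is automatic: Theorem~\ref{matroid-hered-Lor} gives that $\pol_\LL$ is $\CCC_{\pol_\LL}$-Lorentzian, so by \eqref{af-lc} applied (by continuity) to $\vv_\alpha,\vv_\beta \in \overline{\CCC_{\pol_\LL}}$ we obtain $a_k^2 \geq a_{k-1}a_{k+1}$, which by the identity is precisely $|c_k|^2 \geq |c_{k-1}|\,|c_{k+1}|$, i.e., the Heron--Rota--Welsh log-concavity.
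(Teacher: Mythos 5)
Your high-level approach is exactly the paper's: realize the unsigned coefficients of $\overline{\chi}_\MM$ as the intersection numbers $D^{k}_{\vv_\beta}D^{d-k}_{\vv_\alpha}\pol_\LL$ for a pair of "nef" directions, and then invoke Theorem~\ref{matroid-hered-Lor} and~\eqref{af-lc}. Your $(\vv_\alpha,\vv_\beta)$ are (up to relabeling) the paper's 0/1-vectors $\beta_{\LL,i}$ and $\alpha_{\LL,i}$, and your target identity matches the paper's Theorem~\ref{char_comp} combined with Lemma~\ref{red}. So the skeleton is right.

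However, there is an error in the cone-membership step that leads you into an unnecessary and much more delicate recursion. You claim $\vv_\beta$ ``is supermodular rather than submodular, hence is not in $\overline{\SMOD_\LL}$,'' and propose verifying $\vv_\beta \in \overline{\CCC_{\pol_\LL}}$ recursively via Definition~\ref{hericone}. This is based on using the lattice join $F\vee G$, but the paper's submodularity is over all subsets $\zero\subseteq S\subseteq\one$ with the set-theoretic union $S\cup T$. With that convention, $y_S = [i\in S]$ is \emph{modular} ($y_S+y_T = y_{S\cap T}+y_{S\cup T}$ always), so its restriction $\vv_\beta$ lies in $\overline{\SMOD_\LL}$, and hence in $\overline{\CCC_{\pol_\LL}}$ by Lemma~\ref{smod-effective}. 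The same is true for $\vv_\alpha$. Alternatively, as the paper does, one observes the 0/1-vectors differ from $\alpha_\LL,\beta_\LL\in\overline{\SMOD_\LL}$ by elements of $L(\LL)\subseteq L_{\pol_\LL}$ (equation~\eqref{modeq}), so the mixed derivatives are unchanged. Either way the membership is immediate; no recursive verification is needed.

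For the combinatorial identity $a_k=|c_k(\MM)|$ you sketch a deletion–contraction argument matched against the factorization from~\eqref{f-def}. This is a genuinely different route from the paper, which instead computes the intersection numbers directly via Lemmas~\ref{volalpha} and~\ref{volbeta} (the latter using Weisner's theorem to extract the M\"obius function), then assembles Theorem~\ref{char_comp} and applies the closed formula of Lemma~\ref{red}. Your route is plausible but incomplete as stated: the factorization $\pol_\LL^{\{F\}}=\pol_{[\zero,F]}\pol_{[F,\one]}$ corresponds to restriction and contraction by the \emph{flat} $F$, not deletion/contraction by the ground-set element $i$, and bridging this to $\chi_\MM = \chi_{\MM\setminus i}-\chi_{\MM/i}$ requires an additional inductive argument you don't supply. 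The paper's Weisner/M\"obius computation is cleaner precisely because it stays entirely on the lattice side and avoids translating between the lattice recursion and the ground-set recursion.
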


Consider the following two elements in the closure of $\SMOD_{\LL}$
$$\alpha_\LL= \left(\frac {|F\setminus \zero|}{|\one\setminus \zero|} \right)_{\zero < F < \one} \mbox{ and } \ \ \beta_\LL = \left( \frac {|\one\setminus F|}{|\one\setminus \zero| } \right)_{\zero < F < \one}.
$$
\rev{For those familiar with the techniques of \cite{AHK}, we point out that the polynomial $\pol_\LL$ is the volume polynomial of the Chow ring of the matroid associated to the geometric lattice $\LL$ (see \cite[Sec. 5]{BL}), and the elements $\alpha_\LL$ and $\beta_\LL$ of the closure of $\SMOD_{\LL}$ correspond to the classes $\alpha$ and $\beta$ as defined in \cite{AHK}. 
%Thus the remainder of our proof of Theorem~\ref{HRWconj} given below is similar in spirit to that of \cite{AHK}.
}

Throughout this section we fix an element $i \in \one\setminus \zero$, and let $\alpha_{\LL,i}=(a_F)_{\zero < F < \one}$ and $\beta_{\LL,i}=(b_F)_{\zero < F < \one}$ be the $0/1$-valued vectors defined by 
$a_F=1$ if and only if $i \in F$ and $b_F=1$ if and only if $i \not \in F$.
Then 
\begin{equation}\label{modeq}
\alpha_\LL-\alpha_{\LL,i} \in L(\LL) \ \ \mbox{ and } \ \ \beta_\LL-\beta_{\LL,i} \in L(\LL),
\end{equation}
for all $i \in \one \setminus \zero$. 

The elements $\alpha_\LL$ and $\beta_\LL$ behave well under the projections $\pi_S$. We leave the proof to the reader. 

\begin{lemma}\label{alpha-beta}
If $\LL = [\zero,\one]$ and  $\zero<F<\one$, then
\[
    \pi_{\{F\}}(\alpha_\LL) = (0, \alpha_{[F,\one]}) \ \ \ \mbox{and} \ \ \ \pi_{\{F\}}(\beta_\LL) = (\beta_{[\zero,F]}, 0).
\]
\end{lemma}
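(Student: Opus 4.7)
The plan is to exploit the fact that the projection $\pi_{\{F\}}$ depends only on the class of its input modulo the lineality space $L_{\pol_\LL}$, which by construction contains $L(\LL)$. So one is free to replace $\alpha_\LL$ (resp. $\beta_\LL$) by any congruent representative before reading off the projection. The computational backbone is the averaging identity
\[
\alpha_\LL = \frac{1}{|\one \setminus \zero|} \sum_{i \in \one \setminus \zero} \alpha_{\LL, i}, \qquad \beta_\LL = \frac{1}{|\one \setminus \zero|} \sum_{i \in \one \setminus \zero} \beta_{\LL, i},
\]
which is immediate from $|\{i \in \one \setminus \zero : i \in G\}| = |G \setminus \zero|$ (and the complementary count for $\beta$) together with the definitions of $\alpha_{\LL,i}$ and $\beta_{\LL,i}$.

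By \eqref{modeq}, all vectors $\alpha_{\LL,i}$ lie in a common coset of $L(\LL)$, and any affine combination of them with coefficients summing to one stays in that coset. I would therefore replace the full average above by the partial average $\bar\alpha := \frac{1}{|\one \setminus F|} \sum_{i \in \one \setminus F} \alpha_{\LL, i}$, which is still equivalent to $\alpha_\LL$ modulo $L(\LL)$. Since $\alpha_{\LL,i}(F) = 1$ precisely when $i \in F$, restricting the summation to $i \in \one \setminus F$ guarantees $\bar\alpha(F) = 0$; hence $\pi_{\{F\}}(\alpha_\LL) = \pi_{\{F\}}(\bar\alpha)$ is simply the restriction of $\bar\alpha$ to $V_{\{F\}}$, with no further correction.

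The final step is a short counting check on the restricted entries. For $G < F$, every $i \in \one \setminus F$ fails to lie in $G$, so $\bar\alpha(G) = 0$. For $F < G < \one$,
\[
\bar\alpha(G) = \frac{|\{i \in \one \setminus F : i \in G\}|}{|\one \setminus F|} = \frac{|G \setminus F|}{|\one \setminus F|} = \alpha_{[F, \one]}(G),
\]
which yields $\pi_{\{F\}}(\alpha_\LL) = (0, \alpha_{[F, \one]})$. The claim for $\beta_\LL$ follows by the symmetric maneuver: average $\beta_{\LL,i}$ only over $i \in F \setminus \zero$, for which $\beta_{\LL,i}(F) = 0$, and count to obtain $|F \setminus G|/|F \setminus \zero| = \beta_{[\zero,F]}(G)$ for $G < F$ and $0$ for $G > F$.

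The only conceptual point is the translation invariance of $\pi_{\{F\}}$ under the lineality, which is part of its very definition, so there is no real obstacle; once the averaging trick is spotted, everything else is elementary cardinality arithmetic using $|G \setminus \zero| = |F \setminus \zero| + |G \setminus F|$ when $F < G$ and the disjoint decomposition $\one \setminus \zero = (F \setminus \zero) \sqcup (\one \setminus F)$.
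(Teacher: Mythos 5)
The paper leaves this proof to the reader, so there is no argument to compare yours against; your averaging strategy (replace $\alpha_\LL$ by a congruent representative that vanishes at $F$ and then read off the projection as a restriction, finishing by a cardinality check) is a natural way to do it, and the counting itself is correct.

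There is, however, one point that needs repair. You assert that ``the projection $\pi_{\{F\}}$ depends only on the class of its input modulo the lineality space $L_{\pol_\LL}$,'' calling this ``part of its very definition.'' That is not how $\pi_S$ is defined in \eqref{piSdef}: it is a genuine linear map to $\RR^{V_S}$, depending on a choice of $\ell^{(F)}\in L_{\pol_\LL}$ with $\ell^{(F)}_F=1$. For $\ww\in L(\LL)$ one computes
\[
\pi_{\{F\}}(\xx+\ww)-\pi_{\{F\}}(\xx)=\bigl(\ww-w_F\,\ell^{(F)}\bigr)\big|_{V_{\{F\}}},
\]
and the right-hand side is an element of $L_{\{F\}}$ (an $L(\LL)$-vector vanishing at $F$, restricted to $V_{\{F\}}$), not zero. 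When $L_{\{F\}}\neq\{0\}$ --- which is the generic situation --- $\pi_{\{F\}}$ is therefore \emph{not} invariant under $L(\LL)$-translation, and your step ``$\pi_{\{F\}}(\alpha_\LL)=\pi_{\{F\}}(\bar\alpha)$'' does not follow on the nose: it holds only modulo $L_{\{F\}}$. Concretely, already for the rank-$3$ uniform matroid on four elements there are choices of $\ell^{(F)}$ for which $\pi_{\{F\}}(\alpha_\LL)\neq (0,\alpha_{[F,\one]})$ as vectors.

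The fix is easy and your computation already contains it. Your $\bar\alpha$ and $\bar\beta$ single out the canonical ``averaged'' choice of $\ell^{(F)}$, namely $\ell^{(F)}=(\alpha_\LL-\bar\alpha)/(\alpha_\LL)_F$; with that choice the equalities in the lemma hold exactly, as your counting shows. Alternatively, one reads the lemma modulo $L_{\{F\}}\subseteq L_{\pol_\LL^{\{F\}}}$, which is all that is used downstream in Lemmas \ref{volalpha} and \ref{volbeta} since those evaluate $\pol_\LL^{\{F\}}$ at the projected point, and $L_{\{F\}}$ lies in the lineality space of $\pol_\LL^{\{F\}}$. Either way your argument goes through once you replace the blanket ``$\pi_{\{F\}}$ is $L(\LL)$-invariant'' by the precise statement that $\pi_{\{F\}}(\xx+\ww)\equiv\pi_{\{F\}}(\xx)\pmod{L_{\{F\}}}$, or by fixing the averaged $\ell^{(F)}$.
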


\rev{Recall that $d(\LL)$ denotes the degree of the polynomial $\pol_\LL$ and $r(\LL)$ denotes the rank of $\LL$, so that $d(\LL) = r(\LL) - 1$.}

\begin{lemma}\label{volalpha}
If  $\LL = [\zero,\one]$ is the lattice of flats of a matroid, then 
$$
\ff_\LL(\alpha_\LL) = \frac 1 {d(\LL)!}.
$$
\end{lemma}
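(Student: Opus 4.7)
The plan is to prove $\pol_\LL(\alpha_\LL) = 1/d!$ by induction on $d = d(\LL) = r(\LL) - 1$. The base case $d = 0$ corresponds to $\LL = \{\zero, \one\}$, so $\underline{\LL} = \varnothing$ and $\pol_\LL$ is the constant polynomial $1$ (by Theorem~\ref{uniquepoly} with $w(\varnothing) = 1$); $\alpha_\LL$ is the empty vector, and $1 = 1/0!$ gives the claim.

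For the inductive step, I would apply Euler's identity to the degree-$d$ polynomial $\pol_\LL$, which combined with the identity $\partial_F \pol_\LL(\ttt) = \pol_\LL^{\{F\}}(\pi_{\{F\}}(\ttt))$ yields
\begin{equation*}
    d \cdot \pol_\LL(\alpha_\LL) = \sum_{F \in \underline{\LL}} (\alpha_\LL)_F \cdot \pol_\LL^{\{F\}}(\pi_{\{F\}}(\alpha_\LL)).
\end{equation*}
The two crucial inputs are the factorization from \eqref{f-def}, $\pol_\LL^{\{F\}}(\ttt) = \pol_{[\zero,F]}(\ttt) \cdot \pol_{[F,\one]}(\ttt)$, and Lemma~\ref{alpha-beta}, which gives $\pi_{\{F\}}(\alpha_\LL) = (0, \alpha_{[F,\one]})$. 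Combined, these produce
\begin{equation*}
    \pol_\LL^{\{F\}}(\pi_{\{F\}}(\alpha_\LL)) = \pol_{[\zero,F]}(0) \cdot \pol_{[F,\one]}(\alpha_{[F,\one]}).
\end{equation*}
Since $\pol_{[\zero,F]}$ is homogeneous of degree $r(F) - 1$, it vanishes at $0$ unless $F$ is an atom of $\LL$, in which case $\pol_{[\zero, F]} = 1$. Thus only atoms contribute to the sum.

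Applying the inductive hypothesis to each $[F, \one]$ (of rank $r-1$) gives $\pol_{[F,\one]}(\alpha_{[F,\one]}) = 1/(d-1)!$. Substituting $(\alpha_\LL)_F = |F \setminus \zero|/|\one \setminus \zero|$ and using the partition property of the lattice of flats (from the section's opening paragraph: the atoms $\{F \setminus \zero\}$ partition $\one \setminus \zero$), we get
\begin{equation*}
    d \cdot \pol_\LL(\alpha_\LL) = \frac{1}{(d-1)!} \sum_{F \text{ atom}} \frac{|F \setminus \zero|}{|\one \setminus \zero|} = \frac{1}{(d-1)!},
\end{equation*}
which gives the claim. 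I do not expect a genuine obstacle: the argument is a clean Euler-identity induction, and the two conceptually essential ingredients (the atomic partition property and the multiplicative structure from \eqref{f-def}) are already available. The only point requiring minor care is verifying that the evaluation $\pol_\LL^{\{F\}}(\pi_{\{F\}}(\alpha_\LL))$ is well-defined independently of the choice of section $\ell^{(F)}$ used to define $\pi_{\{F\}}$, but this is guaranteed since $\pol_\LL^{\{F\}}$ is invariant along $L_{\pol_\LL^{\{F\}}}$ by construction.
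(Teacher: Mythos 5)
Your proof is correct and follows essentially the same Euler-identity induction as the paper, using \eqref{f-def} and Lemma~\ref{alpha-beta} to isolate the atom terms. The only cosmetic difference is at the final step: the paper first invokes \eqref{modeq} to replace $\alpha_\LL$ by the $0/1$-vector $\alpha_{\LL,i}$, collapsing the sum to the single atom $H$ containing the chosen $i$, whereas you evaluate the weighted sum over all atoms directly via the partition identity $\sum_{F\text{ atom}}|F\setminus\zero|=|\one\setminus\zero|$; both routes are a one-line calculation and the paper's version has the minor advantage of running in strict parallel with the proof of Lemma~\ref{volbeta} (where no such partition identity is available and the $\alpha_{\LL,i}$-substitution is genuinely needed).
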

\begin{proof}%\rev{ADDED DETAILS TO PROOF.}
Since $\{ A\setminus \zero\}_{\zero\prec A \leq \one}$ partitions $\one \setminus \zero$, there is a unique $H \in \underline{\LL}$ containing $i$ for which $\zero \prec H < \one$. By Euler's formula and \eqref{modeq},
$$
d(\LL) \cdot \ff_\LL(\alpha_\LL)= d(\LL) \cdot \ff_\LL(\alpha_{\LL,i})= 
\sum_{H\leq F < E} \partial_{t_F} \ff_\LL(\alpha_{\LL,i}).
$$
By \eqref{modeq} and \eqref{derproj} the latter sum may be written as
$$
\sum_{H\leq F < E}\pol_\LL^{\{F\}}(\pi_{\{F\}}(\alpha_\LL)), 
$$
and by \eqref{f-def} and Lemma~\ref{alpha-beta}, this sum may be expressed as  
$$
\sum_{H\leq F < E} \ff_{[\zero,F]} ( 0)\cdot  \ff_{[F,\one]} (\alpha_{[F,\one]}) = \ff_{[H,\one]}(\alpha_{[H,\one]}),  
$$
since $\ff_{[\zero,F]} (0)=0$ unless $F$ is an atom, and then $\ff_{[\zero,F]} (0)=1$. The lemma now follows by induction over $d(\LL)$. 
\end{proof}

 The next theorem is usually stated as a consequence of Weisner's theorem, see \cite[p. 277]{stanley}. 
\begin{theorem}[Weisner's theorem]
If $x\prec a \leq y$ are elements in a semimodular lattice $L$,  then 
$
\mu(x,y) = -\sum_b\mu(x,b) 
$
where the sum is over all $b \in L$ for which $x\leq b\prec y$ and $a \not \leq b$. 
\end{theorem}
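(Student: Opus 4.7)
The plan is to reduce the stated identity to the classical form of Weisner's theorem and then use semimodularity to rewrite the index set in terms of the covering relation. Applied inside the interval $[x,y]$, in which $a$ is an atom because $x \prec a \leq y$, Weisner's theorem gives
\[
  \sum_{b \in [x,y]\,:\ b \vee a = y} \mu(x,b) \;=\; 0.
\]
Since $a < y$, the bottom element $b=x$ satisfies $b \vee a = a \neq y$, so it does not contribute. Isolating the term $b=y$ then yields
\[
  \mu(x,y) \;=\; -\sum_{b\,:\ x < b < y,\ b \vee a = y} \mu(x,b),
\]
and it remains to show that the index set $\{b : x < b < y,\ b \vee a = y\}$ coincides with $\{b : x < b \prec y,\ a \not\leq b\}$ that appears in the statement.

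For either direction of this set identity, the key observation is the following. If $x \leq b$ and $a \not\leq b$, then $x \leq a \wedge b < a$, and since $x \prec a$ this forces $a \wedge b = x$. Hence $a$ covers $a \wedge b$, so semimodularity (in exactly the form invoked in the proof of Theorem~\ref{matroid-hered-Lor}) guarantees that $a \vee b$ covers $b$. Forward direction: if $b < y$ and $b \vee a = y$, then necessarily $a \not\leq b$ (otherwise $b \vee a = b$), and the above gives $b \prec a \vee b = y$. Reverse direction: if $x < b \prec y$ and $a \not\leq b$, then $b \prec a \vee b$ and $a \vee b \leq y$, so the cover relation $b \prec y$ forces $a \vee b = y$.

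The only external input is the classical form of Weisner's theorem, which may be cited or derived in a couple of lines by M\"obius inversion on $[x,y]$. The matching of index sets is a direct unwinding of semimodularity applied to the identity $a \wedge b = x$, so I do not anticipate a genuine obstacle; the entire content of the argument is that $x \prec a$ combined with $a \not\leq b$ pins $a \wedge b$ down to be $x$, which is precisely the hypothesis that makes semimodularity applicable.
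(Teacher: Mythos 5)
The paper does not actually prove this statement; it only cites \cite[p.~277]{stanley} and remarks that it is a consequence of Weisner's theorem. Your proof fills in exactly that derivation, and it is correct. The reduction to the classical Weisner identity $\sum_{b \vee a = y}\mu(x,b)=0$ on the interval $[x,y]$ (where $x \prec a$ makes $a$ an atom), together with the observation that $a \not\leq b$ forces $a \wedge b = x$, is the standard route.

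Two small points worth flagging. First, the paper's statement as printed has $a \not< b$, but the index set must exclude $b=a$ (as your derivation correctly does), so the condition should read $a \not\leq b$; you have silently worked with the correct version, and this is also what the paper actually uses later in the proof of Lemma~\ref{volbeta}, where the condition is $F \not\ni i$, i.e.\ $H \not\leq F$ for the atom $H$. Second, your parenthetical ``in exactly the form invoked in the proof of Theorem~\ref{matroid-hered-Lor}'' is slightly off: the form quoted there requires \emph{both} $a$ and $b$ to cover $a\wedge b$, whereas your argument only establishes that $a$ covers $a\wedge b$ and needs the one-sided implication ($a\wedge b \prec a$ implies $b \prec a\vee b$). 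That one-sided condition is the usual \emph{definition} of (upper) semimodularity and is equivalent, for finite lattices, to the two-sided characterization the paper quotes from Stanley, so there is no mathematical gap --- just cite the definition of semimodularity directly rather than the two-sided reformulation. Also note that your reverse direction does not actually need semimodularity at all: $b \prec y$ and $b \leq a\vee b \leq y$ with $a \not\leq b$ already forces $a\vee b = y$. Only the forward direction uses the cover-lifting property.
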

A consequence of Weisner's theorem is that the M\"obius function of a semimodular lattice $\LL$ alternates in sign, i.e., 
$(-1)^{\rho(b)-\rho(a)} \mu(a,b) \geq 0$, where $\rho$ is the rank function of $\LL$. 

\begin{lemma}\label{volbeta}
If $\LL = [\zero,\one]$ is the lattice of flats of a matroid, then
$$
\ff_\LL(\beta_\LL)= \frac {|\mu(\zero,\one)|}{d(\LL)!}. 
$$
\end{lemma}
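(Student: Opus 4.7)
The plan is to induct on $d(\LL) = r(\LL) - 1$, paralleling the proof of Lemma~\ref{volalpha} with the r\^ole of atoms replaced by that of coatoms. The base case $r(\LL) = 1$ is immediate ($\pol_\LL = 1$ and $|\mu(\zero,\one)| = 1$). For the induction step, fix any $i \in \one \setminus \zero$. By \eqref{modeq}, $\beta_\LL - \beta_{\LL,i} \in L(\LL) \subseteq L_{\pol_\LL}$, and by \eqref{incl} this vector also pushes forward under each $\pi_{\{F\}}$ into $L_{\pol_\LL^{\{F\}}}$. Thus we may substitute $\beta_{\LL,i}$ for $\beta_\LL$ throughout the Euler recursion \eqref{recdef}; combining with the product decomposition \eqref{f-def} and Lemma~\ref{alpha-beta}, we obtain
$$d(\LL)\cdot\pol_\LL(\beta_\LL) \;=\; \sum_{\zero < F < \one} (\beta_{\LL,i})_F \cdot \pol_{[\zero,F]}(\beta_{[\zero,F]}) \cdot \pol_{[F,\one]}(0).$$

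The factor $\pol_{[F,\one]}(0)$ vanishes unless $[F,\one]$ has degree zero, i.e., unless $F \prec \one$, while $(\beta_{\LL,i})_F = 1$ exactly when $i \notin F$. Invoking the inductive hypothesis on each interval $[\zero,F]$ and using $r(F) = d(\LL)$ for coatoms, the right-hand side simplifies to
$$\frac{1}{(d(\LL)-1)!}\sum_{\substack{F \prec \one \\ i \notin F}} |\mu(\zero,F)|.$$

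It remains to show this sum equals $|\mu(\zero,\one)|$. Let $a$ be the unique atom of $\LL$ containing $i$ (an element of $\underline{\LL}$ whenever $r(\LL) \geq 2$; when $r(\LL) = 1$ we are in the base case). Since $a$ is the smallest flat containing $i$, we have $a \leq F$ if and only if $i \in F$. Weisner's theorem applied with $x = \zero$, $y = \one$ then gives $\mu(\zero,\one) = -\sum_b \mu(\zero,b)$ summed over coatoms $b$ with $a \not\leq b$, equivalently over coatoms $b$ with $i \notin b$. Because $\LL$ is semimodular, $\mu(\zero,\cdot)$ alternates in sign according to rank, so every $\mu(\zero,b)$ in the sum carries the opposite sign from $\mu(\zero,\one)$; passing to absolute values then delivers the desired identity. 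I expect the main delicate point to be exactly this last step, where one must confirm that the sign-alternation of $\mu$ on semimodular lattices precisely compensates the minus sign appearing in Weisner's formula, and that the condition ``$a \not\leq b$'' genuinely coincides with the set-theoretic condition ``$i \notin b$'' for all coatoms $b$ (including the rank-two edge case where $a$ is itself a coatom).
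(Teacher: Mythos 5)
Your proof is correct and follows essentially the same route as the paper: the Euler recursion plus the product decomposition and Lemma~\ref{alpha-beta} reduce the computation to a sum over coatoms $F$ with $i\notin F$, and then induction and Weisner's theorem finish. One small remark: the paper's statement of Weisner's theorem writes the condition ``$a\not< b$'' where (as you correctly use) the standard form requires ``$a\not\leq b$''; these differ precisely in the rank-two case $a=b$, which you flagged and handled properly.
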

\begin{proof}
By \eqref{f-def}, \eqref{modeq}, and Lemma~\ref{alpha-beta}, 
$$
d(\LL) \cdot \ff_\LL(\beta_\LL)= \sum_{\zero<F<\one} b_F  \cdot \ff_{[\zero,F]}(\beta_{[\zero,F]}) \cdot  \ff_{[F,\one]}(0)= 
\sum_{\stackrel{\zero< F \prec \one}{F \not \ni i}} \ff_{[\zero,F]}(\beta_{[\zero,F]}).
$$
The lemma then follows by induction and Weisner's theorem. 
\end{proof}

\begin{theorem}\label{char_comp}
Suppose $\LL = [\zero,\one]$ is the lattice of flats of a matroid. Then 
$$
d(\LL)! \cdot \ff_\LL(s\alpha_\LL +t\beta_\LL) = \sum_{\stackrel {\zero \leq F < \one} {i \not \in F}} \binom {d(\LL)}{r([\zero,F])}  \cdot |\mu(\zero,F)| \cdot t^{r([\zero,F])} s^{d([F,\one])}.
$$
\end{theorem}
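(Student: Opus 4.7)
The plan is to compute $\pol_\LL(s\alpha_\LL + t\beta_\LL)$ by a single application of Euler's identity \eqref{recdef}, evaluate each term using Lemmas~\ref{volalpha}, \ref{volbeta}, and \ref{alpha-beta}, and then match the resulting bivariate polynomial against the claimed formula with the aid of Weisner's theorem.

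First, using the lineality relations \eqref{modeq} together with $L(\LL) \subseteq L_{\pol_\LL}$, I would replace $\alpha_\LL, \beta_\LL$ by the $0/1$-vectors $\alpha_{\LL,i}, \beta_{\LL,i}$, so that the $F$-th coordinate of $s\alpha_{\LL,i} + t\beta_{\LL,i}$ equals $s$ when $i \in F$ and $t$ when $i \notin F$. Applying Euler \eqref{recdef} at $S = \varnothing$, together with Lemma~\ref{alpha-beta}, \eqref{incl}, the factorization \eqref{f-def}, and Lemmas~\ref{volalpha}--\ref{volbeta}, each inner factor evaluates to
$$
\pol_\LL^{\{F\}}\bigl(\pi_{\{F\}}(s\alpha_\LL + t\beta_\LL)\bigr) = \frac{|\mu(\zero,F)|}{d([\zero,F])!\, d([F,\one])!}\, s^{d([F,\one])}\, t^{d([\zero,F])}.
$$
Multiplying by $d(\LL)!$ and splitting according to whether $i \in F$ yields
\begin{align*}
d(\LL)! \cdot \pol_\LL(s\alpha_\LL + t\beta_\LL) &= \sum_{\substack{\zero < F < \one \\ i \in F}} \binom{d(\LL)-1}{d([\zero,F])} |\mu(\zero,F)|\, t^{d([\zero,F])} s^{r([F,\one])} \\
&\quad + \sum_{\substack{\zero < F < \one \\ i \notin F}} \binom{d(\LL)-1}{d([\zero,F])} |\mu(\zero,F)|\, t^{r([\zero,F])} s^{d([F,\one])}.
\end{align*}

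Collecting coefficients of $t^k s^{d(\LL)-k}$ and applying Pascal's identity $\binom{d(\LL)}{k} = \binom{d(\LL)-1}{k} + \binom{d(\LL)-1}{k-1}$, the boundary cases match immediately: at $k = 0$ only the unique atom $\overline{\{i\}}$ contributes (with $|\mu(\zero,\overline{\{i\}})| = 1$), producing the target constant $1$, while at $k = d(\LL)$ the binomials already coincide. The intermediate matching for $1 \leq k \leq d(\LL) - 1$ reduces to verifying the identity
$$
\sum_{\substack{F:\, r(F) = k+1,\, F < \one \\ i \in F}} |\mu(\zero,F)| \;=\; \sum_{\substack{b:\, r(b) = k,\, b > \zero \\ i \notin b}} |\mu(\zero,b)|.
$$

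The main obstacle is establishing this identity. I would apply Weisner's theorem with atom $a = \overline{\{i\}}$ to each $F$ on the left; since $\mu$ alternates in sign on semimodular lattices, this gives $|\mu(\zero,F)| = \sum_{\zero < b \prec F,\, i \notin b} |\mu(\zero,b)|$. Swapping the summation order reexpresses the left side as $\sum_b |\mu(\zero,b)| \cdot \#\{F \succ b : i \in F\}$, summed over flats $b$ of rank $k$ with $i \notin b$. The standard matroid fact that $\overline{b \cup \{i\}}$ is the unique cover of $b$ containing $i$ (and automatically satisfies $r(F) = k+1 \leq d(\LL)$, so $F < \one$) shows this count equals $1$, which establishes the identity and completes the proof.
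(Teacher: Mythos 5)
Your proof is correct, but it diverges from the paper's argument in a way worth flagging. The paper differentiates $f(t)=\ff_\LL(s\alpha_\LL + t\beta_\LL)$ with respect to $t$; equivalently, it applies the single directional derivative $D_{\beta_\LL}=D_{\beta_{\LL,i}}$, which by \eqref{modeq} pulls out \emph{only} the flats $F$ with $i\notin F$. After evaluating each term via \eqref{f-def}, Lemma~\ref{alpha-beta}, and Lemmas~\ref{volalpha}--\ref{volbeta}, one simply integrates in $t$ using $f(0)=s^{d(\LL)}/d(\LL)!$, and the binomial $\binom{d(\LL)}{r([\zero,F])}$ appears directly from the factorials; no reconciliation between two index sets is needed. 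You instead apply Euler's identity \eqref{recdef} at $S=\varnothing$ (i.e.\ $D_{\ttt}\,\ff_\LL = d\cdot\ff_\LL$), which produces contributions from both $i\in F$ and $i\notin F$, and you then have to transfer the $i\in F$ sum over to the $i\notin F$ sum via a second invocation of Weisner's theorem (beyond the one already buried in Lemma~\ref{volbeta}), using the matroid fact that $\overline{b\cup\{i\}}$ is the unique cover of a flat $b\not\ni i$ containing $i$. This is a genuinely different route: it is a bit longer and requires the extra combinatorial identity, but it is elementary and makes the Pascal-type structure of the binomial coefficient explicit, whereas the paper's integration step delivers the result in one pass. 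I checked your boundary analysis at $k=0$ and $k=d(\LL)$ and the intermediate identity $L_k=N_k$; all steps go through.
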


\begin{proof}
Let $f(t)= \ff_\LL(s\alpha_\LL +t\beta_\LL)$. \rev{Then
$$
f'(t) = D_{\beta_\LL}\ff_\LL(s\alpha_\LL +t\beta_\LL)= \sum_{\stackrel {\zero < F < \one} {i \not \in F}} \ff_\LL (s \pi_{\{F\}}(\alpha_\LL) +t\pi_{\{F\}}(\beta_\LL)).
$$}
Hence by Lemma \ref{alpha-beta} and \eqref{modeq},
\[
    f'(t) = \sum_{\stackrel {\zero < F < \one} {i \not \in F}} \ff_{[\zero,F]}(\beta_{[\zero,F]}) \cdot  \ff_{[F,\one]}(\alpha_{[F,\one]}) \cdot t^{d([\zero,F])} s^{d([F,\one])}.
\]
By Lemmas \ref{volalpha} and \ref{volbeta}, 
$$
f'(t)= \sum_{\stackrel {\zero < F < \one} {i \not \in F}}   |\mu(\zero,F)| \cdot \frac {t^{d([\zero,F])}} {d([\zero,F])!} \cdot \frac {s^{d([F,\one])}}{d([F,\one])!}. 
$$
The lemma follows since 
$
f(0)=  s^{d(\LL)} /{d(\LL)!}
$,
by Lemma \ref{volalpha}.
\end{proof}

\begin{lemma}[Cor. 7.27, \cite{White87}]\label{red}
Suppose $\LL = [\zero,\one]$ is the lattice of flats of a matroid $\MM$. Then 
\begin{equation}\label{rcf}
\overline{\chi}_{\MM}(t) = \sum_{F \not \ni i} \mu(\zero,F) t^{d([F,\one])}. 
\end{equation}
\end{lemma}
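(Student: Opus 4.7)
The plan is to verify the equivalent identity
\[
(t-1)\sum_{F \not\ni i} \mu(\zero, F)\, t^{r([F,\one])-1} \;=\; \chi_\MM(t),
\]
using $d([F,\one]) = r([F,\one])-1$ together with the definition of $\chi_\MM$ in \eqref{chara}. Expanding the left side and matching coefficients of $t^{r-k}$, where $r$ is the rank of $\LL$, this reduces to proving, for each integer $k \geq 0$, the rank-level identity
\[
\sum_{\substack{G \in \LL,\, i \in G \\ r(G) = k}} \mu(\zero, G) \;=\; -\sum_{\substack{F \in \LL,\, i \notin F \\ r(F) = k-1}} \mu(\zero, F).
\]

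For $k \geq 2$ I would apply the Weisner identity stated in the paper to each rank-$k$ flat $G$ containing $i$, taking the atom of $[\zero, G]$ to be $a := \text{cl}(\{i\})$: the interval $[\zero, G]$ is the semimodular lattice of flats of $\MM|_G$, and $\zero \prec a < G$ strictly because $r(a) = 1 < k$. Weisner then yields
\[
\mu(\zero, G) \;=\; -\sum_{\substack{F \prec G,\, F > \zero \\ i \notin F}} \mu(\zero, F).
\]
Summing over all such $G$ and interchanging the order of summation, the rank-level identity follows from the combinatorial observation that every flat $F$ of rank $k-1$ not containing $i$ is covered by exactly one flat $G$ of rank $k$ that contains $i$, namely $G = \text{cl}(F \cup \{i\})$: any such $G$ contains $F \cup \{i\}$ and has rank $r(F)+1$, so must coincide with $\text{cl}(F \cup \{i\})$. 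The boundary cases $k = 0$ and $k = 1$ I would treat by inspection: for $k = 0$ both sides vanish, and for $k = 1$ both sides equal $-1$, since $\text{cl}(\{i\})$ is the unique rank-one flat containing $i$ (because $i \in \one \setminus \zero$ is not a loop), giving $\mu(\zero, \text{cl}(\{i\})) = -1 = -\mu(\zero, \zero)$.

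I expect the main obstacle to be respecting the strict inequalities in the Weisner statement: it requires $a < G$ strictly, so it fails precisely at $k = 1$ (where $G = a$), and its Möbius sum only ranges over $F > \zero$; both restrictions force the cases $k \in \{0, 1\}$ to be handled separately, which is straightforward but must not be overlooked.
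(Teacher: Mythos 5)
The paper does not prove Lemma~\ref{red}; it states it with a citation to \cite{White87}. Your argument therefore fills in a reference rather than paralleling a proof in the text, and it is correct: multiplying the claimed right-hand side by $t-1$ and comparing coefficients of $t^{r-k}$ (with $r$ the rank of $\LL$) reduces the lemma to the displayed rank-level M\"obius identity, which you obtain from Weisner applied in each interval $[\zero,G]$ with $a = \mathrm{cl}(\{i\})$, together with the (semimodularity) observation that each flat $F$ of rank $k-1$ with $i \notin F$ has exactly one cover of rank $k$ containing $i$, namely $\mathrm{cl}(F\cup\{i\})$; the degenerate cases $k \in \{0,1\}$ are correctly isolated and disposed of.

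One detail is worth flagging. The paper's stated Weisner identity restricts the sum by the condition ``$a \not< b$''; for your displayed deduction to hold verbatim this must be read as ``$a \not\leq b$''. When $k \geq 3$ the two conditions agree, since an atom $a$ and a coatom $b$ of $[\zero,G]$ are then automatically distinct; but when $k = 2$ every coatom $b$ of $[\zero,G]$ is itself an atom, so $b = a$ satisfies $a \not< b$ and would be spuriously included in the sum (one can check on $U_{2,3}$ that the literal $\not<$ version of the formula overcounts by $1$). The identity you write --- with ``$i \notin F$'', equivalently $a \not\leq F$ --- is the correct one and matches Stanley's standard formulation, so your conclusion stands, but the mismatch in the paper's hypothesis deserves a remark before you apply it at $k = 2$.
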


\begin{proof}[Proof of Theorem \ref{HRWconj}]
If $\alpha=\alpha_\LL$, $\beta=\beta_\LL$ and $d=d(\LL)$, then 
$$
d! \cdot \ff_\LL(s\alpha_\LL +t\beta_\LL)= (sD_{\alpha}+tD_{\beta})^d \ff_\LL= \sum_{k=0}^d \binom d k \cdot (D_\alpha^k D_{\beta}^{d-k}\ff_\LL) \cdot s^kt^{d-k}.  
$$
 Hence by Theorem \ref{char_comp} and Lemma \ref{red}, the coefficient in front of  $t^k$ in the reduced characteristic polynomial of $\MM$ is equal to $D_\alpha^k D_{\beta}^{d-k}\ff_\LL$. The theorem now follows from Theorem \ref{matroid-hered-Lor} and Remark \ref{AF-remark}.
\end{proof}

\section{Stellar subdivisions of hereditary polynomials}\label{subdsec}
In this section we study stellar subdivisions of hereditary polynomials. These are linear operators which for volume polynomials of Chow rings of simplicial fans correspond to stellar subdivisions of the fans. \rev{The main result of this section is Theorem~\ref{support-general}, that subdivisions and their inverses preserve the hereditary Lorentzian property. This is analogous to \cite[Thm. 1.6]{Lagrangian} on the K\"ahler package of Chow rings of simplicial fans.}

Let $S \in \Delta$, where $|S|\geq 1$. The \emph{stellar subdivision}, $\Delta_S$,  of $\Delta$ on $S$ is the simplicial complex on $V \cup \{0\}$, where $0 \notin V$, obtained by 
\begin{itemize}
\item removing all faces containing $S$, and 
\item adding all faces $R \cup \{0\}$, where $S \not \subseteq R$ and $R \cup S \in \Delta$.
\end{itemize}
For positive real numbers $\ccc =(c_i)_{i\in S}$, let 
$$
L^\ccc= \left\{ (\ell_0, \ell) \in \RR\times \RR^V : \ell \in L \mbox{ and } \ell_0 = \sum_{i \in S}c_i\ell_i\right\}.
$$
\begin{lemma}\label{simplp}
If $(\Delta,L)$ is hereditary, then so is $(\Delta_S, L^\ccc)$ \rev{for every $S$ and $\ccc$}. 
\end{lemma}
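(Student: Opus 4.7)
The plan is to unpack the definition of hereditariness and check it face-by-face for $(\Delta_S, L^\ccc)$. Recall that $(\Delta_S, L^\ccc)$ is hereditary means that for every $T \in \Delta_S$, the projection $L^\ccc \to \RR^T$ given by restriction of coordinates is surjective. By construction of $\Delta_S$, every face $T$ falls into exactly one of two types:
\begin{itemize}
\item[(i)] $0 \notin T$, so $T \in \Delta$ with $S \not\subseteq T$ (these are the faces not killed by the first bullet);
\item[(ii)] $T = R \cup \{0\}$ with $S \not\subseteq R$ and $R \cup S \in \Delta$.
\end{itemize}
I would handle the two cases separately.

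Case (i) is immediate and cheap. Given a target $(x_i)_{i \in T} \in \RR^T$, hereditariness of $(\Delta,L)$ produces $\ell \in L$ with $\ell_i = x_i$ for $i \in T$. Then the pair $\left(\sum_{i \in S} c_i \ell_i, \ell\right)$ lies in $L^\ccc$ and restricts to $(x_i)_{i \in T}$ on $T \subseteq V$.

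Case (ii) is the only substantive one, and uses the assumption $S \not\subseteq R$ crucially. Fix target values $x_0$ and $(x_i)_{i \in R}$. Since $R \cup S \in \Delta$, hereditariness of $(\Delta,L)$ says I can prescribe $\ell \in L$ freely on $R \cup S$. The constraint $\ell_0 = \sum_{i \in S} c_i \ell_i$ defining $L^\ccc$ becomes
\[
\sum_{i \in R \cap S} c_i x_i + \sum_{i \in S \setminus R} c_i \ell_i \;=\; x_0,
\]
in which the values $(\ell_i)_{i \in R}$ are already pinned down by the target, but $(\ell_i)_{i \in S \setminus R}$ are still free parameters. Because $S \not\subseteq R$, the index set $S \setminus R$ is nonempty, and since all $c_i > 0$ the linear functional $(\ell_i)_{i \in S \setminus R} \mapsto \sum_{i \in S \setminus R} c_i \ell_i$ is surjective onto $\RR$. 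So I can choose these free entries (e.g.\ set them all to $0$ except for one distinguished $j \in S \setminus R$, which I solve for) to hit the required value of $x_0$. Then hereditariness of $(\Delta, L)$ produces the desired $\ell \in L$, and $(x_0, \ell) \in L^\ccc$ restricts to $(x_0, (x_i)_{i \in R})$ on $T$.

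There is no real obstacle here beyond bookkeeping: the lemma essentially reduces to the observation that $S \not\subseteq R$ provides an extra free coordinate in $S \setminus R$, which precisely absorbs the one new linear constraint imposed by the definition of $L^\ccc$. The only mild care needed is separating indices in $R \cap S$ (fixed by the target) from those in $S \setminus R$ (free for us to adjust).
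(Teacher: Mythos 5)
Your proof is correct and follows essentially the same route as the paper's: split faces of $\Delta_S$ into those not containing $0$ (immediate from hereditariness of $(\Delta,L)$) and those of the form $R\cup\{0\}$, where the key point is to invoke hereditariness of $(\Delta,L)$ on $R\cup S\in\Delta$ and use $S\setminus R\neq\varnothing$ together with $c_i>0$ to absorb the single new constraint $\ell_0=\sum_{i\in S}c_i\ell_i$. The only cosmetic difference is that the paper first reduces to facets of $\Delta_S$, for which $|S\setminus R|=1$, making the solve for the single free coordinate automatic, whereas you verify the condition for arbitrary faces directly by setting all but one free entry in $S\setminus R$ to zero; both are valid and the underlying idea is identical.
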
 

\begin{proof}
It suffices to prove \eqref{simpl} for facets $F$ of $\Delta_S$. 

If $0\not \in F$, then $F \in \Delta$ so that  \eqref{simpl} for $(\Delta_S, L^\ccc)$ follows from \eqref{simpl} \rev{for} $(\Delta, L)$. Otherwise $0 \in F$, and then $F$ contains all but one element of $S$, and $(F \setminus \{0\}) \cup S \in \Delta$. In this case \eqref{simpl} for $F$ follows from \eqref{simpl} for $(F \setminus \{0\}) \cup S \in \Delta$. 
\end{proof}

\begin{lemma}\label{switchlemma}
 Suppose $(\Delta,L)$ is hereditary, where $\Delta$ has dimension $d-1$.  If $S \in \Delta$ and $g \in \PPP^d(\Delta_S,L^\ccc)$, then 
 $$
 \partial_0^k\partial^T \partial_i g/c_i =  \partial_0^k\partial^T \partial_j g/c_j= -\partial_0^{k+1}\partial^Tg 
 $$
 for all $T \subseteq V$ such that $|T|=d-k-1$, $k>0$, and $i,j \in S \setminus T$. 
\end{lemma}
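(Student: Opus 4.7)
The plan is to derive both equalities from a single lineality relation supplied by $L^{\ccc}\subseteq L_g$, and then to dispose of the ``noise'' that survives upon applying $\partial_0^k\partial^T$. I would first observe that both sides vanish unless $\{0\}\cup T$ lies in $\Delta_g\subseteq\Delta_S$, which by the definition of stellar subdivision forces $S\cup T\in\Delta$; together with the cardinality bookkeeping in the hypotheses this pins down $|S\cup T|=d$, so that $S\cup T$ is a facet of $\Delta$.

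Next I would invoke heredity of $(\Delta,L)$ on $S\cup T$: for each $i\in S\setminus T$ it produces $\ell^{(i)}\in L$ with $\ell^{(i)}_i=1$ and $\ell^{(i)}_v=0$ for every $v\in(S\cup T)\setminus\{i\}$. By the defining formula for $L^{\ccc}$, the vector $(c_i,\ell^{(i)})$ lies in $L^{\ccc}\subseteq L_g$, yielding the polynomial identity
\[
c_i\,\partial_0 g+\partial_i g+\sum_{v\in V\setminus(S\cup T)}\ell^{(i)}_v\,\partial_v g\;\equiv\;0,
\]
where the $v\in(S\cup T)\setminus\{i\}$ terms drop out by our choice of $\ell^{(i)}$. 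Applying $\partial_0^k\partial^T$ turns this into the constant identity
\[
c_i\,\partial_0^{k+1}\partial^T g+\partial_0^k\partial^T\partial_i g+\sum_{v\in V\setminus(S\cup T)}\ell^{(i)}_v\,\partial_0^k\partial^T\partial_v g=0.
\]

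The main obstacle will be checking that the leftover sum vanishes. For $v\in V\setminus(S\cup T)$, the scalar $\partial_0^k\partial^T\partial_v g$ equals (up to a positive multinomial factor) the coefficient in $g$ of the monomial $t_0^kt^Tt_v$, whose support $\{0\}\cup T\cup\{v\}$ lies in $\Delta_g\subseteq\Delta_S$ only when $S\cup T\cup\{v\}\in\Delta$; but this has cardinality $d+1$, exceeding the dimension of $\Delta$, so no such face exists and every summand is zero. Rearranging the surviving identity gives $\partial_0^k\partial^T\partial_i g/c_i=-\partial_0^{k+1}\partial^T g$, and because the right-hand side does not involve $i$, the same value arises for every $j\in S\setminus T$, producing the full chain of equalities.
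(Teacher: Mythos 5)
Your argument follows the same route as the paper's (reduce to the case $\{0\}\cup T\in\Delta_g$, differentiate the lineality relation coming from $L^{\ccc}\subseteq L_g$, and discard the ``noise'' terms $\partial_0^k\partial^T\partial_v g$ for $v\notin S\cup T$), but there is a genuine gap at the cardinality step. The observation $\{0\}\cup T\in\Delta_g\subseteq\Delta_S$ does indeed give $S\cup T\in\Delta$ and hence $|S\cup T|\leq d$, but combined with $|T|=d-k-1$ and $S\setminus T\neq\varnothing$ it only yields $d-k\leq|S\cup T|\leq d$. Nothing in the stated hypotheses forces $|S\cup T|=d$: for instance $T=\varnothing$ is permitted whenever $k=d-1>0$, and then $|S\cup T|=|S|$ may be strictly less than $d$. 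When $|S\cup T|<d$ the noise does not vanish, since $\{0\}\cup T\cup\{v\}\in\Delta_S$ is equivalent to $S\cup T\cup\{v\}\in\Delta$ and $S\not\subseteq T\cup\{v\}$, and $|S\cup T\cup\{v\}|$ then need not exceed $d$. Concretely, with $\Delta$ the boundary of a tetrahedron ($d=3$, $V=\{1,2,3,4\}$), $\vv\in\RR_{>0}^4$, $L=\ker\vv$, $f=(\sum_i v_it_i)^3/6$, $S=\{1,2\}$ and $g=\sub_S^{\ccc}(f)$, one computes for $T=\varnothing$, $k=2$ that $\partial_0^2\partial_1 g/c_1=v_1v_2^2/(c_1c_2^2)$, $\partial_0^2\partial_2 g/c_2=v_1^2v_2/(c_1^2c_2)$, and $-\partial_0^3 g$ is their sum; these are three distinct numbers for generic data, so the step fails without a supplementary hypothesis.

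The lemma is only invoked in the proof of Lemma~\ref{gfcoef}, and there $T$ always arises as $F\setminus(A\cup\{i\})$ for a facet $F$ of $\Delta$ containing $S$, some $A\subseteq S$ with $|A|=k$, and $i\in S\setminus A$. In that situation $S\cup T=F$, so $|S\cup T|=d$ holds automatically and your argument (and the paper's, which implicitly asserts the same support restriction) goes through; when $S\not\subseteq F$ all the quantities in question vanish because $\{0\}\cup T\notin\Delta_S$. So the hypothesis $|S\cup T|=d$ (equivalently, $S\cup T$ is a facet of $\Delta$) is precisely the ingredient your proof needs, but it must be added to the hypotheses rather than deduced from them.
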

\begin{proof}
The lemma is trivially true if  $T\cup\{0\} \notin \Delta_S$. Suppose $T\cup\{0\} \in \Delta_S$. The polynomial $\partial_0^k g^T$ is hereditary by Lemma~\ref{herbydiff}. Hence 
$$
\partial_0^{k+1}\partial^T g \cdot t_0 + \sum_{i \in S\setminus T}  \partial_0^k\partial^T \partial_i g\cdot t_i =0, 
$$
for all $\ttt \in \RR^{S \cup \{0\}}$ such that $t_0 = \sum_{i\in S\setminus T} c_it_i$. The lemma follows.  
\end{proof}
Define a linear operator $\bus_S^\ccc : \RR[t_i : i \in V\cup\{0\}] \longrightarrow  \RR[t_i : i \in V]$ by 
$$
\bus_S^\ccc(g) = g\big|_{t_0= \sum_{i\in S}c_it_i}. 
$$
\begin{lemma}\label{gfcoef}
 Suppose $(\Delta,L)$ is hereditary where $\Delta$ has dimension $d-1$, and that $S \in \Delta$. Then 
 $$ 
\bus_S^\ccc : \PPP^d(\Delta_S,L^\ccc) \longrightarrow \PPP^d(\Delta,L),  
$$
is injective. 

Moreover if $T$ is a facet of $\Delta$ and $f=\bus_S^\ccc(g)$, then 
\begin{align}
 \partial^T f &= c_j\partial_0 \partial^{T\setminus\{j\}}g,  \ \   \mbox{ if $j\in S \subseteq T$, and} \label{fg1} \\ 
 \partial^T f &= \partial^{T}g,  \ \   \mbox{ otherwise}. \label{fg2}
\end{align}
\end{lemma}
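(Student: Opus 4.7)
The plan is to compute $\partial^T f$ explicitly for every facet $T$ of $\Delta$ via the chain rule, from which the two formulas and both well-definedness and injectivity will follow, together with a support analysis to show $\Delta_f \subseteq \Delta$. The key observation is that $\bus_S^\ccc$ is pullback along the linear map $\phi:t \mapsto (\sum_{i\in S}c_it_i,\,t)$, so $\partial_i(g\circ\phi) = (c_i\partial_0 g + \partial_i g)\circ\phi$ for $i \in S$ and $\partial_i(g\circ\phi) = (\partial_i g)\circ\phi$ for $i \in V \setminus S$. Because these operators commute, iterating over $i \in T$ and expanding the product yields
\[
\partial^T f \;=\; \sum_{T_2 \subseteq T \cap S}\Bigl(\prod_{i \in T_2} c_i\Bigr)\,\bus_S^\ccc\bigl(\partial_0^{|T_2|}\,\partial^{T\setminus T_2} g\bigr).
\]

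To prove \eqref{fg2} (facet $T$ with $S \not\subseteq T$), I argue that every $T_2 \neq \varnothing$ term vanishes. The constant $\partial_0^{|T_2|}\partial^{T \setminus T_2} g$ is (up to a positive factorial) the coefficient of $t_0^{|T_2|}\prod_{i\in T\setminus T_2}t_i$ in $g$, and the support of this monomial is $(T \setminus T_2) \cup \{0\}$; for it to lie in $\Delta_S$ one would need $(T \setminus T_2) \cup S \in \Delta$, but since $S \setminus T \neq \varnothing$ this set has cardinality $d + |S \setminus T| > d$ and so cannot be a face of $\Delta$. Only the $T_2 = \varnothing$ term survives, giving $\partial^T f = \partial^T g$. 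To prove \eqref{fg1} (facet $T \supseteq S$), the $T_2 = \varnothing$ term vanishes because $T \supseteq S$ forces $T \notin \Delta_S$; for $T_2 \neq \varnothing$ I iterate Lemma \ref{switchlemma} by peeling off one element of $T_2$ at a time, using $\partial_0^{k+1}\partial^{T'}g = -\partial_0^k\partial^{T'\cup\{i\}}g/c_i$ for $i \in S \setminus T'$ (valid since $S \setminus T' = T_2 \setminus T'$ is nonempty and $k \geq 1$ throughout the reduction). This yields
\[
\Bigl(\prod_{i \in T_2} c_i\Bigr)\partial_0^{|T_2|}\partial^{T\setminus T_2} g \;=\; (-1)^{|T_2|-1}\,c_j\,\partial_0\partial^{T\setminus\{j\}}g \quad\text{for any } j \in T_2,
\]
with the independence of the right side in $j \in S$ obtained from a separate application of Lemma \ref{switchlemma} with $k = 1$ and $T'' = T \setminus \{i,j\}$. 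Summing over nonempty $T_2 \subseteq S$ produces the combinatorial factor $\sum_{k=1}^{|S|}\binom{|S|}{k}(-1)^{k-1} = 1$, proving \eqref{fg1}.

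With the facet formulas in hand, the containment $\bus_S^\ccc(\PPP^d(\Delta_S,L^\ccc)) \subseteq \PPP^d(\Delta,L)$ reduces to three checks. Degree is preserved trivially. For $L \subseteq L_f$, any $\ell \in L$ lifts to $(\sum_i c_i\ell_i,\ell) \in L^\ccc \subseteq L_g$, and $\phi(t+\ell)-\phi(t) = (\sum_i c_i\ell_i,\ell)$ yields $f(t+\ell) = f(t)$. For $\Delta_f \subseteq \Delta$, I argue on the level of monomials: each monomial of $g$ has support $U \in \Delta_S$, and substitution of $t_0 \mapsto \sum_{i \in S}c_it_i$ turns it into a polynomial whose monomials are supported in $(U\setminus\{0\}) \cup S$, which lies in $\Delta$ by the definition of $\Delta_S$ (either $0 \notin U$ and $U \in \Delta$ directly, or $U = R \cup \{0\}$ with $R \cup S \in \Delta$).

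For injectivity, suppose $\bus_S^\ccc(g) = 0$. Then \eqref{fg2} gives $\partial^T g = 0$ for every facet $T$ of $\Delta$ with $S \not\subseteq T$, and \eqref{fg1} gives $\partial^{(T\setminus\{j\})\cup\{0\}}g = 0$ for every facet $T \supseteq S$ of $\Delta$ and each $j \in S$. These two families exactly account for the facets of $\Delta_S$ (those not containing $0$, and those containing $0$, respectively), so $\partial^F g = 0$ for every facet $F$ of $\Delta_S$. I then conclude $g \equiv 0$ by induction on $d$: for each $i \in V \cup \{0\}$, the derivative $\partial_i g$ has degree $d-1$ and lies in $\PPP^{d-1}(\lk_{\Delta_S}(\{i\}),L')$, where $L'$ is the projection of $L^\ccc$ onto the remaining coordinates; this pair is hereditary by restriction from $(\Delta_S,L^\ccc)$ (which is itself hereditary by Lemma \ref{simplp}), and the facet values vanish since joining a facet of the link to $\{i\}$ yields a facet of $\Delta_S$. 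The induction hypothesis forces $\partial_i g \equiv 0$, and Euler's identity $d\cdot g = \sum_i t_i\,\partial_i g$ closes the argument. The main obstacle is the inductive peeling in \eqref{fg1}: one must carefully verify that Lemma \ref{switchlemma} applies at each step (i.e., that $k > 0$ and the peeled index lies in $S \setminus T'$) and separately invoke it to obtain the $j$-independence; the rest is routine bookkeeping with supports and lineality.
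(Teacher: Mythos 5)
Your proof is correct and rests on the same ingredients as the paper's — the chain rule for $\bus_S^\ccc$ viewed as pullback along $\phi\colon\ttt\mapsto(\sum_{i\in S}c_it_i,\ttt)$, Lemma \ref{switchlemma}, and support containment $\Delta_g\subseteq\Delta_S$ — but the execution differs in a way worth noting. The paper keeps the operator $\prod_{i\in S'}(c_i\partial_0+\partial_i)$ (with $S'=T\cap S$) intact and collapses it to $(c_j\partial_0+\partial_j)\partial^{S'\setminus\{j\}}$ in one stroke via Lemma \ref{switchlemma}, then splits into the cases $S'=S$ versus $S'\neq S$. You instead expand the product into $\sum_{T_2\subseteq T\cap S}(\prod_{i\in T_2}c_i)\partial_0^{|T_2|}\partial^{T\setminus T_2}g$ and argue term by term. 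For \eqref{fg2} you avoid the switch lemma altogether and kill every $T_2\neq\varnothing$ term by a clean support count ($(T\setminus T_2)\cup S=T\cup S$ has size $d+|S\setminus T|>d$, so $(T\setminus T_2)\cup\{0\}\notin\Delta_S$); this makes explicit a step the paper leaves implicit. For \eqref{fg1} you peel off $T_2$ one element at a time, collect a $(-1)^{|T_2|-1}$, and sum with $\sum_{k\geq1}\binom{|S|}{k}(-1)^{k-1}=1$; this is more computational than the paper's one-line collapse but equally valid, and you correctly note that the $j$-independence of $c_j\partial_0\partial^{T\setminus\{j\}}g$ needs a separate application of the switch lemma with $k=1$. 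Finally, for injectivity the paper simply cites Theorem \ref{uniquepoly}; you re-derive the needed uniqueness inline by Euler's identity and induction on $d$, essentially reproving the uniqueness half of Theorem \ref{uniquepoly}. Both routes are sound; the paper's is shorter, while yours makes every support and lineality check fully explicit.
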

\begin{proof}
Let $g \in \PPP^d(\Delta_S, L^\ccc)$ and $f=\bus_S^\ccc(g)$. Then $L \subseteq L_f$. Suppose $T \notin \Delta$. If $S \cap T=\varnothing$, then $\partial^T f \equiv 0$. If $T=S_0 \cup R$, where $S_0=T \cap S \neq \varnothing$, then $\{0\} \cup R \notin \Delta_S$. By the chain rule, $\partial^Tf \equiv 0$. Hence $f \in \PPP^d(\Delta,L)$. 

Let $T=S'\cup R$, where $S'=T\cap S$, be a facet of $\Delta$. Then, by the chain rule,  
$$
\partial^T f = \prod_{i \in S'}(c_i\partial_0+\partial_i)\partial^R g.
$$
If $|S'|=0$, then $\partial^T f= \partial^Tg$. If $|S'|>0$, then by using Lemma \ref{switchlemma},
$$
\partial^T f = \prod_{i \in S'}(c_i\partial_0+\partial_i)\partial^R g = (c_j\partial_0+\partial_j)\partial^{S'\setminus\{j\}} \partial^Rg,
$$
where $j \in S'$. If $S'=S$, then $\partial^T f= c_j\partial_0 \partial^{T\setminus\{j\}}g$. If $S' \neq S$, then $\partial^T f = \partial^T g$. This proves \eqref{fg1}
and \eqref{fg2}. 

From Theorem \ref{uniquepoly} and \eqref{fg1}, \eqref{fg2} it follows that $\bus_S^\ccc$ is injective. 
\end{proof}

The next theorem introduces the \emph{stellar subdivision operator}\footnote{Thanks to Chris Eur for asking if subdivisions may be generalized beyond fans.} $\sub_S^\ccc$. Notice that $\sub_S^\ccc$ is universal in the sense that the definition only depends on $S$ and $\ccc$, and not on $\Delta$ and $L$. 

\begin{theorem}\label{subgen}
 Suppose $(\Delta,L)$ is hereditary where $\Delta$ has dimension $d-1$, and that $S \in \Delta$. Then $\bus_S^\ccc : \PPP^d(\Delta_S,L^\ccc) \longrightarrow \PPP^d(\Delta,L)$ is bijective, and its inverse is given by the linear operator $\sub_S^\ccc : \PPP^d(\Delta,L) \longrightarrow \PPP^d(\Delta_S,L^\ccc)$ defined by 
$$
\sub_S^\ccc(f) =f - (-1)^{s} \sum_{n=s}^\infty \frac{z^n}{n!} \cdot h_{n-s}(\dc) \, \dc^S f, \ \ \ \ \mbox{ where } s=|S|, 
$$
and where $h_k(\dc)$ is the complete homogeneous symmetric polynomial of degree $k$ in the variables $\dc_i=\partial_i/c_i$, $i \in S$, and $z = t_0 - \sum_{i \in S} c_i t_i$.

Moreover, $f$ is positive if and only if $\sub_S^\ccc(f)$ is positive. 
\end{theorem}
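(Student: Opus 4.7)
The strategy is to verify the closed form for $\sub_S^\ccc$ directly: show it lands in $\PPP^d(\Delta_S, L^\ccc)$ and is a right inverse of $\bus_S^\ccc$, then invoke the injectivity of $\bus_S^\ccc$ from Lemma~\ref{gfcoef} to upgrade to a two-sided inverse. That $\bus_S^\ccc(\sub_S^\ccc(f)) = f$ is immediate, since the substitution $t_0 \mapsto \sum_{i \in S} c_i t_i$ makes $z$ vanish, killing every correction term (each carries a factor $z^n$ with $n \ge s \ge 1$). For $\sub_S^\ccc(f) \in \PPP^d(\Delta_S, L^\ccc)$, homogeneity of degree $d$ is immediate from $\deg(z^n h_{n-s}(\dc) \dc^S f) = n + (d-n) = d$, and lineality follows by passing to the coordinates $(z, (t_i)_{i \in V})$, where $L^\ccc$ corresponds to $\{0\} \oplus L$ and both $f$ and the operators $\dc_i, h_k(\dc)$ are $L$-invariant.

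The technical heart is the support condition $\Delta_{\sub_S^\ccc(f)} \subseteq \Delta_S$. Setting $g = \sub_S^\ccc(f)$, it suffices to verify $\partial^T g \equiv 0$ for each $T \notin \Delta_S$; the case $0 \in T$ reduces to $T \subseteq V$ via $\partial^T g = \partial_z \partial^{T \setminus \{0\}} g$. For $T \subseteq V$, write $T_S = T \cap S$ and $T_V = T \setminus S$. In the new coordinates $\partial_{t_0} = \partial_z$ and $\partial_{t_i} = \partial_{t_i}^{\mathrm{new}} - c_i \partial_z$ for $i \in S$, so expanding $\partial^T$ binomially produces a sum indexed by subsets of $T_S$; the coefficients $\partial_z^\ell \partial^{T'}_{\mathrm{new}} g \vert_{z=0}$ can be read directly off the closed form ($= \partial^{T'} f$ at $\ell = 0$; $= 0$ for $1 \le \ell < s$; $= (-1)^{s+1} h_{\ell - s}(\dc) \dc^S \partial^{T'} f$ for $\ell \ge s$). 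Invoking the generating-function identity
\[
\sum_{j \ge 0} (-1)^j \, h_{n-j}(x) \, e_j(y) \;=\; h_n(x \setminus y), \qquad y \subseteq x,
\]
(which comes from $\prod_i (1 - w x_i)^{-1} \prod_j (1 - w y_j) = \prod_{x_i \notin y} (1 - w x_i)^{-1}$), each Taylor coefficient of $\partial^T g$ in $z$ either collapses to a scalar multiple of $\partial^{S \cup T_V} f$, which vanishes since $T \subseteq S \cup T_V$ and subset-closedness of $\Delta$ forces $S \cup T_V \notin \Delta$ whenever $T \notin \Delta$, or vanishes by direct cancellation when $S \subseteq T$ makes $S \cup T_V = T$; for $T = R \cup \{0\}$ the analogous vanishing uses $R \cup S \notin \Delta$. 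Combined with Lemma~\ref{gfcoef}, this proves $\bus_S^\ccc$ is bijective with inverse $\sub_S^\ccc$.

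Positivity is a clean consequence of Lemma~\ref{gfcoef}: each facet-weight of $g = \sub_S^\ccc(f)$ equals either $\partial^F f$ (for a facet $F$ of $\Delta$ with $S \not\subseteq F$, which remains a facet of $\Delta_S$) or $c_j^{-1} \partial^F f$ (for a facet $F = R \cup \{j\}$ of $\Delta$ containing $S$, producing the facet $R \cup \{0\}$ of $\Delta_S$ where $\{j\} = S \setminus R$). Every facet of $\Delta$ is thereby accounted for by a positively-scaled facet-weight of $g$, so $f$ is positive if and only if $g$ is. The main obstacle is the support argument: organizing the binomial expansion of $\partial^T$ in new coordinates, reading off the Taylor coefficients of $g$ from the closed form, and applying the symmetric-function identity to produce the decisive cancellation.
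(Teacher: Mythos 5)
Your proof is correct and follows the paper's overall strategy: show $\sub_S^\ccc(f)$ lands in $\PPP^d(\Delta_S,L^\ccc)$ with $\Delta_{\sub_S^\ccc(f)}\subseteq\Delta_S$ as the nontrivial step, note $\bus_S^\ccc\circ\sub_S^\ccc=\mathrm{id}$ trivially, and use the injectivity and facet-weight formulas \eqref{fg1}--\eqref{fg2} of Lemma~\ref{gfcoef} to conclude bijectivity and equivalence of positivity. The one presentational difference worth noting is in the support verification: you pass to $(z,\ttt)$-coordinates and run a uniform binomial expansion of $\partial^T$ against the $z$-Taylor coefficients of $g$, so the symmetric-function identity $\sum_j(-1)^j e_j h_{n-j}=\delta_{0n}$ does work for every $T$. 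The paper is slightly more economical here: for $T\notin\Delta_S$ with $S\not\subseteq T$ each correction term already carries a factor of $\partial^{S\cup T_V}f\equiv 0$, so $\partial^T g=\partial^T f\equiv 0$ with no cancellation needed, and the identity is only invoked once, to show $\dc^S g\equiv 0$, after which $\partial^T g=\partial^{T\setminus S}\partial^S g\equiv 0$ for all $T\supseteq S$ is automatic. Both routes are sound; yours is a unified computation, the paper's separates the trivial vanishing from the single genuine cancellation.
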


\begin{proof}
We first prove  $\sub_S^\ccc : \PPP^d(\Delta,L) \longrightarrow \PPP^d(\Delta_S,L^\ccc)$. Let $f \in \PPP^d(\Delta,L)$, and let $g=\sub_S^\ccc(f)$. It is straightforward to see that  $L^\ccc \subseteq L_g$. We claim $\Delta_g \subseteq \Delta_S$. By the definition of $\Delta_S$, for any $T \subseteq V \cup \{0\}$,
$$
(S\cup T) \setminus \{0\} \in \Delta \ \ \Longrightarrow \ \ T \in \Delta_S \mbox{ or } S \subseteq T.
$$
Assume $T \notin \Delta_S$ and $S \not \subseteq T$. Then $(S \cup T) \setminus \{0\} \notin \Delta$, so that $\partial^{S\cup T}f \equiv 0$, from which it follows $\partial^Tg = \partial^Tf$. If $0 \in T$, then $\partial^Tf \equiv 0$. If $0 \notin T$, then $T \notin \Delta$ so that $\partial^Tf \equiv 0$. Hence $T \not \in \Delta_g$. To prove the claim it remains to prove $\partial^Sg \equiv 0$. If $e_j(\dc)$ denotes the elementary symmetric polynomial of degree $j$ in the variables $\dc_i$, $i \in S$, then
\[
\begin{split}
    \dc^Sg  &= \dc^S f - (-1)^{s} \sum_{n=s}^\infty\sum_{j=0}^{s}  (-1)^{s-j} \frac{z^{n-s+j}}{(n-s+j)!} \cdot e_{j}(\dc)\, h_{n-s}(\dc) \, \dc^S f \\
        &= \dc^S f - \sum_{n=0}^\infty \frac{z^{n}}{n!} \left(\sum_{j=0}^{n} (-1)^{j} e_{j}(\dc) \, h_{n-j}(\dc)\right) \dc^S f =0,      \end{split}
\]
since $\sum_{j=0}^{n}  (-1)^{j} e_{j} \, h_{n-j}=\delta_{0n}$. 

Since $(\bus_S^\ccc \circ \sub_S^\ccc)(f) =f$ for all $f$ by definition, $\sub_S^\ccc$ is a bijection. The final statement follows from Lemma~\ref{gfcoef}.
\end{proof}

\rev{We now observe various basic connections between a hereditary polynomial and its stellar subdivisions, including descriptions of links and H-connectivity.}

\begin{lemma}\label{sub-induct-rules}
    Let $f \in \PPP^d(\Delta,L)$ and $S \in \Delta$. For $i \in V$, 
    \begin{align*}
        \sub_S^\ccc(f)^{\{i\}} &= \sub_S^\ccc(f^{\{i\}}), & & \mbox{if } i \not\in S \\
        \sub_S^\ccc(f)^{\{i\}} &= \sub_{S\setminus \{i\}}^{\ccc'}(f^{\{i\}}), & & \mbox{if } i \in S
    \end{align*}
    where $\ccc'$ is equal to $\ccc$ restricted to $S \setminus \{i\}$. Further if $S = \{i\}$, then 
    \[
        \sub_S^\ccc(f) = \left.f\right|_{t_i = t_0/c_i}.
    \]
\end{lemma}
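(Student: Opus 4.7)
The proof follows from direct manipulation of the series defining $\sub_S^\ccc$, combined with the injectivity of $\bus_S^\ccc$ established in Lemma~\ref{gfcoef}. I would treat the three assertions in turn.

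For the case $i \notin S$: the quantity $z = t_0 - \sum_{j\in S} c_j t_j$, the symmetric polynomial $h_{n-s}(\dc)$, and the differential operator $\dc^S$ all involve only coordinates indexed by $S \cup \{0\}$, which is disjoint from $\{i\}$. Hence $\partial_i$ and the substitution $t_i = 0$ pass termwise through the series, and the identity follows immediately:
$$
\sub_S^\ccc(f)^{\{i\}} = \partial_i \sub_S^\ccc(f)\big|_{t_i=0} = \sub_S^\ccc(\partial_i f|_{t_i=0}) = \sub_S^\ccc(f^{\{i\}}).
$$

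For the case $i \in S$ with $|S| \geq 2$: set $g = \sub_S^\ccc(f)$, so that $f = \bus_S^\ccc(g) = g|_{t_0 = \sum_{j \in S} c_j t_j}$. Applying $\partial_i$ by the chain rule and then setting $t_i = 0$ yields
$$
f^{\{i\}} = \bus_{S\setminus\{i\}}^{\ccc'}\!\left(g^{\{i\}}\right) + c_i\, \bus_{S\setminus\{i\}}^{\ccc'}\!\left(\partial_0 g\big|_{t_i=0}\right).
$$
The crucial step is showing the second term vanishes: from the defining formula, $\partial_0 g$ is a sum over $n \geq s$ whose $n$-th summand contains the factor $z^{n-1}$, and after the substitution $t_i = 0$ followed by $t_0 = \sum_{j \in S \setminus \{i\}} c_j t_j$ the polynomial $z$ collapses to $0$; since $s \geq 2$ forces $n-1 \geq 1$, every term dies. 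Therefore $\bus_{S\setminus\{i\}}^{\ccc'}(g^{\{i\}}) = f^{\{i\}} = \bus_{S\setminus\{i\}}^{\ccc'}(\sub_{S\setminus\{i\}}^{\ccc'}(f^{\{i\}}))$, and the identity follows from the injectivity of $\bus_{S\setminus\{i\}}^{\ccc'}$ (Lemma~\ref{gfcoef}). The degenerate case $S = \{i\}$ is verified by direct inspection: both sides vanish, since $\sub_{\{i\}}^{(c_i)}(f)$ contains no $t_i$ and $\sub_\emptyset$ annihilates any input.

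For the final identity, specialize to $S = \{i\}$: then $h_{n-1}(\dc_i) = \dc_i^{n-1}$, so the defining series collapses to the Taylor expansion
$$
\sum_{n=0}^{\infty} \frac{(z/c_i)^n}{n!}\, \partial_i^n f = f\big|_{t_i \mapsto t_i + z/c_i} = f\big|_{t_i = t_0/c_i},
$$
using $t_i + z/c_i = t_0/c_i$ when $z = t_0 - c_i t_i$. The main obstacle lies in the second case, specifically in justifying that $g^{\{i\}}$ and $\sub_{S\setminus\{i\}}^{\ccc'}(f^{\{i\}})$ belong to a common domain $\PPP^{d-1}(\ast,\ast)$ on which $\bus_{S\setminus\{i\}}^{\ccc'}$ is injective. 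This requires the bookkeeping identity $\lk_{\Delta_S}(\{i\}) = (\lk_\Delta(\{i\}))_{S\setminus\{i\}}$ and a matching compatibility of lineality subspaces under the projections $\pi_{\{i\}}$ — a direct but somewhat tedious check from the definitions of stellar subdivision and link.
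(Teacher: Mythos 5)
Your treatment of the cases $i \notin S$, $S = \{i\}$, and the closed form $\sub_{\{i\}}^\ccc(f) = f|_{t_i = t_0/c_i}$ is correct and close in spirit to the paper. For the core case $i \in S$ with $|S| \geq 2$, however, you take a genuinely different route. The paper applies $\dc_i$ directly to the series defining $\sub_S^\ccc(f)$; since $\dc_i z = -1$, the product rule produces two sums that recombine via the one-variable deletion recursion $h_{n+1-s}(\dc) - \dc_i\, h_{n-s}(\dc) = h_{n-(s-1)}\bigl((\dc_j)_{j \in S\setminus\{i\}}\bigr)$ to give exactly the series for $\sub_{S\setminus\{i\}}^{\ccc'}$ (applied to $\dc_i f$), and one concludes by rescaling and setting $t_i = 0$. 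This is a self-contained local identity; no appeal to injectivity or to any ambient space is needed. You instead work on the other side of the duality: set $g = \sub_S^\ccc(f)$, differentiate $f = \bus_S^\ccc(g)$ by the chain rule, show the residual term $c_i\,\bus_{S\setminus\{i\}}^{\ccc'}(\partial_0 g|_{t_i=0})$ vanishes because only positive powers of $z$ appear (correct, since $n-1 \geq s-1 \geq 1$), and then invoke the injectivity of $\bus_{S\setminus\{i\}}^{\ccc'}$ from Lemma~\ref{gfcoef}. That argument is valid in outline, but the step you flag as an ``obstacle'' is a real gap, not just tedium: to apply Lemma~\ref{gfcoef} you must check that $g^{\{i\}}$ and $\sub_{S\setminus\{i\}}^{\ccc'}(f^{\{i\}})$ lie in the same space $\PPP^{d-1}(\cdot,\cdot)$, which requires exactly $\lk_{\Delta_S}(\{i\}) = (\lk_\Delta(\{i\}))_{S\setminus\{i\}}$ together with $(L^\ccc)_{\{i\}} = (L_{\{i\}})^{\ccc'}$. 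The first of these is Corollary~\ref{sub-induct-rules-delta}, which the paper states as a \emph{consequence} of the present lemma; so as written your argument is circular in presentation order, and you would need to prove both identities directly from the definitions of stellar subdivision, link, $L^\ccc$ and $L_S$ before invoking Lemma~\ref{gfcoef}. This is doable, but it is precisely the overhead the paper's symmetric-function computation is designed to avoid.
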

\begin{proof}
    The case when $i \not\in S$ follows immediately from the definition of $\sub_S^\ccc$. The case when $S = \{i\}$ follows from the fact that $\bus_S$ is the inverse of $\sub_S$ and that $\{i\} \not\in \Delta_S$. For the case of $i \in S$, we compute
    \[
    \begin{split}
        \dc_i \sub_S^\ccc(f) &= \dc_i f - (-1)^{s-1} \sum_{n=s-1}^\infty \frac{z^n}{n!} \left[h_{n+1-s}(\dc) - h_{n-s}(\dc) \dc_i\right] \dc^S f \\
            &= \dc_i f - (-1)^{s-1} \sum_{n=s-1}^\infty \frac{z^n}{n!} \cdot h_{n-(s-1)}\big((\dc_j)_{j \in S \setminus \{i\}}\big) \, \dc^{S \setminus\{i\}} \dc_i f
    \end{split}
    \]
    where $h_{-1} = 0$. The result then follows by scaling by $c_i$ and setting $t_i = 0$.
\end{proof}

The following lemma contains standard facts, but also follows as a corollary of Lemma~\ref{sub-induct-rules}.

\begin{lemma} \label{sub-induct-rules-delta}
   Let $\Delta$ be a simplicial complex on $V$,  and let $S \in \Delta$. For $i \in V$ we have
    \begin{align*}
        \lk_{\Delta_S}(\{i\}) &= (\lk_{\Delta}(\{i\}))_S, & & \mbox{if } i \not\in S, \\
        \lk_{\Delta_S}(\{i\}) &= (\lk_{\Delta}(\{i\}))_{S\setminus\{i\}}, & & \mbox{if } i \in S, \\
      \lk_{\Delta_S}(\{0\}) &= \{R \subseteq V: S \not \subseteq R \mbox{ and } R\cup S \in \Delta\}. & & 
\end{align*}
    % Moreover if $S = \{i\}$, then  $\Delta_S \cong \Delta$\rev{, that is, $\Delta_S$ and $\Delta$ are homeomorphic}.
\end{lemma}

\begin{lemma} \label{H-conn_equiv}
    Given a simplicial complex $\Delta$ and $S \in \Delta$, $\Delta$ is H-connected if and only if $\Delta_S$ is H-connected.
\end{lemma}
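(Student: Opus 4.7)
The plan is to induct on $d := \dim\Delta + 1$. The base case $d = 1$ is immediate since both $\Delta$ and $\Delta_S$ are zero-dimensional and hence $H$-connected by convention. For $d \geq 2$, I will use the equivalent recursive characterization: a simplicial complex $\Omega$ of positive dimension is $H$-connected iff $\Omega$ is connected and $\lk_\Omega(\{v\})$ is $H$-connected for every vertex $v$ of $\Omega$. This splits the desired equivalence into a connectedness claim and a vertex-link claim.

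For the connectedness claim, when $|S|=1$ Corollary~\ref{sub-induct-rules-delta} already gives $\Delta_S \cong \Delta$; when $|S|=2$ the unique removed edge $S = \{u_1,u_2\}$ is replaced by the path $u_1, 0, u_2$ via the new edges $\{u_1,0\},\{u_2,0\} \in \Delta_S$; and when $|S| \geq 3$ no edge of $\Delta$ is removed at all. Conversely, any path in $\Delta_S$ through $0$ may be rerouted in $\Delta$ through any fixed vertex $u \in S$, since $\{w,0\} \in \Delta_S$ forces $\{w\}\cup S \in \Delta$ and hence $\{w,u\} \in \Delta$. Therefore $\Delta$ is connected iff $\Delta_S$ is connected.

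For the vertex-link claim, each original vertex $v \in V$ of $\Delta_S$ has $\lk_{\Delta_S}(\{v\})$ equal to a single stellar subdivision of $\lk_\Delta(\{v\})$ by Corollary~\ref{sub-induct-rules-delta}, so the inductive hypothesis (applied in dimension $d-2$) equates their $H$-connectedness. The main obstacle, and the core of the proof, is the new vertex $0$: writing each face $R$ of its link as $R_1 \cup R_2$ with $R_1 := R \cap S$ and $R_2 := R \setminus S$, the defining conditions $R \cup S \in \Delta$ and $S \not\subseteq R$ become $R_2 \in \lk_\Delta(S)$ and $R_1 \subsetneq S$, yielding
$$
\lk_{\Delta_S}(\{0\}) = \partial\sigma_S \,*\, \lk_\Delta(S),
$$
the simplicial join of the boundary of the abstract simplex on $S$ with the link of $S$ in $\Delta$. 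To close the induction I would prove a short auxiliary lemma: the join of two pure $H$-connected simplicial complexes is $H$-connected. This reduces to the fact that every link in a join is itself a join of links, $\lk_{\Lambda_1 * \Lambda_2}(T_1 \cup T_2) = \lk_{\Lambda_1}(T_1) * \lk_{\Lambda_2}(T_2)$, together with the observation that the join of two complexes each having at least one vertex is automatically connected (and joining with the $(-1)$-dimensional complex $\{\varnothing\}$ does nothing); a short case analysis on the sizes of $T_1$ and $T_2$ handles all faces of size at most $\dim(\Lambda_1 * \Lambda_2) - 1$. Since $\partial\sigma_S$ is $H$-connected (trivially for $|S|\leq 2$ and as a simplicial sphere for $|S|\geq 3$) and $\lk_\Delta(S)$ inherits $H$-connectedness from $\Delta$ by iterating the recursive characterization, the forward direction of the vertex-link claim at $0$ follows; the converse direction is easier, since $H$-connectedness of $\Delta_S$ immediately implies $H$-connectedness of each $\lk_{\Delta_S}(\{v\})$ for $v \in V$ and, by induction, of $\lk_\Delta(\{v\})$. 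Combining the two claims closes the induction.
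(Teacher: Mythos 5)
Your proof is correct and rests on the same inductive backbone as the paper's: induct on dimension, use the recursive characterization of H-connectedness (connected plus H-connected vertex links), and invoke Corollary~\ref{sub-induct-rules-delta} to match $\lk_{\Delta_S}(\{v\})$ against a stellar subdivision of $\lk_\Delta(\{v\})$ for $v \in V$. The genuine difference is in the treatment of the new vertex $0$. The paper merely observes that $\lk_{\Delta_S}(\{0\})$ is connected and lets the induction on $\lk_{\Delta_S}(\{w\})$ ($w \neq 0$) absorb the links of all larger faces containing $0$; you instead exhibit the explicit join decomposition $\lk_{\Delta_S}(\{0\}) = \partial\sigma_S * \lk_\Delta(S)$ and prove outright, via an auxiliary lemma that the join of pure H-connected complexes is H-connected (using $\lk_{\Lambda_1*\Lambda_2}(T_1\cup T_2) = \lk_{\Lambda_1}(T_1)*\lk_{\Lambda_2}(T_2)$ and a case split on whether $T_i$ is a facet), that this link is H-connected. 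This proves strictly more than the induction needs, but buys transparency: the join structure makes the assertion the paper calls ``straightforward to see'' completely explicit, and your auxiliary lemma is a clean reusable fact. (Incidentally, your decomposition also exposes a harmless edge case the paper's phrasing glosses over: when $S$ is itself a facet of size $2$, $\lk_{\Delta_S}(\{0\})=\partial\sigma_S$ is two isolated points and is not connected --- it is only H-connected by the zero-dimensional convention, which is exactly what your argument invokes.)
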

\begin{proof}
    We may assume that $|S| \geq 2$. It is straightforward to see that $\lk_{\Delta_S}(\{0\})$ is always connected. 

    By Lemma~\ref{sub-induct-rules-delta} and induction on the dimension of $\Delta$, we now only need to show that $\Delta$ is connected if and only if $\Delta_S$ is connected, which is standard and clear by definition. 
\end{proof}

Define an equivalence relation on positive hereditary polynomials  by $f \sim g$ if there exists a finite sequence of hereditary polynomials 
$$
f=f_0, f_1, \ldots, f_m=g
$$
such that for each $1\leq k \leq m$, either 
$$
\sub_S^\ccc(f_{k-1})= f_k \ \ \mbox{ or }\ \  \sub_S^\ccc(f_{k})= f_{k-1},
$$
for some positive real numbers $\ccc$ and $S \in \Delta_{f_{k-1}}$ or $S \in \Delta_{f_{k}}$. 

\begin{theorem}\label{support-general}
Suppose $f \sim g$, and that $\CCC_f$ and $\CCC_g$ are nonempty. Then $f$ is hereditary Lorentzian if and only if $g$ is hereditary Lorentzian. 
\end{theorem}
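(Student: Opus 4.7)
The plan is to reduce to a single stellar-subdivision step, then induct on $d = \deg f$. Since $\sim$ is the transitive-symmetric closure of moves $f \leftrightarrow \sub_S^\ccc(f)$, it suffices to prove: if $g = \sub_S^\ccc(f)$ with $\CCC_f$ and $\CCC_g$ both nonempty, then $f$ is hereditary Lorentzian if and only if $g$ is. By Theorem \ref{subgen}, $f$ and $g$ are simultaneously positive. The case $|S| = 1$ is trivial: Lemma \ref{sub-induct-rules} shows $\sub_S^\ccc$ is then just the rescaling $t_i \mapsto t_0/c_i$, under which hereditary Lorentzianness is manifestly invariant, so I assume $|S| \geq 2$.

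For the base case $d = 2$, condition (C) of Theorem \ref{mainthm_hereditary} is vacuous on both sides (the $\tau\Delta$'s are zero-dimensional), so the task reduces to comparing Hessians. In this degree only the $n = s$ term in the defining formula for $\sub_S^\ccc(f)$ survives, giving $g = f - \alpha z^2/2$ with $\alpha = \partial^S f / \prod_{i \in S} c_i > 0$ and $z = t_0 - \sum_{i \in S} c_i t_i$. Consequently $H_g = H_f^{\mathrm{ext}} - \alpha v v^\top$, where $H_f^{\mathrm{ext}}$ pads $H_f$ with a zero row and column for $t_0$ and $v = \nabla z$. The direction $f \Rightarrow g$ follows from eigenvalue interlacing applied to the rank-one positive perturbation $H_f^{\mathrm{ext}} = H_g + \alpha v v^\top$; the direction $g \Rightarrow f$ follows from $H_f = A^\top H_g A$ via Cauchy interlacing, where $A : t \mapsto (t, \sum_{i \in S} c_i t_i)$. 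Nonemptiness of each cone supplies at least one positive direction for the corresponding Hessian via Euler's identity, so ``at most one positive eigenvalue'' becomes ``exactly one'' on both sides.

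For the inductive step $d \geq 3$, I invoke Remark \ref{main-remark}: hereditary Lorentzianness is equivalent to connectedness of the support together with hereditary Lorentzianness of every codimension-one slice. Connectedness transfers between $\Delta_f$ and $\Delta_g$ via Lemma \ref{H-conn_equiv}. For each $i \in V$, Lemma \ref{sub-induct-rules} identifies $g^{\{i\}}$ with $\sub_S^\ccc(f^{\{i\}})$ when $i \notin S$ and with $\sub_{S \setminus \{i\}}^{\ccc'}(f^{\{i\}})$ when $i \in S$; since $\pi_{\{i\}}$ of each cone is contained in the corresponding slice's cone by \eqref{incl}, the inductive hypothesis applies in both cases. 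This already completes the direction $g \Rightarrow f$.

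The main obstacle is the forward direction at the new vertex $j = 0$: assuming $f$ is hereditary Lorentzian, one must show $g^{\{0\}}$ is hereditary Lorentzian. By Corollary \ref{sub-induct-rules-delta}, $\Delta_{g^{\{0\}}} = (\partial S) * \lk_{\Delta_f}(S)$, a join of two H-connected complexes. My strategy is to apply Proposition \ref{main-conv} to $g^{\{0\}}$: construct a nonempty open convex cone $\CCC \subseteq \RR^V$ by combining $\CCC_{f^S}$ (nonempty because $f^S$ is hereditary Lorentzian by hypothesis) with the positive orthant on the $S$-coordinates, and verify that every projection $(g^{\{0\}})^T$ is $\pi_T(\CCC)$-Lorentzian. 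The explicit expression obtained by setting $t_0 = 0$ in $\partial_0 \sub_S^\ccc(f)$ writes $g^{\{0\}}$ as a scalar multiple of $w \cdot f^S$ (with $w = \sum_{i \in S} c_i t_i$) plus a purely-$t_S$ quadratic correction whose Hessian is a rank-one perturbation of the type already handled in the $d = 2$ base case. Projections $(g^{\{0\}})^{\{i\}}$ for $i \in V \setminus S$ reduce via Lemma \ref{sub-induct-rules} applied inside $g$ to subdivisions of $f^{\{i\}}$, handled by induction, while projections for $i \in S$ correspond to shrinking $S$ and recursing on the same construction one dimension lower. The technical heart of a complete proof is identifying $g^{\{0\}}$ as this controlled perturbation of the product $w \cdot f^S$ and carrying out the resulting rank-one signature bookkeeping.
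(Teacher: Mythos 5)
There is a genuine gap, and you have in fact identified it yourself: the entire forward direction at the new vertex $0$ is left as a sketch. That step is not a matter of ``bookkeeping''---the polynomial $g^{\{0\}}$ is a nontrivial alternating sum $\sum_{m\ge 0}(-1)^m \frac{w^{m+s-1}}{(m+s-1)!}h_m(\dc)\,\dc^S f$ and is not a rank-one perturbation of $w\cdot f^S$ in any obvious sense once $d\ge 4$; the proposed application of Proposition~\ref{main-conv} via a product cone is therefore speculative and unverified. As it stands the argument does not prove the theorem.

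The source of the difficulty is a strategic choice. You route through Remark~\ref{main-remark}, which converts ``hereditary Lorentzian'' into ``every codimension-one slice $g^{\{i\}}$ is hereditary Lorentzian.'' This forces you to certify the slice $g^{\{0\}}$ as a \emph{full} hereditary Lorentzian polynomial, which is the hardest slice because it is the only one that is not itself a subdivision of some $f^{\{j\}}$ (Lemma~\ref{sub-induct-rules} only covers $i\ne 0$). The paper sidesteps this entirely by inducting directly on condition (Q) of Theorem~\ref{mainthm_hereditary}: one only needs to control Hessians of $g^T$ for $|T|=d-2$. When $d\ge 4$, every such $T$ has at least two elements, so one may always pick some $i\in T$ with $i\neq 0$ and pass to $g^{\{i\}}$, which \emph{is} a subdivision by Lemma~\ref{sub-induct-rules}, and then induct. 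The only place the new vertex is unavoidable is $d=3$, where $T=\{0\}$ is a legitimate size-$(d-2)$ face; there the paper observes that $g^{\{0\}}$ is either identically zero or a product of two linear forms $w(\ell-\gamma w)$ (using $s\le d=3$ in the defining series), whose Hessian trivially has at most one positive eigenvalue. Your base case $d=2$ is essentially the paper's, and your transfer of (C) via Lemma~\ref{H-conn_equiv} matches. To repair your argument you should abandon Remark~\ref{main-remark} for the inductive step and verify (Q) face by face as the paper does, treating $d=3$ separately; the per-slice formulation demands more than Theorem~\ref{mainthm_hereditary} actually needs, and that excess is exactly where you got stuck.
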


\begin{proof}
Consider the property (Q) restricted to positive hereditary polynomials $f$ of degree $d$: 
\begin{itemize}
    \item[(Q)] The Hessian of $f^T$ has at most one positive eigenvalue for each $T \in \Delta_f$ of size $d-2$. 
\end{itemize}
Let $g = \sub_S^\ccc(f)$. We prove by induction over $d$ that $f$ satisfies (Q) if and only if $g$ satisfies (Q).
If $s=1$, then this follows from Lemma \ref{sub-induct-rules}, and if $d < s$, then this follows from the fact that $g = f$. Assume $2 \leq s \leq d$.

If $d=2$, then 
$
g = f-z^2 \dc^S f/2
$. 
If $f$ satisfies (Q), then so does $g$ by the Cauchy interlacing theorem, since $\dc^S f > 0$. If $g$ satisfies (Q), then so does $f=\bus_S^\ccc(g)$ by Sylvester's law of inertia and Cauchy interlacing.  This verifies the statement for $d=2$.

If $d=3$, then $g^{\{0\}}$ is either identically zero or the product of two linearly independent linear polynomials. Hence $g^{\{0\}}$ has at most one positive eigenvalue. The statement for $d=3$ follows from Lemma \ref{sub-induct-rules} for $T = \{i\}$, $i \neq 0$, by induction.

Finally if $d \geq 4$, then the statement follows from Lemma \ref{sub-induct-rules} by induction.

Suppose $f \sim g$, and that $\CCC_f$ and $\CCC_g$ are nonempty. By Lemma \ref{H-conn_equiv},  $\Delta_f$ is H-connected if and only if $\Delta_g$ is H-connected.  Also, $f$ satisfies (Q) if and only if $g$ satisfies (Q). The theorem now follows from Theorem \ref{mainthm_hereditary}. 
\end{proof}

The following proposition may be proven by induction.  
\begin{proposition}\label{consub}
Let $f \in \RR[t_i : i \in V]$ be a positive hereditary polynomial, and suppose $S \in \Delta_f$. 
If $\vv \in \KKK_f$, then  $\left(\sum_{i \in S}c_iv_i - \epsilon, \vv\right) \in \KKK_{\sub_S^\ccc(f)}$ for all sufficiently small  $\epsilon>0$. 
\end{proposition}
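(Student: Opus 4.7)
The plan is to induct on $d=\deg f$. In the base case $d=1$, the only $S \in \Delta_f$ is a singleton $\{i\}$, and Lemma~\ref{sub-induct-rules} gives $g := \sub_S^\ccc(f) = f|_{t_i = t_0/c_i}$. Evaluating at $\ww := (c_i v_i - \epsilon, \vv)$ returns $f(v_1,\ldots,v_i - \epsilon/c_i,\ldots,v_n)$, which tends to $f(\vv)>0$ as $\epsilon \to 0^+$, so $\ww \in \CCC_g$ for small $\epsilon$ by openness.

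For the inductive step $d \geq 2$, set $\ww := (u_0,\vv)$ with $u_0 := \sum_{i \in S} c_iv_i - \epsilon$ and verify the two clauses of Definition~\ref{hericone}(2). Positivity modulo lineality is immediate: writing $\vv = \uu + \ell$ with $\uu > 0$ and $\ell \in L_f$ (using positivity of $f$), one has $\ww - (\sum c_i\ell_i,\ell) = (\sum c_iu_i - \epsilon,\uu) \in \RR_{>0}^{V \cup \{0\}}$ for small $\epsilon$, with $(\sum c_i\ell_i,\ell) \in L^\ccc \subseteq L_g$. For the projection clause at $j \in V$, Lemma~\ref{sub-induct-rules} identifies $g^{\{j\}} = \sub_{S'}^{\ccc'}(f^{\{j\}})$ with $S' = S$ or $S \setminus \{j\}$ according as $j \notin S$ or $j \in S$. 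Choosing $\ell^{(j)} \in L_f$ with $\ell^{(j)}_j = 1$ (and $\ell^{(j)}|_{S \setminus \{j\}} = 0$ when $j \in S$, possible by strong hereditarity), lifting to $L^\ccc \subseteq L_g$, and using this as the lineality vector for $\pi_{\{j\}}^g$, a direct computation yields
\[
\pi_{\{j\}}^g(\ww) \;=\; \Bigl(\sum_{i \in S'} c_i\,\bigl(\pi_{\{j\}}^f(\vv)\bigr)_i - \epsilon,\ \pi_{\{j\}}^f(\vv)\Bigr).
\]
Since $\pi_{\{j\}}^f(\vv) \in \CCC_{f^{\{j\}}}$, the inductive hypothesis applied to $f^{\{j\}}$ (degree $d-1$) and $S'$ places this in $\CCC_{g^{\{j\}}}$ for small $\epsilon$.

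The main obstacle is the projection clause at $j = 0$. I would choose $\ell^{(0)} = (1, \ell) \in L^\ccc \subseteq L_g$ with $\ell \in L_f$ satisfying $\sum_{i \in S} c_i \ell_i = 1$ and $\ell_i > 0$ for each $i \in S$ (possible by strong hereditarity). A direct calculation then gives the key identity $\sum_{i \in S} c_i \bigl(\pi_{\{0\}}^g(\ww)\bigr)_i = \epsilon$ and the perturbation formula
\[
\pi_{\{0\}}^g(\ww) \;=\; \pi_{\{0\}}^g(\ww_0) + \epsilon\,\ell\big|_{V_{\{0\}}^g}, \qquad \ww_0 := \Bigl(\sum c_i v_i,\,\vv\Bigr).
\]
When $d=2$, $g^{\{0\}}$ is linear---a positive scalar multiple of $\sum_{i \in S} c_i t_i$ when $|S|=2$ (from the formula in Theorem~\ref{subgen}, since $\dc^S f$ is a positive constant), or of $f^{\{i\}}$ when $|S|=1$---and the key identity immediately yields $g^{\{0\}}(\pi_{\{0\}}^g(\ww)) > 0$. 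For $d \geq 3$, I verify the recursive conditions on $\CCC_{g^{\{0\}}}$ via the commutativity
\[
\pi_{\{j'\}}^{g^{\{0\}}} \circ \pi_{\{0\}}^g \;=\; \pi_{\{0\}}^{g^{\{j'\}}} \circ \pi_{\{j'\}}^g \qquad (j' \in V_{\{0\}}^g \subseteq V),
\]
so the link projection condition $\pi_{\{j'\}}^{g^{\{0\}}}(\pi_{\{0\}}^g(\ww)) \in \CCC_{g^{\{j',0\}}}$ reduces to the already-handled projection clause at $j = j'$, while positivity modulo lineality for $\pi_{\{0\}}^g(\ww)$ follows from the perturbation formula and the positive choice of $\ell|_S$, using that positivity of $\sum_{i \in S} c_i(\cdot)_i$ on the class mod $L_{g^{\{0\}}}$ is equivalent to effective positivity (the cone $\CCC_{g^{\{0\}}}$ being effective in degree $\geq 2$).

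The principal technical point is that $g^{\{0\}}$ is not itself a stellar subdivision of a smaller polynomial, so the induction cannot be applied directly to it; instead, the commutativity of link projections reduces every recursive requirement on $\CCC_{g^{\{0\}}}$ to conditions at $\CCC_{g^{\{j'\}}}$ for $j' \in V$, already resolved by the induction on $d$.
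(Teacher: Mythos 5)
The paper gives no proof of Proposition~\ref{consub} (it is only asserted that induction works), so your inductive strategy — peel off the clauses of Definition~\ref{hericone}, use Lemma~\ref{sub-induct-rules} to push the projection at $j\in V$ through $\sub^\ccc_S$ and apply the inductive hypothesis, and treat $j=0$ by a direct calculation plus the commutativity of link projections — is the natural one, and much of the verification (the base case, the positivity-mod-lineality for $\ww$ itself, the key identity $\sum_{i\in S}c_i\,(\pi^g_{\{0\}}(\ww))_i=\epsilon$, the $d=2$ analysis, the reduction of the recursive link conditions at $0$ to those at $j'\in V$ via $\pi^{g^{\{0\}}}_{\{j'\}}\circ\pi^g_{\{0\}}=\pi^{g^{\{j'\}}}_{\{0\}}\circ\pi^g_{\{j'\}}$) is correct in outline.

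However, there is a genuine gap at the point you yourself single out as the principal technical obstacle: the positivity-mod-lineality clause for $\pi^g_{\{0\}}(\ww)$ when $d\geq 3$. You assert that ``positivity of $\sum_{i\in S}c_i(\cdot)_i$ on the class mod $L_{g^{\{0\}}}$ is equivalent to effective positivity,'' i.e.\ that $\sum_{i\in S}c_i\bigl(\pi^g_{\{0\}}(\ww)\bigr)_i=\epsilon>0$ forces $\pi^g_{\{0\}}(\ww)\in\RR_{>0}^{V_0}+L_{g^{\{0\}}}$. That cannot be right in general: $\RR_{>0}^{V_0}+L_{g^{\{0\}}}$ is an open cone whose dimension modulo $L_{g^{\{0\}}}$ equals $\dim\RR^{V_0}/L_{g^{\{0\}}}$, so a single linear inequality can characterize it only when that quotient is one-dimensional (i.e.\ $d=2$). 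The perturbation formula $\pi^g_{\{0\}}(\ww)=\pi^g_{\{0\}}(\ww_0)+\epsilon\,\ell|_{V_0}$ only tells you about the $S$-coordinates; the coordinates indexed by $V_S=V_0\setminus S$ give $v_i-(\sum_k c_k v_k-\epsilon)\ell_i$, which you cannot control by the choice of $\ell|_S$ alone. The missing ingredient is to use the membership $\pi^f_S(\vv)\in\CCC_{f^S}$ (available for $\vv\in\CCC_f$) together with the effectiveness of $\CCC_{f^S}$ to correct the $V_S$-coordinates by an element of $L^\ccc_{\{0\}}$ vanishing on $S$, leaving the $\epsilon$-perturbation to handle the $S$-coordinates. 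Separately, there is a small unaddressed edge case in the $j\in V$, $j\notin S$ branch: when $\{j\}\cup S\notin\Delta_f$ one has $S\notin\Delta_{f^{\{j\}}}$ and the inductive hypothesis does not directly apply; here $g^{\{j\}}=f^{\{j\}}$ (independent of $t_0$) and the projection condition reduces trivially to $\pi^f_{\{j\}}(\vv)\in\CCC_{f^{\{j\}}}$, but this needs to be said.
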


Hence if $\KKK_f$ is nonempty, then $f$ is hereditary Lorentzian if and only if $\sub_S(f)$ is hereditary Lorentzian.

\section{Volume polynomials of simple polytopes}\label{polytsec}

We now turn to the classical topic of volume polynomials associated to simple polytopes and convex bodies. These polynomials are known to be $\CCC$-Lorentzian by e.g. Hodge theory or convex geometry  \cite{Aleksandrov, McMullen, stanleyp,Timorin}. We prove that such polynomials are Lorentzian using the theory developed in Section \ref{hersec}. This leads to a self-contained and new proof of the Alexandrov-Fenchel inequalities \cite{Aleksandrov}. 

We refer to \cite{Schneider,Timorin} for undefined terminology.  
In this section  $P$  will denote  a full dimensional simple polytope of dimension $d$ in a Euclidean space $(\VV,\langle \cdot, \cdot \rangle)$. Let $\rho_1, \ldots, \rho_n$ be the (outward) unit normals of the facets of $P$.  Let further $\CCC_P$ be the set of all simple polytopes in $\VV$ that have the same normals as $P$. A polytope $Q \in \CCC_P$ is uniquely determined by its \emph{support numbers} 
$$
t_i(Q) = \max_{q \in Q} \langle \rho_i, q \rangle, \ \ \ \ 1 \leq i \leq n.
$$
If the origin is in the interior of $Q$, then $t_i(Q)$ is the distance to the supporting hyperplane in direction $\rho_i$. 
Since  \rev{the Minkowski sum $\lambda Q +  \mu R$  lies in  $\CCC_P$ whenever $Q, R \in \CCC_P$ and $\lambda, \mu >0$,} and 
$$
t_i(\lambda Q + \mu R) = \lambda t_i(Q) + \mu t_i(R), \ \ \ \lambda, \mu >0, 
$$ 
we may identify $\CCC_P$ with an open convex cone in $\RR^n$. The volume on $\CCC_P$ defines a polynomial in $t_i(Q)$, $1\leq i \leq n$, i.e., 
$$
\Vol(Q)= \pol_P(t_1(Q), \ldots,  t_n(Q)),  \ \ \ \mbox{ for all } Q \in \CCC_P,
$$
for a unique polynomial $\pol_P \in \RR[t_1,\ldots, t_n]$. 

Consider the linear space 
$$
L_P= \left\{ \Big(\langle \rho_1, \yy\rangle, \ldots, \langle \rho_n, \yy\rangle\Big) : \yy \in \VV\right\} \subseteq \RR^n.
$$
Then $\pol_P(\ttt+\ww)=\pol_P(\xx)$, for all $\ww \in L_P$, $\ttt \in \CCC_P$. Since $\CCC_P$ is open, $L_P$ is contained in the lineality space of $\pol_P$.    

Let $\Delta_P$ be the simplicial complex whose simplices are $\{ i \in [n]  :  F \subseteq P_i \}$, where $F$ is any face of $P$, and $P_i$ is the facet of $P$ with normal $\rho_i$.

For details of the proof of the next proposition we refer to \cite{Schneider,Timorin}.  
\begin{proposition}\label{derpolt}
Let $P$ be a simple polytope, and let $P_i$ be the facet  of $P$ with normal $\rho_i$ (translated to the orthogonal complement $\VV_i$ of $\rho_i$ in $\VV$). Then 
 \[
    \partial_i \pol_P(\ttt) = \pol_{P_i}\left(\left(\frac{t_j - t_i \cos(\theta_{ij})} {\sin(\theta_{ij})}\right)_j\right), 
\]
where $\theta_{ij}$ is the angle between $\rho_i$ and $\rho_j$.
\end{proposition}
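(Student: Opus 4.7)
The plan is to reduce the proposition to a classical geometric fact about support numbers and facet volumes, and then to verify the claimed polynomial identity on the open cone $\CCC_P$, which suffices since $\CCC_P$ is open in $\RR^n$. Throughout, I would parametrize $Q \in \CCC_P$ by its support numbers $\ttt = (t_1(Q),\ldots,t_n(Q))$, so that $\pol_P(\ttt) = \Vol(Q)$ by definition.

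The first key step is the classical identity
\[
    \partial_i \pol_P(\ttt) = \Vol_{d-1}(Q_i),
\]
where $Q_i$ denotes the $i$-th facet of $Q$. Geometrically, perturbing only $t_i$ by $\epsilon$ translates the facet $Q_i$ outward along $\rho_i$ by $\epsilon$ while leaving the other supporting hyperplanes fixed; to first order this adds a prism of height $\epsilon$ and base $Q_i$, yielding volume $\epsilon \cdot \Vol_{d-1}(Q_i) + O(\epsilon^2)$.

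The second key step is to identify the translated facet $\tilde Q_i := Q_i - t_i \rho_i \subseteq \VV_i$ as an element of $\CCC_{P_i}$ and to compute its support numbers. Simplicity of $P$ implies that the facets of $\tilde Q_i$ are indexed by those $j$ with $\{i,j\} \in \Delta_P$; projecting $\rho_j$ onto $\VV_i$ and normalizing gives the outward unit normal
\[
    \tilde\rho_j = \frac{\rho_j - \cos(\theta_{ij})\,\rho_i}{\sin(\theta_{ij})}.
\]
Along the edge $Q_i \cap Q_j$ the constraints $\langle \rho_i,q\rangle = t_i$ and $\langle \rho_j,q\rangle = t_j$ are simultaneously active, so $\max_{q \in Q_i}\langle \rho_j,q\rangle = t_j$, and expanding $\langle \tilde\rho_j,\, q - t_i\rho_i\rangle$ yields the support number $(t_j - t_i\cos(\theta_{ij}))/\sin(\theta_{ij})$. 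Substituting into $\Vol_{d-1}(\tilde Q_i) = \pol_{P_i}(\cdots)$ produces the right-hand side of the proposition, and since the resulting identity holds on the open set $\CCC_P \subseteq \RR^n$, it is a polynomial identity.

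The main obstacle is a mild bookkeeping issue: the formula records values for every $j \neq i$, but when $\{i,j\} \notin \Delta_P$ the direction $\tilde\rho_j$ does not correspond to an actual facet of $P_i$. This is harmless, because $\pol_{P_i}$ only depends on the variables indexed by facets of $P_i$, so the values assigned to the non-facet entries play no role in the evaluation. With this caveat noted, the identity holds verbatim; the only other delicate point is checking that $\tilde Q_i \in \CCC_{P_i}$ whenever $Q \in \CCC_P$, which again follows from simplicity.
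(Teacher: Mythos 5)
Your proposal is correct and follows essentially the same route as the paper: both identify $\partial_i\pol_P(\ttt)=\vol(Q_i)$, translate by $-t_i\rho_i$ so the facet lives in $\VV_i$, and then read off the new support numbers using the decomposition $\rho_j=\cos(\theta_{ij})\rho_i+\sin(\theta_{ij})\rho_j'$. You spell out the support-number computation and the index bookkeeping slightly more explicitly than the paper does, but the argument is the same.
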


\begin{proof}
Let $\ttt \in \CCC_P$ be the support numbers of a polytope $Q$ which contains the origin in its interior, and is such that the ray along $\rho_i$ intersects the relative interior of the facet $Q_i$. It is straightforward that $\partial_i \pol_P(\ttt) = \vol(Q_i)$. The polytope $Q' = Q - t_i \rho_i$ contains the origin in the relative interior of $Q_i'$.  The support numbers of $Q'$ are then 
\[
    t_j' = t_j - t_i \langle\rho_i, \rho_j\rangle =t_j - t_i \cos(\theta_{ij}). 
\]
Suppose $P_j$ is adjacent to $P_i$. The projection of the unit normal vector $\rho_j$ onto $\VV_i$ 
is given by $\sin(\theta_{ij}) \cdot \rho_j'$, where $\rho_j' \in \VV_i$ is the corresponding unit normal vector of $P_i$. This implies the $1/\sin(\theta_{ij})$ factors. 
\end{proof}

\begin{lemma}\label{herpol}
Let $P$ be a simple polytope. Then $(\Delta_P, L_P)$ is hereditary, and $\CCC_P \subseteq \CCC_{\pol_P}$. 
    
\end{lemma}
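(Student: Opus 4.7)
The plan splits naturally into checking the hereditary condition on $(\Delta_P, L_P)$ and then establishing the cone inclusion $\CCC_P \subseteq \CCC_{\pol_P}$ by induction on $d = \dim P$, using Proposition~\ref{derpolt} as the essential link between $\pol_P$ and the polynomials $\pol_{P_i}$ of the facets.

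For the hereditary condition, fix $T \in \Delta_P$ and let $F$ be a face of $P$ with $F \subseteq P_i$ for all $i \in T$. Simplicity of $P$ forces the normals $(\rho_i)_{i \in T}$ to be linearly independent, so the linear map $\mathcal{V} \to \RR^T$ sending $y \mapsto (\langle \rho_i, y\rangle)_{i \in T}$ is surjective. Since $L_P = \{(\langle\rho_i,y\rangle)_i : y \in \mathcal{V}\}$, this is precisely the hereditary condition $\{(\ell_i)_{i \in T} : \ell \in L_P\} = \RR^T$.

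For the cone inclusion, the base case $d = 1$ is immediate. For the inductive step, let $v \in \CCC_P$ be the support vector of a simple polytope $Q$ with the same normal fan as $P$. First, translating $Q$ by a suitable $y \in \mathcal{V}$ places the origin in the interior and shifts $v$ by $(\langle\rho_i,y\rangle)_i \in L_P \subseteq L_{\pol_P}$, showing $v \in \RR_{>0}^{V} + L_{\pol_P}$. Second, choose $\ell^{(i)} = (\cos\theta_{ij})_j \in L_P$ (coming from $y = \rho_i$), so that $\pi_{\{i\}}(v)_j = v_j - v_i\cos\theta_{ij}$. The diagonal rescaling $B\colon w_j \mapsto w_j/\sin\theta_{ij}$ then sends $\pi_{\{i\}}(v)$ to the vector of support numbers of the facet $Q_i$ of $Q$ (after translation to $\mathcal{V}_i$), so $B\pi_{\{i\}}(v) \in \CCC_{P_i}$. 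By Proposition~\ref{derpolt}, $\pol_P^{\{i\}} = \pol_{P_i} \circ B$, and by the induction on $d$, $\CCC_{P_i} \subseteq \CCC_{\pol_{P_i}}$.

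The remaining ingredient, which I expect to be the main obstacle, is the compatibility fact that $\CCC_{f \circ B} = B^{-1}(\CCC_f)$ whenever $B$ is diagonal with positive entries and $f$ is hereditary. Granted this, one concludes $\pi_{\{i\}}(v) \in B^{-1}(\CCC_{\pol_{P_i}}) = \CCC_{\pol_P^{\{i\}}}$, completing the inductive step. The auxiliary fact itself is proved by a second induction on $\deg f$: the linear case is trivial, and for the inductive step one verifies $L_{f\circ B} = B^{-1}L_f$, $\Delta_{f\circ B} = \Delta_f$, and that the projection operators can be chosen so that $B_{V_{\{i\}}} \circ \pi_{\{i\}}^{(f \circ B)} = \pi_{\{i\}}^{(f)} \circ B$, which makes the recursive definitions of $\CCC_{f \circ B}$ and $B^{-1}(\CCC_f)$ coincide termwise.
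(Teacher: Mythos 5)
Your proof is correct, but it follows a different and somewhat longer route than the paper's. Both arguments rest on the same geometric fact — that translating $Q$ by $\yy$ shifts its support vector by $(\langle \rho_i, \yy\rangle)_i \in L_P$, so translating so the origin lands in (the relative interior of) a face zeros out the corresponding support numbers and keeps the link coordinates positive. The paper uses this uniformly: for each $S \in \Delta_P$ it translates $Q$ so the origin sits in the relative interior of the face corresponding to $S$, which immediately yields $\pi_S(\vv) \in \RR_{>0}^{V_S}$ for a suitable choice of projections, and so $\vv \in \CCC_{\pol_P}$ by unwinding Definition~\ref{hericone}. No induction on dimension, no appeal to Proposition~\ref{derpolt}, and no auxiliary lemma are needed. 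Your version instead verifies only the first layer of the recursion directly (for $S = \varnothing$ and $S = \{i\}$), then hands off the rest to an induction on $\dim P$ via the identification $\pol_P^{\{i\}} = \pol_{P_i} \circ B$ from Proposition~\ref{derpolt}; this forces you to also establish the compatibility fact $\CCC_{f \circ B} = B^{-1}(\CCC_f)$ for positive diagonal $B$. That lemma is true and your sketch of it is sound (the key is choosing $\ell^{(i)}_B = b_i B^{-1}\ell^{(i)}$ so that $B_{V_{\{i\}}} \circ \pi_{\{i\}}^{f\circ B} = \pi_{\{i\}}^f \circ B$, and noting $(f \circ B)^{\{i\}} = b_i\, f^{\{i\}} \circ B_{V_{\{i\}}}$), but you pay for the explicitness by needing it at all. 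In short: your approach makes the recursive bookkeeping more visible and modular, while the paper's single translation argument is shorter and avoids both the induction on dimension and the rescaling lemma.
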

\begin{proof}
 If $S \in \Delta_P$, then $\rho_i,  i \in S$ are linearly independent since the dual of $P$ is simplicial. Hence $(\Delta_P, L_P)$ is hereditary. Let $S \in \Delta_P$ and $Q \in \CCC_P$, and let $F$ be the  face of $Q$ corresponding to $S$. Then we may translate by $\yy \in \VV$ so that the origin is in the relative interior of $F$. Then $t_i(Q+\yy)=0$ for all $i \in S$ and $t_j(Q+\yy)>0$ for all $j$ in the link of $S$. Hence $Q \in \CCC_{\pol_P}$.
\end{proof}

From Proposition \ref{derpolt} we deduce

\begin{corollary}\label{heredP}
Let $P$ be a simple $d$-dimensional polytope. Then $\pol_P \in \PPP^d(\Delta_P,L_P)$ is hereditary, and $\Delta_{\pol_P}=\Delta_P$. Moreover $\pol_P$ is positive. 
\end{corollary}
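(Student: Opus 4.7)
The plan is to verify the four assertions---degree $d$, the inclusion $L_P\subseteq L_{\pol_P}$, the equality $\Delta_{\pol_P}=\Delta_P$, and positivity---in that order, and then deduce strong hereditariness as a formal consequence. Proposition~\ref{derpolt} will be the workhorse, applied inductively on $d$. The first two points are immediate: homogeneity of Euclidean volume under $Q\mapsto\lambda Q$ forces $\pol_P$ to have degree $d$, and translation invariance of volume under $Q\mapsto Q+\yy$ (which shifts support numbers by the vector $(\langle\rho_i,\yy\rangle)_i\in L_P$) gives $L_P\subseteq L_{\pol_P}$, as already observed in the proof of Lemma~\ref{herpol}.

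The substantive step is to show $\Delta_{\pol_P}=\Delta_P$ by induction on $d$. By Proposition~\ref{derpolt}, $\partial_i\pol_P$ is obtained from the volume polynomial of the facet $P_i$ by an affine reparametrization of the variables indexed by the neighbors of $i$ in $\Delta_P$. Since $P_i$ is itself a simple polytope of dimension $d-1$ whose dual complex matches $\lk_{\Delta_P}(\{i\})$, iterating this identity shows that for any $S\subseteq[n]$, $\partial^S\pol_P$ is a reparametrization of $\pol_{P_S}$ when $S\in\Delta_P$---where $P_S=\bigcap_{i\in S}P_i$ is a face of dimension $d-|S|$---and vanishes otherwise. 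This gives both inclusions simultaneously and hence $\pol_P\in\PPP^d(\Delta_P,L_P)$.

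For positivity, when $F$ is a facet of $\Delta_P$ we have $|F|=d$ and $P_F$ is a single vertex of $P$; iterating Proposition~\ref{derpolt} along $F$ reduces $\partial^F\pol_P$ to the volume of a $0$-dimensional polytope, multiplied by a product of $1/\sin\theta_{ij}$ factors, all nonzero precisely because the simple polytope $P$ has linearly independent normals $\{\rho_i:i\in F\}$ at that vertex. Hence $\pol_P^F$ is a positive constant. Strong hereditariness is then automatic: $\Delta_{\pol_P}=\Delta_P$ and $L_P\subseteq L_{\pol_P}$ together with the hereditariness of $(\Delta_P,L_P)$ from Lemma~\ref{herpol} imply that the surjectivity condition~\eqref{simpl} for the larger space $L_{\pol_P}$ follows from the same condition for the smaller $L_P$.

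The main obstacle will be making the iteration of Proposition~\ref{derpolt} rigorous: the proposition is stated for a single partial derivative, so one must identify the variables of $\pol_{P_i}$ with the reparametrized variables of $\pol_P$, match $\Delta_{P_i}$ with $\lk_{\Delta_P}(\{i\})$, and track the vanishing of iterated derivatives on faces outside $\Delta_P$. Once this bookkeeping is in place, every step reduces to the induction hypothesis combined with the non-degeneracy of the trigonometric factors.
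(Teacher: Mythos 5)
Your proposal is correct and takes essentially the same route as the paper, which states the corollary as an immediate consequence of iterating Proposition~\ref{derpolt} (the earlier displayed observation already gives $L_P\subseteq L_{\pol_P}$). One small gloss: for positivity you should say the $\sin\theta_{ij}$ factors are \emph{positive}, not merely nonzero, which holds because adjacent facet normals of a simple polytope (and of each iterated face) make an angle in $(0,\pi)$.
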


\begin{lemma} \label{simplex_pol}
    If $P$ is a $d$-dimensional simplex containing $0$ in its interior, then $\PPP^d(\Delta_P,L_P)$ is one-dimensional and contains a nonzero polynomial of the form
    \[
        f(\ttt) = (v_1 t_1 + \cdots + v_{d+1} t_{d+1})^d
    \]
    for some positive constants $v_1,\ldots,v_{d+1}$.
\end{lemma}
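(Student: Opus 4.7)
The plan is to observe that $\PPP^d(\Delta_P,L_P)$ descends to a space of degree-$d$ polynomials on the one-dimensional quotient $\RR^{d+1}/L_P$, hence is one-dimensional, and to exhibit a generator with strictly positive coefficients via the classical Minkowski identity for closed polytopes.

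First I would unpack the data for a $d$-simplex. Since $P$ is a $d$-simplex, $\Delta_P$ is the boundary of the simplex on $\{1,\ldots,d+1\}$, i.e., it consists of all proper subsets of $[d+1]$. The normals $\rho_1,\ldots,\rho_{d+1}$ span $\VV$ (any $d$ of them already form a basis), so the map $\VV \to \RR^{d+1}$, $y \mapsto (\langle \rho_i,y\rangle)_i$, is injective, and $\dim L_P = d$. Because $\deg f = d$ and $f$ has $d+1$ variables, every derivative $\partial^S f$ with $|S| > d$ automatically vanishes, so the condition $\Delta_f \subseteq \Delta_P$ imposes no constraint. Hence $\PPP^d(\Delta_P,L_P)$ equals the space of homogeneous degree-$d$ polynomials on $\RR^{d+1}$ that factor through the one-dimensional quotient $\RR^{d+1}/L_P$.

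Second, since $L_P$ has codimension one, its annihilator in $(\RR^{d+1})^*$ is one-dimensional, spanned by any nonzero linear form $\ell(\ttt) = \sum_i v_i t_i$ whose coefficient vector $(v_1,\ldots,v_{d+1})$ satisfies $\sum_i v_i \rho_i = 0$. Every homogeneous degree-$d$ polynomial descending to a one-dimensional quotient is a scalar multiple of $\ell^d$, so $\PPP^d(\Delta_P,L_P) = \RR\cdot \ell^d$ is one-dimensional.

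The main (and only mildly subtle) step is to produce such an $\ell$ with strictly positive coefficients. I would appeal to the Minkowski identity for a closed polytope, namely
$$\sum_{i=1}^{d+1} \vol_{d-1}(P_i)\,\rho_i = 0,$$
which is obtained by applying the divergence theorem to constant vector fields on $P$ (equivalently, integrating the identity vector field on the boundary). Setting $v_i := \vol_{d-1}(P_i) > 0$ then yields a linear form $\ell = v_1 t_1 + \cdots + v_{d+1} t_{d+1}$ with strictly positive coefficients whose $d$-th power spans $\PPP^d(\Delta_P,L_P)$, completing the proof.
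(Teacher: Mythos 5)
Your proof is correct and takes essentially the same approach as the paper: both recognize that $L_P$ is a hyperplane cut out by a single linear form $\ell = \sum_i v_i t_i$, so $\PPP^d(\Delta_P,L_P) = \RR\cdot\ell^d$. The only difference is how positivity of the $v_i$ is established—you invoke the Minkowski identity $\sum_i \vol_{d-1}(P_i)\rho_i = 0$, while the paper simply asserts existence and uniqueness (up to positive rescaling) of a positive linear dependence among the normals of a simplex containing the origin in its interior.
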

\begin{proof}
Since $P$ is a simplex, there is a (unique up to positive scalar) $\vv \in \RR_{>0}^{d+1}$ such that $v_1 \rho_1 + \cdots + v_{d+1} \rho_{d+1} = 0$. Clearly $f(\ttt)$ for this $\vv$ is in $\PPP^d(\Delta_P,L_P)$. On the other hand if $g \in \PPP^d(\Delta_P,L_P)$, then $L_g$ contains $\{ \ttt \in \RR^{d+1} : v_1 t_1 + \cdots + v_{d+1} t_{d+1}=0\}$, from which it follows that $g$ is a constant multiple of $f$. 
\end{proof}

\begin{theorem}\label{mainP}
If $P$ is a simple polytope, then $\pol_P$ is hereditary Lorentzian. 
\end{theorem}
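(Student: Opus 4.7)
My plan is to apply Theorem~\ref{mainthm_hereditary} to $\pol_P$. By Corollary~\ref{heredP}, $\pol_P$ is a positive strongly hereditary polynomial of degree $d=\dim P$ with $\Delta_{\pol_P}=\Delta_P$, and by Lemma~\ref{herpol} the cone $\CCC_{\pol_P}\supseteq\CCC_P$ is non-empty. Since $\pol_P$ is positive, the remark following Theorem~\ref{mainthm_hereditary} reduces the task to verifying (i) that $\Delta_P$ is $H$-connected, and (ii) that for every $S\in\Delta_P$ with $|S|=d-2$ the Hessian of $\pol_P^S$ has at most one positive eigenvalue. Condition (i) is purely topological: $\Delta_P$ is combinatorially the boundary of the dual simplicial polytope $P^{\ast}$, a triangulation of $S^{d-1}$, and for $|S|\leq d-2$ its link in $\Delta_P$ is a triangulation of a sphere of dimension at least one, hence graph-connected.

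For (ii) I would reduce to the planar Brunn-Minkowski inequality. Given $S$ with $|S|=d-2$, the face $F=\bigcap_{i\in S}P_i$ of $P$ is a polygon, and iterating Proposition~\ref{derpolt} gives $\pol_P^S(\ttt)=c_S\cdot\pol_F(A_S\ttt)$ for some positive constant $c_S$ and some positive-diagonal linear substitution $A_S$. By Sylvester's law of inertia it therefore suffices to show that for every simple polygon $F$ the Hessian of $\pol_F$ has at most one positive eigenvalue.

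I would prove this polygon statement by induction on the number $n$ of edges, using the stellar subdivision machinery of Section~\ref{subdsec}. For a triangle $T$, Lemma~\ref{simplex_pol} expresses $\pol_T$ as a positive scalar multiple of the square of a linear form with positive coefficients; its Hessian is a rank-one positive semidefinite matrix, so $\pol_T$ is hereditary Lorentzian. For a polygon $F$ with $n\geq 4$ edges, view $F$ as obtained from an $(n-1)$-gon $F'$ by cutting off a vertex $v_{ij}$ with a new edge of outward normal $c_i\rho_i+c_j\rho_j$ (with $c_i,c_j>0$); at the level of normal fans this is the stellar subdivision of the edge $\{i,j\}$, so the subdivided complex and subdivided lineality equal $\Delta_F$ and $L_F$ respectively. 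Both $\pol_F$ and $\sub_{\{i,j\}}^{(c_i,c_j)}(\pol_{F'})$ are then strongly hereditary, positive, and lie in $\PPP^2(\Delta_F,L_F)$, a space which is one-dimensional by Theorem~\ref{uniquepoly} together with the standard observation that the balanced weights on an $n$-cycle with lineality $L_F$ form a one-parameter family. Therefore $\sub_{\{i,j\}}^{(c_i,c_j)}(\pol_{F'})=\lambda\cdot\pol_F$ for some $\lambda>0$, where positivity of $\lambda$ comes from the positivity statement in Theorem~\ref{subgen}. Since the hereditary Lorentzian property is invariant under positive scaling, Theorem~\ref{support-general} together with the inductive hypothesis yields that $\pol_F$ is hereditary Lorentzian, and in particular its Hessian has at most one positive eigenvalue.

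The step I expect to require the most care is the identification of $\sub_{\{i,j\}}^{(c_i,c_j)}(\pol_{F'})$ with a positive multiple of $\pol_F$. My proposed route bypasses an explicit computation of the area of the cut-off triangle by combining the one-dimensionality of $\PPP^2(\Delta_F,L_F)$ with the positivity preservation in Theorem~\ref{subgen}, and it is the one-dimensionality claim (which amounts to a clean balancing calculation on the complete two-dimensional simplicial fan of $F$) that carries most of the content.
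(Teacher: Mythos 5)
Your overall strategy is the same as the paper's: reduce to the quadratic case via Proposition~\ref{derpolt}, then reach a triangle using the subdivision machinery and one-dimensionality of $\PPP^2(\Delta_Q,L_Q)$. There is, however, a genuine gap in the inductive step for polygons.

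You claim that any polygon $F$ with $n\geq 4$ edges can be ``viewed as obtained from an $(n-1)$-gon $F'$ by cutting off a vertex,'' i.e.\ that $\Delta_F$ is a single stellar subdivision of an $(n-1)$-gon's boundary complex. This fails already for $n=4$: take $F$ a parallelogram, with normals $\rho,-\rho,\sigma,-\sigma$. Cutting off a vertex of a triangle $F'$ produces a quadrilateral one of whose normals lies in the strictly positive span of two of the others. No normal of a parallelogram lies in the positive span of any two of the others, so a parallelogram is not a cut-off triangle. Equivalently, extending the two edges adjacent to any fixed edge of a parallelogram yields parallel lines, so ``un-cutting'' never returns a bounded $3$-gon. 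The induction therefore does not run one edge at a time by subdivisions alone.

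The fix is exactly what the paper's proof does implicitly: use the full equivalence relation $\sim$ generated by \emph{both} stellar subdivisions and welds (inverse subdivisions), not just subdivisions. Any two complete simplicial $1$-dimensional fans in $\RR^2$ are connected by a sequence of subdivisions and welds (the $2$-dimensional case of Theorem~\ref{subsupp}/\cite{Wlod}; for the parallelogram one must first subdivide to a pentagon before welding down), and Theorem~\ref{support-general} is stated for that full equivalence relation. Replacing your inductive descent by this statement closes the gap, and the rest of your argument — positivity propagation through $\bus_S^\ccc$, one-dimensionality of $\PPP^2$ for the triangle via Lemma~\ref{simplex_pol}, and the H-connectivity of $\Delta_P$ — is sound and matches the paper's proof.
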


\begin{proof}
The cone $\CCC_{\vol_P}$ is nonempty by Lemma \ref{herpol}. Moreover $\Delta_P$ is H-connected. By Proposition \ref{derpolt}, it remains to prove that $\vol_P$ has at most one positive eigenvalue for $d=2$. By performing suitable edge subdivisions and inverse edge subdivisions to $\vol_P$, it follows that  $\vol_P \sim f$ for some polynomial $f$ such that  $\Delta_f = \Delta_Q$ and $L_Q \subseteq L_f$, where $Q$ is a triangle. Since $\PPP^2(\Delta_Q,L_Q)$ is one-dimensional by Lemma \ref{simplex_pol}, it follows that $f$ is a positive constant multiple of $\vol_Q$. The theorem now follows from Lemma \ref{simplex_pol} and Theorem \ref{support-general}.
\end{proof}

As a corollary of Theorem \ref{mainP}, we obtain the Alexandrov-Fenchel inequalities for convex bodies. The \emph{mixed volume} of convex bodies   $K_1, K_2, \ldots, K_n$ in $\RR^n$ may be defined by 
$$
V(K_1,K_2,\ldots, K_n)= \partial_1\partial_2 \cdots \partial_n \Vol(t_1K_1+t_2K_2+\cdots+t_nK_n).
$$

\begin{corollary}[Alexandrov-Fenchel inequalities \cite{Aleksandrov}]
    Let $K_1,K_2,\ldots,K_n$ be convex bodies in $\RR^n$. Then 
    \[
        V(K_1,K_2,K_3,\ldots,K_n)^2 \geq V(K_1,K_1,K_3,\ldots,K_n) \cdot V(K_2,K_2,K_3,\ldots,K_n).
    \]
    
\end{corollary}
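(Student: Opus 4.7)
The strategy is to realize each mixed volume as an iterated directional derivative of the volume polynomial $\pol_P$ of a single simple polytope $P$, and then invoke the (AF) condition of Remark~\ref{AF-remark} for the $\CCC_{\pol_P}$-Lorentzian polynomial guaranteed by Theorem~\ref{mainP}. Since mixed volumes are continuous in the Hausdorff metric and convex polytopes are dense among convex bodies, it suffices to treat the case where each $K_i$ is a convex polytope.

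Given such polytopes $K_1,\ldots,K_n$, I would choose a simple polytope $P$ whose normal fan refines the normal fan of each $K_i$; a generic simple perturbation of the Minkowski sum $K_1+\cdots+K_n$ works. Writing $\rho_1,\ldots,\rho_m$ for the outward unit normals of $P$, each $K_i$ has well-defined support numbers $t_j(K_i)=\max_{q\in K_i}\langle\rho_j,q\rangle$, and because the normal fan of $K_i$ is a coarsening of that of $P$, the vector $\ttt(K_i)\in\RR^m$ lies in $\overline{\CCC_P}\subseteq\overline{\CCC_{\pol_P}}$ by Lemma~\ref{herpol}. The identity $\Vol(Q)=\pol_P(\ttt(Q))$ extends by continuity from $\CCC_P$ to $\overline{\CCC_P}$, so for nonnegative scalars $s_1,\ldots,s_n$,
\[
\Vol(s_1K_1+\cdots+s_nK_n)=\pol_P\bigl(s_1\ttt(K_1)+\cdots+s_n\ttt(K_n)\bigr).
\]
Differentiating and using the paper's definition of mixed volume identifies
\[
V(K_{i_1},\ldots,K_{i_n}) = D_{\ttt(K_{i_1})}\cdots D_{\ttt(K_{i_n})}\pol_P
\]
for any indices $i_1,\ldots,i_n$, by multilinearity and symmetry of iterated directional derivatives.

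By Theorem~\ref{mainP}, $\pol_P$ is $\CCC_{\pol_P}$-Lorentzian, so Remark~\ref{AF-remark} gives the (AF) inequality for any vectors in $\CCC_{\pol_P}$; since both sides of (AF) are polynomials in the input vectors, the inequality extends by continuity to $\overline{\CCC_{\pol_P}}$. Applying (AF) with $\vv_1=\ttt(K_1)$, $\vv_2=\ttt(K_2)$, and $\vv_j=\ttt(K_j)$ for $3\leq j\leq n$ yields the Alexandrov-Fenchel inequality through the identification above. The main obstacle is the construction in the middle paragraph: producing the common refining simple polytope $P$ (standard but requiring a genericity argument to ensure simplicity) and verifying rigorously that each $K_i$ lies in $\overline{\CCC_P}$ with the volume formula $\Vol(Q)=\pol_P(\ttt(Q))$ extending continuously to the closure. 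Once this convex-geometric setup is in place, the remainder is a direct translation between (AF) and the language of mixed volumes.
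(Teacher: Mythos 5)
Your proposal is correct and follows essentially the same route as the paper: reduce to simple polytopes with a common normal fan (the approximation step the paper delegates to Schneider's Theorem 2.4.15), identify mixed volumes as iterated directional derivatives of $\pol_P$, and apply Theorem~\ref{mainP} together with the (AF) condition of Remark~\ref{AF-remark}. The only cosmetic difference is that you work with support vectors in the closure $\overline{\CCC_P}$ and extend (AF) there by continuity, whereas the paper approximates each body by a simple polytope lying in the open cone $\CCC_P$; both are routine and equivalent.
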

\begin{proof}
If $K_1, \ldots, K_n$ are simple polytopes in $\CCC_P$, \rev{then using Proposition \ref{comp}, }the Alexandrov-Fenchel inequalities follow directly from Theorem \ref{mainP} and Remark \ref{AF-remark}. By a simple approximation argument, the general case reduces to this case, see  \cite[Theorem 2.4.15]{Schneider}. 
\end{proof}

The same proof also reproves a recent result of Huh and the first author \cite{BH}, namely that the polynomial
$$
\Vol(t_1K_1+t_2K_2+\cdots+t_nK_n)
$$
is $\RR_{>0}^n$-Lorentzian, whenever $K_1, K_2, \ldots, K_n$ are convex bodies in $\RR^n$.

\section{Chow rings of simplicial fans}\label{chowsec}
We will now apply the theory developed so far to Chow rings of simplicial fans. The methods apply to more general rings which are intimately connected to hereditary polynomials. 

Let $\Delta$ be a simplicial complex on $V$, and let $L$ be a linear subspace of $\RR^V$. 
Define a graded $\RR$-algebra by 
$$
A(\Delta,L)= \bigoplus_{k \geq 0}A^k(\Delta,L)=\frac {\RR[x_i : i \in V]}{I(\Delta)+J(L)},
$$
where 
\begin{itemize}
    \item $I(\Delta)$ is the Stanley-Reisner ideal of $\Delta$, i.e., the ideal in $\RR[x_i : i \in V]$ generated by all monomials $\rev{\xx^S :=} x_{s_1}\cdots x_{s_k}$,  for which $\rev{S =} \{s_1,\ldots, s_k\}$ is not an face of  $\Delta$, and 
    \item $J(L)$ is the ideal generated by $\sum_{i \in V}\ell_i x_i$, for all $\ell \in L$. 
\end{itemize}
Let $\Sigma$ be a simplicial fan in a finite dimensional real vector space $\VV$ with specified ray vectors $\rho_i$, $i \in V$, which generate the one-dimensional cones of $\Sigma$. Let $L(\Sigma)$ be the linear subspace of $\RR^V$ defined by
$$
L(\Sigma) = \left( \lambda(\rho_i) : \lambda \in \VV^{\ast} \right)_{i \in V}, 
$$
and let $\Delta(\Sigma)$ be the simplicial complex on $V$ defined by
\[
    \Delta(\Sigma) = \{S \subseteq V : \{\rho_i : i \in S \} \mbox{ generates a cone in } \Sigma\}.
\]

\begin{definition}\label{chow}
The \emph{Chow ring} of a simplicial fan $\Sigma$ is the graded $\RR$-algebra $A(\Sigma)= A(\Delta(\Sigma), L(\Sigma))$. 
\end{definition}
Notice that we don't require that $\Sigma$ is rational, as is commonly required.

\begin{lemma}\label{herfan}
    If $\Sigma$ is a simplicial fan, then $(\Delta(\Sigma),L(\Sigma))$ is hereditary.
\end{lemma}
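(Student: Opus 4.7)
The plan is to verify condition \eqref{simpl} directly from the definition of $L(\Sigma)$ together with the simpliciality of $\Sigma$. Fix $T \in \Delta(\Sigma)$; by definition of $\Delta(\Sigma)$, the ray vectors $\{\rho_i : i \in T\}$ generate a cone $\sigma \in \Sigma$. The defining property of a simplicial fan is that the ray generators of each cone are linearly independent in $\VV$, so the set $\{\rho_i : i \in T\}$ is a linearly independent subset of $\VV$.

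Given any prescribed values $(x_i)_{i \in T} \in \RR^T$, I would extend $\{\rho_i : i \in T\}$ to a basis of $\VV$ and define a linear functional $\lambda \in \VV^*$ by setting $\lambda(\rho_i) = x_i$ for $i \in T$ and $\lambda \equiv 0$ on the complementary basis vectors. Such a $\lambda$ exists precisely because the $\rho_i$ with $i \in T$ can be freely assigned values on a basis. By definition of $L(\Sigma)$, the vector $(\lambda(\rho_i))_{i \in V}$ lies in $L(\Sigma)$, and its restriction to $T$ coordinates is exactly $(x_i)_{i \in T}$. This shows $\{(\ell_i)_{i \in T} : \ell \in L(\Sigma)\} = \RR^T$, establishing that $(\Delta(\Sigma),L(\Sigma))$ is hereditary.

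There is no real obstacle: the argument is a direct consequence of linear independence of ray generators, which is built into the definition of a simplicial fan. The only thing to be a little careful about is to note that the assignment $\lambda(\rho_i) = x_i$ has no compatibility constraints precisely because the $\rho_i$, $i \in T$, are linearly independent, and that the extension to all of $\VV$ can be done freely since $\VV^*$ separates arbitrary linear data on any basis.
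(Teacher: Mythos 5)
Your argument is correct and is exactly the paper's reasoning: the ray generators of any cone in a simplicial fan are linearly independent, so one can freely prescribe $\lambda(\rho_i)$ for $i \in T$ by a linear functional $\lambda \in \VV^*$, giving all of $\RR^T$. You have simply spelled out the one-line justification the paper gives in slightly more detail.
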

\begin{proof}
    Follows from the fact that any $k$-dimensional cone in $\Sigma$ is spanned by $k$ linearly independent ray vectors.
\end{proof}

For  $\alpha \in A^k(\Delta,L)^*$, let $\hat{\alpha}$ be the polynomial in $\RR[t_i: i \in V]$ defined by
$$
\hat{\alpha}(\ttt)= \frac 1 {k!} \alpha\left(\left(\sum_{i \in V}t_ix_i \right)^k\right).
$$
Then 
\begin{equation}\label{hatt1}
\partial_j \hat{\alpha}(\ttt)= \frac 1 {(k-1)!} \alpha\left(x_j\left(\sum_{i \in V}t_ix_i \right)^{k-1}\right), 
\end{equation}
from which it follows that $\hat{\alpha}(\ttt) \in \PPP^k(\Delta,L)$.
\begin{proposition}\label{charPK}
The linear map $A^k(\Delta,L)^\ast \longrightarrow \PPP^k(\Delta,L)$ defined by 
$
\alpha \longmapsto \hat{\alpha}(\ttt)
$
is  bijective. 
\end{proposition}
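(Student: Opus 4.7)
My approach would be to identify the coefficients of $\hat\alpha(\ttt)$ with the values of $\alpha$ on monomials, and then translate the defining conditions of $\PPP^k(\Delta,L)$ (namely $\Delta_f \subseteq \Delta$ and $L \subseteq L_f$) into the vanishing conditions needed for a linear functional on $\RR[x_i : i \in V]_k$ to descend to the quotient $A^k(\Delta,L)$.

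First, expanding $\left(\sum_i t_i x_i\right)^k$ by the multinomial theorem, the coefficient of $\prod_i t_i^{a_i}$ in $\hat\alpha(\ttt)$ equals $\frac{1}{\prod_i a_i!}\,\alpha\!\left(\prod_i x_i^{a_i}\right)$, for every tuple $(a_i)$ with $\sum_i a_i = k$. Injectivity is then immediate: if $\hat\alpha \equiv 0$, then $\alpha$ vanishes on every monomial, and these span $A^k(\Delta,L)$.

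For surjectivity, given $f \in \PPP^k(\Delta,L)$, I define a linear functional $\tilde\alpha$ on the polynomial ring $\RR[x_i : i \in V]_k$ by setting $\tilde\alpha\!\left(\prod_i x_i^{a_i}\right) = \prod_i a_i! \cdot c_{(a_i)}$, where $c_{(a_i)}$ is the coefficient of $\prod_i t_i^{a_i}$ in $f$. The content of the proof is to check that $\tilde\alpha$ vanishes on $I(\Delta)_k + J(L)_k$, so it descends to $\alpha \in A^k(\Delta,L)^\ast$; the equality $\hat\alpha = f$ is then immediate from the formula above.

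For the Stanley-Reisner part, a monomial $\prod_{i\in T} x_i^{a_i}$ (with $a_i > 0$) lies in $I(\Delta)_k$ precisely when $T \notin \Delta$. Since $\Delta_f \subseteq \Delta$, this forces $T \notin \Delta_f$, so $\partial^T f \equiv 0$, which in particular means that the coefficient $c_{(a_i)}$ vanishes. For the linear part, a generator of $J(L)_k$ has the form $(\sum_j \ell_j x_j) \cdot m$ with $\ell \in L$ and $m$ a monomial of degree $k-1$. Applying $\tilde\alpha$ and regrouping reduces this to showing that the corresponding linear combination of coefficients of $f$ vanishes, which is exactly the statement that the appropriate partial derivative of $D_\ell f = \sum_j \ell_j \partial_j f$ is zero. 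Since $L \subseteq L_f$ means $D_\ell f \equiv 0$ for all $\ell \in L$, this holds.

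The only point requiring care is the bookkeeping between monomial coefficients of $f$ and partial derivatives of $f$, in order to verify cleanly that $\tilde\alpha$ kills $J(L)_k$; I do not anticipate a genuine obstacle, as the computation is a direct consequence of the product rule and the identity $\partial_j (\sum_i t_i x_i)^k / k! = x_j(\sum_i t_i x_i)^{k-1}/(k-1)!$ already recorded in \eqref{hatt1}.
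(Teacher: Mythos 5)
Your proof is correct and follows essentially the same route as the paper: the functional $\tilde\alpha$ you define on $\RR[x_i : i \in V]_k$ by $\tilde\alpha(\prod_i x_i^{a_i}) = \prod_i a_i!\cdot c_{(a_i)}$ is precisely the map $a \longmapsto a(\partial)f$ from the paper's proof, and your verifications that it annihilates $I(\Delta)_k$ (via $\Delta_f \subseteq \Delta$) and $J(L)_k$ (via $L \subseteq L_f$) are exactly the unstated content of the paper's one-line claim that $a(\partial)f = 0$ for $a \in I(\Delta)+J(L)$. The only cosmetic difference is that you split the argument into injectivity plus surjectivity while the paper packages it as a two-sided inverse.
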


\begin{proof}
If $a \in I(\Delta)+J(L)$ and $f \in \PPP^k(\Delta,L)$, then $a(\partial)f=0$. Hence the linear map
$
\phi : \PPP^k(\Delta,L) \to A^k(\Delta, L)^*
$
is well-defined by 
$
\phi(f)(a)= a(\partial)f.
$
The inverse of $\phi$ is given by 
$
\alpha \longmapsto \hat{\alpha}
$.
\end{proof}

\begin{lemma}\label{multlink}
Suppose $\Delta$ is a simplicial complex on $V$, and that $L$ is a linear subspace of $\RR^V$. If $S \in \Delta$, then 
$$
a \longmapsto \xx^S a, 
$$
defines a linear map $\phi_S: A^{k}(\lk_\Delta(S), L_S) \longrightarrow A^{k+|S|}(\Delta, L)$, where 
$$
L_S= \{(\ell_i)_{i \in V_S}: (\ell_i)_{i \in V} \in L \mbox{ and } \ell_j=0 \mbox{ for all } j \in S\}.
$$
Moreover, if $\alpha \in A^{k+|S|}(\Delta,L)^*$, then
\begin{equation}\label{hatt2}
\widehat{\alpha \circ \phi_S}= \hat{\alpha}^S. 
\end{equation}
\end{lemma}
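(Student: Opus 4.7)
The plan is to unpack the definitions and verify two entirely formal assertions: well-definedness of $\phi_S$ on the quotient ring, and a direct calculation matching the two polynomials in \eqref{hatt2}.

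For the first part, I would begin with the natural ring homomorphism $\RR[x_i : i \in V_S] \hookrightarrow \RR[x_i : i \in V]$ given by the inclusion $V_S \hookrightarrow V$, composed with multiplication by $\xx^S$. To check that this descends to a well-defined linear map $A^k(\lk_\Delta(S), L_S) \to A^{k+|S|}(\Delta, L)$, I need to verify that generators of the ideal $I(\lk_\Delta(S)) + J(L_S)$ are sent into $I(\Delta) + J(L)$ after multiplying by $\xx^S$. For a Stanley-Reisner generator $x_{s_1} \cdots x_{s_k}$ corresponding to $T = \{s_1,\ldots, s_k\} \subseteq V_S$ with $T \notin \lk_\Delta(S)$, we have $T \cap S = \varnothing$ and $T \cup S \notin \Delta$, so $\xx^S \cdot x_{s_1}\cdots x_{s_k}$ is the Stanley-Reisner monomial for $T \cup S$ and lies in $I(\Delta)$. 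For a linear relation arising from $(\ell_i)_{i \in V_S} \in L_S$, extend it by zeros on $S$ to obtain $(\ell_i)_{i \in V} \in L$; then $\sum_{i \in V_S} \ell_i x_i = \sum_{i \in V} \ell_i x_i$ already lies in $J(L)$, so its product with $\xx^S$ still lies in $J(L)$.

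For the second part, I would just compute both sides. Given $\alpha \in A^{k+|S|}(\Delta,L)^*$, differentiating the definition of $\hat{\alpha}$ in $S$ variables (iterating \eqref{hatt1}) yields
\[
\partial^S \hat{\alpha}(\ttt) = \frac{1}{k!}\,\alpha\!\left(\xx^S \left(\sum_{i \in V} t_i x_i\right)^{k}\right),
\]
and setting $t_i = 0$ for $i \in S$ restricts the inner sum to indices in $V_S$:
\[
\hat{\alpha}^S(\ttt) = \frac{1}{k!}\,\alpha\!\left(\xx^S \left(\sum_{i \in V_S} t_i x_i\right)^{k}\right).
\]
On the other hand, by the definition of $\widehat{(\cdot)}$ applied to the functional $\alpha \circ \phi_S \in A^k(\lk_\Delta(S), L_S)^*$,
\[
\widehat{\alpha \circ \phi_S}(\ttt) = \frac{1}{k!}\,\alpha\!\left(\phi_S\!\left(\left(\sum_{i \in V_S} t_i x_i\right)^{k}\right)\right) = \frac{1}{k!}\,\alpha\!\left(\xx^S \left(\sum_{i \in V_S} t_i x_i\right)^{k}\right),
\]
which agrees with the previous expression.

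There is no real obstacle: the lemma is essentially a bookkeeping statement reconciling the algebraic multiplication-by-$\xx^S$ operation with the polynomial operations $\partial^S$ and restriction $t_i = 0, i \in S$ that define $f \mapsto f^S$ on $\PPP^{\bullet}(\Delta, L)$. The only mildly subtle point is the compatibility of $L_S$ with $L$, which is immediate from the explicit construction extending by zeros on $S$.
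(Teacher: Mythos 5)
Your verification of the Stanley--Reisner generators is correct, and your computation leading to equation~\eqref{hatt2} is the right idea. However, there is a genuine gap in your treatment of the linear relations, rooted in a conflation of $V_S$ with $V\setminus S$. Recall that $V_S$ is the set of vertices of $\lk_\Delta(S)$, i.e.\ those $j \notin S$ with $S\cup\{j\}\in\Delta$; in general $V_S \subsetneq V\setminus S$. Given $(\ell_i)_{i\in V_S}\in L_S$, there is by definition some $(\ell_i)_{i\in V}\in L$ vanishing on $S$ and restricting to the given vector on $V_S$, but that element of $L$ may well have nonzero entries on $V\setminus(S\cup V_S)$. Your assertion that ``$\sum_{i\in V_S}\ell_i x_i = \sum_{i\in V}\ell_i x_i$ already lies in $J(L)$'' is therefore false: the discrepancy is $\sum_{j\notin S\cup V_S}\ell_j x_j$, which is generally nonzero, and there is also no reason that ``extending by zeros'' lands you in $L$.

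The fix, which is exactly what the paper does, is to multiply by $\xx^S$ \emph{before} comparing with $J(L)$ and to absorb the discrepancy into the Stanley--Reisner ideal: for $j\in V\setminus(S\cup V_S)$ one has $S\cup\{j\}\notin\Delta$, hence $\xx^S x_j\in I(\Delta)$, and so
\[
\xx^S\sum_{i\in V_S}\ell_i x_i \;=\; \xx^S\sum_{i\in V}\ell_i x_i \;-\; \sum_{\,j:\; S\cup\{j\}\notin\Delta}\ell_j\, x_j\,\xx^S \;\in\; J(L) + I(\Delta).
\]
The same point is glossed over (more harmlessly) in your second part: setting $t_i=0$ for $i\in S$ turns the inner sum into $\sum_{i\in V\setminus S} t_i x_i$, not $\sum_{i\in V_S} t_i x_i$; these agree inside $\alpha(\xx^S(\cdot)^k)$ only because the extra terms are killed by the same relations $\xx^S x_j\in I(\Delta)$. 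You should make that reduction explicit rather than assume $V\setminus S = V_S$.
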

\begin{proof}
If $\xx^T \in I(\lk_\Delta(S))$, then $\xx^S \xx^T \in I(\Delta)$. If $(\ell_i)_{i \in V_S} \in L_S$, then 
$$
\xx^S \sum_{i \in V_S}\ell_i x_i = \xx^S \sum_{i \in V}\ell_i x_i - \sum_{j : S\cup \{j\} \notin \Delta} \ell_jx_j\xx^S \in I(\Delta) + J(\Delta).
$$
Hence $\phi_S$ is well-defined. Moreover, \eqref{hatt2} follows from \eqref{hatt1}. 
\end{proof}

Let $\sigma$ be a cone in the simplicial fan $\Sigma$, corresponding to the face $S \in \Delta(\Sigma)$. The \emph{star} of $\sigma$ in $\Sigma$ is the fan in $\VV/\spn(\sigma)$ with cones 
$$
\st_{\Sigma}(\sigma)= \{ \tau+\spn(\sigma) : \tau \in \Sigma \mbox{ contains } \sigma \}.
$$
It is straightforward to see that 
$$
\Delta(\st_{\Sigma}(\sigma))=\lk_{\Delta(\Sigma)}(S) \ \ \mbox{ and } \ \ L(\st_{\Sigma}(\sigma))= L(\Sigma)_S.  
$$
Hence Lemma \ref{multlink} identifies a canonical map $A^d(\Sigma)^* \longrightarrow A^{d-\dim(\sigma)}(\st_{\Sigma}(\sigma))^*$. 

\begin{lemma}\label{amo}
Suppose $(\Delta, L)$ is hereditary.  Any $a \in A(\Delta, L)$ may be written as 
$$
a= \sum_{S \in \Delta} \mu(S) \prod_{i \in S} x_i, \ \ \ \ \mu(S) \in \RR.
$$
In particular, $A^m(\Delta,L) =(0)$ for all $m>d$, where $d-1$ is the dimension of $\Delta$. 
\end{lemma}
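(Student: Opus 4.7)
The plan is to prove the lemma by showing every monomial $\xx^{\alpha}$ can be rewritten modulo $I(\Delta) + J(L)$ as a linear combination of squarefree monomials indexed by faces of $\Delta$. The key tool is that hereditary gives, for every $S \in \Delta$ and every $i \in S$, an element $\ell^{(i,S)} \in L$ with $\ell^{(i,S)}_i = 1$ and $\ell^{(i,S)}_j = 0$ for $j \in S \setminus \{i\}$. This yields the rewriting rule
$$
x_i \equiv -\sum_{j \notin S} \ell^{(i,S)}_j x_j \pmod{J(L)}.
$$

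First, I would reduce to monomials whose support lies in $\Delta$: if $\supp(\alpha) \notin \Delta$ then $\xx^{\alpha} \in I(\Delta)$ already. Second, define the \emph{excess} of a monomial with $\supp(\alpha) = S \in \Delta$ to be $|\alpha| - |S|$, which is zero precisely for squarefree monomials $\prod_{i \in S} x_i$. I then induct on excess. If $\xx^\alpha$ has positive excess, pick $i \in S$ with $\alpha_i \geq 2$ and apply the rewriting rule above multiplied by $\xx^{\alpha - e_i}$:
$$
\xx^\alpha \equiv -\sum_{j \notin S} \ell^{(i,S)}_j \, x_j \, \xx^{\alpha - e_i} \pmod{I(\Delta) + J(L)}.
$$
Since $\alpha_i - 1 \geq 1$, each term $x_j \xx^{\alpha - e_i}$ has support $S \cup \{j\}$. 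Those terms with $S \cup \{j\} \notin \Delta$ are killed by $I(\Delta)$. Each surviving term has support $S \cup \{j\} \in \Delta$ of size $|S|+1$ and total degree $|\alpha|$, hence excess $|\alpha| - |S| - 1$, strictly less than before. The induction reduces everything to squarefree monomials on faces.

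For the second assertion, if $m > d$, then every face $S \in \Delta$ has $|S| \leq d < m$, so there are no squarefree monomials $\prod_{i \in S} x_i$ of degree $m$ supported on faces, and the representation forces $A^m(\Delta,L) = (0)$.

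No single step is really hard here; the only thing to be careful about is the choice of $i$ with $\alpha_i \geq 2$ (guaranteed by positive excess) so that $i$ remains in the support of $\alpha - e_i$, which is what ensures the excess strictly drops rather than merely changing the support at the same excess level.
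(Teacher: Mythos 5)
Your proof is correct and follows essentially the same argument as the paper: inducting on excess $|\alpha|-|S|$ (the paper's $\sum_j(\alpha_j-1)$) and using a linear form $\ell\in L$ vanishing on $S\setminus\{i\}$ with $\ell_i=1$ to trade a repeated variable $x_i$ for variables outside $S$. The side remark about choosing $i$ with $\alpha_i\geq 2$ so that $i$ stays in the support is the same point the paper handles implicitly by reducing the exponent from $\alpha_1>1$ to $\alpha_1-1\geq 1$.
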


\begin{proof}
Suppose  
$$
x_{i_1}^{\alpha_1} \cdots x_{i_s}^{\alpha_s} \in A^k(\Delta, L)\setminus \{0\}, 
$$
where $\alpha_j>0$ for all $1\leq j \leq s$. 
Then $S=\{i_1,\ldots, i_s\} \in \Delta$. If $s=k$ we have nothing to prove. Hence we may assume $\alpha_j>1$ for some $j$. Without loss of generality we may assume $j=1$ and $i_1=1$. There is an element $\ell \in L$ such that $\ell_1=1$ and $\ell_i=0$ for all $i \in S \setminus \{1\}$.  Then 
$$
x_{i_1}^{\alpha_1} \cdots x_{i_s}^{\alpha_s}= \left(x_1-\sum_{i \in V}\ell_i x_i\right)x_{i_1}^{\alpha_1-1}x_{i_2}^{\alpha_2} \cdots x_{i_s}^{\alpha_s}.
$$
By induction over $\sum_{j=1}^s(\alpha_j-1)$ this proves the lemma. 
\end{proof}

The following convex cone is the one commonly used for fans, called the \emph{ample cone} or the cone of \emph{strictly convex elements}. 
\begin{definition}\label{chowcone}
Let $\Delta$ be a $(d-1)$-dimensional simplicial complex on $V$, and let $L$ be a linear subspace of $\RR^V$. Suppose $(\Delta,L)$ is weakly hereditary. Define an open convex cone $\CCC(\Delta,L)$ recursively as follows. 
\begin{itemize}
\item[(1)] If $d=1$, then $\CCC(\Delta,L)= \RR_{>0}^V+L$,
\item[(2)] If $d>1$, then 
$$
\CCC(\Delta,L) = (\RR_{>0}^V +L)\cap \{ \vv \in \RR^V : \pi_{\{i\}}(\vv) \in \CCC(\lk_{\Delta}(\{i\}),L_{\{i\}}) \mbox{ for all } i \in V\}. 
$$
\end{itemize}
If $\Sigma$ is a simplicial fan we define $\CCC(\Sigma)= \CCC(\Delta(\Sigma), L(\Sigma))$. 
\end{definition}

\begin{lemma}
    If $(\Delta,L)$ is weakly hereditary and $\Delta' \subseteq \Delta$ and $L \subseteq L'$, then $(\Delta',L')$ is weakly hereditary and $\CCC(\Delta,L) \subseteq \CCC(\Delta',L')$.
\end{lemma}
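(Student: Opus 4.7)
My plan is to handle the two conclusions separately. For the hereditariness of $(\tau\Delta',L')$, note first that $\tau\Delta'\subseteq\tau\Delta$: if $T\in\tau\Delta'$, then $T\in\Delta'\subseteq\Delta$ and $T\subsetneq T'$ for some $T'\in\Delta'\subseteq\Delta$, so $T$ is a non-facet of $\Delta$ and hence $T\in\tau\Delta$. The required surjection $L'\twoheadrightarrow\RR^T$ then follows from $L\subseteq L'$ combined with the surjection $L\twoheadrightarrow\RR^T$ guaranteed by hereditariness of $(\tau\Delta,L)$.

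For the cone containment, I would induct on $d'=\dim(\Delta')+1$. The subtlety is that the vertex sets $V_{\{i\}}$ of $\lk_\Delta(\{i\})$ and $V'_{\{i\}}$ of $\lk_{\Delta'}(\{i\})$ generally differ (with $V'_{\{i\}}\subseteq V_{\{i\}}$), so the projections $\pi_{\{i\}}$ and $\pi'_{\{i\}}$ land in different coordinate spaces when we recurse. To accommodate this I would strengthen the inductive hypothesis to allow $\Delta'$ on a possibly smaller ambient set $V'\subseteq V$ with $L|_{V'}\subseteq L'\subseteq\RR^{V'}$, concluding $\vv|_{V'}\in\CCC(\Delta',L')$ rather than $\vv\in\CCC(\Delta',L')$. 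The base case $d'=1$ is immediate from the inclusion $\CCC(\Delta,L)\subseteq\RR_{>0}^V+L$ followed by restriction to $V'$.

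For the inductive step, fix $\vv\in\CCC(\Delta,L)$ and $i\in V'$ with $\{i\}\in\tau\Delta'$, which forces $\{i\}\in\tau\Delta$. Choose $\ell^{(i)}\in L$ with $\ell^{(i)}_i=1$ using hereditariness of $(\tau\Delta,L)$, and use it for both $\pi_{\{i\}}$ and $\pi'_{\{i\}}$; the formula \eqref{piSdef} then yields $\pi'_{\{i\}}(\vv|_{V'})=\pi_{\{i\}}(\vv)|_{V'_{\{i\}}}$. The containments $\lk_{\Delta'}(\{i\})\subseteq\lk_\Delta(\{i\})$ on $V'_{\{i\}}\subseteq V_{\{i\}}$ and $L_{\{i\}}|_{V'_{\{i\}}}\subseteq L'_{\{i\}}$ follow from the definitions, and $(\tau\lk_\Delta(\{i\}),L_{\{i\}})$ is hereditary since any $T\in\tau\lk_\Delta(\{i\})$ satisfies $T\cup\{i\}\in\tau\Delta$, so applying hereditariness of $(\tau\Delta,L)$ at $T\cup\{i\}$ supplies the required surjection. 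The strengthened inductive hypothesis at the link level then yields $\pi'_{\{i\}}(\vv|_{V'})\in\CCC(\lk_{\Delta'}(\{i\}),L'_{\{i\}})$, and combined with $\vv|_{V'}\in\RR_{>0}^{V'}+L'$ (verified as in the base case) this gives $\vv|_{V'}\in\CCC(\Delta',L')$.

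The hard part will be the bookkeeping involved in formulating the strengthened inductive hypothesis so that the shifting ambient vertex sets in the recursive definition of $\CCC$ are properly tracked; once the correct statement is in place, each of the three auxiliary verifications (subcomplex containment, restricted lineality inclusion, and hereditariness of the induced link data) reduces to unwinding the definitions and a single application of the original hereditariness hypothesis on $(\tau\Delta,L)$.
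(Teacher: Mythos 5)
Your argument is correct and fills in a proof that the paper omits (the lemma is stated without one, evidently regarded as routine from Definition~7.8). The two key facts you establish in the hereditariness half — that $\tau\Delta'\subseteq\tau\Delta$, and that the surjections $L\twoheadrightarrow\RR^T$ for $T\in\tau\Delta$ pass to $L'$ — are both right. The cone containment is correctly reduced to an induction on $\dim\Delta'$ mirroring the recursive structure of $\CCC(\cdot,\cdot)$, and you are right to flag (and fix) the bookkeeping issue: even if one reads the lemma with a common ambient vertex set $V$, the links $\lk_{\Delta'}(\{i\})$ and $\lk_{\Delta}(\{i\})$ are generally supported on different subsets $V'_{\{i\}}\subseteq V_{\{i\}}$, so the inductive hypothesis must be phrased with a restriction map rather than a literal set inclusion. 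Choosing the \emph{same} $\ell^{(i)}\in L\subseteq L'$ for both projections so that $\pi'_{\{i\}}(\vv|_{V'})=\pi_{\{i\}}(\vv)|_{V'_{\{i\}}}$ is exactly the right move, and the three auxiliary verifications (link subcomplex, restricted lineality inclusion, and heredity of $(\tau\lk_\Delta(\{i\}),L_{\{i\}})$ via $T\cup\{i\}\in\tau\Delta$) all check out. One small presentational suggestion: make explicit that $\vv\in\CCC(\Delta,L)$ gives $\pi_{\{i\}}(\vv)\in\CCC(\lk_\Delta(\{i\}),L_{\{i\}})$ before invoking the inductive hypothesis, since that is the hypothesis being restricted.
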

\begin{proof}
    \rev{Both statements follow from the fact that $(\Delta', L')$ being weakly hereditary is weaker than $(\Delta, L)$ being weakly hereditary.}
\end{proof}

\begin{corollary} \label{subfan-cone}
    Let $\Sigma$ be a simplicial fan and suppose $\CCC(\Sigma)$ is nonempty. Then $\CCC(\Sigma')$ is nonempty for any subfan $\Sigma'$ of $\Sigma$.
\end{corollary}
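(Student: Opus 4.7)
The plan is to prove the slightly stronger statement that the coordinate restriction $\pi : \RR^V \to \RR^{V'}$ sends $\CCC(\Sigma)$ into $\CCC(\Sigma')$, where $V$ and $V'$ denote the ray sets of $\Sigma$ and $\Sigma'$ (with $V' \subseteq V$ since every ray of $\Sigma'$ is a ray of $\Sigma$). The corollary is then immediate: a witness $\vv \in \CCC(\Sigma)$ produces a witness $\pi(\vv) \in \CCC(\Sigma')$.

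I would proceed by induction on the dimension $d$ of $\Sigma$, following the recursive structure of Definition~\ref{chowcone}. The key geometric observation driving the argument is that $L(\Sigma') = \pi(L(\Sigma))$, since both spaces consist of vectors obtained by evaluating elements of $\VV^*$ on the respective ray vectors. This immediately yields $\pi(\RR_{>0}^V + L(\Sigma)) = \RR_{>0}^{V'} + L(\Sigma')$, which handles the base case $d=1$ via part~(1) of Definition~\ref{chowcone} and also verifies the first clause of part~(2) for larger $d$.

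For the inductive step, fix $\vv \in \CCC(\Sigma)$ and $i \in V'$. By definition $\pi_{\{i\}}(\vv) \in \CCC(\lk_{\Delta(\Sigma)}(\{i\}), L(\Sigma)_{\{i\}}) = \CCC(\st_\Sigma(\rho_i))$, and $\st_{\Sigma'}(\rho_i)$ is a subfan of $\st_\Sigma(\rho_i)$ of dimension $d-1$. The inductive hypothesis then provides that the coordinate restriction of $\pi_{\{i\}}(\vv)$ to $V'_{\{i\}}$ lies in $\CCC(\st_{\Sigma'}(\rho_i))$. The main (modest) technical point is verifying that this restriction coincides with $\pi_{\{i\}}(\pi(\vv))$ computed in the ambient space of $\Sigma'$: one must choose the auxiliary vectors $\ell^{(i)} \in L(\Sigma)$ from \eqref{piSdef} so that their restrictions to $V'$ serve as valid $\ell^{(i)} \in L(\Sigma')$, which is possible precisely because $L(\Sigma)|_{V'} = L(\Sigma')$. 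Once this compatibility is in place, the recursive definition of $\CCC(\Sigma')$ is satisfied at $\pi(\vv)$, completing the induction and the proof.
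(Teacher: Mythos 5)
Your proof is correct. The paper derives this corollary from the preceding (unnumbered) lemma, which is stated for simplicial complexes on a \emph{common} ground set $V$ with $\Delta' \subseteq \Delta$ and $L \subseteq L'$; but for a subfan $\Sigma' \subseteq \Sigma$ the ray set $V'$ is in general a proper subset of $V$, so the lemma does not apply verbatim, and one must additionally pass from $\RR^V$ to $\RR^{V'}$. You handle exactly this by proving the sharper statement $\pi(\CCC(\Sigma)) \subseteq \CCC(\Sigma')$ for the coordinate restriction $\pi:\RR^V\to\RR^{V'}$, unfolding Definition~\ref{chowcone} by induction on dimension and using $\pi(L(\Sigma)) = L(\Sigma')$ together with a compatible choice of the auxiliary vectors $\ell^{(i)}$ in \eqref{piSdef}. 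The inductive step correctly reduces to $\st_{\Sigma'}(\rho_i) \subseteq \st_{\Sigma}(\rho_i)$ via the identification $\CCC(\lk_{\Delta(\Sigma)}(\{i\}), L(\Sigma)_{\{i\}}) = \CCC(\st_\Sigma(\rho_i))$. This is essentially the same recursion the paper's lemma encodes, so your argument is the same approach with the implicit coordinate-restriction step made explicit; it is a welcome clarification rather than a different route.
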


We say that a functional $\alpha \in A^d(\Delta,L)^*$ is hereditary Lorentzian if $\hat{\alpha}$ is hereditary Lorentzian. Moreover $\alpha$ is called \emph{positive} if $\alpha(\xx^F) \geq 0$ for all facets $F$ of $\Delta$. It follows that if $\alpha$ is positive and $(\Delta, L)$ is weakly hereditary, then $ \CCC(\Delta,L) \subseteq \CCC_{\hat{\alpha}}$.

\begin{example}
    Let $f_\theta(\ttt) = (t_1+t_2+t_3)^2-\theta (t_1+t_2-t_0)^2$. This polynomial is hereditary for all $\theta \in \RR$, hereditary Lorentzian for $\theta > 0$, and has a negative linear coefficient for $\theta > 1$. Further, $f_\theta$ is not a fan polynomial for $0 < \theta < 1$. To see this, fix $0 < \theta < 1$ and suppose $\Sigma$ is a quadratic fan with ray vectors $\rho_0,\rho_1,\rho_2,\rho_3 \in \VV$ for which $f_\theta = \hat\alpha$ for some $\alpha \in A^2(\Sigma)^*$. Then $L(\Sigma) \subseteq L_{f_\theta}$ is at least two-dimensional, and thus 
    \[
        L(\Sigma) = L_{f_\theta} = \{(\ell_i)_{i=0}^3 \in \RR^4 : \ell_1 + \ell_2 + \ell_3 = 0 \mbox{ and } \ell_1 + \ell_2 - \ell_0 = 0\}.
    \]
    The condition $\ell_1 + \ell_2 - \ell_0 = 0$ implies $\rho_0 = \rho_1 + \rho_2$, and thus $\rho_0$ lies in the relative interior of the cone generated by $\rho_1$ and $\rho_2$. However $\{1,2\} \in \Delta_{f_\theta} \subseteq \Delta(\Sigma)$, which contradicts that $\Sigma$ is a fan.
\end{example}

The following direct consequence of Theorem \ref{mainthm_hereditary} gives a characterization of hereditary Lorentzian functionals on Chow rings of fans. 

\begin{theorem}\label{genfanchar}
Let $\Sigma$ be a simplicial fan, and let $\alpha \in A^k(\Sigma)^*$, $1\leq k \leq d$. If $\CCC_{\hat\alpha}$ is nonempty, then $\alpha$ is hereditary Lorentzian if and only if 
\begin{itemize}
\item[(C)] $\tau\Delta_{\hat\alpha}$ is H-connected, and
\item[(Q)] for all $S \in \Delta_{\hat\alpha}$ with $|S|=k-2$, the Hessian of $\hat{\alpha}^S$ has at most one positive eigenvalue. 
\end{itemize}
\end{theorem}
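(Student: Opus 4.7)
The plan is to reduce the statement directly to Theorem~\ref{mainthm_hereditary}. By definition, $\alpha$ is hereditary Lorentzian exactly when $\hat\alpha$ is; and Theorem~\ref{mainthm_hereditary} already characterizes hereditary Lorentzian polynomials of degree $\geq 2$ precisely by conditions (C) and (Q). The only substantive point is therefore to verify that $\hat\alpha$ is a hereditary polynomial of degree $k$ in the sense of Section~\ref{hersec}, so that the main theorem of that section applies.

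To this end, I would first use Proposition~\ref{charPK} to place $\hat\alpha\in\PPP^k(\Delta(\Sigma),L(\Sigma))$. From this one reads off the two inclusions $\Delta_{\hat\alpha}\subseteq\Delta(\Sigma)$ (if $S\notin\Delta(\Sigma)$, then $\xx^S\in I(\Delta(\Sigma))$ and hence $\partial^S\hat\alpha\equiv 0$) and $L(\Sigma)\subseteq L_{\hat\alpha}$ (every $\ell\in L(\Sigma)$ gives $\sum_i\ell_i x_i=0$ in $A(\Sigma)$, whence $D_\ell\hat\alpha\equiv 0$). By Lemma~\ref{herfan}, $(\Delta(\Sigma),L(\Sigma))$ is hereditary, so in particular $(\tau\Delta(\Sigma),L(\Sigma))$ is hereditary; the monotonicity lemma stated immediately before Corollary~\ref{subfan-cone}, applied with $\Delta'=\Delta_{\hat\alpha}$ and $L'=L_{\hat\alpha}$, then shows that $(\tau\Delta_{\hat\alpha},L_{\hat\alpha})$ is hereditary. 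Thus $\hat\alpha$ is a hereditary polynomial of degree $k$.

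For $k\geq 2$, this together with the hypothesis $\CCC_{\hat\alpha}\neq\varnothing$ places $\hat\alpha$ squarely within the scope of Theorem~\ref{mainthm_hereditary}, and the claimed biconditional follows. The borderline case $k=1$ requires only a one-line check: both (C) and (Q) hold vacuously for a $0$-dimensional $\Delta_{\hat\alpha}$, while any linear polynomial with nonempty $\CCC_{\hat\alpha}$ is automatically $\CCC_{\hat\alpha}$-Lorentzian by Definition~\ref{hericone}(1). The only genuine bookkeeping, and hence the main obstacle, is in carefully distinguishing the pair $(\Delta(\Sigma),L(\Sigma))$ attached to the fan from the intrinsic pair $(\Delta_{\hat\alpha},L_{\hat\alpha})$ attached to the polynomial; once this is handled cleanly by the monotonicity lemma, the theorem is an immediate corollary of the general hereditary characterization of Section~\ref{hersec}.
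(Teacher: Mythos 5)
Your proposal is correct and takes the same route the paper intends: the paper states the result as a direct consequence of Theorem~\ref{mainthm_hereditary}, and your argument simply makes that reduction explicit by verifying, via Proposition~\ref{charPK}, Lemma~\ref{herfan}, and the monotonicity lemma, that $\hat\alpha$ is a hereditary polynomial so the main theorem of Section~\ref{hersec} applies, together with a clean disposal of the $k=1$ edge case.
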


We also formulate a version for $d$-dimensional fans for which $A^d(\Sigma)$ is one-dimensional. The \emph{volume polynomial} of $\Sigma$  is then defined (up to a  constant) as $\vol_\Sigma= \hat{\alpha}$, where \rev{$\alpha \in A^d(\Sigma)^*$} is chosen so that $\alpha(\xx^F)>0$ for some facet $F$ of $\Delta(\Sigma)$. We say that $\Sigma$ is \emph{positive} if $\hat{\alpha}$ is positive. We say that $\Sigma$ is \emph{hereditary Lorentzian} if $\vol_{\Sigma}$ is hereditary Lorentzian. 

\begin{theorem}\label{posfanchar}
Let $\Sigma$ be a positive $d$-dimensional simplicial fan. If $\CCC(\Sigma)$ is nonempty, then $\Sigma$ is hereditary Lorentzian if and only if 
\begin{itemize}
\item[(C)] $\Delta(\Sigma)$ is H-connected, and
\item[(Q)] for all $S \in \Delta(\Sigma)$ with $|S|=d-2$, the Hessian of $\vol_{\Sigma}^S$ has at most one positive eigenvalue. 
\end{itemize}
\end{theorem}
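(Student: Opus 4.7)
The plan is to deduce this directly from Theorem~\ref{mainthm_hereditary}, applied to the polynomial $\vol_\Sigma$ itself, after installing the required hereditary structure. By Lemma~\ref{herfan} the pair $(\Delta(\Sigma), L(\Sigma))$ is hereditary, and by Proposition~\ref{charPK} the bijection $\alpha \longmapsto \hat\alpha$ identifies $A^d(\Sigma)^*$ with $\PPP^d(\Delta(\Sigma), L(\Sigma))$. Consequently $\vol_\Sigma=\hat\alpha$ is a strongly hereditary polynomial with $L(\Sigma)\subseteq L_{\vol_\Sigma}$ and $\Delta_{\vol_\Sigma}\subseteq\Delta(\Sigma)$.

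Next I would verify that the positivity hypothesis upgrades these containments to the equalities $\Delta_{\vol_\Sigma}=\Delta(\Sigma)$ and $\CCC(\Sigma)\subseteq \CCC_{\vol_\Sigma}$. For the first, positivity means that $\partial^F \vol_\Sigma=\alpha(\xx^F)>0$ for each facet $F$ of $\Delta(\Sigma)$, so every such $F$ lies in $\Delta_{\vol_\Sigma}$; combined with the reverse inclusion above this gives equality, and then positivity of $\vol_\Sigma$ as a hereditary polynomial (in the sense of Definition before \ref{helor}) is automatic. For the cone containment, I would compare Definitions~\ref{hericone} and~\ref{chowcone} by induction on $d$: they are literally identical in their recursive clauses, and in the base case $d=1$ the positivity of the linear polynomial $\vol_\Sigma$ forces $\CCC_{\vol_\Sigma}=\RR_{>0}^V+L_{\vol_\Sigma}\supseteq \RR_{>0}^V+L(\Sigma)=\CCC(\Delta(\Sigma),L(\Sigma))$. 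The inductive step uses that $\vol_\Sigma^{\{i\}}$ is itself a volume polynomial of the star $\st_\Sigma(\rho_i)$ via Lemma~\ref{multlink} and equation~\eqref{hatt2}, so the link-level cones match up as well. In particular $\CCC_{\vol_\Sigma}$ is nonempty.

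With these identifications secured, Theorem~\ref{mainthm_hereditary} applies and characterizes hereditary Lorentzian-ness of $\vol_\Sigma$ by H-connectedness of $\tau\Delta_{\vol_\Sigma}$ together with the Hessian condition (Q) for faces of size $d-2$ in $\Delta_{\vol_\Sigma}$. Since $\vol_\Sigma$ is positive, the remark immediately following Theorem~\ref{mainthm_hereditary} allows us to replace $\tau\Delta_{\vol_\Sigma}$ by $\Delta_{\vol_\Sigma}$ itself in~(C), and since $\Delta_{\vol_\Sigma}=\Delta(\Sigma)$ both conditions are precisely the (C) and (Q) stated in the theorem.

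The proof is essentially a reduction, so I expect no substantive obstacle; the only care needed is bookkeeping, namely making sure that the two recursive definitions of the cones ($\CCC_f$ versus $\CCC(\Delta,L)$) really do agree under positivity, and that passing from a functional $\alpha$ to the polynomial $\hat\alpha$ preserves facial structure in the way claimed. Both are routine once the hereditary framework of Section~\ref{hersec} is in place.
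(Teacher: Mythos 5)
Your proof is correct and follows the paper's intended route: the paper presents this theorem, like Theorem~\ref{genfanchar}, as a direct consequence of Theorem~\ref{mainthm_hereditary}, and you have supplied the supporting reductions (Lemma~\ref{herfan}, Proposition~\ref{charPK}, the comparison of $\CCC(\Sigma)$ with $\CCC_{\vol_\Sigma}$, and the remark after Theorem~\ref{mainthm_hereditary} allowing $\Delta_{\vol_\Sigma}$ in place of $\tau\Delta_{\vol_\Sigma}$). The only thing worth noting is that you tacitly read ``$\Sigma$ positive'' as $\alpha(\xx^F)>0$ for \emph{all} facets $F$ of $\Delta(\Sigma)$, which is a mild strengthening of the paper's literal definition ($\hat\alpha^F>0$ for facets of $\Delta_{\hat\alpha}$), but it is precisely the reading that makes $\Delta_{\vol_\Sigma}=\Delta(\Sigma)$ and hence makes the theorem's conditions, stated on $\Delta(\Sigma)$, match those of Theorem~\ref{mainthm_hereditary}.
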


\begin{remark}
If $A^2(\st_\Sigma(\sigma))$ is one-dimensional for all cones  $\sigma$ of dimension $d-2$, then (Q) is equivalent to \rev{the statement} that $\st_\Sigma(\sigma)$ is (hereditary) Lorentzian for all cones  $\sigma$ of dimension $d-2$. 
\end{remark}

\begin{remark}
In Section 5 of \cite{BaHu}, Babaee and Huh  constructed a two-dimensional positive simplicial fan in $\RR^4$ for which the Hessian of its volume polynomial has more than one positive eigenvalue. Hence this fan fails to be hereditary Lorentzian. We thank June Huh for pointing out this example. 
    
\end{remark}

The \emph{product} of two fans $\Sigma_1$ and $\Sigma_2$ on $\VV_1$ and $\VV_2$, respectively is the fan $\Sigma_1 \times \Sigma_2$ on $\VV_1\oplus \VV_2$ with cones $\sigma_1 \times \sigma_2$, where $\sigma_1 \in \Sigma_1$ and $\sigma_2 \in \Sigma_2$.  The following theorem is a direct consequence of Lemma \ref{dirprod}. 

\begin{theorem}
Suppose $\Sigma = \Sigma_1 \times \Sigma_2$ is the product of two simplicial fans of dimensions $p$ and $q$, respectively. If $\alpha \in A^p(\Sigma_1)^*$ and $\beta \in A^q(\Sigma_2)^*$ are positive hereditary Lorentzian, then so is $\alpha \times \beta \in A^{p+q}(\Sigma)^*$. 

\end{theorem}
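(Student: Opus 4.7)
The plan is to reduce the statement to Proposition~\ref{prodHL} via the correspondence between functionals on Chow rings and strongly hereditary polynomials given by Proposition~\ref{charPK}. First I would unpack the combinatorial data of the product fan: the ray index set of $\Sigma$ is $V = V_1 \sqcup V_2$, the simplicial complex is $\Delta(\Sigma) = \Delta(\Sigma_1) \times \Delta(\Sigma_2)$ (each cone in $\Sigma$ is a product of a cone from $\Sigma_1$ and one from $\Sigma_2$), and $L(\Sigma) = L(\Sigma_1) \oplus L(\Sigma_2)$ (since a linear functional on $\VV_1 \oplus \VV_2$ decomposes as a sum of linear functionals on each factor). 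Under these identifications one has $A(\Sigma) \cong A(\Sigma_1) \otimes A(\Sigma_2)$, and $\alpha \times \beta$ is defined as the tensor product functional $\alpha \otimes \beta$, which by Lemma~\ref{amo} is automatically supported on the single summand $A^p(\Sigma_1) \otimes A^q(\Sigma_2)$ of $A^{p+q}(\Sigma)$.

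Next I would establish the factorization
\[
    \widehat{\alpha \times \beta}(\ttt) = \hat\alpha(\ttt^{(1)}) \cdot \hat\beta(\ttt^{(2)}),
\]
where $\ttt = (\ttt^{(1)}, \ttt^{(2)})$ is split according to $V = V_1 \sqcup V_2$. This is a direct binomial calculation: expanding $(\sum_{i \in V} t_i x_i)^{p+q}$ by which side each factor comes from, only the bidegree $(p,q)$ cross-term survives when paired with $\alpha \times \beta$, and the binomial coefficient $\binom{p+q}{p}$ cancels exactly against $p!\,q!/(p+q)!$ in the definition of the hat operation.

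Finally I would apply Proposition~\ref{prodHL}. By Lemma~\ref{herfan}, each pair $(\Delta(\Sigma_i), L(\Sigma_i))$ is hereditary, so Proposition~\ref{charPK} places $\hat\alpha$ and $\hat\beta$ in $\PPP^p(\Delta(\Sigma_1), L(\Sigma_1))$ and $\PPP^q(\Delta(\Sigma_2), L(\Sigma_2))$ respectively; as members of these spaces they are strongly hereditary polynomials on disjoint sets of variables. They are positive because $\alpha$ and $\beta$ are positive, and hereditary Lorentzian by hypothesis. Proposition~\ref{prodHL} then yields that the product $\hat\alpha \cdot \hat\beta = \widehat{\alpha \times \beta}$ is hereditary Lorentzian, which by definition means $\alpha \times \beta$ is hereditary Lorentzian. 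All the heavy analytic work is encoded in the earlier sections, so the only genuine step here is the product formula for the associated polynomials, which is the main (though mild) obstacle.
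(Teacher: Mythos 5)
Your proof is correct and takes essentially the same route as the paper: both reduce the statement to Proposition~\ref{prodHL} via the factorization $\widehat{\alpha\times\beta}=\hat\alpha\cdot\hat\beta$. The only (cosmetic) difference is that you verify this factorization by a direct binomial expansion, whereas the paper obtains it from Lemma~\ref{dirprod} and the uniqueness in Theorem~\ref{uniquepoly}.
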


\subsection{A support theorem for Lorentzian functionals on fans}\label{suppfans}
Recall that the support $|\Sigma|$ of a fan $\Sigma$ is the union of its cones. We will prove here that the Lorentzian property only depends on the support of the fan. More precisely, if $\Sigma$ and $\Sigma'$ are $d$-dimensional simplicial fans with the same support, then there is a canonical bijection $A^d(\Sigma)^* \longrightarrow A^d(\Sigma')^*$,      $\alpha \longmapsto \alpha'$, such that 
$$
 \mbox{$\alpha$ is hereditary Lorentzian if and only if $\alpha'$  is hereditary Lorentzian, }
$$
provided that $\CCC_{\hat\alpha}$ and $\CCC_{\hat\alpha'}$ are nonempty. The corresponding property for Lefschetz fans was recently proved by Ardila, Denham and Huh in \cite{Lagrangian}.

Let $\Sigma$ be a simplicial fan, and let $\rho_0$ be a vector in the relative interior of a cone $\sigma \in \Sigma$. The \emph{stellar subdivision} of $\Sigma$ with respect to  $\rho_0$ is the simplicial fan $\sub_{\rho_0}(\Sigma)$ obtained from $\Sigma$ by
\begin{itemize}
    \item removing all cones containing $\sigma$, and
    \item adding all cones generated by $\tau$ and $\rho_0$, where $\sigma \not \subseteq \tau \in \Sigma$ is such that the cone generated by $\tau$ and $\sigma$ is in $\Sigma$.
\end{itemize}
If $S = \{i \in V : \rho_i \in \sigma\} \in \Delta(\Sigma)$, then 
$$\rho_0 = \sum_{i \in S} c_i \rho_i$$ 
for \rev{some} positive real numbers $\ccc = (c_i)_{i \in S}$. It follows that $\Delta(\sub_{\rho_0}(\Sigma)) = \Delta(\Sigma)_S$ and $L(\sub_{\rho_0}(\Sigma)) = L(\Sigma)^\ccc$, and thus $A(\sub_{\rho_0}(\Sigma)) = A(\Delta(\Sigma)_S, L(\Sigma)^\ccc)$. Hence by Theorem \ref{subgen} and Proposition \ref{charPK}, $\sub_{\rho_0}$ defines a linear bijection 
$$
\sub_{\rho_0} : A^d(\Sigma)^* \longrightarrow A^d(\sub_{\rho_0}(\Sigma))^*. 
$$
Moreover $\sub_{\rho_0}$ preserves positivity. Clearly $|\Sigma| = |\sub_{\rho_0}(\Sigma)|$, and the converse is also true\footnote{Theorem A in \cite{Wlod} is formulated for rational fans. However it is true also for fans in $\RR^n$, see the comment after Theorem 8.1 in \cite{Wlod}.}.  

\begin{theorem}[Theorem A in \cite{Wlod}]\label{subsupp}
Let $\Sigma$ and $\Sigma'$ be simplicial fans in $\VV$. Then $|\Sigma|=|\Sigma'|$ if and only if there exists a finite sequence of simplicial fans
\[
    \Sigma = \Sigma_0, \Sigma_1, \ldots, \Sigma_m = \Sigma'
\]
such that for each $1 \leq k \leq m$, either
\[
    \sub_\rho(\Sigma_{k-1})= \Sigma_k \ \ \mbox{ or }\ \  \sub_\rho(\Sigma_{k})= \Sigma_{k-1}, 
\]
for some  $\rho$ in $|\Sigma_{k-1}|=|\Sigma_k|$.
    
\end{theorem}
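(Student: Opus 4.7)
The forward direction is immediate from the definition of stellar subdivision: performing $\sub_\rho$ replaces the star of $\sigma$ (all cones containing $\sigma$) with cones spanned by $\rho$ together with proper faces of those cones, and these new cones tile exactly the union of the removed ones, so $|\sub_\rho(\Sigma)|=|\Sigma|$. Iterating such moves (and their inverses) preserves support, so one direction of the equivalence follows.

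For the converse, the plan is to build a common simplicial refinement $\Sigma''$ of $\Sigma$ and $\Sigma'$, and then show that any simplicial refinement of a simplicial fan is reachable by a sequence of stellar subdivisions; the desired sequence from $\Sigma$ to $\Sigma'$ is then obtained by concatenating the path $\Sigma \to \Sigma''$ with the reverse (inverse subdivisions) of $\Sigma' \to \Sigma''$. To construct $\Sigma''$, first form the coarsest common refinement $\Sigma\wedge\Sigma' = \{\sigma\cap\sigma' : \sigma\in\Sigma,\ \sigma'\in\Sigma'\}$, which is a (possibly non-simplicial) fan with the same support refining both; then simplicialize by adding a ray in the relative interior of each non-simplicial cone, which is literally the operation of stellar subdivision at a new ray vector. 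This produces a simplicial $\Sigma''$ refining both $\Sigma$ and $\Sigma'$.

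The central technical step is then: given a simplicial refinement $\Sigma''$ of a simplicial fan $\Sigma$ with $|\Sigma|=|\Sigma''|$, produce a chain $\Sigma = \Sigma_0,\Sigma_1,\ldots,\Sigma_m = \Sigma''$ where each $\Sigma_k$ is a stellar subdivision of $\Sigma_{k-1}$. Proceed by induction on the number $N$ of rays in $\Sigma''^{(1)}\setminus \Sigma^{(1)}$. If $N = 0$, then simpliciality of both fans together with the refinement hypothesis forces $\Sigma = \Sigma''$ (each maximal cone of $\Sigma''$ is contained in some cone of $\Sigma$ of the same dimension and spanned by the same rays). If $N \geq 1$, pick a new ray $\rho$ of $\Sigma''$, let $\sigma$ be the unique minimal cone of $\Sigma$ whose relative interior contains $\rho$, and let $\widetilde\Sigma = \sub_\rho(\Sigma)$. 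Verify that $\Sigma''$ still refines $\widetilde\Sigma$ and that the number of new rays has decreased by one, then apply the inductive hypothesis to $(\widetilde\Sigma,\Sigma'')$.

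The main obstacle is precisely the verification that $\Sigma''$ refines $\widetilde\Sigma$. One must show that every cone $\tau \in \Sigma''$ is contained in some cone of $\widetilde\Sigma$. For cones of $\Sigma''$ disjoint from the relative interior of $\sigma$ this is inherited from the refinement of $\Sigma$, so the real work is analyzing cones of $\Sigma''$ that meet $\mathrm{relint}(\sigma)$: these must lie in the star of $\sigma$ in $\Sigma$, which has been replaced in $\widetilde\Sigma$ by joins with $\rho$ of faces not containing $\sigma$; since $\rho$ itself is a ray of $\Sigma''$ and $\Sigma''$ is simplicial, a careful combinatorial argument shows that each such cone of $\Sigma''$ lies in one of the new cones of $\widetilde\Sigma$. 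Once this refinement-preservation lemma is established, the induction closes and the theorem follows by combining the two half-paths through $\Sigma''$ as indicated.
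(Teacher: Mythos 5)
The paper does not prove this statement; it is cited verbatim as Theorem~A of W\l odarczyk's paper, with the footnote merely noting that the rational-fan hypothesis can be dropped. So there is no ``paper proof'' to compare against: you are attempting to reprove, in a paragraph, a deep result (the weak Oda factorization theorem for toric fans) whose proof occupies an entire article.

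Moreover, your argument has a genuine gap. The pivotal claim in your induction --- that every simplicial refinement $\Sigma''$ of a simplicial fan $\Sigma$ with $|\Sigma''|=|\Sigma|$ is reachable from $\Sigma$ by a sequence of stellar subdivisions alone --- is false. Take $\Sigma$ to be the single simplicial cone $\mathrm{cone}(e_1,e_2,e_3)\subset\RR^3$ together with its faces, and let $\Sigma''$ be the ``medial'' refinement with new rays $\rho_{12}=e_1+e_2$, $\rho_{23}=e_2+e_3$, $\rho_{13}=e_1+e_3$ and maximal cones $\mathrm{cone}(e_1,\rho_{12},\rho_{13})$, $\mathrm{cone}(e_2,\rho_{12},\rho_{23})$, $\mathrm{cone}(e_3,\rho_{13},\rho_{23})$, $\mathrm{cone}(\rho_{12},\rho_{23},\rho_{13})$. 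If $\Sigma''$ were an iterated stellar subdivision of $\Sigma$, the last step would insert one of the three new rays, say $\rho_{13}\in\mathrm{relint}\,\mathrm{cone}(e_1,e_3)$; but $\mathrm{cone}(e_1,e_3)$ lies on the boundary of $|\Sigma|$ and is contained in exactly one maximal cone of any simplicial fan with that support, so a stellar subdivision at $\rho_{13}$ produces exactly two maximal cones containing $\rho_{13}$, whereas $\Sigma''$ has three. By symmetry the same obstruction rules out $\rho_{12}$ and $\rho_{23}$, so $\Sigma''$ is not an iterated stellar subdivision of $\Sigma$. Consequently your plan of going ``up'' from $\Sigma$ to a common refinement $\Sigma''$ and then ``down'' to $\Sigma'$ cannot work as stated; it amounts to the so-called strong factorization (strong Oda) conjecture, which is a substantially stronger assertion than the weak factorization statement of Theorem~\ref{subsupp} and is not known to hold. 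The actual content of Theorem~A in \cite{Wlod} is precisely to prove the weaker statement, allowing subdivisions and welds in an arbitrary interleaved order, and this requires a genuinely different and much more involved argument than the naive refinement scheme you propose.
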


 Although the sequence of subdivisions and welds (inverse subdivisions) described above may not be unique, we now show that the associated bijection $\phi_{\Sigma,\Sigma'}: A^d(\Sigma)^* \longrightarrow A^d(\Sigma')^*$ is.

\begin{theorem}\label{canonical-bijection}
    Let $\Sigma$ and $\Sigma'$ be $d$-dimensional simplicial fans with the same support. There is a canonical bijection $\phi_{\Sigma,\Sigma'}: A^d(\Sigma)^* \longrightarrow A^d(\Sigma')^*$,  given by any valid sequence of subdivisions and welds.
\end{theorem}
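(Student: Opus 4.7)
The plan is to pin down a canonical bijection via an intrinsic compatibility with piecewise linear (PL) functions on the common support, and then to show that every valid sequence realizes it. By Proposition~\ref{charPK} identify $A^d(\Sigma)^*$ with $\PPP^d(\Delta(\Sigma), L(\Sigma))$; under this identification each elementary subdivision step $\Sigma \rightsquigarrow \sub_{\rho_0}(\Sigma)$ with $\rho_0 = \sum_{i \in S} c_i \rho_i$ corresponds to the linear bijection $\sub_S^\ccc$ of Theorem~\ref{subgen}, and each weld to its inverse $\bus_S^\ccc$. Existence of a valid sequence is Theorem~\ref{subsupp}; the content of the theorem is that any two sequences give the same composite.

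For $\ttt \in \RR^{V(\Sigma)}$ let $\phi_\ttt \colon |\Sigma| \to \RR$ denote the continuous PL function, linear on each cone of $\Sigma$, with $\phi_\ttt(\rho_i) = t_i$; define $\phi_{\ttt'}$ analogously for $\Sigma'$. I claim every valid sequence from $\Sigma$ to $\Sigma'$ realizes a bijection $\phi_{\Sigma,\Sigma'}$ satisfying the PL-compatibility
\[
\widehat{\phi_{\Sigma,\Sigma'}(\alpha)}(\ttt') = \hat{\alpha}(\ttt) \quad \text{whenever } \phi_{\ttt'} = \phi_\ttt \text{ on } |\Sigma| = |\Sigma'|.
\]
For a single step $\Sigma' = \sub_{\rho_0}(\Sigma)$, the hypothesis $\phi_{\ttt'} = \phi_\ttt$ forces $t_0 = \sum_{i \in S} c_i t_i$ and $t_i' = t_i$ on $V(\Sigma)$; the identity $\bus_S^\ccc \circ \sub_S^\ccc = \mathrm{id}$ of Theorem~\ref{subgen} is exactly this equality. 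PL-compatibility is preserved under composition with $\sub_S^\ccc$ and $\bus_S^\ccc$, so every valid sequence realizes a PL-compatible bijection.

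It remains to show PL-compatibility determines $\phi_{\Sigma,\Sigma'}$ uniquely. Pick a common simplicial refinement $\Sigma''$ of $\Sigma$ and $\Sigma'$ (refine the geometric intersection fan of $\Sigma$ and $\Sigma'$ to be simplicial by iterated stellar subdivisions at interior rays). By Theorem~\ref{uniquepoly} any $g \in \PPP^d(\Delta(\Sigma''), L(\Sigma''))$ is determined by its facet values $\partial^T g$. Applying Lemma~\ref{gfcoef} iteratively along a pure-subdivision sequence from $\Sigma$ to $\Sigma''$ yields, for each facet $T$ of $\Delta(\Sigma'')$, the identity
\[
\partial^T g = \partial^{T^*} \hat{\alpha} \cdot \prod_k \frac{1}{c_{j_k}},
\]
where $T^*$ is the unique facet of $\Sigma$ containing the cone $\sigma_T$ and the product runs over the subdivision steps along a path refining $\sigma_{T^*}$ to $\sigma_T$. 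The product equals the volume ratio $\vol(\sigma_{T^*})/\vol(\sigma_T)$ (computed with respect to any fixed basis of the ambient space), a geometric quantity depending only on the inclusion $\sigma_T \subseteq \sigma_{T^*}$, not on the sequence. Hence $g$ is uniquely determined by $\hat{\alpha}$; applying the symmetric argument between $\Sigma'$ and $\Sigma''$ and composing gives the canonical $\phi_{\Sigma,\Sigma'}$.

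The main obstacle is confirming the order-independence of the product $\prod_k 1/c_{j_k}$. This rests on two observations: stellar subdivisions at cones sharing no rays commute tautologically as polynomial operators (they act on disjoint variable blocks), and Lemma~\ref{switchlemma} eliminates the apparent dependence on the choice of $j \in S$ in Lemma~\ref{gfcoef}. Together they identify the product with the stated volume ratio. With that in hand, the uniqueness of $g$, and symmetrically of the extension of $\phi_{\Sigma,\Sigma'}(\alpha)$ into $\Sigma''$, implies the canonicity of $\phi_{\Sigma,\Sigma'}$.
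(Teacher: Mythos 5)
Your core idea is the same as the paper's: the quantity $V(\sigma)\,\alpha(\xx^{F})$, where $V(\sigma)$ is the volume of the parallelotope spanned by the rays of the facet $\sigma$, is a geometric invariant that transforms trivially under a single stellar subdivision (this is your ``product of $1/c_{j_k}$ equals a volume ratio'' step, and it is exactly Lemma~\ref{gfcoef} together with $V(\sigma')=c_j V(\sigma)$). The PL-compatibility framing is a clean conceptual wrapper around this. So the heart of your argument and the paper's agree.

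However, there is a genuine gap in your uniqueness argument: you route through a common simplicial refinement $\Sigma''$ and ``apply Lemma~\ref{gfcoef} iteratively along a pure-subdivision sequence from $\Sigma$ to $\Sigma''$.'' Theorem~\ref{subsupp} (W\l{}odarczyk's Theorem~A) only gives \emph{weak} factorization: a finite alternating chain of stellar subdivisions and welds. It does \emph{not} assert that an arbitrary simplicial refinement of $\Sigma$ is reachable by stellar subdivisions alone (that is strong factorization, a much deeper and separate matter, and in any case not a tool available in this paper). Constructing $\Sigma''$ by stellar subdivisions of the non-simplicial intersection fan does not produce a stellar chain of simplicial fans starting from $\Sigma$. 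So the ``pure-subdivision sequence'' on which your telescoping product relies is not established. The paper avoids this entirely: it iterates the single-step identity $V(\sigma)\alpha(\xx^F)=V(\sigma')\alpha'(\xx^{F'})$ directly along the given alternating chain of subdivisions \emph{and welds}, by tracking the unique maximal cone containing a fixed interior point of $\sigma\cap\sigma'$ at each stage. Since $\Sigma$ and $\Sigma'$ have the same support, every facet of $\Sigma'$ intersects some facet of $\Sigma$ in a set with nonempty interior, so all the facet values of $\alpha'$ are pinned down, and Theorem~\ref{uniquepoly} gives uniqueness without any common refinement. If you replace ``pure-subdivision sequence from $\Sigma$ to $\Sigma''$'' with ``any subdivision-and-weld sequence,'' and track a single interior point through the chain, your argument essentially collapses into the paper's and the refinement $\Sigma''$ becomes superfluous.

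Also minor: your justification that $\prod_k 1/c_{j_k}$ is sequence-independent via ``disjoint subdivisions commute'' plus Lemma~\ref{switchlemma} is not a proof; the correct reason is the telescoping volume identity $V(\sigma')=c_j V(\sigma)$ at each step, which makes the product equal to a ratio of volumes independently of any ordering considerations.
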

\begin{proof}
There exists a bijection $\phi: A^d(\Sigma)^* \longrightarrow A^d(\Sigma')^*$  given by a finite sequence of subdivisions and welds, by Theorems \ref{subgen} and \ref{subsupp}. It remains to prove that this bijection  does not depend on the specific sequence. 

Fix a Euclidean norm on $\VV$. For a cone $\sigma$ in a simplicial fan, let $V(\sigma)$ denote the (relative) volume of the parallelotope spanned by the ray vectors of $\sigma$. Suppose $\Sigma'= \st_{\rho_0}(\Sigma)$, and let $\alpha \in A^d(\Sigma)^*$ and $\alpha'=\sub_{\rho_0}(\alpha) \in  A^d(\Sigma')^*$. Let further $\sigma$ and $\sigma'$ be two maximal cones in $\Sigma$ and $\Sigma'$, respectively, such that $\sigma \cap \sigma'$ has nonempty interior. If $F$ and $F'$ are the faces corresponding to $\sigma$ and $\sigma'$, then 
$$
V(\sigma) \alpha(\xx^F) = V(\sigma') \alpha'(\xx^{F'}),
$$
follows from Lemma \ref{gfcoef}. 

Now let $\phi: A^d(\Sigma)^* \longrightarrow A^d(\Sigma')^*$ be any linear bijection given by a finite sequence of subdivisions and welds, and let $\alpha \in A^d(\Sigma)^*$ and $\alpha'=\phi(\alpha) \in  A^d(\Sigma')^*$. If the intersection of $\sigma$ and $\sigma'$ has nonempty relative interior, it follows from iterating the argument above that 
$
V(\sigma) \alpha(\xx^F) = V(\sigma') \alpha'(\xx^{F'}).
$
Since $\widehat{\alpha'}$ is hereditary, $\alpha'$ is determined by the values on $\xx^F$, where $F$ ranges over all facets of $\Delta(\Sigma')$, by Theorem \ref{uniquepoly}. Hence $\phi$ does not depend on the choices of subdivisions and welds. 
\end{proof}

As a corollary of Theorems \ref{subgen} and \ref{support-general}, we deduce that the hereditary Lorentzian property only depends on the support of fans.

\begin{theorem}\label{support-fans}
    Let $\Sigma$ and $\Sigma'$ be $d$-dimensional simplicial fans with the same support. Let further $\alpha \in A^d(\Sigma)^*$ and $\alpha' \in A^d(\Sigma')^*$ be related by $\alpha' = \phi_{\Sigma,\Sigma'}(\alpha)$. If $\CCC_{\hat\alpha}$ and $\CCC_{\hat\alpha'}$ are nonempty, then $\alpha$ is hereditary Lorentzian if and only if $\alpha'$ is hereditary Lorentzian.
\end{theorem}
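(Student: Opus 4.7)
The plan is to invoke the subdivision theorem of W\l{}odarczyk (Theorem \ref{subsupp}) to reduce to a finite sequence of single stellar subdivisions and welds, translate each step into the polynomial operators $\sub_S^\ccc$ and $\bus_S^\ccc$ of Section \ref{subdsec}, and then appeal to Theorem \ref{support-general}. Fix a chain $\Sigma = \Sigma_0, \Sigma_1, \ldots, \Sigma_m = \Sigma'$ provided by Theorem \ref{subsupp}, and define $\alpha_k \in A^d(\Sigma_k)^*$ recursively by $\alpha_k = \phi_{\Sigma_{k-1},\Sigma_k}(\alpha_{k-1})$ with $\alpha_0 = \alpha$; Theorem \ref{canonical-bijection} guarantees $\alpha_m = \alpha'$.

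First I would verify that at each step the bijection $\phi_{\Sigma_{k-1},\Sigma_k}$ corresponds, under the identifications of Proposition \ref{charPK}, to either the polynomial operator $\sub_S^\ccc$ or its inverse $\bus_S^\ccc$, where $S$ and the positive constants $\ccc$ come from the subdivision ray $\rho_0 = \sum_{i \in S} c_i \rho_i$. This is essentially the content of Theorem \ref{subgen} together with the proof of Theorem \ref{canonical-bijection}, and each intermediate $\hat\alpha_k$ is strongly hereditary by Lemma \ref{herfan}. The positivity clause of Theorem \ref{subgen} moreover shows that positivity of $\hat\alpha_k$ is preserved along the chain, so $\hat\alpha$ is positive if and only if $\hat\alpha'$ is.

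If neither $\hat\alpha$ nor $\hat\alpha'$ is positive, then neither can satisfy property (P), so neither $\alpha$ nor $\alpha'$ is hereditary Lorentzian and the equivalence holds vacuously. Otherwise every $\hat\alpha_k$ is a positive strongly hereditary polynomial, and the chain exhibits $\hat\alpha \sim \hat\alpha'$ in the sense of Section \ref{subdsec}. Since $\CCC_{\hat\alpha}$ and $\CCC_{\hat\alpha'}$ are nonempty by hypothesis, Theorem \ref{support-general} immediately gives that $\hat\alpha$ is hereditary Lorentzian if and only if $\hat\alpha'$ is, which by definition is the claim. The main obstacle is the faithful identification, at each elementary step, of the abstract Chow-theoretic bijection $\phi_{\Sigma_{k-1},\Sigma_k}$ with the explicit polynomial operator $\sub_S^\ccc$ acting on volume polynomials; this is essentially handled by Theorems \ref{subgen} and \ref{canonical-bijection}, so the remaining work is careful bookkeeping rather than new analysis.
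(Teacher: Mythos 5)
Your overall strategy matches the paper's own proof, which is simply the one-line remark that the theorem follows from Theorems \ref{subgen} and \ref{support-general} together with the chain of subdivisions and welds furnished by W\l{}odarczyk's Theorem \ref{subsupp} and the canonical bijection of Theorem \ref{canonical-bijection}. You unpack that correctly, and you also correctly notice a point the paper's terse wording glosses over: the equivalence relation $\sim$ in Section \ref{subdsec} is defined on \emph{positive} strongly hereditary polynomials (and the proof of Theorem \ref{support-general} genuinely uses positivity, e.g.\ $\dc^S f > 0$ in the $d=2$ case).

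The problem is in your attempted fix for the non-positive case. You assert that if $\hat\alpha$ is not positive then it cannot satisfy property (P) of Definition \ref{C-def}, and hence is not hereditary Lorentzian. That inference does not follow from anything proved in the paper. Property (P) only asserts $D_{\vv_1}\cdots D_{\vv_d}\hat\alpha > 0$ for $\vv_j \in \CCC_{\hat\alpha}$. If $w(F) = \partial^F\hat\alpha < 0$ for some facet $F=\{i_1,\ldots,i_d\}$, this gives $D_{e_{i_1}}\cdots D_{e_{i_d}}\hat\alpha < 0$, but the basis vectors $e_{i_j}$ need not lie in $\overline{\CCC_{\hat\alpha}}$; the cone $\CCC_{\hat\alpha}$ is contained in $\RR_{>0}^V + L_{\hat\alpha}$, which does not make $\RR_{>0}^V$ a subset of $\CCC_{\hat\alpha}$, and so there is no obvious limiting argument. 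Nor does the necessity direction of Theorem \ref{mainthm_hereditary} yield positivity: it establishes (C) and (Q), but neither of those forces $w(F)>0$. So the sentence ``neither can satisfy (P)'' is an unsubstantiated claim, not careful bookkeeping. Either you need to supply a genuine proof that (for fan polynomials with $\CCC_{\hat\alpha}\neq\varnothing$) hereditary Lorentzian implies positive, or you should acknowledge that the theorem is being read under an implicit positivity hypothesis --- which is how the paper actually uses it (e.g.\ in Examples \ref{ex-bergman} and \ref{ex-conormal}, where positivity holds). As written, this step is a gap rather than a resolved subtlety.
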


\begin{example}[The normal fan of a simple polytope]
    Recall the notation from Section \ref{polytsec}. Let $P$ be a full dimensional simple polytope in a $d$-dimensional Euclidean space $(\VV,\langle \cdot, \cdot \rangle)$, and let $\rho_1,\ldots,\rho_n$ be the (outward) unit normals of the facets of $P$. The \emph{normal fan}  $\Sigma_P$ of $P$ may be defined as the simplicial fan in $\VV$ with cones generated by $\{\rho_i\}_{i \in S}$ for all $S \in \Delta_P$. 
    Hence $A(\Sigma_P)=A(\Sigma_P, L_P)$. 
    
    The fan $\Sigma_P$ is complete, which means $|\Sigma_P| = \VV$, and thus $|\Sigma_P| = |\Sigma_Q|$ for any full dimensional simple polytope $Q$ in $\VV$. When $Q$ is a simplex, Lemma \ref{simplex_pol} implies $A^d(\Sigma_Q)^*$ is one-dimensional and is spanned by a single hereditary Lorentzian functional. By Theorem \ref{canonical-bijection}, $A^d(\Sigma_P)^*$ is thus defined by a single hereditary functional corresponding to  $\pol_P$, which is hereditary Lorentzian by Theorem \ref{mainP}.
\end{example}

\begin{example}[The Bergman fan of a matroid] \label{ex-bergman}
    Recall the notation from Section~\ref{matroidsec}. Let $\LL=[K,E]$ be the lattice of flats of a rank $d+1$ \rev{matroid}. For each proper flat $F \in \underline{\LL} = \LL \setminus \{\zero,\one\}$, define the vector $\rho_F \in \RR^{\one \setminus \zero} / \RR\mathbf{1}$, where $\mathbf{1}$ is the all ones vector, via
    \[
        \rho_F = \sum_{i \in F \setminus \zero} e_i + \RR\mathbf{1}
    \]
    where $e_i$ is a standard basis vector. The \emph{Bergman fan} $\Sigma_\LL$ is the subfan of the normal fan of the permutohedron with cones generated by $\{\rho_F\}_{F \in S}$ for all $S$ in the order complex $\Delta(\LL)$ (see e.g. \cite[Section 2.2]{BES}). It follows that $L(\Sigma_\LL)=L(\LL)$. 
    
    Since the permutohedron is a simple polytope, $\CCC(\Sigma_\LL)$ is nonempty by Corollary~\ref{subfan-cone}. Further, $\alpha \in (A^d(\Sigma_\LL))^*$ is uniquely defined up to scalar via $\alpha(\xx^S) = 1$ for every maximal chain $S \in \Delta(\LL)$ by \cite[Prop. 5.2]{AHK}. Hence $\vol_{\Sigma_\LL}=\pol_\LL$, and $\Sigma_\LL$ is positive. 
    H-connectedness of $\Delta(\LL)$ then follows from semimodularity of $\LL$ as in the proof of Theorem \ref{matroid-hered-Lor}, and the Hessian of $\vol_{\Sigma_\LL}^S$ has at most one positive eigenvalue for all $S \in \Delta(\LL)$ with $|S| = d-2$, by Example \ref{lowdeg-matroid}. Thus Theorem \ref{posfanchar} implies $\Sigma_\LL$ is hereditary Lorentzian.
\end{example}

\begin{example}[The conormal fan of a matroid] \label{ex-conormal}
    In \cite{Lagrangian} the authors define the \emph{conormal fan} of a matroid $\MM$, which they denote by $\Sigma_{\MM,\MM^\perp}$. They prove that $\Sigma_{\MM,\MM^\perp}$ is Lefschetz (see \cite[Def. 1.5]{Lagrangian}), which implies the Hodge-Riemann relations for $\Sigma_{\MM,\MM^\perp}$. From the Hodge-Riemann relations of degree 0 and 1, they then derive a number of long-standing conjectures regarding properties of $\MM$. Their proof that $\Sigma_{\MM,\MM^\perp}$ is Lefschetz uses \cite[Thm. 1.6]{Lagrangian}, which is a support theorem analogous to Theorem \ref{support-fans} for Lefschetz fans. Here we will use a similar proof strategy to prove the Hodge-Riemann relations of degree 0 and 1 for $\Sigma_{\MM,\MM^\perp}$ directly, by showing that $\Sigma_{\MM,\MM^\perp}$ is hereditary Lorentzian.
    
    First, $\Sigma_{\MM,\MM^\perp}$ is a subfan of the normal fan of a simple polytope called the bipermutohedron, by \cite[Props. 2.18 and 2.20]{Lagrangian}. Thus $\CCC(\Sigma_{\MM,\MM^\perp})$ is nonempty by Corollary~\ref{subfan-cone}. Further, $\Sigma_{\MM,\MM^\perp}$ has the same support as a product of Bergman fans of matroids by \cite[Section 3.4]{Lagrangian}. Products of Bergman fans are hereditary Lorentzian by Example \ref{ex-bergman} and Proposition \ref{prodHL}, and therefore $\Sigma_{\MM,\MM^\perp}$ is hereditary Lorentzian by Theorem \ref{support-fans}.
\end{example}

\section{Characterizations of Lorentzian polynomials on cones}\label{Lorsec}
In this section we provide various characterizations of Lorentzian polynomials on cones. We shall see how the Theorem \ref{mainthm_hereditary} yields the characterization of Lorentzian polynomials proved by June Huh and the first author in \cite{BH}. We first show that the notion of Lorentzian polynomials on cones extends the notion of Lorentzian polynomials in \cite{BH}. 

 \begin{definition}\label{L-def}
 A homogeneous polynomial $f \in \RR[t_1,\ldots, t_n]$ of degree $d\geq 2$ is called \emph{strictly Lorentzian} if  for all $1\leq i_1, \ldots, i_d \leq n$, 
 \begin{itemize}
\item[(P1)] $\partial_{i_1}  \cdots \partial_{i_d}f >0$, and 
\item[(H1)] the symmetric bilinear form 
 $$
(\xx, \yy) \longmapsto D_{\xx}D_{\yy} \partial_{i_3}\cdots \partial_{i_d} f
 $$
 is non-singular and has exactly one positive eigenvalue. 
 \end{itemize}
 Any positive constant or linear form with positive coefficients is strictly Lorentzian. Limits (in the Euclidean space of homogeneous degree $d$ polynomials) of strictly Lorentzian polynomials are called \emph{Lorentzian}. 
 \end{definition}

 \begin{proposition}\label{CL-equal}
 Let $f\in \RR[t_1,\ldots, t_n]$ be a homogeneous polynomial of degree $d\geq 0$. Then $f$ is $\RR_{>0}^n$-Lorentzian if and only if $f$ is Lorentzian.
 \end{proposition}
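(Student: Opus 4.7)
The plan is to prove the two implications separately. For the $\Leftarrow$ direction, I will reduce---via the closedness of the $\RR_{>0}^n$-Lorentzian class from Remark~\ref{closedc}---to showing that every strictly Lorentzian $f$ is $\RR_{>0}^n$-Lorentzian, and prove this by induction on $d \geq 2$. The base case $d = 2$ is immediate: the expansion $D_{v_1}D_{v_2}f = \sum_{i,j} v_{1,i}v_{2,j}\partial_i\partial_j f$ combined with (P1) gives (P), while (H1) is precisely (HR) at this degree. For $d \geq 3$, each $\partial_i f$ is again strictly Lorentzian (both (P1) and (H1) for $\partial_i f$ are immediate consequences of (P1) and (H1) for $f$), so by induction $\partial_i f$ is $\RR_{>0}^n$-Lorentzian for every $i$. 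I then apply Theorem~\ref{engine} with $\CCC = \RR_{>0}^n$, which is effective with trivial lineality space: hypothesis (1) follows from (P1) by positive expansion; hypothesis (2) follows since the entries of the Hessian of $D_{v_1}\cdots D_{v_{d-2}}f$ are positive combinations of the strictly positive quantities $\partial_{i_1}\cdots\partial_{i_d}f$, so this Hessian has strictly positive entries and is in particular irreducible with nonnegative off-diagonal entries; and hypothesis (3) is the inductive hypothesis.

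For the $\Rightarrow$ direction, suppose $f$ is $\RR_{>0}^n$-Lorentzian. Applying (P) to vectors of the form $e_{i_j}+\epsilon\mathbf{1}$ and letting $\epsilon\to 0^+$ yields, by continuity, $\partial_{i_1}\cdots\partial_{i_d}f\geq 0$ for every index sequence; thus $f$ has nonnegative coefficients. The same limiting procedure applied to (HR) shows that $\nabla^2\partial^\alpha f$ has at most one positive eigenvalue for every $\alpha\in\NN^n$ with $|\alpha|=d-2$. By the characterization of Lorentzian polynomials in \cite{BH}, a polynomial with nonnegative coefficients, $M$-convex support, and this Hessian sign condition is Lorentzian, so it remains to verify $M$-convexity of $\supp(f)$.

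The main obstacle is this combinatorial extraction of $M$-convexity from the Hessian condition. The plan is as follows: if the support were not $M$-convex, witnessed by $\alpha,\beta\in\supp(f)$ and an index $i$ with $\alpha_i>\beta_i$ such that $\alpha-e_i+e_j\notin\supp(f)$ for every $j$ with $\alpha_j<\beta_j$, then for an appropriate $\gamma$ with $|\gamma|=d-2$ one would isolate a $2\times 2$ principal submatrix of $\nabla^2\partial^\gamma f$ whose diagonal entries are strictly positive and whose off-diagonal vanishes, forcing two positive eigenvalues and contradicting the Hessian condition established above. This is essentially the $M$-convexity support theorem of \cite{BH}, and combining it with the Hessian condition closes the argument.
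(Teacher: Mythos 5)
Your $\Leftarrow$ direction is correct and takes the same route as the paper: by induction on $d$, any strictly Lorentzian polynomial is $\RR_{>0}^n$-Lorentzian via Theorem~\ref{engine} with $\CCC=\RR_{>0}^n$ (which is effective with trivial lineality space), and the general case follows by closedness of the $\RR_{>0}^n$-Lorentzian class.

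The $\Rightarrow$ direction has a genuine gap. Your limiting argument yields only that $f$ has nonnegative coefficients and that $\nabla^2\partial^\gamma f$ has at most one positive eigenvalue for every integer multi-index $\gamma$ with $|\gamma|=d-2$; these two facts do \emph{not} imply $M$-convexity of $\supp(f)$. For example, $f = t_1^2 t_2 + t_3^2 t_4$ has nonnegative coefficients, and for each $i$ the Hessian of $\partial_i f$ has exactly one positive eigenvalue (it is supported on a single $2\times 2$ block equal to $\bigl(\begin{smallmatrix}0&2\\2&0\end{smallmatrix}\bigr)$ or $\diag(2,0)$), yet $\supp(f)=\{(2,1,0,0),(0,0,2,1)\}$ is disconnected and hence not $M$-convex. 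This $f$ is of course not $\RR_{>0}^4$-Lorentzian --- the Hessian of $D_{\mathbf{1}}f$ is block diagonal with two blocks each contributing a positive eigenvalue, so (HR) fails --- but exactly this information is lost when you restrict (HR) to coordinate directions. Moreover, your $2\times 2$-submatrix extraction presupposes that the exponent vectors $\alpha,\beta$ witnessing a failure of (EA) are within ``distance two,'' which is not guaranteed when $d\geq 3$; the missing ingredient is the connectivity of $\tau\supp(f)$, handled in the paper via Lemma~\ref{M-tM-connected} and Proposition~\ref{alt-M}, which you do not address. The paper's own proof of this direction is different: it observes that an interior point of the $\RR_{>0}^n$-Lorentzian cone must have strictly positive coefficients (so that the relevant Hessians have positive entries, giving irreducibility for free) and hence is strictly Lorentzian, then appeals to density of the interior. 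Finally, the characterization you cite from \cite{BH} appears in this paper as Theorem~\ref{main-L}, whose proof runs through Proposition~\ref{HL=L} and hence through the present proposition; invoking it here would be circular within this paper, so it would have to be used as a genuinely external black box.
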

 
 \begin{proof}
\rev{Notice first} that the $\RR_{>0}^n$-Lorentzian and the Lorentzian property are closed properties\rev{, by Remark~\ref{closedc} and definition of Lorentzian above}. 
 
 We prove by induction on $d\geq 2$ that  any strictly Lorentzian polynomial is $\RR_{>0}^n$-Lorentzian, the case when $d<2$ being trivial. Suppose $f$ is strictly Lorentzian and of degree at least $3$. Then $\partial_if$ is strictly Lorentzian for each $i$. By induction, $\partial_if$ is $\RR_{>0}^n$-Lorentzian for each $i$. Theorem \ref{engine}, for the case when $L_\CCC=\{0\}$, now implies that $f$ is $\RR_{>0}^n$-Lorentzian. 
 
Suppose $f$ is in the interior of the space of $\RR_{>0}^n$-Lorentzian  polynomials. Then $f$ has positive coefficients and (H1) is satisfied, so that $f$ is strictly Lorentzian. 
 \end{proof}

We now associate to any homogeneous polynomial $f$, the \emph{polarization} $\Pi(f)$, which is a hereditary polynomial that reduces to $f$ by a change of variables. This polarization is different from the one considered in e.g. \cite{BB1,BH}. Suppose $f \in \RR[t_1, \ldots, t_n]$ is a $d$-homogeneous polynomial of degree $\kappa_i$ in $t_i$ for each $1\leq i \leq n$. Let $V_\kappa = \{ v_{ij} : 1\leq i \leq n \mbox{ and } 0\leq j \leq \kappa_i \}$, and define $\Pi(f) \in \RR[t_{ij}: v_{ij} \in V_\kappa]$  by 
$$
\Pi(f) = f(y_1, \ldots, y_n), \ \ \ \mbox{ where } y_i= \sum_{j=0}^{\kappa_i} t_{ij}. 
$$
For $S \subseteq V_\kappa$, let 
$$
\alpha(S) = \big( |S \cap \{v_{1j} : 0\leq j \leq \kappa_1 \}|, \ldots, |S \cap \{v_{nj} : 0\leq j \leq \kappa_n \}|\big). 
$$
Then $S$ is a facet of  $\Delta_{\Pi(f)}$ if and only if $\alpha(S) \in \supp(f)$, where 
$$
\supp(f) = \{ \alpha \in \NN^n : \alpha_1+\cdots+\alpha_n=d \mbox{ and } \partial^\alpha f \neq 0\}. 
$$
Clearly 
$$
\left\{ \ttt \in \RR^{V_\kappa} : \sum_{j=0}^{\kappa_i} t_{ij} =0 \mbox{ for all } 1\leq i \leq n \right\} \subseteq L_{\Pi(f)}, 
$$
from which it follows that $\Pi(f)$ is hereditary. 

\begin{proposition}\label{HL=L}
Let $f \in \RR[t_1,\ldots, t_n]$ be a homogeneous polynomial with nonnegative coefficients. Then $f$ is Lorentzian if and only if $\Pi(f)$ is hereditary Lorentzian. 
\end{proposition}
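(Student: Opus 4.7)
The plan is to exploit the factorization $\Pi(f) = f \circ A$, where $A\colon \RR^{V_\kappa} \to \RR^n$ is the surjective linear map sending $(t_{ij})$ to the tuple of group sums $y_i = \sum_j t_{ij}$. A direct chain-rule calculation gives $D_\vv \Pi(f) = (D_{A\vv} f) \circ A$, and iterating yields $D_{\vv_1}\cdots D_{\vv_d} \Pi(f) = D_{A\vv_1}\cdots D_{A\vv_d} f$ as equal constants when $\deg f = d$; moreover $L_{\Pi(f)} = A^{-1}(L_f)$. After discarding variables on which $f$ does not depend I may assume $\deg_{t_i} f \geq 1$ for every $i$, and then proceed by strong induction on $d$, the cases $d \leq 1$ being routine. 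The engine is the following auxiliary inclusion, proved by a parallel induction on $d \geq 2$: one always has $A^{-1}(\RR_{>0}^n) \subseteq \CCC_{\Pi(f)}$. Indeed, any $\vv \in A^{-1}(\RR_{>0}^n)$ clearly lies in $\RR_{>0}^{V_\kappa} + L_{\Pi(f)}$, and since $\Pi(f)^{\{v_{ij}\}}$ coincides (after relabelling) with the polarization $\Pi(\partial_{y_i} f)$, the recursive requirement $\pi_{\{v_{ij}\}}(\vv) \in \CCC_{\Pi(f)^{\{v_{ij}\}}}$ reduces to the same inclusion for $\Pi(\partial_{y_i} f)$ via the commutation $A_i \circ \pi_{\{v_{ij}\}}(\vv) = A\vv \in \RR_{>0}^n$. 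The same computation shows more generally that $\Pi(f)^S = \Pi(\partial^{\alpha(S)} f)$ for every $S \in \Delta_{\Pi(f)}$.

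For the forward direction, suppose $f$ is Lorentzian. Proposition~\ref{CL-equal} upgrades this to $f$ being $\RR_{>0}^n$-Lorentzian, and Proposition~\ref{comp} applied to $A$ shows that $\Pi(f)$ is $A^{-1}(\RR_{>0}^n)$-Lorentzian. To promote this to hereditary Lorentzian I would invoke Proposition~\ref{main-conv} with $\CCC := A^{-1}(\RR_{>0}^n)$. This cone is nonempty and effective, so each $\pi_S(\CCC)$ is effective as well. For the Lorentzian hypothesis on $\Pi(f)^S$ with $S \in \tau\Delta_{\Pi(f)}$, note that $\partial^{\alpha(S)} f$ is itself Lorentzian, since partial derivatives preserve the Lorentzian property; the induction hypothesis then gives that $\Pi(f)^S = \Pi(\partial^{\alpha(S)} f)$ is hereditary Lorentzian, hence $\CCC_{\Pi(f)^S}$-Lorentzian, and the auxiliary inclusion applied to $\partial^{\alpha(S)} f$ places $\pi_S(\CCC)$ inside $\CCC_{\Pi(f)^S}$.

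For the converse, assume $\Pi(f)$ is hereditary Lorentzian. Given $\uu_1, \ldots, \uu_d \in \RR_{>0}^n$, choose positive preimages $\vv_i \in A^{-1}\{\uu_i\} \cap \RR_{>0}^{V_\kappa}$; the auxiliary inclusion places each $\vv_i$ inside $\CCC_{\Pi(f)}$. The constant identity $D_{\vv_1}\cdots D_{\vv_d} \Pi(f) = D_{\uu_1}\cdots D_{\uu_d} f$ immediately delivers (P) for $f$ on $\RR_{>0}^n$, while the bilinear form $(\xx, \yy) \mapsto D_\xx D_\yy D_{\vv_3}\cdots D_{\vv_d} \Pi(f)$ on $\RR^{V_\kappa}$ is the pullback via $A$ of $(\xx', \yy') \mapsto D_{\xx'} D_{\yy'} D_{\uu_3}\cdots D_{\uu_d} f$ on $\RR^n$. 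Since $A$ is surjective, Sylvester's law of inertia forces the two forms to have the same number of positive eigenvalues, which equals one by (HR) for $\Pi(f)$. Hence $f$ is $\RR_{>0}^n$-Lorentzian, and Proposition~\ref{CL-equal} gives that $f$ is Lorentzian.

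The main obstacle is bookkeeping rather than deep mathematics: carefully setting up the identification $\Pi(f)^S = \Pi(\partial^{\alpha(S)} f)$ and the commutation $A_S \circ \pi_S = A$ under the natural identification of the reduced variable set with the variable set of $\Pi(\partial^{\alpha(S)} f)$, so that the two interlocked inductions run cleanly. With these identifications in hand, everything reduces to repeated applications of Propositions~\ref{comp} and \ref{main-conv} together with Sylvester's law of inertia.
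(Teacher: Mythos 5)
Your proof is correct and follows essentially the same route as the paper: establish the inclusion $A^{-1}(\RR_{>0}^n)\subseteq\CCC_{\Pi(f)}$, use Proposition~\ref{comp} together with Proposition~\ref{CL-equal} for the transfer between $f$ and $\Pi(f)$, and invoke Proposition~\ref{main-conv} for the forward direction. The only stylistic difference is that in the forward direction the paper avoids an induction on $d$: since $\Pi(f)^S=\Pi(\partial^{\alpha(S)}f)$ is itself a linear composition with the Lorentzian polynomial $\partial^{\alpha(S)}f$, Proposition~\ref{comp} directly gives that $\Pi(f)^S$ is $\pi_S(\CCC)$-Lorentzian without first upgrading it to hereditary Lorentzian; likewise the paper's converse is phrased through Propositions~\ref{comp} and~\ref{CL-equal} rather than a direct Sylvester computation, though these are interchangeable. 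Your spelled-out inductive proof of the cone inclusion $A^{-1}(\RR_{>0}^n)\subseteq\CCC_{\Pi(f)}$ (which the paper merely asserts) is a useful addition.
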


\begin{proof}
Suppose $\Pi(f)$ is hereditary Lorentzian. 
Let $P : \RR^{V_\kappa} \longrightarrow \RR^V$ be the linear projection defined by 
$$
P(\ttt)= \left(\sum_{j=0}^{\kappa_i} t_{ij}\right)_{i=1}^n.
$$
\rev{Let $\CCC= P^{-1}(\RR_{>0}^n)$. Then $\RR_{>0}^{V_\kappa} \subseteq \CCC \subseteq \CCC_{\Pi(f)}$}. Hence  $\Pi(f)$ is $\RR_{>0}^{V_\kappa}$-Lorentzian. Hence $f$ is Lorentzian by Propositions \ref{CL-equal} and \ref{comp}.

Suppose $f$ is Lorentzian. Then $\Pi(f)$ is $\CCC$-Lorentzian by Proposition \ref{comp}. Likewise  $\Pi(f)^S$ is $\pi_S(\CCC)$-Lorentzian for each $S \in \tau \Delta_{\Pi(f)}$. The theorem now follows from Proposition \ref{main-conv}.
\end{proof}

 For any nonempty $\MM \subset \NN^n$ and $\beta \in \NN^n$, define 
$$
\partial^\beta\MM= \{\alpha -\beta : \alpha \in \MM \mbox{ and } \alpha -\beta \in \NN^n\} \ \ \mbox{ and } \ \ \tau(\MM) = \!\!\!  \bigcup_{1 \leq j \leq n} \!\!\! \partial_j(\MM), 
$$
where $\partial_i = \partial^{e_i}$. We say that $\MM$ has \emph{constant sum} if there is a number $r$ such that 
$\alpha_1+\cdots+\alpha_n =r$ for all $\alpha \in \MM$.  
\rev{For $\MM$ with constant sum $r$,} we say that $\MM$ is \emph{connected} if $r \geq 2$ and there is no proper subset $A$ of $[n]$ such that 
$$
\MM = \{\alpha \in \MM : \alpha_i=0 \mbox{ for all } i \in A\} \cup \{\alpha \in \MM : \alpha_i=0 \mbox{ for all } i \in [n]\setminus A\}
$$
is a partition of $\MM$ into two nonempty sets. If $\MM$ has constant sum $r \leq 1$, we say that $\MM$ is connected by convention. If $\MM$ has constant sum $r \geq 2$, we say that $\MM$ is \emph{H-connected} if $\partial^\alpha \MM$ is connected for each $\alpha \in \NN^n$ with $|\alpha| \leq r-2$. 

\begin{theorem}\label{main-L2}
  Let $f\in \RR[t_1,\ldots, t_n]$ be a homogeneous polynomial of degree $d\geq 2$ with nonnegative coefficients. Then $f$ is Lorentzian if and only if 
\begin{itemize}
\item[(S2)] $\tau \supp(f)$ is \rev{H}-connected, and 
\item[(H1)] for any $1\leq i_3, \ldots, i_d \leq n$, the symmetric bilinear form 
$$
(\xx, \yy) \longmapsto D_{\xx}D_{\yy} \partial_{i_3}\cdots \partial_{i_d} f
$$
has at most one positive eigenvalue. 
\end{itemize}
 \end{theorem}
 
 \begin{proof}
We first note that (H1) holds for $f$ if and only if (Q) holds for $\Pi(f)$. Also, it is not hard to see that $\tau \Delta_{\Pi(f)}$ is H-connected if and only if $\tau \supp(f)$ is H-connected. The theorem thus follows from Theorem \ref{mainthm_hereditary} and Proposition \ref{HL=L}. 
\end{proof}

Let us recall the definition of an M-convex set.  
\begin{definition}\label{M-conv}
A finite subset  $\MM$ of  $\NN^n$ is \emph{M-convex} if the following \emph{exchange axiom} holds
\begin{itemize}
\item[(EA)] If $\alpha, \beta \in \MM$ and $\alpha_i > \beta_i$, then there is an index $j$ such that $\beta_j> \alpha_j$ and 
$$
\alpha-e_i+e_j \in \MM,
$$
where $e_i$ denotes the $i$th standard basis vector of $\RR^n$. 
\end{itemize}
\end{definition} 
A subset $\MM$ of  $\NN^n$  is M-convex if and only if it is the set of integer points of a generalized permutohedron, see \cite{postnikov}.
Notice that if $\MM$ is M-convex, then so is $\tau(\MM)$ and $\partial_i(\MM)$ for all $i$. Proposition \ref{alt-M} below is a converse to this.

 \begin{proposition}\label{alt-M}
Suppose $\MM \subset \NN^n$ has constant sum $r\geq 3$. If $\MM$ is connected and $\partial_i(\MM)$ are M-convex for all $i$, then 
$\MM$ is M-convex. 
\end{proposition}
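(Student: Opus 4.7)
The plan is to verify the exchange axiom (EA) for $\MM$ directly. Fix $\alpha,\beta\in\MM$ with $\alpha_i>\beta_i$; the task is to produce $j$ with $\beta_j>\alpha_j$ and $\alpha-e_i+e_j\in\MM$. The argument splits according to whether $\alpha$ and $\beta$ share a coordinate.

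\emph{Reduction to $\partial_k(\MM)$ when a shared coordinate exists.} Suppose first that there is an index $k$ with $\alpha_k\geq 1$ and $\beta_k\geq 1$, where we allow $k=i$ provided $\beta_i\geq 1$. Then $\alpha-e_k$ and $\beta-e_k$ both lie in the $M$-convex set $\partial_k(\MM)$ and satisfy $(\alpha-e_k)_i>(\beta-e_k)_i$. Applying (EA) in $\partial_k(\MM)$ at coordinate $i$ furnishes an index $j$ with $(\beta-e_k)_j>(\alpha-e_k)_j$ and $(\alpha-e_k)-e_i+e_j\in\partial_k(\MM)$. Re-adding $e_k$ gives $\alpha-e_i+e_j\in\MM$; a short case check on whether $j=k$ (and, when $k=i$, whether $j=i$, which is ruled out by $\alpha_i>\beta_i$) shows $\beta_j>\alpha_j$ in all sub-cases. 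This disposes of every pair except those satisfying $\beta_i=0$ and $\supp(\alpha)\cap\supp(\beta)=\varnothing$.

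\emph{The disjoint-support case, via connectedness.} Let $A=\supp(\alpha)$, a proper nonempty subset of $[n]$. Consider the equivalence relation $\sim$ on $\MM$ generated by ``$\delta$ and $\delta'$ share a coordinate in their supports''. If $\sim$ had more than one class, pick a class $\MM_1$ and set $B=\bigcup_{\delta\in\MM_1}\supp(\delta)$; then every element of $\MM\setminus\MM_1$ is supported in $[n]\setminus B$ while every element of $\MM_1$ is supported in $B$, and since $r\geq 1$ both parts are nonempty and $B$ is proper. This contradicts connectedness of $\MM$, so $\sim$ has a single class and there is a chain $\alpha=\gamma_0,\gamma_1,\dots,\gamma_m=\beta$ in $\MM$ with $\supp(\gamma_{k-1})\cap\supp(\gamma_k)\neq\varnothing$ for each $k$. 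Among such chains of minimal length, $m\geq 2$ by the standing disjointness assumption.

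\emph{Completing the proof; main obstacle.} I would induct on the minimal $m$, applying the reduction of the first paragraph to the pair $(\alpha,\gamma_1)$ to produce $\alpha'\in\MM$ obtained from $\alpha$ by an exchange at coordinate $i$. If the landing coordinate $j'$ lies in $\supp(\beta)$ we are done; otherwise $\alpha'$ continues to share support with $\gamma_1$, and the chain $\alpha',\gamma_1,\dots,\gamma_m$ admits a strictly shorter minimal chain to $\beta$ by the minimality of $m$, allowing induction to conclude. The genuine technical obstacle is that the exchange axiom in $\partial_p(\MM)$ does not allow free choice of the landing index $j'$, so one must argue that some monovariant — for instance the chain distance to $\beta$, or the cardinality $|\supp(\alpha)\setminus\supp(\beta)|$ — strictly decreases along the iteration. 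The hypothesis $r\geq 3$ enters precisely here: it guarantees enough room in each $\partial_k(\MM)$ to sustain the iteration without stalling, whereas for $r=2$ the $\partial_k(\MM)$ are merely collections of basis vectors and the bootstrap degenerates.
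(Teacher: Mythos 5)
Your first paragraph is correct: when $\alpha$ and $\beta$ share a support coordinate $k$ (or when $\beta_i\geq 1$), the exchange in $\MM$ does reduce cleanly to an exchange in the $M$-convex set $\partial_k(\MM)$, and your sub-case check on $j=k$ and $j=i$ is fine. Your connectedness argument producing a chain $\alpha=\gamma_0,\dots,\gamma_m=\beta$ with consecutive overlapping supports is also correct, and the overall strategy (handle the shared-support case via $\partial_k$, then reduce the disjoint case to it using a chain) matches the spirit of the paper's proof.

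The problem is the final paragraph: it is not a proof but an acknowledgement of a gap, and the gap is real. Two things fail. First, you cannot apply your shared-coordinate reduction to the pair $(\alpha,\gamma_1)$ ``at coordinate $i$'' in general, because the exchange axiom at $i$ requires $\alpha_i>(\gamma_1)_i$, and nothing forces this; if $(\gamma_1)_i\geq\alpha_i$ the reduction does not produce an $\alpha'$ of the form $\alpha-e_i+e_{j'}$. Second, even when an exchange step does go through, you correctly observe that the landing index $j'$ is not under your control, so neither ``chain distance to $\beta$'' nor $|\supp(\alpha)\setminus\supp(\beta)|$ is obviously a decreasing monovariant — $\alpha'$ may still be at minimal chain distance $m$ from $\beta$, and its support may intersect $\supp(\beta)$ no more than $\supp(\alpha)$ does. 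You state the obstacle but do not overcome it, and you do not explain where $r\geq 3$ is actually used, only gesture at it.

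The paper closes exactly this gap, and it is worth seeing how much extra structure is needed. It inducts on the {\em exchange-graph} distance $d(\alpha,\beta)$ rather than on chain length, with base case $d(\alpha,\beta)<r$ (which forces a shared coordinate). In the inductive step it takes a minimal exchange path from $\alpha$ to $\beta$, lets $\gamma$ be the first element on the path whose support meets $B=\supp(\beta)$, and uses $M$-convexity of $\partial_k(\MM)$ at the unique $k\in\supp(\gamma)\cap B$ to pin down $d(\gamma,\beta)=r-1$, hence $d(\alpha,\gamma)=d(\alpha,\beta)-(r-1)$. It then iterates the inductive hypothesis exactly $r-1$ times, at each step exchanging at a coordinate in $B^c$ where $\gamma^{(s)}$ exceeds $\alpha$, which is legal because $d(\alpha,\gamma^{(s)})\leq d(\alpha,\gamma)+s<d(\alpha,\beta)$ for $s<r-1$. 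The result $\alpha'=\gamma^{(r-1)}$ satisfies $\alpha'_k\leq\alpha_k$ on $B^c$, hence $\alpha'=\alpha-e_{k'}+e_{j_1}$ with $j_1\in B$; if $k'\neq i$ one further exchange in $\partial_{j_1}(\MM)$ and then $\partial_k(\MM)$ (possible because $r\geq 3$ gives a third shared coordinate) finishes. This is precisely the controlled iteration and the controlled use of $r\geq 3$ that your sketch lacks.
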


\begin{proof}
First we show the exchange graph of $\MM$ is connected. If not, then let $\MM = S_1 \sqcup S_2$ be a partition such that $S_1$ and $S_2$ are disconnected in the exchange graph. By connectedness of $\MM$, there exist $\alpha \in S_1$, $\beta \in S_2$, and $k$ such that $\alpha_k,\beta_k > 0$. The M-convexity of $\partial_k(M)$ then implies $\alpha$ and $\beta$ are connected in the exchange graph, a contradiction. Therefore the exchange graph of $\MM$ is connected.

Given $\alpha, \beta \in \MM$ and $i$ such that $\alpha_i>\beta_i$, we will now show (EA); that is, that there exists $j$ such that $\beta_j>\alpha_j$ and
\begin{equation}\label{exch}
    \alpha-e_i+e_j \in \MM.
\end{equation}
We prove (\ref{exch}) by induction on distance in the exchange graph $d(\alpha,\beta)$. The base case is $d(\alpha,\beta) < r$. In this case $\alpha_k \beta_k \neq 0$ for some $k$, and thus (\ref{exch}) follows from the M-convexity of $\partial_k(M)$.

For the general case, let $A$ and $B$ be the support of $\alpha$ and $\beta$ respectively. If $A \cap B$ is non-empty, then (\ref{exch}) follows from the M-convexity of $\partial_k(\MM)$ as above. Otherwise fix a minimal-length path between $\alpha$ and $\beta$, and let $\gamma \in \MM$ be the element closest to $\alpha$ on the path with support which intersects $B$. Since there is a (unique) $k$ such that $\gamma_k\beta_k>0$, we may use the M-convexity of $\partial_k(\MM)$ to deduce that $d(\gamma,\beta)=r-1$. Thus $d(\alpha,\gamma) =  d(\alpha,\beta) - (r-1)$. By induction we apply (\ref{exch}) to $\gamma^{(0)} = \gamma$, $\alpha$ and any index $k \in B^c$ for which $\gamma_k > \alpha_k$, to obtain $\gamma^{(1)} \in M$. Iterating this process, we apply (\ref{exch}) by induction to $\gamma^{(s)}$, $\alpha$ and any index $k \in B^c$ for which $\gamma^{(s)}_k > \alpha_k$, to obtain $\gamma^{(s+1)} \in \MM$. We can do this for $s < r-1$ since $d(\alpha,\gamma^{(s)}) \leq d(\alpha,\gamma) + s$ for all $s$. Thus $\alpha' = \gamma^{(r-1)} \in \MM$ is such that $\alpha'_k \leq \alpha_k$ for all $k \in B^c$, which implies
\[
    \alpha' = \alpha - e_{k'} + e_{j_1}
\]
for some $k' \in A$ and $j_1 \in B$. If $k' = i$, then (\ref{exch}) is satisfied by $\alpha' \in \MM$. Otherwise, since $\alpha'_{j_1}, \beta_{j_1} > 0$, the M-convexity of $\partial_{j_1}(\MM)$ implies there exists
\[
    \alpha'' = \alpha' - e_i + e_{j_2} \in \MM
\]
where $j_2 \in B$. Thus
\[
    \alpha'' = \alpha - e_{k'} - e_i + e_{j_1} + e_{j_2} \in \MM.
\]
Since $r \geq 3$, there exists some $k$ such that $\alpha_k,\alpha''_k > 0$, and (\ref{exch}) follows from the M-convexity of $\partial_k(\MM)$ for some $j \in \{j_1,j_2\}$.
\end{proof}

\begin{lemma}\label{M-tM-connected}
    Suppose $\MM \subset \NN^n$ has constant sum $r \geq 3$. Then $\MM$ is connected if and only if the truncation $\tau\MM$ is connected.
\end{lemma}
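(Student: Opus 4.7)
The plan is to prove both implications via contrapositive, showing that a proper nonempty subset $A \subsetneq [n]$ disconnects one of $\MM, \tau\MM$ if and only if it disconnects the other.

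For the easier direction, suppose $A$ disconnects $\MM$, so each $\alpha \in \MM$ has $\supp(\alpha) \subseteq A$ or $\supp(\alpha) \subseteq [n]\setminus A$, with both options realized. For any $\beta = \alpha - e_j \in \tau\MM$ we have $\supp(\beta) \subseteq \supp(\alpha)$, so $\beta$ inherits the containment. Picking a representative from each piece of $\MM$ and any index $j$ in its support yields a $\beta \in \tau\MM$ in the corresponding piece; since $r - 1 \geq 2 > 0$, each such $\beta$ is nonzero, so both pieces of the induced partition of $\tau\MM$ are nonempty. Hence $A$ disconnects $\tau\MM$.

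For the main direction, assume $A$ disconnects $\tau\MM$. The key claim is that no $\alpha \in \MM$ has $\supp(\alpha)$ meeting both $A$ and $[n]\setminus A$. If some $\alpha_i \geq 2$, then $\supp(\alpha - e_i) = \supp(\alpha)$ meets both sides, contradicting the disconnection of $\tau\MM$. Otherwise $\alpha$ is $0/1$-valued, so $|\supp(\alpha)| = r \geq 3$, and therefore at least one of $A \cap \supp(\alpha)$ and $([n]\setminus A) \cap \supp(\alpha)$ has cardinality $\geq 2$; removing an element from the larger side yields $\alpha - e_i \in \tau\MM$ whose support still meets both $A$ and $[n]\setminus A$, a contradiction. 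To finish, both pieces of $\MM$ must be nonempty: given $\beta \in \tau\MM$ with $\supp(\beta) \subseteq A$ (such $\beta$ exists since $A$ disconnects $\tau\MM$), any preimage $\alpha \in \MM$ with $\beta = \alpha - e_j$ satisfies $\supp(\alpha) \supseteq \supp(\beta) \neq \emptyset$ and meets $A$, so $\supp(\alpha) \subseteq A$ by the claim; symmetrically for the other piece.

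The main obstacle is the $0/1$-valued case of the second direction, where the hypothesis $r \geq 3$ enters crucially. Indeed, at $r = 2$, an element $\alpha = e_i + e_j \in \MM$ with $i \in A$ and $j \in [n]\setminus A$ admits no such argument, since each $\alpha - e_k$ has singleton support lying in only one piece of the partition---this is precisely why the lemma fails at $r = 2$.
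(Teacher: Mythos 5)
Your proof is correct and follows essentially the same route as the paper's: the easy direction is that removing a coordinate can only shrink supports, and the substantive direction uses $r \geq 3$ to produce, from any $\alpha \in \MM$ whose support meets both $A$ and its complement, a truncation in $\tau\MM$ whose support still meets both. You argue by contrapositive and spell out the nonemptiness of both partition pieces, which the paper leaves implicit, but the key idea is identical.
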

\begin{proof}
    First, if $\MM$ is disconnected via a proper subset $A \subset [n]$ then $\tau\MM$ is also disconnected via $A$. On the other hand, if $\MM$ is connected then for any proper subset $A \subset [n]$, there exists $\alpha \in \MM$ which has support intersecting both $A$ and $[n] \setminus A$. Since $r \geq 3$, this implies there is some $\alpha' \in \tau\MM$ which has support intersecting both $A$ and $[n] \setminus A$. Therefore $\tau\MM$ is also connected.
\end{proof}

The following characterization of $\MM$-convex sets follows directly from Proposition \ref{alt-M} and Lemma \ref{M-tM-connected} by induction. 

\begin{proposition}\label{char-MC}
Suppose $\MM$ has constant sum $r \geq 3$. Then $\MM$ is M-convex if and only if 
\begin{itemize}
\item $\tau\MM$ is \rev{H}-connected, and 
\item $\partial^\alpha \MM$ is M-convex for each $\alpha \in \NN^n$ with $|\alpha|=r-2$. 
\end{itemize}
\end{proposition}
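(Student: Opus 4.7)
The plan is to prove both implications by induction on $r$, using Proposition~\ref{alt-M} and Lemma~\ref{M-tM-connected} together with two easily verified identities: $\partial^{\alpha}(\partial_j\MM) = \partial^{\alpha+e_j}\MM$ and, as a consequence, $\partial^{\alpha}(\tau\MM) = \tau(\partial^{\alpha}\MM)$.

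For the sufficiency direction, assume $\tau\MM$ is $H$-connected and $\partial^{\beta}\MM$ is $M$-convex for every $\beta$ with $|\beta|=r-2$. I would induct on $r$. In the base case $r=3$, each $\partial_i\MM = \partial^{e_i}\MM$ is $M$-convex by hypothesis, and $\tau\MM$ being $H$-connected implies (with $\alpha=0$) that $\tau\MM$ is connected, so $\MM$ is connected by Lemma~\ref{M-tM-connected}; Proposition~\ref{alt-M} then gives $M$-convexity of $\MM$. For $r\geq 4$, I would apply the induction hypothesis to each $\partial_i\MM$, which has constant sum $r-1\geq 3$. Its second hypothesis reads: for every $\beta$ with $|\beta|=(r-1)-2$, $\partial^{\beta}(\partial_i\MM) = \partial^{\beta+e_i}\MM$ is $M$-convex, which is immediate since $|\beta+e_i|=r-2$. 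Its first hypothesis, that $\tau(\partial_i\MM)=\partial_i(\tau\MM)$ is $H$-connected, reduces via the commutation identity to connectedness of $\partial^{\gamma+e_i}\tau\MM$ for $|\gamma|\leq r-4$, which follows from $H$-connectedness of $\tau\MM$ since $|\gamma+e_i|\leq r-3$. With each $\partial_i\MM$ now $M$-convex and $\MM$ connected (as in the base case), Proposition~\ref{alt-M} concludes.

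For the necessity direction, assume $\MM$ is $M$-convex. That $M$-convexity is preserved under $\partial_i$ is direct from the exchange axiom, so $\partial^{\beta}\MM$ is $M$-convex for all $\beta$, in particular for $|\beta|=r-2$. For the $H$-connectedness of $\tau\MM$ I need $\partial^{\alpha}\tau\MM$ connected for $|\alpha|\leq r-3$. Using $\partial^{\alpha}\tau\MM = \tau(\partial^{\alpha}\MM)$, and noting that $\partial^{\alpha}\MM$ is $M$-convex of constant sum $r-|\alpha|\geq 3$, it suffices to show the short lemma that any $M$-convex set of constant sum $\geq 2$ is connected: given a disjoint-support partition $\MM = \MM_A \sqcup \MM_{A^c}$, pick $\alpha\in\MM_A$ and $\beta\in\MM_{A^c}$, choose $i\in\mathrm{supp}(\alpha)\subseteq A^c$, and use (EA) to produce $\alpha-e_i+e_j\in\MM$ with $j\in\mathrm{supp}(\beta)\subseteq A$; its support intersects both $A$ and $A^c$, a contradiction. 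Once $\partial^{\alpha}\MM$ is known connected, Lemma~\ref{M-tM-connected} gives connectedness of $\tau(\partial^{\alpha}\MM)=\partial^{\alpha}\tau\MM$, completing the proof.

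The only genuine obstacle is the bookkeeping of the commutations between $\tau$, $\partial_i$, and $\partial^{\alpha}$, and verifying that the constant-sum lower bound $\geq 3$ required by Lemma~\ref{M-tM-connected} survives each inductive step; the rest is an unpacking of hypotheses.
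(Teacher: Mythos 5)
Your proof is correct and follows essentially the same route the paper indicates: the paper's own justification is the one-liner that the result ``follows directly from Proposition~\ref{alt-M} and Lemma~\ref{M-tM-connected} by induction,'' and that is exactly the induction you carry out, supplemented by the (correct) commutation identities $\partial^\alpha\partial_j = \partial^{\alpha+e_j}$ and $\partial^\alpha\tau = \tau\partial^\alpha$, and by the elementary observation that an $M$-convex set of constant sum at least $2$ is connected. The bookkeeping of constant-sum thresholds in each inductive application of Lemma~\ref{M-tM-connected} and Proposition~\ref{alt-M} is handled correctly.
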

The next lemma is well-known, but we give a quick proof here for completeness.  
\begin{lemma}\label{quadr}
If $f$ is a Lorentzian quadratic, then $\supp(f)$ is M-convex. 
\end{lemma}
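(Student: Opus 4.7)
The plan is to identify $f$ with its Hessian $A := \nabla^2 f$, a symmetric matrix with nonnegative entries and at most one positive eigenvalue (the quadratic case of the Lorentzian condition), and then verify the exchange axiom (EA) of Definition \ref{M-conv} for $\MM := \supp(f)$. Under this dictionary, $2e_i \in \MM$ iff $a_{ii} > 0$, and $e_i + e_j \in \MM$ iff $a_{ij} > 0$ for $i \neq j$. The key analytic input is Sylvester's law of inertia together with Cauchy interlacing: since $A$ has at most one positive eigenvalue, so does the restriction of $A$ to any linear subspace. Equivalently, for two orthogonal vectors $u, v$ with $u^T A u > 0$ and $v^T A v > 0$, one must have $(u^T A v)^2 \geq (u^T A u)(v^T A v)$.

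Fix $\alpha, \beta \in \MM$ and an index $i$ with $\alpha_i > \beta_i$. Write $\alpha - e_i = e_\mu$ (so $\mu = i$ when $\alpha = 2e_i$) and set $T := \{j : \beta_j > \alpha_j\}$. Axiom (EA) requires $a_{\mu j} > 0$ for some $j \in T$. A short case check shows that when $\supp(\alpha) \cap \supp(\beta) \neq \varnothing$, one necessarily has $\mu \in \supp(\beta)$, and taking $j$ to be the ``other'' index of $\beta$ yields $\alpha - e_i + e_j = \beta \in \MM$, so (EA) holds trivially. Assume therefore that $\supp(\alpha) \cap \supp(\beta) = \varnothing$, whence $T = \supp(\beta)$, and suppose for contradiction that $a_{\mu j} = 0$ for every $j \in T$.

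The plan now is to exhibit a $2$-dimensional subspace of $\RR^n$ on which $A$ is positive definite. Choose $v$ supported on $T$ with $v^T A v > 0$: take $v = e_k$ if $\beta = 2e_k$, and otherwise let $v$ be a Perron-Frobenius eigenvector of the $\{k,l\}$ principal submatrix when $\beta = e_k + e_l$ (this has a positive eigenvalue with a positive eigenvector because $a_{kl} > 0$ forces determinant $\le 0$ and trace $\ge 0$). For $u$: if $a_{\mu\mu} > 0$ (automatic when $\mu = i$), set $u := e_\mu$, which yields $u^T A v = \sum_{s \in T} a_{\mu s} v_s = 0$ by the failure assumption, so the $2\times 2$ restriction of $A$ to $\spn(u, v)$ is diagonal with positive entries and hence positive definite. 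Otherwise $\mu \neq i$, $a_{\mu\mu} = 0$, and $a_{i\mu} > 0$; use the perturbation $u_t := e_\mu + t e_i$ with $t > 0$. A direct computation gives $u_t^T A u_t = 2 a_{i\mu} t + a_{ii} t^2$ and $u_t^T A v = t \sum_{s \in T} a_{is} v_s$, so $(u_t^T A u_t)(v^T A v) = 2 a_{i\mu}(v^T A v)\, t + O(t^2)$ strictly dominates $(u_t^T A v)^2 = O(t^2)$ for all sufficiently small $t > 0$. Either way $A$ is positive definite on a two-dimensional subspace, contradicting the at-most-one-positive-eigenvalue hypothesis.

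The main obstacle is the subcase $a_{\mu\mu} = 0$: when $\alpha = e_i + e_\mu$ has no ``loop'' at $\mu$, the naive $u = e_\mu$ gives $u^T A u = 0$ and fails to furnish a positive direction. The perturbation $u_t = e_\mu + t e_i$ recovers this, with the contradiction hinging on the asymptotic comparison $u_t^T A u_t = \Theta(t)$ versus $(u_t^T A v)^2 = O(t^2)$ as $t \to 0^+$. It is in this balance that both hypotheses—$a_{i\mu} > 0$ (encoding $\alpha \in \MM$) and the failure $a_{\mu s} = 0$ for $s \in T$ (which kills the constant term of $u_t^T A v$)—enter in an essential way.
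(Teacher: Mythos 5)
Your argument is correct and follows the same core strategy as the paper: the failure of (EA) for the necessarily disjoint supports of $\alpha$ and $\beta$ kills certain Hessian entries, allowing you to produce test vectors $u,v$ whose restricted Gram matrix is positive definite, contradicting the at-most-one-positive-eigenvalue property. The paper's computation is a bit slicker and avoids your case split---taking $\uu=e_i+te_\mu$ and letting $t\to\infty$ makes $\uu^TH\uu\geq 2th_{i\mu}\to\infty$ while (EA) failing freezes $\uu^TH\vv$ at a constant, which handles both your subcases at once, and it also skips the Perron--Frobenius detour by simply taking $\vv=e_k+e_\ell$, since $h_{k\ell}>0$ already forces $\vv^TH\vv>0$.
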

\begin{proof}
Let $H=(h_{ij})_{i,j=1}^n$ be the Hessian of $f$. Suppose there are $\alpha, \beta \in \supp(f)$ for which (EA) fails. Write 
$
\alpha=e_i+e_j \mbox{ and } \beta= e_k+e_\ell. 
$
Then $\{i,j\}$ and $\{k,\ell\}$ are disjoint. Let $t>0$ and 
$
\uu=e_i+te_j \mbox{ and } \vv= e_k+e_\ell.
$
Then
$$
\uu^T H \vv = h_{ik}+h_{i\ell}, \ 
\uu^TH\uu = h_{ii}+t^2h_{jj} + 2th_{ij}, \mbox{ and }
\vv^TH\vv = h_{kk}+h_{\ell \ell} + 2h_{k\ell},
$$
where $h_{ij}h_{k\ell}>0$. Hence for large $t$,
$
(\uu^T H \vv)^2 < (\uu^TH\uu) (\vv^TH\vv),
$
which contradicts (AF). 
\end{proof}

From Theorem \ref{main-L2}, Lemma \ref{quadr} and Proposition \ref{char-MC} we deduce the following characterization of Lorentzian polynomials, first proved in \cite{BH}. 

\begin{theorem}\label{main-L}
  Let $f\in \RR[t_1,\ldots, t_n]$ be a homogeneous polynomial of degree $d\geq 2$ with nonnegative coefficients. Then $f$ is Lorentzian if and only if 
\begin{itemize}
\item[(S)] the support of $f$ is M-convex, and 
\item[(H1)] for any $1\leq i_3, \ldots, i_d \leq n$, the symmetric bilinear form 
$$
(\xx, \yy) \longmapsto D_{\xx}D_{\yy} \partial_{i_3}\cdots \partial_{i_d} f
$$
has at most one positive eigenvalue.
\end{itemize}
 \end{theorem}

 The following proposition is a consequence of Propositions \ref{CL-equal} and Remark \ref{AF-remark}. 
 
 \begin{proposition}\label{redu}
 Let $f \in \RR[t_1,\ldots, t_n]$ be a homogeneous polynomial of degree $d$, and $\CCC \subset \RR^n$ an open convex cone. Then $f$ is $\CCC$-Lorentzian if and only if for all $\vv_1,\ldots, \vv_d \in \CCC$, the polynomial 
 $$
 f(s_1\vv_1+\cdots + s_d\vv_d) \in \RR[s_1,\ldots, s_d]
 $$
 is Lorentzian. 
 \end{proposition}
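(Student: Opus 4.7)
My plan is to reduce both directions to Proposition \ref{CL-equal} (which identifies Lorentzian with $\RR^n_{>0}$-Lorentzian) together with the reformulations (N) in Remark \ref{closedc} and (AF) in Remark \ref{AF-remark} of the $\CCC$-Lorentzian property. The key observation is that pulling $f$ back along the linear map $A\colon \RR^d \to \RR^n$ with $A\ee_i = \vv_i$ converts a $\CCC$-condition on $f$ at the tuple $(\vv_1,\ldots,\vv_d)$ into an $\RR_{>0}^d$-condition on $g(\sss) = f(A\sss)$, which matches the Lorentzian class exactly.

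For the forward direction I would fix $\vv_1, \ldots, \vv_d \in \CCC$ and invoke Proposition \ref{comp} to conclude that $g$ is $A^{-1}(\CCC)$-Lorentzian. Since $\CCC$ is a convex cone containing each $\vv_i = A\ee_i$, the preimage $A^{-1}(\CCC)$ contains $\RR_{>0}^d$; shrinking the cone preserves the Lorentzian property (directly from Definition \ref{C-def}), so $g$ is $\RR_{>0}^d$-Lorentzian, hence Lorentzian by Proposition \ref{CL-equal}.

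For the backward direction I would verify (N) and (AF) for $f$ at an arbitrary $(\vv_1,\ldots,\vv_d) \in \CCC^d$, using the hypothesis that $g(\sss) = f(s_1\vv_1 + \cdots + s_d\vv_d)$ is Lorentzian. Since $f$ is $d$-homogeneous, each $d$-fold mixed partial of $g$ is a constant, and the chain rule yields
\[
\partial_{s_1}\cdots\partial_{s_d}\,g \;=\; D_{\vv_1}\cdots D_{\vv_d}\,f \qquad\text{and}\qquad \partial_{s_i}^2\partial_{s_3}\cdots\partial_{s_d}\,g \;=\; D_{\vv_i}^2 D_{\vv_3}\cdots D_{\vv_d}\,f \quad (i=1,2).
\]
Nonnegativity of the coefficients of the Lorentzian polynomial $g$ immediately gives (N) for $f$ at $(\vv_1,\ldots,\vv_d)$. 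Moreover, Lorentzianity of $g$ forces the Hessian of $\partial_{s_3}\cdots\partial_{s_d}\,g$ to have at most one positive eigenvalue---a closed condition inherited from (H1) of Definition \ref{L-def} under limits of strictly Lorentzian polynomials---so its $\{1,2\}$-principal $2\times 2$ minor has nonpositive determinant, and via the identities above this is precisely the (AF)-inequality for $f$ at $(\vv_1,\ldots,\vv_d)$. Remark \ref{closedc}, Remark \ref{AF-remark}, and Lemma \ref{AF=H} then combine (N) and (AF) into $\CCC$-Lorentzianity of $f$. I do not foresee a serious obstacle; the only careful point is noting that ``at most one positive eigenvalue'' is preserved under limits of symmetric matrices, so the Lorentzian class (the closure of strictly Lorentzian polynomials) still inherits the Hessian condition needed to extract (AF).
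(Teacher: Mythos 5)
Your argument is correct and follows essentially the same route the paper intends: the paper dismisses the proposition as a consequence of Proposition~\ref{CL-equal} and Remark~\ref{AF-remark}, and you are fleshing out exactly that reduction, pulling back along the linear map $e_i\mapsto\vv_i$, using Proposition~\ref{comp} in one direction and the (N)/(AF) reformulation in the other. One small point worth making explicit in the backward step: ``at most one positive eigenvalue'' of a symmetric $2\times2$ matrix does not by itself force a nonpositive determinant (two nonpositive eigenvalues give a nonnegative determinant); you need the diagonal entries $D_{\vv_i}^2D_{\vv_3}\cdots D_{\vv_d}f$ to be nonnegative, which you do have from (N), but the deduction should be stated rather than left to the ``so.''
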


\rev{Denote by $\LLL_n(\CCC)$ the space of all $\CCC$-Lorentzian polynomials in $\RR[t_1,\ldots, t_n]$, and let $\LLL_n^d(\CCC) \subset \LLL_n(\CCC)$ be the space of all such polynomials of degree $d$. 
Given an open convex cone $\CCC$ in $\RR^n$ with lineality space $L_{\CCC}$, denote by $\Hmg_n^d(\CCC)$ the linear subspace of $d$-homogeneous polynomials in $\RR[t_1,\ldots,t_n]$ for which $L_{\CCC}$ is contained in the lineality space of $f$. Hence $\LLL_n^d(\CCC) \subseteq \Hmg_n^d(\CCC)$. Further, denote by $\rho_{\CCC}: \RR^n \to \RR^n / L_{\CCC}$ the standard linear projection, and let $S_{\CCC} \subset \overline{\CCC}$ be a set of unit vectors for which the projected vectors $\rho_{\CCC}(S_{\CCC})$ generate the extreme rays of $\overline{\rho_{\CCC}(\CCC)} \subset \RR^n / L_{\CCC}$.}

 The next theorem is a version of Theorem \ref{main-L} for $\CCC$-Lorentzian polynomials.

 \begin{theorem} \label{M-convex-extreme}
     \rev{Let $\CCC$ be an open convex cone, let $f \in \Hmg_n^d(\CCC)$ be of degree $d \geq 2$, and let $S_{\CCC}$ be defined as above}. Then $f$ is $\CCC$-Lorentzian if and only if
     \begin{itemize}
     \item $D_{\uu_1}D_{\uu_2}\cdots D_{\uu_d}f\geq 0$ for all $\uu_1,\ldots,\uu_d \in S_{\CCC}$, and 
         \item the set $\{ \alpha \in \NN^{2d} : \alpha_1+\cdots+\alpha_{2d}=d \mbox{ and } D_{\uu_1}^{\alpha_1}\cdots D_{\uu_{2d}}^{\alpha_{2d}}f>0\}$ is M-convex for all $\uu_1,\ldots,\uu_{2d} \in S_{\CCC}$, and 
         \item for all $\uu_3,\ldots,\uu_d \in S_{\CCC}$, the symmetric bilinear form 
            $$
            (\xx, \yy) \longmapsto D_{\xx}D_{\yy} D_{\uu_3} \cdots D_{\uu_d} f
            $$
            has at most one positive eigenvalue.
     \end{itemize}
     
 \end{theorem}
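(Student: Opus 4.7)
The plan is to reduce the theorem to the Br\"and\'en--Huh characterization of classical Lorentzian polynomials (Theorem~\ref{main-L}) via the restriction principle of Proposition~\ref{redu}: $f$ is $\CCC$-Lorentzian if and only if $f(s_1\vv_1+\cdots+s_d\vv_d) \in \RR[s_1,\ldots,s_d]$ is Lorentzian for every $\vv_1,\ldots,\vv_d \in \CCC$.

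For the forward direction, fix $\uu_1,\ldots,\uu_{2d} \in S_\CCC \subseteq \overline{\CCC}$ and any $\ww \in \CCC$, and set $\uu_i^\epsilon := \uu_i + \epsilon\ww \in \CCC$ for $\epsilon > 0$. Applying Proposition~\ref{comp} to the linear map $A_\epsilon : e_i \mapsto \uu_i^\epsilon$ shows that $g_\epsilon(s) := f(\sum_i s_i \uu_i^\epsilon)$ is $\RR_{>0}^{2d}$-Lorentzian, hence Lorentzian by Proposition~\ref{CL-equal}. Since Lorentzian polynomials form a closed set and $g_\epsilon \to g := f(\sum_i s_i \uu_i)$ coefficient-wise as $\epsilon \to 0$, $g$ is Lorentzian. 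Theorem~\ref{main-L} applied to $g$ yields conditions (1) and (2) of the theorem, since the $s^\alpha$-coefficient of $g$ equals $\frac{1}{\alpha!} D_{\uu_1}^{\alpha_1}\cdots D_{\uu_{2d}}^{\alpha_{2d}} f$; condition (3) follows from (HR) applied to each perturbation via lower semicontinuity of the number of positive eigenvalues of a symmetric matrix.

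For the converse direction, fix $\vv_1,\ldots,\vv_d \in \CCC$ and write each $\vv_i = \sum_j c_{ij}\uu_{ij}$ as a finite positive combination of extreme rays $\uu_{ij} \in S_\CCC$. Define
\[
    h\bigl((t_{ij})_{ij}\bigr) := f\bigl(\textstyle\sum_{i,j} t_{ij}\uu_{ij}\bigr),
\]
so that $f(\sum_i s_i\vv_i) = h(Ts)$ for a linear map $T$ with nonnegative entries. I verify the three hypotheses of Theorem~\ref{main-L} for $h$. Nonnegativity of the coefficients of $h$ follows from condition (1), since the $t^\alpha$-coefficient is $\frac{1}{\alpha!}D_{\uu_{11}}^{\alpha_{11}}\cdots f \geq 0$. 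For $M$-convexity of $\supp(h)$, the exchange axiom for any pair $\alpha,\beta \in \supp(h)$ involves at most $|\supp(\alpha)\cup\supp(\beta)| \leq 2d$ coordinates, and restricting $h$ to any such $2d$-tuple of variables yields a polynomial whose support equals the set in condition (2) for the corresponding extreme rays. For (H1) of Theorem~\ref{main-L}, the chain rule identifies the Hessian of $\partial_{i_3}\cdots\partial_{i_d}h$ as the pullback under $A : e_{ij} \mapsto \uu_{ij}$ of the Hessian of $D_{\uu_{i_3}}\cdots D_{\uu_{i_d}}f$, to which Sylvester's law of inertia transfers the ``at most one positive eigenvalue'' bound from condition (3). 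Thus $h$ is Lorentzian, so $h(Ts)$ is Lorentzian by Proposition~\ref{comp} combined with Proposition~\ref{CL-equal}, and $f$ is $\CCC$-Lorentzian by Proposition~\ref{redu}.

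The main obstacle is the extreme-ray decomposition step in the converse: one must have every $\vv \in \CCC$ expressible as a positive combination of elements of $S_\CCC$. This is automatic by Carath\'eodory when $\overline{\CCC}$ is pointed. When $\CCC$ contains a nontrivial lineality space $L_\CCC$, Proposition~\ref{lineal} gives $L_\CCC \subseteq L_f$, so both $f$ and the cone descend to the pointed quotient cone in $\RR^n/L_\CCC$, where the argument applies.
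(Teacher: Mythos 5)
Your argument is correct and follows essentially the same strategy as the paper's (terse) proof: reduce via Proposition~\ref{redu} and Proposition~\ref{comp} to classical Lorentzian polynomials of the restricted form $f(s_1\uu_1 + \cdots + s_m \uu_m)$ with $\uu_i \in S_\CCC$, and then invoke the Br\"and\'en--Huh characterization, noting that the exchange axiom only needs to be tested $2d$ coordinates at a time. You phrase this through Theorem~\ref{main-L}, while the paper cites Proposition~\ref{char-MC} (together with Remark~\ref{closedc}); these are equivalent levers. Your forward direction --- perturbing each $\uu_i$ into the open cone, applying Proposition~\ref{comp} and Proposition~\ref{CL-equal}, and passing to the limit using closedness of Lorentzian polynomials and lower semicontinuity of the number of positive eigenvalues --- and your converse --- forming the auxiliary polynomial $h$, checking nonnegativity, $2d$-locality of the exchange axiom, and the inertia bound via the chain rule and Sylvester's law, then composing with the nonnegative linear map $T$ and appealing to Proposition~\ref{redu} --- are careful spellings-out of what the paper compresses into one paragraph.

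The one point that does not quite hold up is your treatment of the lineality space at the end. You invoke Proposition~\ref{lineal} to deduce $L_\CCC \subseteq L_f$, but Proposition~\ref{lineal} assumes property (P), which in the converse direction is precisely what is being established. Moreover, if $L_\CCC \neq \{0\}$ then $\overline{\CCC}$ has \emph{no} extreme rays at all (any candidate ray $r$ fails: pick $0 \neq \ell \in L_\CCC$ and write $x = (x+\ell) + (-\ell)$ with both summands in $\overline{\CCC}$ but not both in $r$), so $S_\CCC = \varnothing$ and conditions (1)--(3) become vacuous, making the converse false as stated. The theorem is implicitly meant for $\overline{\CCC}$ pointed, or to be read after quotienting by the lineality; the paper's own proof carries the same silent assumption. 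Your instinct to flag the issue is sound, but the reduction to the pointed quotient should be stated as a hypothesis or preliminary normalization rather than derived from the conditions of the theorem.
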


 \begin{proof}
\rev{Note that up to translation by some vector in $L_{\CCC}$, each vector of $\CCC$ is a non-negative linear combination of vectors in $S_{\CCC}$.} By Propositions \ref{redu} and \ref{comp}, $f$ is $\CCC$-Lorentzian if and only if for all vectors $\uu_1, \ldots, \uu_m \in S_{\CCC}$, the polynomial $g=f(s_1\uu_1+ \cdots+ s_m \uu_m)$ is Lorentzian. Notice that it requires at most $2d$ vectors to verify that the support of $g$ is M-convex. The proof now follows from Proposition \ref{char-MC} (using Remark \ref{closedc} for the first condition). 
 \end{proof}

\begin{corollary}\label{products}
 If $f, g \in \LLL_n(\CCC)$, then $fg \in \LLL_n(\CCC)$. 
\end{corollary}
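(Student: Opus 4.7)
The plan is to reduce the claim to the corresponding closure property for ordinary Lorentzian polynomials via the restriction criterion of Proposition~\ref{redu}.

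Set $d = \deg(fg) = \deg f + \deg g$. By Proposition~\ref{redu}, $fg$ is $\CCC$-Lorentzian if and only if for every $\vv_1,\ldots,\vv_d \in \CCC$ the polynomial $(fg)(s_1\vv_1+\cdots+s_d\vv_d) \in \RR[s_1,\ldots,s_d]$ is ordinary Lorentzian. Fix such vectors and let $A : \RR^d \to \RR^n$ be the linear map $(s_1,\ldots,s_d) \mapsto s_1\vv_1+\cdots+s_d\vv_d$. Since $\CCC$ is a convex cone containing every $\vv_i$, we have $A(\RR_{>0}^d) \subseteq \CCC$, so $\RR_{>0}^d \subseteq A^{-1}(\CCC)$. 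By Proposition~\ref{comp}, both $f\circ A$ and $g\circ A$ are $A^{-1}(\CCC)$-Lorentzian, hence $\RR_{>0}^d$-Lorentzian, and hence ordinary Lorentzian by Proposition~\ref{CL-equal}. Since $(fg)\circ A = (f\circ A)(g\circ A)$, the corollary reduces to showing that the product of two ordinary Lorentzian polynomials in the same variables is Lorentzian.

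For that reduction we appeal to Theorem~\ref{main-L}. The support of a product is the Minkowski sum of the supports, and Minkowski sums of M-convex sets are M-convex, so condition (S) is immediate. The main obstacle is (H1): a direct Leibniz expansion of $\partial_{i_3}\cdots\partial_{i_d}(fg)$ produces a sum of quadratics each individually having at most one positive eigenvalue, yet the Lorentzian signature is not preserved under summation, so the Hessian condition does not fall out from a one-line computation. The standard resolution is to pass to the dense open subclass of strictly Lorentzian polynomials and verify (H1) for products there by an irreducibility plus Perron-Frobenius argument in the spirit of Lemma~\ref{indlemma} and Theorem~\ref{engine}; the general case then follows by taking limits, since $\CCC$-Lorentzianity is a closed condition by Remark~\ref{closedc}. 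This is precisely the content of \cite[Cor.~4.8]{BH}, which we cite.
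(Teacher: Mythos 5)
Your argument is correct, but it runs along a genuinely different track than the paper's. The paper proves the statement inside its own framework: it first shows that the \emph{disjoint-variable} product $f(\xx)\,g(\zz)$ lies in $\LLL_{2n}(\CCC\oplus\CCC)$, observing that extreme rays of $\overline{\CCC\oplus\CCC}$ are of the form $(\vv,\mathbf{0})$ or $(\mathbf{0},\vv)$ with $\vv$ an extreme ray of $\overline{\CCC}$, so that after $d-2$ derivatives in those directions one is left with a polynomial $u(\xx)v(\zz)$ whose factors are $\CCC$-Lorentzian and have degrees summing to $2$; the Hessian of such a product has a block structure making the signature condition in Theorem~\ref{M-convex-extreme} immediate, and the support condition splits as a Cartesian product. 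One then pulls back along the diagonal map $A:\xx\mapsto(\xx,\xx)$ via Proposition~\ref{comp} and observes $A^{-1}(\CCC\oplus\CCC)=\CCC$. This keeps the whole proof self-contained, and in particular resolves, rather than defers, exactly the obstruction you flag (that ``the Lorentzian signature is not preserved under summation''): passing to disjoint variables eliminates the cross-terms entirely.

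Your route instead restricts to the line spanned by $\vv_1,\ldots,\vv_d$ via Proposition~\ref{redu} and Proposition~\ref{comp}, reduces to the ordinary setting via Proposition~\ref{CL-equal}, and then cites the product closure of Lorentzian polynomials from \cite{BH}. This is logically sound, but it outsources the one nontrivial step to a black box; in fact the argument in \cite{BH} for that closure result is essentially the same disjoint-variables-plus-diagonal-substitution trick the paper runs for you here, so citing it is circular in spirit if not in logic. A small side point: your sketch of how \cite{BH} handles the Hessian condition (strictly Lorentzian plus a Perron--Frobenius step plus limits) is not quite what is done there; that description better fits the proof of closure under differentiation. If you want a self-contained proof within this paper, the disjoint-variable maneuver plus Theorem~\ref{M-convex-extreme} is the efficient path.
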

\begin{proof}
    We will show that $f(\xx)g(\zz) \in \LLL_{2n}(\CCC \oplus \CCC)$, and the result then follows from Proposition \ref{comp} using $A: \xx \longmapsto (\xx,\xx)$. \rev{First note that $D_{\vv} (fg) = 0$ for $\vv \in L_{\CCC \oplus \CCC} = L_{\CCC} \oplus L_{\CCC}$, so that $fg \in H_{2n}(\CCC \oplus \CCC)$.} Next note that vectors that generate \rev{extreme rays of $\overline{\rho_{\CCC \oplus \CCC}(\CCC \oplus \CCC})$ are of the form $(\rho_{\CCC}(\vv),\mathbf{0})$ or $(\mathbf{0},\rho_{\CCC}(\vv))$ for some vector $\vv$ for which $\rho_{\CCC}(\vv)$ generates an extreme ray of $\overline{\rho_{\CCC}(\CCC)}$.} Thus $f(\xx)g(\zz) \in \LLL_{2n}(\CCC \oplus \CCC)$ follows from Theorem \ref{M-convex-extreme}.
\end{proof}

\begin{theorem}\label{interior}
Let $\CCC$ be an open convex cone in $\RR^n$ with lineality space $L_{\CCC}$, and \rev{let $f \in \Hmg_n^d(\CCC)$ be of degree $d\geq 2$.} Then $f$ is in the interior of $\LLL_n^d(\CCC) \subset \Hmg_n^d(\CCC)$ if and only if for all \rev{$\vv_1, \ldots, \vv_d \in \overline{S_{\CCC}}$},
\begin{itemize}
\item[(PC)] $D_{\vv_1}\cdots D_{\vv_{d}} f>0$, and 
\item[(HC)] the symmetric bilinear form 
$$
(\xx, \yy) \longmapsto D_{\xx}D_{\yy} D_{\vv_3}\cdots D_{\vv_{d}} f
$$
has exactly one positive eigenvalue, and kernel equal to $L_{\CCC}$. 
\end{itemize}

\end{theorem}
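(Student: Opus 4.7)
The plan is to first reduce to the case where $\CCC$ is pointed, i.e., $L_\CCC = \{0\}$. Any $f \in \Hmg_n^d(\CCC)$ factors through the projection $\pi : \RR^n \to \RR^n/L_\CCC$ as $f = \tilde{f} \circ \pi$, and Proposition~\ref{comp} applied to $\pi$ and a linear section of it shows that $f$ is $\CCC$-Lorentzian if and only if $\tilde{f}$ is $\pi(\CCC)$-Lorentzian. The resulting linear isomorphism $\Hmg_n^d(\CCC) \cong \Hmg_{n'}^d(\pi(\CCC))$ identifies the interior of the Lorentzian cone, and ``kernel equal to $L_\CCC$'' in (HC) becomes non-degeneracy of the bilinear form. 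We henceforth assume $L_\CCC = \{0\}$, so that $\CCC$ is pointed and $S_\CCC/L_\CCC = S_\CCC$.

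For the backward direction, suppose $f \in \Hmg_n^d(\CCC)$ satisfies (PC) and (HC). Applying Theorem~\ref{M-convex-extreme}: condition (PC) gives the first hypothesis, and since the support of $f(s_1\uu_1 + \cdots + s_d\uu_d)$ is the full simplex in this setting, the $M$-convexity hypothesis is trivial; (HC) implies the third. Hence $f$ is $\CCC$-Lorentzian. To show $f$ is interior, note that (PC) and (HC) are each open conditions on $f$ for any fixed tuple $(\vv_1, \ldots, \vv_d) \in S_\CCC^d$; by compactness of the normalization of $\overline{S_\CCC}$ into a base of $\overline{\CCC}$, the strict positivity in (PC) and the non-degeneracy plus Lorentz signature in (HC) hold uniformly in the tuple, so any $g \in \Hmg_n^d(\CCC)$ sufficiently close to $f$ also satisfies (PC) and (HC), hence is $\CCC$-Lorentzian.

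For the forward direction, suppose $f$ lies in the interior of $\LLL_n^d(\CCC)$. The fact that $f$ is $\CCC$-Lorentzian, together with Remark~\ref{closedc}, already gives $D_{\vv_1}\cdots D_{\vv_d}f \geq 0$ and the ``at most one positive eigenvalue'' condition on the bilinear form $P$ for all $\vv_i \in \overline{\CCC}$. If (PC) failed, say $D_{\vv_1}\cdots D_{\vv_d}f = 0$ for some $\vv_i \in S_\CCC$, choose generic linear forms $\mu_1, \ldots, \mu_d$ and set $g = -\prod_i \mu_i \in \Hmg_n^d(\CCC)$; for generic choice $D_{\vv_1}\cdots D_{\vv_d} g < 0$, so $f + \epsilon g$ would violate (N) for small $\epsilon > 0$, contradicting that $f + \epsilon g$ is $\CCC$-Lorentzian. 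For (HC), condition (PC) applied with $\vv_1 = \vv_2$ yields $P(\vv_1, \vv_1) > 0$, so combined with ``at most one positive eigenvalue'' we obtain exactly one. For non-degeneracy, suppose $0 \neq \uu \in \ker P$; by a standard polarization construction there exists $g \in \Hmg_n^d(\CCC)$ whose associated bilinear form for the fixed $\vv_3, \ldots, \vv_d$ equals the rank-one positive-semidefinite form $(x, y) \mapsto (\uu^* \cdot x)(\uu^* \cdot y)$, where $\uu^*$ is a linear form with $\uu^*(\uu) = 1$. Then the bilinear form of $f + \epsilon g$ acquires a strictly positive contribution in the $\uu$ direction, so for small $\epsilon > 0$ it has two positive eigenvalues, contradicting that $f + \epsilon g$ is $\CCC$-Lorentzian.

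The main obstacle is the final perturbation argument: one must verify that adding a rank-one positive-semidefinite form supported on a kernel direction of $P$ produces a \emph{second} positive eigenvalue, rather than merely shifting the existing one. This follows from a signature analysis: the unperturbed form $P$ restricted to the orthogonal complement of $\ker P$ is non-degenerate with exactly one positive eigenvalue, while the perturbation contributes a strictly positive value on $\ker P$, so the perturbed form necessarily has two positive eigenvalues. The compactness argument in the backward direction also requires care to handle the possibility that $S_\CCC$ itself is not closed, but taking closures and invoking continuity of the involved conditions resolves this.
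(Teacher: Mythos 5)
Your proposal is correct and follows essentially the same route as the paper: deduce $S\subseteq\LLL_n^d(\CCC)$ from Theorem~\ref{M-convex-extreme}, openness of $S$ from compactness and continuity, and the reverse inclusion by perturbation. The paper compresses the last step into the phrase ``a standard perturbation argument,'' which you have usefully unpacked into the two explicit perturbations (a negative product of linear forms when (PC) degenerates, and a rank-one positive-semidefinite Hessian contribution along a kernel direction when (HC) degenerates); both are sound, although for the second you do not actually need the added bilinear form to equal the rank-one form $(\uu^*\!\cdot x)(\uu^*\!\cdot y)$ exactly---it suffices that $D_\uu^2 D_{\vv_3}\cdots D_{\vv_d}\,g>0$ (e.g.\ take $g=(a\cdot\ttt)^d$ with $a\in\CCC^*$ and $a\cdot\uu\neq 0$), since first-order degenerate perturbation theory then splits a zero eigenvalue of $P$ upward. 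You also correctly flag that $S_\CCC$ need not be closed, so that the paper's appeal to ``compactness of $S_\CCC/L_\CCC$'' should really go through the compact cross-section $\overline{\CCC}\cap\mathbb{S}^{n-1}$ of the pointed quotient cone, using multilinearity/Carath\'eodory to pass from extreme rays to all of $\overline{\CCC}$; this is a genuine (if easily repaired) sloppiness in the paper's phrasing.
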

\begin{proof}
    Letting $S$ denote the set of polynomials satisfying (PC) and (HC), it is clear that $S \subseteq \LLL_n^d(\CCC)$ by Theorem \ref{M-convex-extreme}. By compactness of \rev{$\overline{S_{\CCC}}$} and continuity, (PC) and (HC) are open conditions. Hence $S$ is an open set contained in $\LLL_n^d(\CCC)$. 
    
    Finally, a standard perturbation argument shows that anything in the interior of $\LLL_n^d(\CCC)$ must be contained in $S$, and this completes the proof.
\end{proof}

The M-convexity condition of Theorem \ref{M-convex-extreme} can also be replaced by another derivative condition as follows.

\begin{theorem} \label{extreme-plus-one}
    \rev{Let $\CCC$ be an open convex cone, and let $f \in \Hmg_n^d(\CCC)$ be of degree $d \geq 2$.} Given any $\ww \in \CCC$, $f$ is $\CCC$-Lorentzian if and only if for all $\uu_1,\ldots,\uu_{d-2} \in S_{\CCC}$ and all $0 \leq k \leq d-2$, the quadratic polynomial
    \[
        D_{\uu_1} \cdots D_{\uu_k} D_{\ww}^{d-2-k} f
    \]
    is $\CCC$-Lorentzian.
\end{theorem}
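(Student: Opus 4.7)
For the forward direction I would argue that if $f$ is $\CCC$-Lorentzian, then $D_\ww f$ is $\CCC$-Lorentzian by Definition~\ref{C-def}, and closure of the $\CCC$-Lorentzian property (Remark~\ref{closedc}), together with approximation of $\uu \in S_\CCC \subseteq \overline\CCC$ by elements of $\CCC$, yields $D_\uu f$ is $\CCC$-Lorentzian for each $\uu \in S_\CCC$. Iterating shows that $D_{\uu_1}\cdots D_{\uu_k} D_\ww^{d-2-k} f$ is a $\CCC$-Lorentzian quadratic.

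For the converse I would proceed by strong induction on $d$, with $d=2$ immediate. Suppose $d \geq 3$ and the theorem holds in all smaller degrees. To show that $D_\ww f$ is $\CCC$-Lorentzian of degree $d-1$, I would apply the inductive hypothesis: the quadratic required for $D_\ww f$ at parameter $k'$, namely $D_{\uu_1}\cdots D_{\uu_{k'}} D_\ww^{d-3-k'}(D_\ww f) = D_{\uu_1}\cdots D_{\uu_{k'}} D_\ww^{d-2-k'} f$, is precisely the hypothesis for $f$ at $k = k' \leq d-3$. Similarly, for each $\uu \in S_\CCC$, commuting derivatives shows the quadratics required by the theorem for $D_\uu f$ correspond to the hypothesis for $f$ at $k = k'+1 \leq d-2$. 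So $D_\ww f$ and $D_\uu f$ (for $\uu \in S_\CCC$) are $\CCC$-Lorentzian by induction. Repeatedly applying Definition~\ref{C-def} and closure now shows that $D_{\uu_{i_1}}\cdots D_{\uu_{i_j}} f$ is $\CCC$-Lorentzian whenever $j \geq 1$ and each $\uu_{i_\ell} \in \{\ww\} \cup S_\CCC$.

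I would then conclude via Theorem~\ref{M-convex-extreme}. Its first and third conditions follow immediately from the hypothesis at $k = d-2$ together with closure. The main task is verifying the $M$-convex support condition, i.e., that $\MM = \{\alpha \in \NN^{2d} : D_{\uu_1}^{\alpha_1}\cdots D_{\uu_{2d}}^{\alpha_{2d}} f > 0\}$ is $M$-convex for each $\uu_1,\ldots,\uu_{2d} \in S_\CCC$. To do this I would augment with $\ww$: set $\uu_0 = \ww$ and form $\tilde h(s_0, s_1, \ldots, s_{2d}) = f\bigl(\sum_{k=0}^{2d} s_k \uu_k\bigr)$. For each $j$, the derivative $\partial_{s_j}\tilde h = (D_{\uu_j} f)\bigl(\sum_k s_k \uu_k\bigr)$ is classically Lorentzian, since $D_{\uu_j} f$ is $\CCC$-Lorentzian by the previous step, and the linear map $A\colon (s_0, \sss) \mapsto \sum_k s_k \uu_k$ sends $\RR_{>0}^{2d+1}$ into $\CCC$ (as $s_0 > 0$ forces $s_0 \ww \in \CCC$ and $\CCC + \overline\CCC = \CCC$); Propositions~\ref{comp} and~\ref{CL-equal} give the claim. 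Theorem~\ref{main-L} then makes $\partial_j \tilde\MM := \supp(\partial_{s_j} \tilde h)$ $M$-convex for every $j$.

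Finally, $\tilde\MM := \supp(\tilde h)$ is connected in the sense of Section~\ref{Lorsec}: for each $j \geq 1$ the coefficient of $s_0^{d-1} s_j$ in $\tilde h$ is a positive multiple of $D_\ww^{d-1} D_{\uu_j} f$, which is strictly positive by applying condition (P) of the $\CCC$-Lorentzian quadratic $D_{\uu_j} D_\ww^{d-3} f$ (hypothesis at $k=1$) to the pair $(\ww, \ww)$, so the monomial $s_0^{d-1} s_j$ lies in $\tilde\MM$ and forbids any proper partition of the index set. Proposition~\ref{alt-M} then gives that $\tilde\MM$ is $M$-convex, and slicing at $s_0 = 0$ (which preserves $M$-convexity, since the exchange axiom cannot force an exchange with the $0$-th coordinate when $\alpha_0 = \beta_0 = 0$) shows $\MM$ is $M$-convex. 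Theorem~\ref{M-convex-extreme} then completes the proof. The hardest part will be the $M$-convexity of the support, and it is precisely here that the augmentation by $\ww$ is essential: the quadratic hypotheses at $k < d-2$ only feed the inductive machinery and do not directly yield $M$-convexity, whereas after augmentation the interior derivative $D_\ww$ provides the interaction needed to establish connectedness and then invoke Proposition~\ref{alt-M}.
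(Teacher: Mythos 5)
Your proof is correct, and while it shares the essential skeleton with the paper's proof (induction on $d$, establishing that $D_\ww f$ and $D_\uu f$ for $\uu \in S_\CCC$ are $\CCC$-Lorentzian by the inductive hypothesis, then invoking Proposition~\ref{alt-M} to upgrade $M$-convexity of the truncations to $M$-convexity of the full support), the details differ in two genuine ways. First, the paper reduces via Proposition~\ref{redu} to showing that $g(\sss) = f(s_1\uu_1 + \cdots + s_m \uu_m)$ is Lorentzian, for a freely chosen finite set $\uu_1,\ldots,\uu_m \in S_\CCC$ whose convex hull has $\vv_1,\ldots,\vv_d$ and $\ww$ in its relative interior; you instead verify the three conditions of Theorem~\ref{M-convex-extreme} directly, which forces the extra slicing step from $\tilde\MM \subset \NN^{2d+1}$ back to $\MM \subset \NN^{2d}$ (your observation that slicing at $\alpha_0=0$ preserves $M$-convexity is correct and necessary). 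Second, and more substantively, the two proofs establish the connectedness needed for Proposition~\ref{alt-M} by different means: the paper argues indirectly, writing $\ww$ as a strictly positive combination $\yy$ of the $\uu_i$, deducing $D_\yy g$ Lorentzian with support $\tau\supp(g)$, hence $\tau\supp(g)$ is $M$-convex (so connected), and then transferring to $\supp(g)$ via Lemma~\ref{M-tM-connected}; you instead compute the coefficient of $s_0^{d-1}s_j$ directly from condition (P) applied to the hypothesis quadratic at $k=1$, which is more elementary and bypasses Lemma~\ref{M-tM-connected} entirely. Both routes buy the same thing, but your direct coefficient computation makes the role of $\ww$ as a ``hub'' vertex of the support explicit, whereas the paper's route recycles Lemma~\ref{M-tM-connected} and avoids introducing the auxiliary variable $s_0$.
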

\begin{proof}
    If $f$ is $\CCC$-Lorentzian, the
    \rev{fact that the space of $\CCC$-Lorentzian polynomials is closed implies}
    %definition and Theorem~\ref{clopen}\footnote{\rev{Note that while Theorem~\ref{clopen} has not yet been proven, its proof does not require Theorem~\ref{extreme-plus-one}.}}
    that $D_{\uu_1} \cdots D_{\uu_k} D_{\ww}^{d-2-k} f$ is $\CCC$-Lorentzian for all choices of $\uu_1,\ldots,\uu_{d-2}$ and $k$. For the other direction, we prove that $f$ is $\CCC$-Lorentzian by induction on $d$, the case when $d=2$ being immediate. For $d \geq 3$ fix $\vv_1, \ldots, \vv_d \in \CCC$, and let $\uu_1,\ldots,\uu_m \in S_{\CCC}$ be such that $\vv_1, \ldots, \vv_d$ and $\ww$ are contained in the relative interior of the convex hull of $\uu_1,\ldots,\uu_m$, \rev{up to translation by vectors in $L_{\CCC}$}. Consider the polynomial
    \[
        g(\sss) = f(s_1 \uu_1 + \cdots +s_m \uu_m).
    \]
  Then $\partial_i g$ is Lorentzian for all $i \in [m]$ by induction and Propositions \ref{comp} and \ref{CL-equal}. Further, there exists $\yy \in \RR_{\geq 0}^m$ such that $\sum_i y_i \uu_i = \ww$ \rev{up to translation by vectors in $L_{\CCC}$}, and thus $D_{\yy} g$ is Lorentzian as well by the same argument. Letting $\MM$ denote the support of $g$, Theorem \ref{main-L} then implies $\partial_i(\MM)$ and $\tau\MM$ are M-convex. By Lemma \ref{M-tM-connected} and Proposition \ref{alt-M}, this implies $\MM$ is M-convex. Thus by Theorem \ref{main-L} again, $g$ is Lorentzian. Since $\vv_1, \ldots, \vv_d$ are contained in the convex hull of $\uu_1,\ldots,\uu_m$ \rev{up to translation by vectors in $L_{\CCC}$}, Propositions \ref{comp} and \ref{CL-equal} then imply
    $
        f(s_1 \vv_1 + \cdots + s_d \vv_d)
    $
    is Lorentzian. Since this holds for all choices of $\vv_1, \ldots, \vv_d \in \CCC$, Proposition \ref{redu} implies $f$ is $\CCC$-Lorentzian.
\end{proof}

An immediate corollary is the following characterization of Lorentzian polynomials. 

\begin{corollary}
    Let $f \in \RR[t_1,\ldots,t_n]$ be a homogeneous polynomial of degree $d \geq 2$. Then $f$ is Lorentzian if and only if for all $i_1,\ldots,i_{d-2} \in [n]$ and all $0 \leq k \leq d-2$, the quadratic polynomial
    \[
        \partial_{i_1} \cdots \partial_{i_k} D_{\mathbf{1}}^{d-2-k} f
    \]
    is Lorentzian.
\end{corollary}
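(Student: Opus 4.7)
The plan is to derive this characterization as a direct specialization of Theorem~\ref{extreme-plus-one} to the cone $\CCC = \RR_{>0}^n$, invoking Proposition~\ref{CL-equal} to identify Lorentzian with $\RR_{>0}^n$-Lorentzian.

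First, I would observe that the extreme rays of $\overline{\RR_{>0}^n}$ are precisely the nonnegative multiples of the standard basis vectors $e_1,\ldots,e_n$. Since the $\CCC$-Lorentzian conditions depend only on the rays (the directional derivative $D_{\vv}$ scales linearly in $\vv$, and positivity of coefficients is unaffected by rescaling of the test vectors by positive scalars), we may take $S_{\RR_{>0}^n} = \{e_1,\ldots,e_n\}$ for the purposes of applying Theorem~\ref{extreme-plus-one}. Next, I would choose the auxiliary vector $\ww \in \CCC$ in that theorem to be $\ww = \mathbf{1} = (1,1,\ldots,1) \in \RR_{>0}^n$, which is allowed since this cone is open and contains $\mathbf{1}$.

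With these choices, since $D_{e_i} = \partial_i$, Theorem~\ref{extreme-plus-one} becomes the statement: $f$ is $\RR_{>0}^n$-Lorentzian if and only if for every $i_1,\ldots,i_{d-2} \in [n]$ and every $0 \leq k \leq d-2$, the quadratic polynomial $\partial_{i_1}\cdots \partial_{i_k} D_{\mathbf{1}}^{d-2-k} f$ is $\RR_{>0}^n$-Lorentzian. By Proposition~\ref{CL-equal}, both the outer statement ($f$ is $\RR_{>0}^n$-Lorentzian) and the inner condition (each quadratic is $\RR_{>0}^n$-Lorentzian) are equivalent to the corresponding Lorentzian statements in the sense of Definition~\ref{L-def}. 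This yields the corollary.

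There is no real obstacle here since Theorem~\ref{extreme-plus-one} is stated in exactly the generality needed; the only minor point to verify carefully is that the set $S_{\CCC}$ in Theorem~\ref{extreme-plus-one} truly reduces to the standard basis in the case $\CCC = \RR_{>0}^n$, and that the quadratic polynomials obtained this way automatically have nonnegative coefficients so that the equivalence of Lorentzian with $\RR_{>0}^n$-Lorentzian via Proposition~\ref{CL-equal} applies in both directions. Both are immediate: the extreme rays of the positive orthant are evidently the coordinate rays, and a Lorentzian (hence $\RR_{>0}^n$-Lorentzian) polynomial has nonnegative coefficients by property~(P) applied to standard basis vectors, so Proposition~\ref{CL-equal} gives the desired two-way equivalence.
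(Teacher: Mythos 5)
Your proof is correct and is exactly the argument the paper intends: specialize Theorem~\ref{extreme-plus-one} to $\CCC=\RR_{>0}^n$ and $\ww=\mathbf{1}$, note that the unit extreme-ray generators are $e_1,\dots,e_n$ so $D_{e_i}=\partial_i$, and translate between Lorentzian and $\RR_{>0}^n$-Lorentzian via Proposition~\ref{CL-equal}. The paper simply labels this ``an immediate corollary,'' so you have filled in precisely the intended reasoning.
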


\section{Topology of  spaces of $\CCC$-Lorentzian polynomials}\label{top}
Equip the linear space $\Hmg_n^d(\CCC)$ with a Euclidean topology. Fix $\vv \in \CCC$, and let 
$$
\mathbb{P}\LLL_n^d(\CCC)= \{ f \in \LLL_n^d(\CCC) : f(\vv)=1 \}. 
$$
Since $\LLL_n^d(\CCC)$ is closed, by Remark \ref{closedc}, it follows that $\mathbb{P}\LLL_n^d(\CCC)$ is compact in $\Hmg_n^d(\CCC)$. 
We end by proving that the space $\mathbb{P}\LLL_n^d(\CCC)$ is contractible, which generalizes \cite[Theorem 2.28]{BH} to any cone $\CCC$. 

\begin{theorem}\label{clopen}
Let $\CCC$ be an open convex cone in $\RR^n$. The space $\mathbb{P}\LLL_n^d(\CCC) \subset \Hmg_n^d(\CCC)$ is the closure of its interior, and   is contractible to a polynomial in its interior. 

\end{theorem}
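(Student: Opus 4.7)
The strategy has three parts: non-empty interior, density of the interior, and contractibility. Compactness of $\mathbb{P}\LLL_n^d(\CCC)$ is already provided by Remark~\ref{closedc}.

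For non-empty interior, I would apply Theorem~\ref{interior}: the interior consists of polynomials satisfying strict (PC) and (HC). To exhibit such a polynomial, fix linear forms $\ell_1,\ldots,\ell_N$ that are strictly positive on $\overline{\CCC} \setminus L_\CCC$ and whose common zero set equals $L_\CCC$, and set $g_0(\xx) = e_d(\ell_1(\xx),\ldots,\ell_N(\xx))$, the pullback of the classical degree-$d$ elementary symmetric polynomial under the linear map $\xx \mapsto (\ell_i(\xx))_i$. Since $e_d$ is classically Lorentzian on $\RR_{>0}^N$ (Proposition~\ref{CL-equal}), Proposition~\ref{comp} gives $g_0 \in \LLL_n^d(\CCC)$. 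For a generic choice of the $\ell_i$, the pullback of the Hessian of $e_d$ retains its Lorentz signature (its kernel matches $L_\CCC$ by construction) and the strict positivity conditions hold, so $g_0$ lies in the interior.

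For density and contractibility, the principal subtlety is that $\LLL_n^d(\CCC)$ is in general \emph{not} convex: one can construct explicit boundary polynomials $f$ and interior $g_0$ for which the straight-line combination $(1-t)f + t g_0$ develops a Hessian with two positive eigenvalues at intermediate $t$ (quadratic examples in dimensions two and three suffice). Hence the deformation retract must be non-linear. Following the strategy of \cite[Theorem 2.28]{BH}, I would construct a continuous family $H(f,t)$ combining $f$ and $g_0$ with a carefully chosen higher-order correction term that preserves the Lorentzian signature at every intermediate $t$. Using Proposition~\ref{redu}, the problem reduces to the behaviour of $f$ along the restrictions $(s_1,\ldots,s_d) \mapsto f(s_1 \vv_1 + \cdots + s_d \vv_d)$ for $\vv_i \in \CCC$; each such restriction is a classical Lorentzian polynomial, and a fibrewise Lorentzian-preserving homotopy can be assembled into a global continuous deformation $H$ of $f$ into a normalisation of $g_0$. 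Taking $t$ small in this deformation then gives the required density of the interior, and hence the closure-of-interior statement.

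The main obstacle is establishing the simultaneous continuity in $f$ and preservation of the Lorentzian property at every intermediate $t$ when assembling the fibrewise homotopies; this requires careful handling of the various degeneracy strata of $\LLL_n^d(\CCC)$ (e.g.\ points where $L_f \supsetneq L_\CCC$ or where the Hessian vanishes along some extreme ray of $\overline{\CCC}$) and constitutes the technical heart of the proof.
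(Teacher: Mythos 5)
You correctly identify the three-part structure (non-empty interior, density, contractibility) and correctly observe that $\LLL_n^d(\CCC)$ is not convex, so a straight-line homotopy to an interior point cannot work. However, your proof has a genuine gap at the heart of it: you never actually construct the non-linear deformation retract. The phrases ``a carefully chosen higher-order correction term that preserves the Lorentzian signature'' and ``a fibrewise Lorentzian-preserving homotopy can be assembled into a global continuous deformation'' are promissory notes, not arguments, and you acknowledge as much in your final paragraph. In particular, it is unclear how a family of homotopies defined on the one-dimensional restrictions $f(s_1\vv_1+\cdots+s_d\vv_d)$, varying over all tuples $\vv_1,\ldots,\vv_d\in\CCC$, could be glued into a single well-defined map on $\Hmg_n^d(\CCC)$: two different tuples can give conflicting instructions for the same polynomial, and no compatibility mechanism is offered.

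The paper's proof sidesteps the fibrewise assembly issue entirely by writing down an explicit global deformation. Choose linearly independent $\ww_1,\ldots,\ww_n$ in the interior of the dual cone, set $\ww=\ww_1+\cdots+\ww_n$, fix $\vv\in\CCC$ and $0<C<1$, and define
\[
f_s(\ttt)= f\bigl(\ttt+s\langle\ttt,\ww\rangle\vv\bigr) - Cs^d f(\vv)\sum_{j=1}^n\langle\ttt,\ww_j\rangle^d.
\]
The first term is the pullback of $f$ under the shear $I+s\vv\ww^T$, which maps $\CCC$ into itself, so it stays in $\LLL_n^d(\CCC)$ by Proposition~\ref{comp}; a direct comparison of the mixed derivatives $D_{\uu_1}\cdots D_{\uu_d}$ of the two terms over $\uu_i\in\overline{\CCC}\cap\SS^{n-1}$ shows that $f_s$ satisfies the strict conditions of Theorem~\ref{interior} for every $s>0$, and the normalisation $f_s/f_s(\vv)$ converges as $s\to\infty$ to a fixed interior polynomial. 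This single formula yields the required deformation retract, jointly continuous in $(f,s)$, with no stratification analysis needed. Your candidate interior polynomial $e_d(\ell_1,\ldots,\ell_N)$ is plausible but also carries an unverified rank claim; the paper instead obtains its interior endpoint canonically as the $s\to\infty$ limit, for which the rank condition is explicit from the subtracted positive-definite form.
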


\begin{proof}
 We will construct a deformation retract  $F :  \mathbb{P}\LLL_n^d(\CCC)\times [0,1] \longrightarrow \mathbb{P}\LLL_n^d(\CCC)$, such that for all 
 $f \in \mathbb{P}\LLL_n^d(\CCC)$,
 \begin{enumerate}
     \item $F(f,s)$ is in the interior of $\mathbb{P}\LLL_n^d(\CCC)$  for all $0 <s \leq 1$, and 
     \item $F(f,1)=g$, for a specific polynomial $g$ in the interior of $\mathbb{P}\LLL_n^d(\CCC)$.  
     \end{enumerate}
 
 We may assume $\CCC$ is pointed by considering the quotient of $\RR^n$ and $\CCC$ by $L_{\CCC}$. Choose linearly independent vectors $\ww_1,\ldots, \ww_n$ in 
$$
\CCC^*= \{\ww \in \RR^n : \langle \ww, \uu \rangle >0 \mbox{ for all } \uu \in \overline{\CCC} \cap \mathbb{S}^{n-1} \},
$$
the interior of the dual cone of $\overline{\CCC}$. Fix $\vv \in \CCC$, and let $\ww= \ww_1+\cdots+\ww_n$. For $s>0$, let
$$
f_s(\ttt)= f(\ttt+s \langle \ttt, \ww\rangle\vv)- Cs^d f(\vv)\sum_{j=1}^n \langle \ttt, \ww_j\rangle^d, 
$$
where $0<C<1$. 
We will prove that $f_s$ lies in the interior of $\LLL_n^d(\CCC)$. 
By Proposition~\ref{comp},  $g_s(\ttt)= f(\ttt+s \langle \ttt, \ww\rangle\vv)$ is in $\LLL_n^d(\CCC)$. Moreover by \eqref{us},
$$
D_{\uu_1}\cdots D_{\uu_d}g_s \geq  s^d D_\vv^d f \prod_{i=1}^d  \langle \uu_i, \ww \rangle = d! s^d f(\vv)  \prod_{i=1}^d \langle \uu_i, \ww \rangle,
$$
whenever $\uu_i \in \overline{\CCC} \cap \mathbb{S}^{n-1}$ for all $i$. Also,
$$
d! \prod_{i=1}^d \langle \uu_i, \ww \rangle \geq d! \sum_{j=1}^n \prod_{i=1}^d \langle \uu_i, \ww_j \rangle = D_{\uu_1}\cdots D_{\uu_d} \sum_{j=1}^n \langle \ttt, \ww_j\rangle^d.
$$
From this follows $D_{\uu_1}\cdots D_{\uu_d}f_s >0$, whenever  $\uu_i \in \overline{\CCC} \cap \mathbb{S}^{n-1}$ for all $i$. Moreover 
$$
D_{\uu_3}\cdots D_{\uu_d}f_s = D_{\uu_3}\cdots D_{\uu_d}g_s - q(\ttt),
$$
where $q(\ttt)$ is positive definite. Hence the Hessian of $D_{\uu_3}\cdots D_{\uu_d}f_s$ has exactly one positive eigenvalue and $(d-1)$ negative eigenvalues. By Theorem~\ref{interior}, $f_s$ is a polynomial in the interior of $\LLL_n^d(\CCC)$ for all $s>0$. 

Now 
$$
\lim_{s \to \infty} \frac {f_s(\ttt)} {f_s(\vv)} = \frac {\langle \ttt, \ww\rangle^d- C\sum_{j=1}^n \langle \ttt, \ww_j\rangle^d}{\langle \vv, \ww\rangle^d- C\sum_{j=1}^n \langle \vv, \ww_j\rangle^d}, 
$$
is a polynomial in the interior of $\mathbb{P}\LLL_n^d(\CCC)$. Hence 
$$
F(f,s)= \frac {f_{s/(1-s)}(\ttt)} {f_{s/(1-s)}(\vv)},
$$
is a deformation retract satisfying the required properties.
\end{proof}

It was proved in \cite{Bball} that $\mathbb{P}\LLL_n^d(\RR_{>0}^n)$ is homeomorphic to a Euclidean ball. 
\begin{question}
Is $\mathbb{P}\LLL_n^d(\CCC)$ homeomorphic to a Euclidean ball? What about if $\CCC$ is a polyhedral cone? 
\end{question}

\noindent
\textbf{Acknowledgements.} \rev{The authors would like to thank Weizhe Zheng for pointing out two errors in Section~\ref{Lorsec} in a previous version of this manuscript. The authors would also like to thank the anonymous referees for their careful reading and many comments, which improved the quality of the paper.}

\end{document}